\tikzset{vert/.style={circle,fill,inner sep=0,
              minimum size=0.15cm,draw,outer sep=0}}
\let\oldtocsubsection=\tocsubsection
\renewcommand{\tocsubsection}[2]{\hspace*{1.1cm}\oldtocsubsection{#1}{#2}}
\begin{document}

\newtheorem{theorem}{Theorem}[section]
\newtheorem{lemma}[theorem]{Lemma}
\newtheorem{corollary}[theorem]{Corollary}
\newtheorem{conjecture}[theorem]{Conjecture}
\newtheorem{cor}[theorem]{Corollary}
\newtheorem{proposition}[theorem]{Proposition}
\theoremstyle{definition}
\newtheorem{coupling}[theorem]{Coupling}
\newtheorem{definition}[theorem]{Definition}
\newtheorem{assumption}{Assumption}
\newtheorem{example}[theorem]{Example}
\newtheorem{claim}[theorem]{Claim}
\newtheorem{remark}[theorem]{Remark}

\Roman{assumption}


\newcommand{\Def}{:=}
\newcommand{\R}{\mathbb{R}}
\newcommand{\Z}{\mathbf{Z}}
\newcommand{\X}{\mathbf{X}}
\newcommand{\abs}[1]{\left| {#1} \right|}
\newcommand{\floor}[1]{{\left\lfloor #1 \right\rfloor}}
\newcommand{\eps}{\epsilon}
\newcommand{\norm}[1]{\left\| {#1} \right\|}
\newcommand{\divides}{\mid}
\newcommand{\tr}{\mathop{\mathrm{tr}}}
\newcommand{\omitted}{\bullet}


\newcommand{\expect}{\mathbb{E}}
\newcommand{\Exp}{\mathbb{E}}
\newcommand{\E}{\mathbb{E}}
\newcommand{\prob}{\Pr}
\newcommand{\Prob}{\prob}
\renewcommand{\P}{\prob}
\newcommand{\1}{\mathbf{1}}
\newcommand{\one}[1]{\1\{ {#1} \}}
\newcommand{\Ent}{\operatorname{Ent}}
\newcommand{\Var}{\operatorname{Var}}
\newcommand{\dtv}{d_{TV}}
\newcommand{\dw}{d_{W}}
\newcommand{\drawnfrom}{\sim}
\newcommand{\cov}{\mathop{\mathbf{Cov}}\nolimits}
\newcommand{\lawof}{\mathscr{L}}
\newcommand{\weakto}{\Rightarrow}
\newcommand{\probto}{\overset{\mathbb{P}}{\to}}

\newcommand{\Unif}{\operatorname{Unif}}
\newcommand{\Poisson}{\operatorname{Poisson}}
\newcommand{\Binom}{\operatorname{Binom}}
\newcommand{\Geom}{\operatorname{Geom}}
\newcommand{\Bernoulli}{\operatorname{Bernoulli}}


\newcommand{\ff}[2]{\left[ {#1} \right]_{#2}}
\newcommand{\dff}[2]{\left\llbracket {#1} \right\rrbracket_{#2}}
\newcommand{\edgesetof}{\mathcal{E}}


\newcommand{\umd}{\mathscr{G}_{n,d}}
\newcommand{\pmd}[1][d]{\mathscr{P}_{n,#1}}
\newcommand{\phammd}{\mathscr{T}_{n,d}}
\newcommand{\phamcondmd}{\mathscr{T}_{n,d}^*}
\newcommand{\phamrv}{\tilde{P}}
\newcommand{\hams}{ H_n }
\newcommand{\cycspace}[1][k]{\ensuremath{\mathcal{J}_{#1}}}
\newcommand{\cycprocess}[1][r]{\ensuremath{\mathbf{I}_{#1}}}
\newcommand{\rnd}[1][n]{  f_{#1} }
\DeclareDocumentCommand{\rndcyc}{ O{r} O{n} }{ \ensuremath{ {f_{#1,#2}}} }
\DeclareDocumentCommand{\rndpf}{ O{r} O{n} }{ \ensuremath{ {\mathcal{Y}_{#1,#2}} }}

\newcommand{\phamcycmean}[1][\alpha]{\mu_{ #1 } }
\newcommand{\pcycmean}[1][\alpha]{\lambda_{ #1 } }
\newcommand{\phamcycpf}{\tilde{\Z} }
\newcommand{\pcycpf}{\Z }

\newcommand{\limvar}[1][r]{ V^{({#1})}}

\newcommand{\cdp}{\varrho}
\newcommand{\cdpgf}{\ensuremath{\mathcal{P}}}
\newcommand{\cdproot}{\tau}

\newcommand{\fullcycspace}{\mathcal{J}}

\newcommand{\mixedcycspace}[1][k]{\mathcal{H}_{#1}}

\newcommand{\fullmixedcycspace}{\mathcal{H}}

\newcommand{\mixedcycprocess}{\mathbf{I}_r(\rHam,\rP)}

\newcommand{\rP}{Q}

\newcommand{\rHam}{\ensuremath{H}}

\newcommand{\rHammd}[1][n]{\mathscr{S}_{#1}}


\newcommand{\sepmixedmd}{\mathscr{S}_{n,d}}


\newcommand{\cvberror}{\ensuremath{\varepsilon}}
\newcommand{\cvbpf}{\mathbf{W}}
\newcommand{\cvbvar}[1][\alpha]{ W_{ #1 } }
\newcommand{\cvbeps}{\varepsilon}

\newcommand*\mystrut[1]{\vrule width0pt height0pt depth#1\relax}

\newcommand{\Hampoly}[1][d,n]{\Pi_{#1}}

\newcommand{\chainfield}{\mathcal{F}}
\newcommand{\ccoeff}{\theta}
\newcommand{\indcoeff}{\beta}



\newcommand{\nvec}{\phi}
\newcommand{\nvspf}{Z_\ell}
\newcommand{\nvpf}{Z_\phi}
\newcommand{\nvserror}{\xi_n}

\newcommand{\nvmc}[1]{X_{ {#1} }}
\newcommand{\nvmcrnd}{\rho}
\newcommand{\nvmcpi}{\pi}

\newcommand{\nvstein}{f_h}
\newcommand{\nvsteinsigma}{\mathcal{F}}

\newcommand{\lplus}{\cdproot_{+}}


\newcommand{\cycvar}[2]{X_{#1,#2}}
\DeclareDocumentCommand{\cycsig}{ O{r} O{n} }{ \ensuremath{ {\mathscr{F}_{#1,#2}}} }

\newcommand{\tverror}[3]{\varepsilon^{#1}({#2,#3})}
\newcommand{\rndfun}[3]{{f^{#1}_{#2,#3}}}

\title{Quantitative Small Subgraph Conditioning}
\author{Tobias Johnson}
\address{Department of Mathematics, University of Southern California}
\email{tobias.johnson@usc.edu}
\author{Elliot Paquette}
\address{Faculty of Mathematics, Weizmann Institute of Science}
\email{elliot-andrew.paquette@weizmann.ac.il}
\thanks{%
The authors acknowledge partial support from 
NSF grant
DMS-0847661.
}
\date{May 22, 2015}
\subjclass[2010]{60C05, 05C80, 60G30}
\keywords{Contiguity, random graphs, Hamiltonian cycles, configuration model}

\begin{abstract} 
We revisit the method of small subgraph conditioning, used to
establish that random regular graphs are Hamiltonian a.a.s. We refine
this method using new technical machinery 
for random
$d$-regular graphs on $n$ vertices that hold not just asymptotically,
but for any values of $d$ and $n$. This lets us estimate how quickly
the probability of containing a Hamiltonian cycle converges to $1$,
and it produces quantitative contiguity results between different
models of random regular graphs. These results hold with $d$ held
fixed or growing to infinity with $n$. As additional applications, we establish
the distributional convergence of the number of Hamiltonian cycles
when $d$ grows slowly to infinity, and we prove that the number of
Hamiltonian cycles can be approximately computed from the graph's
eigenvalues for almost all regular graphs.\end{abstract}

\maketitle

\tableofcontents

\section{Introduction}

The \emph{uniform model} $\umd$ of random regular graph of degree $d$ on $n$ vertices is the setting for many celebrated theorems concerning discrete random structures, and much is known about it.  
For an excellent survey of random regular graphs, consider~\cite{WormaldSurvey}.  In a line of work going back to Fenner and Frieze~\cite{FennerFrieze84}, Bollob\'as~\cite{Bollobas83}, and Frieze~\cite{Frieze88}, it was settled finally by Robinson and Wormald~\cite{RobinsonWormald92, RobinsonWormald94} that a uniformly chosen $d$-regular graph was 
a.a.s.\ Hamiltonian as $n \to \infty$ for any fixed $d\geq 3$.  The techniques of Bollob{\'a}s, Fenner and Frieze are algorithmic, while the work of Robinson and Wormald
follows
a second-moment method approach together with what is known as \emph{small subgraph conditioning}.  The combined efforts of~\cite{FJMRN} show that there are many Hamiltonian cycles a.a.s.\ and produce an algorithm that finds them a.a.s.  

These results are concerned with holding $d$ fixed and letting $n$ tend to infinity.  An alternative is to allow $d=d(n)$ to vary with $n,$ possibly growing to 
infinity at some rate.  Along this line of reasoning, it is shown in~\cite{CFR} that there is a constant $c>0$ so that if $d_0 \leq d(n) \leq cn,$ the graph is Hamiltonian a.a.s.  By a different approach, it is shown in~\cite{KSVW} that if $d(n) \geq \sqrt{n} \log n$ then the graph is Hamiltonian a.a.s.  

All of these techniques are ultimately asymptotic, in the sense that they show a graph feature holds with some probability tending to 
$1$.  In this paper, we will show how the \emph{small subgraph conditioning method} can be used to produce estimates that do not just hold in the limit as $n \to \infty$ but hold for all $n$ and $d$ simultaneously.  In particular, we extend the method of small subgraph conditioning to the regime where $d=d(n)$ may grow to infinity, and we are principally interested in the regime in which $\log d / \log n \to 0.$

As with much work on the uniform model, we actually work with the \emph{configuration model} (or \emph{pairing model}) $\pmd$.  In this model, $nd$ balls are partitioned into $n$ bins of equal size, noting this 
requires 
$nd$ to be even.  A matching of all the balls is chosen uniformly at random, and then the balls in each bin are identified to form vertices.  All the edges are preserved in the identification to produce a $d$-regular pseudograph, 
which we call the \emph{projection} of the pairing.
For definiteness, we will refer to the balls as \emph{prevertices}, which are partitioned into $n$ vertex bins of size $d,$ and we will reserve typical graph nomenclature for the projected pseudograph.

Our central object of study is the number of Hamiltonian cycles in a random regular graph.
If $P$ is some pairing of $nd$ prevertices, we denote by $H_n(P)$ 
the number of Hamiltonian cycles in the projection of $P$ to a pseudograph. As in the theorem below,
we will often write simply $H_n$, indicating the distribution of its parameter underneath
a $\P$ or $\E$ symbol.

Our first theorem gives a bound on the probability that there are no Hamiltonian
cycles in the configuration and uniform models: 
\begin{theorem}
\label{thm:ham}
Suppose that $d=d(n)\geq 4$ satisfies
$\log d / \log n \to 0$. For every $\epsilon > 0,$ 
\[
\Pr_{\pmd}\left[ \hams = 0 \right] = O(n^{-1/3 + \epsilon}).
\]
If $d \geq 3$ and $d^2/ \log n \to 0,$ then for every $\epsilon >0,$
\[
\Pr_{\umd}\left[ \hams = 0 \right] = O(n^{-1/3 + \epsilon}).
\]
\end{theorem}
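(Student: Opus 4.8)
The plan is a quantitative, conditional second-moment argument. Work first in $\pmd$. Write $\rnd\Def\hams/\E_{\pmd}[\hams]$, so $\E_{\pmd}[\rnd]=1$; let $\mathcal F_r$ be the $\sigma$-algebra generated by the numbers of short cycles of lengths $1,2,\dots,r$ (loops, multiple edges, triangles, and so on) in the projection; and put $\rndcyc\Def\E_{\pmd}[\rnd\mid\mathcal F_r]$. For any $t>0$ one has $\{\hams=0\}\subseteq\{\lvert\rnd-\rndcyc\rvert\ge t\}\cup\{\rndcyc\le t\}$, so the conditional Chebyshev inequality gives
\[
\Pr_{\pmd}[\hams=0]\ \le\ \frac{\E_{\pmd}[\rnd^{2}]-\E_{\pmd}[\rndcyc^{2}]}{t^{2}}\ +\ \Pr_{\pmd}[\rndcyc\le t].
\]
The whole problem is then to choose $r=r(n,d)$ and $t=t(n,d)$ so that the right-hand side is $O(n^{-1/3+\eps})$.

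For the first term I would invoke the quantitative first- and second-moment estimates established earlier in the paper. Via the tower property and linearity, both $\E_{\pmd}[\rnd^{2}]$ and $\E_{\pmd}[\rndcyc^{2}]$ reduce to mixed factorial moments of $\hams$ against the short cycle counts, and the machinery shows that these factor, up to explicit errors, into products over cycle lengths expressed through the asymptotic means $\lambda_k$ of the cycle counts and the ratios $1+\delta_k$ measuring their correlation with Hamiltonicity. This bounds $\E_{\pmd}[\rnd^{2}]-\E_{\pmd}[\rndcyc^{2}]$ by the tail $\sum_{k>r}\lambda_k\delta_k^{2}$ of the convergent series governing the limiting variance — which tends to $0$ as $r\to\infty$, uniformly over the admissible range of $d$ — plus a genuinely quantitative error that increases with $r$ but carries a compensating negative power of $n$, arising from pairs of Hamiltonian cycles that overlap in a long stretch of edges.

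For the second term I would use the quantitative Poisson coupling of the short-cycle process with a vector $(Z_1,\dots,Z_r)$ of independent Poisson variables of means $\lambda_k$. This lets me replace $\rndcyc$ by the explicit weight $W_r\Def\prod_{k\le r}(1+\delta_k)^{Z_k}\,e^{-\lambda_k\delta_k}$, at the cost of the total-variation error of the coupling (again increasing in $r$, decaying in $n$) and the error in the moment factorization. Since $\log W_r$ has mean and variance that stay bounded as $r\to\infty$, while forcing $W_r$ below $n^{-\beta}$ requires some $Z_k$ to be atypically large, $\Pr[W_r\le n^{-\beta}]$ is, for any fixed $\beta>0$, smaller than every fixed power of $n$; taking $t=n^{-\beta}$ with $\beta$ small — which is in any case what the first term demands — this term is negligible. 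What remains is the optimization: each controlling error has the shape $n^{\eps}\cdot(\text{a quantity decaying in }r)+(\text{a quantity of order }(d-1)^{\Theta(r)}/n)$, and the hypothesis $\log d/\log n\to 0$ lets us pick $r$ with $(d-1)^{r}$ equal to a suitable small power of $n$ (in particular with $r\to\infty$), balancing the competing errors and producing the exponent $-1/3+\eps$.

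The uniform-model bound then follows by conditioning. The projection is simple precisely on the event $\{X_1=X_2=0\}$ (no loops and no multiple edges), which lies in $\mathcal F_r$ and has $\Pr_{\pmd}[X_1=X_2=0]=e^{-\Theta(d^{2})}=n^{-o(1)}$ when $d^{2}/\log n\to 0$. Since $\Pr_{\umd}[\hams=0]=\Pr_{\pmd}[\hams=0\mid X_1=X_2=0]$, running the above argument conditionally on $\{X_1=X_2=0\}$ — equivalently, dividing both terms of the displayed bound by $\Pr_{\pmd}[X_1=X_2=0]$ — worsens the estimate only by a factor $n^{o(1)}$, absorbed into $\eps$; and here $d\ge 3$ suffices in place of $d\ge 4$, because conditioning away loops removes the obstruction that keeps $\Pr_{\pmd}[\hams=0]$ bounded away from $0$ at $d=3$. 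I expect the real difficulty to lie in the moment and coupling estimates of the two middle paragraphs: producing the mixed factorial moments of $\hams$ with the short cycle counts, and the Poisson-coupling total-variation bound, with error terms that are simultaneously explicit, uniform over $4\le d\le n^{o(1)}$, and still controllable when $r$ grows — in particular, tracking the $d$-dependence through the Stirling-type asymptotics for $\E_{\pmd}[\hams]$ and through the sum over overlap patterns of two Hamiltonian cycles, and arranging the bookkeeping so that the truncation tails and the $1/n$-errors combine to exactly the rate $n^{-1/3+\eps}$.
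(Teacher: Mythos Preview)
Your plan is essentially the paper's: Theorem~\ref{thm:ham} is deduced from the quantitative contiguity Theorem~\ref{thm:qcontig}, whose proof (via Lemma~\ref{lem:b1}, applied with $A_n=\{\hams=0\}$ so that $\Pr_{\phammd}[A_n]=0$) is exactly the decomposition you write, with $r=\lfloor\log n/(3\log(d-1))\rfloor$ and $t$ a small negative power of $n$; the transfer to $\umd$ by conditioning on simplicity is also what the paper does.

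Two points where the paper's execution differs from your sketch are worth flagging. First, the conditioning is on the cycle \emph{process} $\cycprocess(P)=(I_\alpha)_\alpha$ recording which cycles appear, not merely on the counts $X_1,\dots,X_r$, and the Poisson comparison used is a multiplicative point-probability estimate (Proposition~\ref{prop:multpoiapprox}) rather than a total-variation coupling. The paper states explicitly that a TV bound alone is not enough to establish the lower bound $\E_{\pmd}[\rndcyc^2]\ge V^{(r)}(1-O(\cvberror))$ of Lemma~\ref{lem:cvb}; it does not go through the mixed factorial moments of $\hams$ against cycle counts that you propose, though that classical route may well also work. Second, the upper bound $\E_{\pmd}[\rnd^2]\le d/(d-2)+O(d^{O(1)}/\sqrt n)$ of Proposition~\ref{prop:Variance} is not obtained by summing over overlap patterns of two Hamiltonian cycles; instead the paper rewrites it as an expectation over edge $2$-colorings of an $n$-cycle, reduces to an additive functional of a $2$-state Markov chain, and compares to a Gaussian integral by a size-bias Stein argument. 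These refinements are what make the error terms uniform over the full range $4\le d\le n^{o(1)}$ that you correctly identified as the main difficulty.
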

Note that when $d=3$, the theorem is not true for $\pmd.$  A self-loop anywhere in the graph obstructs the existence of Hamiltonian cycles, and the number of self-loops is asymptotically $\Poisson(1)$ (see Corollary~\ref{cor:tvbound}).
Whereas previous results show that these probabilities are $o(1),$ the novelty here is the establishment of a rate.   Previous work of~\cite{CFR} shows that for $d \geq d_0$ large, this probability is at most $O(n^{-2})$,
but the approach taken in that paper is unlikely to extend to all $d \geq 3.$
It remains an open question to determine the true rate, 
or even to determine if the rate decays as a power of $n$.  

\subsection{Contiguity}

After the initial developments by Robinson and Wormald, Janson further developed small subgraph conditioning~\cite{Jan95}
by using it to prove a property
known as \emph{contiguity}.  Two sequences of laws $P_n$ and $Q_n$ on a common measurable space are contiguous if 
for any sequence $A_n$ of measurable events,  $P_n(A_n) \to 0 \iff Q_n(A_n) \to 0,$ which is a sort of qualitative asymptotic equivalence between the two models.  Contiguity has proven useful in that it allows difficult estimates, such as Friedman's second eigenvalue 
bounds~\cite{Friedman}, to be made for a regular graph model of choice and then transferred to other models.  Contiguity alone, however, gives 
only an asymptotic estimate for the probabilities in one model based on the probabilities in the other.

Beyond generalizing small subgraph conditioning to growing $d,$ we seek to understand how precisely estimates for a probability in one random regular graph model transfer to another.  We will initiate this study for $\pmd$ and a second graph model pertinent to studying $\hams$ in $\pmd$. We define the model $\phammd$ that, as a pseudograph model, can be considered as a degree $d-2$ regular pseudograph induced from $\pmd[d-2]$ with a superimposed, independent and uniformly chosen Hamiltonian cycle.  At the pairing level, we define it by adding two prevertices to each bin of $\pmd[d-2],$ sampling a uniform matching of these new prevertices conditioned to project to a Hamiltonian cycle, and then randomizing the ordering of the prevertices in each bin so that they remain exchangeable. 
Formally, we consider both $\phammd$ and $\pmd$ as laws of pairings on $nd$ prevertices, and we refer to a \emph{pairing event} as any set of these pairings.  Further, the law of $\phammd$ is absolutely continuous with respect to $\pmd,$ and the Radon-Nikodym derivative is precisely $\hams / \Exp_{\pmd} \hams$,
\label{page:radonnikodym}
meaning that for any pairing $P_0$,
\begin{align*}
  \frac{\P_{\phammd}[\left\{ P_0 \right\}]}{\P_{\pmd}[\left\{P_0\right\}]} &= \frac{\hams(P_0)}{\E_{\pmd}\hams},
\end{align*}
where the measure on pairings is given under the $\P$ symbol. Equivalently,
\begin{align}
  \E_{\phammd}[f] = \E_{\pmd}\biggl[\frac{\hams}{\E_{\pmd}\hams}f\biggr]\label{eq:radonnikodymstatement}
\end{align}
for any function $f$ on pairings.

Small subgraph conditioning
actually shows that $\phammd$ and $\pmd$ are contiguous.  As $\phammd$ always has a Hamiltonian cycle, the consequence that 
$\pmd$ is Hamiltonian a.a.s.\ follows
directly from the contiguity of the models, which we extend to the case of growing $d.$

\begin{theorem}
\label{thm:contig}
Suppose that $d=d(n)$ satisfies $4\leq d \leq n^{\alpha_0-\epsilon}$ where $\alpha_0 = \frac{8}{3(8+\sqrt{2})} \approx 0.283$,
for some $\epsilon > 0$.
Then for any sequence of pairing events $A_n,$ 
\[
\Pr_{\phammd}\left[ A_n \right] \to 0 \iff
\Pr_{\pmd}\left[ A_n \right] \to 0. 
\]
If in addition $d \to \infty,$ then 
\(
\dtv(\phammd,\pmd) \to 0.
\)
\end{theorem}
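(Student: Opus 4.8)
\emph{The plan} is to run the classical small-subgraph-conditioning argument while carrying an explicit error term, so that all conclusions survive as $d\to\infty$. By the Radon--Nikodym relation recorded above, $\phammd$ has density $\rnd=\hams/\Exp_{\pmd}\hams$ with respect to $\pmd$, so $\dtv(\phammd,\pmd)=\tfrac12\Exp_{\pmd}\abs{\rnd-1}$ and the contiguity claim amounts to controlling the fluctuations of $\rnd$ under $\pmd$. I would reduce everything to: (a) $\sup_n\Exp_{\pmd}\rnd^2<\infty$, and (b) $\rnd$ does not concentrate near $0$, i.e.\ $\limsup_n\Pr_{\pmd}(\rnd<\delta)\to 0$ as $\delta\to 0$. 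Fact (a) gives uniform integrability of $\{\rnd\}$, so $\Pr_{\pmd}(A_n)\to 0$ forces $\Pr_{\phammd}(A_n)=\Exp_{\pmd}[\rnd\1_{A_n}]\le(\Exp_{\pmd}\rnd^2)^{1/2}\Pr_{\pmd}(A_n)^{1/2}\to 0$; fact (b), together with $\Pr_{\pmd}(A_n)\le\delta^{-1}\Pr_{\phammd}(A_n)+\Pr_{\pmd}(\rnd<\delta)$, gives the reverse implication on sending $n\to\infty$ and then $\delta\to 0$. For the total-variation statement it suffices to strengthen (a) to $\Exp_{\pmd}\rnd^2=1+o(1)$ when $d\to\infty$, for then $\Exp_{\pmd}(\rnd-1)^2=\Exp_{\pmd}\rnd^2-1\to 0$ and $\dtv(\phammd,\pmd)\le\tfrac12(\Exp_{\pmd}(\rnd-1)^2)^{1/2}$.

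Next I would set up the short-cycle data. Let $X_j$ be the number of $j$-cycles in the projection and $\lambda_j=\Exp_{\pmd}X_j\approx(d-1)^j/(2j)$. Two moment computations are needed, each with a remainder to be tracked as $d$ grows: that $(X_1,\dots,X_r)$ has, up to error, the joint factorial moments of independent $\Poisson(\lambda_j)$'s; and that the \emph{tilted} factorial moments satisfy $\Exp_{\pmd}[\hams\prod_j(X_j)_{a_j}]/\Exp_{\pmd}\hams\approx\prod_j(\lambda_j(1+\delta_j))^{a_j}$ for explicit corrections $\delta_j$. For Hamilton cycles these are small: $\delta_j\to 0$ for each fixed $j$ as $d\to\infty$, and $\sum_j\lambda_j\delta_j^2<\infty$ for every $d$ and tends to $0$ as $d\to\infty$ (this sum is the exponent in the classical second-moment ratio $\exp(\sum_j\lambda_j\delta_j^2)=\lim_n\Exp_{\pmd}\hams^2/(\Exp_{\pmd}\hams)^2$). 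I then introduce the truncated density
\[
  \rndcyc=\prod_{j=1}^{r}(1+\delta_j)^{X_j}\,e^{-\lambda_j\delta_j},
\]
which is positive, satisfies $\Exp_{\pmd}\rndcyc=1+o(1)$, and whose law under $\pmd$ converges as $n\to\infty$ to that of $\cvbpf_r=\prod_{j\le r}(1+\delta_j)^{Z_j}e^{-\lambda_j\delta_j}$, the $Z_j$ independent $\Poisson(\lambda_j)$; as $r\to\infty$, $\cvbpf_r\to\cvbpf$ a.s.\ and in $L^2$ (the product converges since $\sum_j\lambda_j\delta_j^2<\infty$), with $\cvbpf>0$ a.s., $\Exp\cvbpf=1$, $\Exp\cvbpf^2=\exp(\sum_j\lambda_j\delta_j^2)$.

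The technical heart --- and the step I expect to be the main obstacle --- is a quantitative $L^2$ bound
\[
  \Exp_{\pmd}[(\rnd-\rndcyc)^2]\le\cvbeps(n,d,r),
\]
with $\cvbeps(n,d,r)\to 0$ for a suitable $r=r(n)$ (which grows slowly when $d$ is bounded, but may be held constant once $d$ is polynomially large, as then each factor $(1+\delta_j)^{X_j}e^{-\lambda_j\delta_j}$ is already $1+o(1)$). One proves this by expanding $\Exp_{\pmd}\rnd^2$, $\Exp_{\pmd}[\rnd\,\rndcyc]$ and $\Exp_{\pmd}\rndcyc^2$ via the factorial-moment estimates above and noting that the small-subgraph-conditioning orthogonality cancels the cross terms up to $\cvbeps$. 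The difficulty is twofold: one needs remainder terms in the asymptotics for $\Exp_{\pmd}\hams$, $\Exp_{\pmd}\hams^2$ and their tilted analogues that are \emph{sharp and uniform in $d$} as $d$ grows polynomially; and one needs a separate estimate that cycles of length exceeding $r$ contribute negligibly to the ratio $\rnd/\rndcyc$. The tension between taking $r$ large (for the latter) and small (for the former), together with how these errors scale with $d$, is exactly what forces the admissible range $d\le n^{\alpha_0-\epsilon}$; I expect the constant $\alpha_0=\frac{8}{3(8+\sqrt2)}$ to emerge from demanding that the dominant (Gaussian-correction) term in the expansion of $\Exp_{\pmd}\hams^2$ stay $o(1)$.

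Finally I would assemble the conclusions. From the $L^2$ bound, $\Exp_{\pmd}\rnd^2\le\Exp_{\pmd}\rndcyc^2+2\cvbeps^{1/2}(\Exp_{\pmd}\rndcyc^2)^{1/2}+\cvbeps$ with $\Exp_{\pmd}\rndcyc^2\to\exp(\sum_{j\le r}\lambda_j\delta_j^2)$ bounded, which gives (a). For (b), note that $-\log\rndcyc=\sum_{j=1}^{r}\bigl(\lambda_j\delta_j-X_j\log(1+\delta_j)\bigr)$ has mean and variance under $\pmd$ both $O(\sum_j\lambda_j\delta_j^2)=O(1)$ uniformly in $n$ and $r$, so Chebyshev's inequality bounds $\Pr_{\pmd}(\rndcyc<\delta)$ by a quantity tending to $0$ as $\delta\to 0$, uniformly in $n$ and $r$; combining this with $\Pr_{\pmd}(\rnd<\delta)\le\Pr_{\pmd}(\rndcyc<2\delta)+\delta^{-2}\cvbeps$ and letting $n\to\infty$ and then $\delta\to 0$ gives (b). When $d\to\infty$ we have $\sum_j\lambda_j\delta_j^2\to 0$, hence $\Exp_{\pmd}\rndcyc^2\to 1$ and $\Exp_{\pmd}\rnd^2\to 1$, giving $\dtv(\phammd,\pmd)\to 0$ as in the first paragraph.
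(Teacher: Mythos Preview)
Your proposal is correct and follows essentially the same route as the paper: quantitative small subgraph conditioning, with the total-variation claim reducing directly to $\Exp_{\pmd}\rnd^2\to 1$ via Proposition~\ref{prop:Variance} (whose error term $d^{\frac32(1+1/\alpha)}/\sqrt n$ with $\alpha\uparrow 8/\sqrt2$ is indeed where $\alpha_0$ comes from), and contiguity for bounded $d$ via an $L^2$ approximation of $\rnd$ by a cycle-based density together with a lower-tail bound on that density. The one implementation difference worth noting is that the paper takes $\rndcyc=\E_{\pmd}[\rnd\mid\cycprocess]$ rather than your explicit product, so orthogonality gives $\E_{\pmd}(\rnd-\rndcyc)^2=\E_{\pmd}\rnd^2-\E_{\pmd}\rndcyc^2$ for free and the work becomes \emph{lower-bounding} $\E_{\pmd}\rndcyc^2$ via the multiplicative Poisson approximation (Proposition~\ref{prop:multpoiapprox}) rather than computing cross-term tilted factorial moments directly.
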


By conditioning the pairings to project to simple graphs, these results can be transferred to the uniform model.  This requires that we introduce $\phamcondmd,$ which is $\phammd$ conditioned to project to a simple graph.  Note that on conditioning, $\hams$ is still the Radon-Nikodym derivative between $\phamcondmd$ and $\umd$ up to renormalization. We are not able to show this same sort of general contiguity statement for $\phamcondmd$ and $\umd$ when $d \to \infty.$  However, we do show that a certain type of quantitative contiguity does transfer.
\begin{theorem}
\label{thm:qcontig}
Suppose that $d \geq 4$ and $\log d / \log n \to 0,$ and suppose that $A_n$ is some sequence of pairing events.  Let $\alpha>0$ be fixed.  Then,
\[
\Pr_{\phammd}\left[ A_n \right] = O(n^{-\alpha})
\implies
\Pr_{\pmd}\left[ A_n \right] = O(n^{-\beta+\epsilon})~\forall \epsilon>0,
\]
where $\beta = \alpha \wedge \frac13.$
Likewise, 
\[
\Pr_{\pmd}\left[ A_n \right] = O(n^{-\alpha})
\implies
\Pr_{\phammd}\left[ A_n \right] = O(n^{-\beta+\epsilon})~\forall \epsilon>0,
\]
where $\beta = \alpha \wedge \frac13.$  In particular 
\[
\Pr_{\phammd}\left[ A_n \right] = O(n^{-1/3 + \epsilon})~\forall \epsilon>0
\iff
\Pr_{\pmd}\left[ A_n \right] = O(n^{-1/3 + \epsilon})~\forall \epsilon>0.
\]
If we additionally assume that $A_n$ consists only of pairings that project to simple graphs, then we may assume $d \geq 3.$
The same results hold with $\phammd$ replaced by $\phamcondmd$ and $\pmd$ replaced by $\umd$ for $3 \leq d = o(\sqrt{\log n}).$
\end{theorem}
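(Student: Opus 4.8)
The plan is to reduce all the implications to quantitative tail control of the likelihood ratio $W_n\Def\hams/\E_{\pmd}\hams$. By~\eqref{eq:radonnikodymstatement} this is the Radon--Nikodym derivative of $\phammd$ with respect to $\pmd$, so $\E_{\pmd}[W_n]=1$ and $\P_{\phammd}[A_n]=\E_{\pmd}[W_n\1_{A_n}]$ for every pairing event $A_n$; everything hinges on how spread out $W_n$ is in each of its two tails. For the implication $\pmd\Rightarrow\phammd$, fix a truncation level $T=n^{\delta}$ and write $\P_{\phammd}[A_n]=\E_{\pmd}[W_n\1_{A_n}\1_{W_n\le T}]+\E_{\pmd}[W_n\1_{W_n>T}]\le T\,\P_{\pmd}[A_n]+\E_{\pmd}[W_n\1_{W_n>T}]$. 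The first term is $O(n^{\delta-\alpha})$. For the second I would use the quantitative small-subgraph-conditioning estimates, which supply a moment bound $\E_{\pmd}[W_n^{\,p}]\le C_p$ for every fixed $p$, uniformly in the regime $\log d/\log n\to0$ (the limiting law of $W_n$ has all moments finite, with $C_p\to1$ as $d\to\infty$), so that $\E_{\pmd}[W_n\1_{W_n>T}]\le T^{-(p-1)}C_p=O(n^{-\delta(p-1)})$. Choosing $\delta=\alpha/p$ and letting $p\to\infty$ then gives $\P_{\phammd}[A_n]=O(n^{-\alpha+\epsilon})$ for every $\epsilon>0$, which since $\alpha\ge\alpha\wedge\tfrac13$ is in particular $O(n^{-\beta+\epsilon})$.

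The reverse implication $\phammd\Rightarrow\pmd$ carries the real content. For a parameter $t=t(n)>0$ I would split
\[
\P_{\pmd}[A_n]\le\P_{\pmd}\bigl[A_n\cap\{W_n\ge t\}\bigr]+\P_{\pmd}[0<W_n<t]+\P_{\pmd}[\hams=0].
\]
On $\{W_n\ge t\}$ one has $\1_{A_n}\le t^{-1}W_n\1_{A_n}$, so the first term is at most $t^{-1}\E_{\pmd}[W_n\1_{A_n}]=t^{-1}\P_{\phammd}[A_n]=O(t^{-1}n^{-\alpha})$. The last term is $O(n^{-1/3+\epsilon})$ by Theorem~\ref{thm:ham}; this is the only place the exponent $\tfrac13$ is imposed, and it is why $\beta=\alpha\wedge\tfrac13$ rather than $\alpha$. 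The middle term is the crux. Writing $W_n=W_n^{(\le K)}+(W_n-W_n^{(\le K)})$, where $W_n^{(\le K)}$ is the explicit finite product of short-cycle correction factors over cycle lengths $k\le K$, one has $\{0<W_n<t\}\subseteq\{W_n^{(\le K)}<2t\}\cup\{|W_n-W_n^{(\le K)}|\ge t\}$. The event $\{W_n^{(\le K)}<2t\}$ requires an anomalous configuration among the short cycles and so, by a quantitative Poisson approximation for their joint law, has probability at most the total-variation distance to independent Poissons (of order $(d-1)^{2K}/n$) plus a super-polynomially small term; the event $\{|W_n-W_n^{(\le K)}|\ge t\}$ has probability at most $t^{-2}\E_{\pmd}[(W_n-W_n^{(\le K)})^2]$, and the long-cycle remainder has second moment of order $(d-1)^{-K}$ by the quantitative second-moment estimate. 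Optimizing the cutoff $K$ (of order $\tfrac13\log n/\log(d-1)$, the same optimization underlying Theorem~\ref{thm:ham}) balances these at $n^{-1/3+\epsilon}$, while taking $t=n^{-\epsilon}$ keeps $t^{-1}n^{-\alpha}=O(n^{-\alpha+\epsilon})$. Collecting terms gives $\P_{\pmd}[A_n]=O(n^{-\beta+\epsilon'})$ with $\beta=\alpha\wedge\tfrac13$, and intersecting the two implications at $\alpha=\tfrac13$ yields the stated equivalence. (For $d\ge3$ and $A_n$ supported on simple projections the argument extends by running the same analysis in the uniform model, where Theorem~\ref{thm:ham} applies since $d^2/\log n\to0$, and transferring through the $\Theta(1)$ probability of a simple projection.)

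The statements for $\phamcondmd$ and $\umd$ follow by conditioning on the event $S_n$ that the pairing projects to a simple graph: by definition $\umd=\pmd(\,\cdot\mid S_n)$ and $\phamcondmd=\phammd(\,\cdot\mid S_n)$, and for $3\le d=o(\sqrt{\log n})$ one has $\P_{\pmd}[S_n]=e^{-(d^2-1)/4+o(1)}=n^{-o(1)}$ and likewise $\P_{\phammd}[S_n]=n^{-o(1)}$. Hence $\P_{\umd}[A_n]=\P_{\pmd}[A_n\cap S_n]/\P_{\pmd}[S_n]$ differs from $\P_{\pmd}[A_n\cap S_n]$ only by a factor $n^{o(1)}$, and likewise for $\phamcondmd$ versus $\phammd$; since replacing $A_n$ by $A_n\cap S_n$ only decreases probabilities and yields an event supported on simple projections, the pairing-model implications transfer with their exponents unchanged once the $n^{o(1)}$ is absorbed into $n^\epsilon$.

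The main obstacle is the middle term of the reverse implication, namely the quantitative small-ball bound $\P_{\pmd}[0<W_n<t]=O(n^{-1/3+\epsilon})$ holding uniformly over $\log d/\log n\to0$ for $t$ as small as a fixed negative power of $n$. It requires the full quantitative package: an effective Poisson approximation for the joint distribution of the short-cycle counts with the correct dependence on $d$ and on the cutoff $K$; an explicit second-moment bound for the long-cycle remainder $W_n-W_n^{(\le K)}$; and the fact that the short-cycle product $W_n^{(\le K)}$ stays bounded below away from $0$ outside a large-deviation event for the short-cycle counts. Tuning $K$, $t$, and these error rates against one another so as to reach the clean exponent $\tfrac13$ is the delicate part; the rest is truncation together with Markov- and Hölder-type bookkeeping.
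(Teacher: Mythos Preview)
Your reverse direction ($\phammd\Rightarrow\pmd$) is essentially the paper's argument: Lemma~\ref{lem:b1} implements exactly the three-way split you describe, with your $W_n^{(\le K)}$ playing the role of the cycle-conditioned expectation $\rndcyc$, the small-ball bound coming from the Poisson tail (Lemma~\ref{lem:rndcyctail}), and the remainder controlled in $L^2$ by Lemma~\ref{lem:rndl2}. One logical slip: you invoke Theorem~\ref{thm:ham} for the term $\P_{\pmd}[\hams=0]$, but the paper derives Theorem~\ref{thm:ham} \emph{from} Theorem~\ref{thm:qcontig}, so this is circular. The fix is simply to drop the separate case: since $W_n^{(\le K)}>0$ for $d\ge 4$, the event $\{W_n=0\}$ is already covered by $\{W_n^{(\le K)}<2t\}\cup\{|W_n-W_n^{(\le K)}|\ge t\}$, and the $1/3$ exponent comes entirely from the optimization of $K$, not from a separate appeal to Theorem~\ref{thm:ham}.

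The forward direction ($\pmd\Rightarrow\phammd$) has a genuine gap. You assert that the paper's machinery supplies $\E_{\pmd}[W_n^{\,p}]\le C_p$ for every fixed $p$, uniformly in the regime. It does not: Proposition~\ref{prop:Variance} gives only the second moment, and controlling $\E_{\pmd}[\hams^{\,p}]$ for $p\ge 3$ would mean estimating the expected number of $p$-tuples of Hamiltonian cycles, which is nowhere carried out and is not a routine extension of the second-moment calculation. (That the limiting law of $W_n$ has all moments finite does not by itself bound $\E[W_n^{\,p}]$ for finite $n$.) The paper's actual device (Lemma~\ref{lem:b2}) sidesteps higher moments entirely: on the $(\log n)$-neat event, $\rndcyc$ is \emph{deterministically} at most $O(r)$ because $\rndpf(x)\le e^{O(r^2/n)}\prod_{\text{odd }k\le r}e^{1/k}$, so one writes
\[
\P_{\phammd}[A_n]=\E_{\pmd}[W_n\1_{A_n}]
\le\sqrt{\Var_{\pmd}[W_n-\rndcyc\circ\cycprocess]}\,\sqrt{\P_{\pmd}[A_n]}
+O(r)\,\P_{\pmd}[A_n]+O(\cvberror)
\]
via Cauchy--Schwarz on the remainder. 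With $r=\lfloor\log n/3\log(d-1)\rfloor$ this yields $O(n^{-\alpha/2-1/6+\epsilon}+n^{-\alpha+\epsilon}+n^{-1/3+\epsilon})$, and the observation $\alpha/2+1/6\ge\alpha\wedge\tfrac13$ finishes. Note that this is genuinely weaker than the $O(n^{-\alpha+\epsilon})$ your moment-bound argument would give; the $\tfrac13$ barrier appears in both directions, not only through $\P[\hams=0]$.
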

\noindent Note that Theorem~\ref{thm:ham} is an immediate consequence of Theorem~\ref{thm:qcontig}. 

\subsection{Other applications}

The machinery developed here has further applications beyond the contiguity results.  In \cite{Jan95}, the limiting distribution of $\hams$ is derived for $d$ fixed and $n\to \infty.$  We can derive the distributional convergence of $\hams$ in the $d \to \infty$ regime.  As expected, its logarithm is asymptotically normal.
\begin{theorem}
\label{thm:lognorm}
If $d \to \infty$ slowly enough that $\log d/ \log n \to 0,$ then with $P \sim \pmd,$
\[
\frac{\log \hams(P) - \log \Exp \hams(P)}{\sqrt{2/d}} \weakto N(0,1).
\]
\end{theorem}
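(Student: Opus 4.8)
The plan is to apply the quantitative small subgraph conditioning decomposition of the preceding sections to $\hams$ and then pass to a Gaussian limit, taking advantage of the fact that when $d\to\infty$ the short-cycle counts under $\pmd$ are individually close to Gaussian. Write $C_k = C_k(P)$ for the number of $k$-cycles in the projection of $P$ (so $C_1$ counts loops and $C_2$ multiple edges), $\lambda_k = (d-1)^k/(2k)$ for the asymptotic mean of $C_k$ under $\pmd$, and $\delta_k$ for the associated Hamiltonian-cycle parameters, so that $\lambda_k(1+\delta_k)$ is the asymptotic mean of $C_k$ under the tilted model $\phammd$; since the only loops surviving under $\phammd$ come from its degree-$(d-2)$ part we have $\delta_1=-2/(d-1)$, while $\delta_2=0$ and the remaining $\delta_k$ decay geometrically in $k$. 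Ignoring the event $\{\hams=0\}$, which has probability $o(1)$ by Theorem~\ref{thm:ham}, the quantitative decomposition provides a cutoff $K=K(n,d)\to\infty$ for which, under $P\sim\pmd$,
\[
  \log\frac{\hams(P)}{\E_{\pmd}\hams}
  = \sum_{k=1}^{K}\bigl(C_k\log(1+\delta_k)-\lambda_k\delta_k\bigr)+\nvserror,
  \qquad \frac{\nvserror}{\sqrt{2/d}}\probto 0 .
\]

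First I would split the main sum into a fluctuating part and a deterministic drift,
\[
  \sum_{k=1}^{K}(C_k-\lambda_k)\log(1+\delta_k)\;+\;\sum_{k=1}^{K}\lambda_k\bigl(\log(1+\delta_k)-\delta_k\bigr),
\]
and treat the drift through $\log(1+x)-x=-\tfrac12x^2+O(x^3)$: since the geometric decay of the $\delta_k$ makes $\sum_{k\ge2}\lambda_k\delta_k^2=o(1/d)$, the drift equals $-\tfrac12V+o(1/d)$, where $V:=\sum_{k\ge1}\lambda_k\delta_k^2=\lambda_1\delta_1^2+o(1/d)=2/(d-1)+o(1/d)\sim 2/d$; in particular the drift is $\sim -1/d=o(\sqrt{2/d})$ and disappears after rescaling. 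For the fluctuating part I would apply the quantitative Poisson approximation (Corollary~\ref{cor:tvbound}) to couple $(C_1,\dots,C_K)$ with independent $C_k^*\sim\Poisson(\lambda_k)$ at total-variation cost $o(1)$; because $d\to\infty$ forces every $\lambda_k$ to grow, each centered Poisson $C_k^*-\lambda_k$ lies within Wasserstein distance $O(1)$ of $N(0,\lambda_k)$, so by independence $\sum_{k\le K}(C_k^*-\lambda_k)\log(1+\delta_k)$ lies within Wasserstein distance $\sum_k|\log(1+\delta_k)|\cdot O(1)=O(1/d)=o(\sqrt{2/d})$ of a centered Gaussian of variance $\sum_{k\le K}\lambda_k\log^2(1+\delta_k)=(1+o(1))\,2/d$. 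Collecting the three error terms — the decomposition remainder, the Poisson coupling, and the Poisson-to-Gaussian step, each $o(\sqrt{2/d})$ in probability — together with the negligible drift, dividing through by $\sqrt{2/d}$, and invoking Slutsky's theorem then yields the stated convergence to $N(0,1)$.

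I expect the crux to be the first displayed line: controlling $\nvserror$ down to the scale $\sqrt{2/d}$, simultaneously as $d\to\infty$ and with a cutoff $K=K(n,d)$ large enough that the discarded tail $\sum_{k>K}\lambda_k\delta_k^2$ is $o(1/d)$ yet small enough that the Poisson coupling error stays $o(1)$. This is exactly where the quantitative estimates of the preceding sections are indispensable: classical asymptotic small subgraph conditioning controls the analogous remainder only in the crude sense $\nvserror\probto 0$, which is worthless once the statement has been divided by $\sqrt{2/d}\to 0$, whereas the quantitative machinery supplies a remainder polynomially small in $n$ — comfortably beating $\sqrt{2/d}=n^{-o(1)}$. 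Granting this, what is left is the Gaussian bookkeeping above, together with the elementary observation that $V$ is asymptotic to its loop term $\lambda_1\delta_1^2=2/(d-1)$.
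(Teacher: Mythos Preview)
Your proposal is correct and follows essentially the same route as the paper: reduce $\log\rnd$ to the explicit Poisson surrogate $\log\rndpf$ via the quantitative machinery (this is your $\nvserror$ step, which the paper handles through the chain $\rnd\to\rndcyc\circ\cycprocess\to\rndpf$ using Lemma~\ref{lem:rndl2} together with the observation that both $\rnd$ and $\rndcyc$ concentrate near $1$ so that $\log$ is locally Lipschitz), then couple the cycle counts to independent Poissons by Corollary~\ref{cor:tvbound}, and finally observe that only the loop term $k=1$ contributes at scale $\sqrt{2/d}$, yielding a Poisson CLT. Your drift/fluctuation split and Wasserstein bookkeeping are a slightly more explicit repackaging of the paper's Lemma~\ref{lem:rndlognorm}, but the substance is the same.
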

In fact, it can be seen that $\hams/\Exp \hams$ is well-approximated by a multiple of the number of self-loops in the $d \to \infty$ regime, by virtue of which the normality follows. 
Better approximations for $\hams/\Exp \hams$ can be obtained by using more cycle information.  Also, in a sufficiently sparse regime, cycle counts can be computed from the graph's eigenvalues with high probability.
This allows the Hamiltonian cycle count to be well approximated 
by an explicitly computable trace, for almost all regular graphs:
\begin{theorem}\label{thm:evalhams}
  Suppose that $3\leq d\leq n^{1/12}$. There is a polynomial $\Hampoly(x)$,
  given in \eqref{eq:Hampoly}, such that
  \begin{align*}
    \Pr_{\pmd}\left[\abs{\frac{\hams}{\E\hams} - \exp(\tr\Hampoly(P))} > n^{-1/12} \right]
      &= O\big((\log n)^2n^{-1/6}\big),
  \end{align*}
  where $\tr\Hampoly(P) = \sum_{i=1}^n\Hampoly (\lambda_i / \sqrt{d-1})$, with $\lambda_1,\ldots,\lambda_n$
  the eigenvalues of the adjacency matrix of $P$.
\end{theorem}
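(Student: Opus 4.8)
The plan is to show that $\hams/\E_{\pmd}\hams$ is, with high probability, extremely close to an explicit multiplicative functional $W_m$ of the short cycle counts of $P$ --- this is where the quantitative small subgraph conditioning developed above enters --- and that each short cycle count is, up to a negligible correction coming from subgraphs denser than a cycle, an explicit polynomial in the adjacency eigenvalues of $P$, the latter being an \emph{exact} spectral identity for closed non-backtracking walk counts. Substituting the second fact into the first produces the polynomial $\Hampoly$ and the estimate.

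More precisely, write $W\Def\hams/\E_{\pmd}\hams$ and, for a truncation level $m=m(n,d)$ to be chosen, set $W_m\Def\prod_{k=1}^{m}(1+\delta_k)^{C_k}e^{-\lambda_k\delta_k}$, where $C_k=C_k(P)$ is the number of cycles of length $k$ in $P$ (so $C_1$ and $C_2$ are the numbers of loops and of parallel-edge pairs) and $\delta_k=\delta_k(d)$, $\lambda_k=\lambda_k(n,d)$ are the explicit small subgraph conditioning parameters for Hamiltonian cycles computed above, which satisfy $\abs{\log(1+\delta_k)}=O\big((d-1)^{1-k}\big)$ and $\lambda_k=O\big((d-1)^k/k\big)$. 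The quantitative form of small subgraph conditioning established above gives $\E_{\pmd}\abs{W-W_m}^2=O\big((\log n)^2(d-1)^{-m}\big)$ and $\E_{\pmd}W_m^2=O(1)$ (cf.\ Corollary~\ref{cor:tvbound}). I would take $m$ to be the least integer with $(d-1)^m\ge n^{1/3}$; since $3\le d\le n^{1/12}$ this forces $n^{1/3}\le(d-1)^m\le n^{5/12}$ and $m=O(\log n)$, so that $\E_{\pmd}\abs{W-W_m}^2=O\big((\log n)^2 n^{-1/3}\big)$ and $\Hampoly$ will have degree $m=O(\log n)$.

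Next, let $N_k(P)\Def\tr(B^k)$ count the closed non-backtracking walks of length $k$ in $P$, where $B$ is the non-backtracking operator on directed edges. By the Ihara--Bass formula for the pseudographs arising from the configuration model there is a rescaled Chebyshev polynomial $p_k$ of degree $k$, characterized by $p_k(2\cos\theta)=2\cos k\theta$, and an explicit constant $c_k(n,d)=\tfrac{n(d-2)}{2}\big(1+(-1)^k\big)$, such that
\[
  N_k(P)=(d-1)^{k/2}\sum_{i=1}^{n}p_k\!\big(\lambda_i/\sqrt{d-1}\big)+c_k(n,d),
\]
where $\lambda_1,\dots,\lambda_n$ are the adjacency eigenvalues of $P$; this is an identity, with no error term. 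Classifying closed non-backtracking walks by their edge-images gives $N_k(P)=\sum_{j\mid k}2j\,C_j(P)+R_k(P)$, where $R_k\ge0$ counts the closed non-backtracking walks whose image is not a single cycle; Möbius inversion over divisors then expresses $2kC_k$, and hence $C_k$, explicitly in terms of $N_1,\dots,N_k$ and $R_1,\dots,R_k$, and so --- via the identity above --- in terms of the eigenvalue statistics $\sum_i p_j(\lambda_i/\sqrt{d-1})$, of explicit constants, and of the $R_j$. The crucial input is the first-moment bound
\[
  \sum_{k=1}^{m}\frac{\abs{\log(1+\delta_k)}}{2k}\,\E_{\pmd}R_k=O\big((\log n)^2 n^{-1/3}\big),
\]
which I would establish by noting that any edge-image contributing to $R_k$ is a connected pseudograph with strictly more edges than vertices, hence occurs in $P$ with expected multiplicity $O\big(n^{-1}(d-1)^{k}\big)$, while the weight $\abs{\log(1+\delta_k)}/2k\asymp(d-1)^{1-k}/k$ cancels the $(d-1)^{k}$ growth, and the number of closed non-backtracking walks of length $k$ supported on a fixed such image is governed by that image's structure, not by $d$.

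Finally, substituting the eigenvalue expressions for the $C_k$ into $\log W_m=\sum_{k\le m}\big(C_k\log(1+\delta_k)-\lambda_k\delta_k\big)$ and spreading the resulting additive constant evenly over the $n$ eigenvalues defines the polynomial $\Hampoly$ of \eqref{eq:Hampoly}, of degree $m=O(\log n)$, for which $\tr\Hampoly(P)=\log W_m+E_R$ with $E_R$ the weighted combination of the $R_j$ above; thus $\E_{\pmd}\abs{E_R}=O\big((\log n)^2 n^{-1/3}\big)$. Then
\[
  \abs{W-\exp(\tr\Hampoly(P))}\le\abs{W-W_m}+W_m\abs{e^{E_R}-1},
\]
and on the event $\{\abs{W-W_m}\le\tfrac12 n^{-1/12}\}\cap\{W_m\le n^{1/12}\}\cap\{\abs{E_R}\le\tfrac14 n^{-1/6}\}$ the right-hand side is at most $n^{-1/12}$. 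By Markov's inequality applied to the three moment bounds above, the complement of this event has probability $O\big((\log n)^2 n^{-1/6}\big)$, which is the claimed bound. The step I expect to be the main obstacle is the uniform first-moment control of the ``tangle'' counts $R_k$: making it hold for every $k\le m$ and every $3\le d\le n^{1/12}$ simultaneously, compatibly with the choice of $m$. One needs $m$ large enough that the small subgraph conditioning truncation beats $n^{-1/6}$ in $L^2$ --- forcing $(d-1)^m\gtrsim n^{1/3}$ --- yet small enough, and $d$ restricted to $d\le n^{1/12}$, that the expected number of non-cyclic subgraphs with at most $m$ edges stays negligible after the $(d-1)^{1-k}$ weighting; the two demands are reconciled because such subgraphs are rarer than cycles by a factor $n^{-1}$ in the configuration model, and because non-backtracking walk counts on a fixed tangle grow at a rate set by the tangle itself rather than by $d$.
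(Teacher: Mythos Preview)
Your approach is essentially the paper's: control $\abs{W-W_m}$ in $L^2$ via Lemma~\ref{lem:rndl2} with the truncation $m\approx\log n/(3\log(d-1))$, express the short cycle counts spectrally through the Chebyshev-type polynomials $\Gamma_k$ and M\"obius inversion (Propositions~\ref{prop:CNBWpoly}--\ref{prop:Xipoly}), and combine by Markov/Chebyshev. The decomposition $\abs{W-\exp(\tr\Hampoly(P))}\le\abs{W-W_m}+W_m\abs{e^{E_R}-1}$ and the three-event argument at the end are correct and match the paper's Lemma~\ref{lem:Hampoly} plus the final Chebyshev step.

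The one substantive difference is in how the tangle error $E_R$ is controlled. You propose a first-moment bound on the weighted sum $\sum_{k\le m}\tfrac{\abs{\log(1+\delta_k)}}{2k}\E R_k$, which---as you correctly flag---requires uniform control over closed non-backtracking walk counts on every small subgraph with excess at least one, and over all $d\le n^{1/12}$. The paper sidesteps this entirely: Proposition~\ref{prop:Xipoly} shows that with probability $1-O\bigl(r(d-1)^r/n\bigr)$ the graph contains no two short cycles within distance $r$ of each other, and on that event \emph{every} cyclically non-backtracking walk of length $\le r$ is a repeated traversal of a single cycle, so $R_k=0$ for all $k\le r$ and $\tr\Xi_k(P)=C_k$ exactly. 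With $r=\lfloor\log n/(3\log(d-1))\rfloor$ this failure probability is $O\bigl((\log n)\,n^{-2/3}\bigr)$, negligible next to the $O\bigl((\log n)^2 n^{-1/6}\bigr)$ Chebyshev term. Thus the ``main obstacle'' you identify simply does not arise: rather than bounding the expected contribution of each tangle type, one shows tangles are rare enough to exclude outright. Your first-moment route could be made rigorous, but it is more work for a weaker conclusion.

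A minor remark: your $W_m$ is already the explicit Poisson formula $\rndpf$, whereas the paper first approximates $W$ by the true conditional expectation $\rndcyc$ (Lemma~\ref{lem:rndl2}) and then passes to $\rndpf$ via the multiplicative Poisson bound (Proposition~\ref{prop:multpoiapprox}) on the strictly $(\log n)$-neat event. Your claimed bound $\E\abs{W-W_m}^2=O\bigl((\log n)^2 n^{-1/3}\bigr)$ therefore implicitly combines both steps; this is fine, but the intermediate step through $\rndcyc$ is where the variance identity $\E\abs{W-\rndcyc}^2=\E W^2-\E\rndcyc^2$ and Proposition~\ref{prop:Variance} actually enter.
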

%

\subsection{Small subgraph conditioning}

To introduce the method, we will sketch how small subgraph conditioning can be used to estimate $\Pr\left[ \hams = 0 \right]$ in the configuration model. All probabilities
and expectations in this section are taken with respect to $\pmd$. 
One possible first instinct is to apply the second moment method, but there is the unfortunate difficulty that $\Var \hams$ is the same order as $\left(\Exp \hams\right)^2$.  The miracle is that most of the variance can be understood as arising from short cycles.

Let $X_k$ be the number of cycles of length $k$ in the configuration model, so that 
\[
f_{r,n} \Def \Exp\left[
\frac{\hams}{\Exp \hams}
\middle\vert
X_1,X_2,\ldots,X_r
\right]
\]
is the Radon-Nikodym derivative of the cycle-count vector of $\phammd$ with respect to the cycle-count vector of $\pmd$ (we actually condition with respect to slightly more information in \eqref{eq:rndcyc} to prove our theorems).  It can be shown that the cycle counts in both models 
are asymptotically vectors of independent Poissons.  Thus from Fatou's lemma, one can calculate
\[
V_r \Def \liminf_{n\to\infty} \Var\left(
\Exp\left[
\frac{\hams}{\Exp \hams}
\middle\vert
X_1,X_2,\ldots,X_r
\right]
\right)
\]
solely using the limiting Poisson structure.  On the other hand, an explicit variance calculation shows that
\[
V_\infty \Def \lim_{n \to \infty} \frac{\Var \hams}{(\Exp \hams)^2}
 = \lim_{r \to \infty} V_r,
\]
which in a sense says that the two graph models $\phammd$ and $\pmd$ conditioned to have the same short cycle counts are asymptotically indistinguishable. Note that these limits being equal is not simply a question of reversing the order of the $r$ and $n$ limits; it asserts, moreover, that the cycle count $\sigma$-algebra asymptotically determines the Radon-Nikodym derivative.  

Then, for any $r$ and $\epsilon>0$ one has the bound 
\begin{align*}
\Pr\left[ \hams = 0 \right]
&\leq 
\Pr\left[ f_{r,n} \leq \epsilon \right]
+
\Pr\left[ \left| \frac{\hams}{\Exp \hams} - f_{r,n}\right| \geq \epsilon \right] \\
&\leq 
\Pr\left[ f_{r,n} \leq \epsilon \right]
+ \frac{ (\Var \hams) / (\Exp \hams)^2 - \Var f_{r,n}}{\epsilon^2},
\end{align*}
where the bound follows from Chebyshev's inequality and 
the following property of conditional expectations:
\begin{align*}
  \Var\bigl(X-\E[X\mid Y]\bigr) = \Var(X) - \Var\bigl( \E[X\mid Y] \bigr).
\end{align*} 
Taking the limit supremum,
\[
\limsup_{n\to \infty}
\Pr\left[ \hams = 0 \right]
\leq 
\lim_{n\to \infty}
\Pr\left[ f_{r,n} \leq \epsilon \right]
+ \frac{V_\infty - V_r}{\epsilon^2}.
\]
From the limiting Poisson structure of the cycle counts, this limiting probability 
 exists and has an explicit form, and it is now a calculation to choose $r$ and $\epsilon$ appropriately to make this bound as small as desired.

\subsection{Quantitative estimate}\label{subsec:quant_est}

We essentially follow the approach outlined above in the classic small subgraph conditioning method.  The two innovations necessary to produce a rate in this argument are a variance estimate of $\hams$ that holds for a large range of $n$ and $d$ simultaneously and an estimate on \emph{how} nearly Poisson are the cycle counts.  The remainder of the work is to make estimates of the conditioned Radon-Nikodym derivative using the Poisson approximations.

Because of the nature of our Poisson approximation, we will also change the $\sigma$-algebra used in the conditioning.  Roughly speaking, we will keep track of not only how many cycles appear but where they appear as well.  As always, we work in the pairing model.  By a cycle in a pairing $P$, we mean a collection of pairs that projects down to a cycle in the pseudograph.

Let $\cycspace[k]$ be the set of all possible cycles of length $k$ that
could appear in an instance of $\pmd$. 
We note that
$\abs{\cycspace}=\ff{n}{k}(d(d-1))^k/2k$, and that this holds even for
$k=1$ and $k=2$. 
 Let $\fullcycspace=\bigcup_{k=1}^r\cycspace$ for $r$ to be specified. 
For any $\alpha \in \fullcycspace$,
define $I_\alpha(P)=\one{\text{$P$ contains $\alpha$}}$. Further, let
$\cycprocess[r](P)=\bigl(I_\alpha(P),\,\alpha\in\fullcycspace\bigr)$.   
Our main approximation theorem says that $\cycprocess[r](P)$ is well-approximated
by a vector of independent Poissons for $P$ drawn from either $\pmd$ or $\phammd$.  These Poisson vectors have slightly different means, and this difference will ultimately account for the dominant term in the variance of $\hams$.  We will use $\pcycmean$ and $\phamcycmean$ to denote the approximate means of $I_\alpha(P)$ with $P$ drawn from $\pmd$ or $\phammd$ respectively.  So, we define, for $\alpha \in \cycspace[k],$
\begin{align}
\pcycmean &\Def \frac{1}{(nd)^k} \label{eq:pcycmean}\\
\phamcycmean &\Def \frac{1}{(nd)^k} + \frac{(-1)^k - 1}{(nd(d-1))^k}\label{eq:phamcycmean}.
\end{align}
Let $\pcycpf=\bigl(Z_\alpha,\,\alpha\in\fullcycspace\bigr)$  
be a vector whose coordinates are independent Poisson random variables with $\E Z_\alpha=\pcycmean$ for $\alpha\in\cycspace[k]$.  
Likewise, let 
 $\phamcycpf=\bigl(\tilde{Z}_\alpha,\,\alpha\in\fullcycspace\bigr)$ 
be a vector whose coordinates are independent Poisson random variables with 
\(
\E \tilde{Z}_\alpha=\phamcycmean
\)
for $\alpha\in\cycspace[k]$.

%
%
A typical distributional approximation theorem between
$\cycprocess[r](P)$ and $\pcycpf$ might be given as a bound between their laws in some probability metric, such as the total variation distance.  This is not quite enough for all of our purposes.  
Especially when it comes to estimating the variance of the conditional Radon-Nikodym derivative (see Lemma~\ref{lem:cvb}), we need better control over the point probabilities of the law of $\cycprocess[r](P)$ for a pairing $P$ from either $\pmd$ or $\phammd$.  Thus by modifying 
standard Stein's method machinery, we seek to show that for a fixed $\{0,1\}$-vector $x$  encoding
a configuration of cycles, 
\begin{align*}
\frac{
\Pr\left[ 
\cycprocess[r](P) = x
\right]
}{
\Pr \left[ 
\pcycpf = x
\right]
}\approx 1 \text{ for $P\sim\pmd$}, \qquad\text{ and }\qquad
\frac{
\Pr\left[ 
\cycprocess[r](P) = x
\right]
}{
\Pr \left[ 
\phamcycpf = x
\right]
}\approx 1 \text{ for $P\sim\phammd$}.\end{align*}
We are not able to do this for all $x$: indeed, $\Pr [ 
\cycprocess[r](P) = x
]$ is zero for some choices of~$x$. We restrict ourselves to vectors 
$x$ representing configurations of not too many cycles, none of which overlap. 
Specifically, we estimate 
the ratio for cycle configurations that are \emph{strictly $\lambda$-neat}, as defined below.
  \begin{definition}\label{def:strictlneat}
    For some $\lambda\geq 1$, 
    a $\{0,1\}$-vector $x=(x_\alpha,\,\alpha\in\fullcycspace)$ is strictly $\lambda$-neat
    if the following hold:
    \begin{enumerate}[i)]
      \item If $x_\alpha=x_\beta=1$ for any $\alpha,\beta\in\fullcycspace$, 
        then $\alpha$ and $\beta$ do not share a vertex in the graph projection.
      \item The total number of prevertices contained in $x$, given by
        \begin{align*}
          \sum_{k=1}^r\sum_{\alpha\in\cycspace[k]}2kx_{\alpha},
        \end{align*}
        is at most
         $\lambda(d-1)^r$.
    \end{enumerate}
  \end{definition}
  
  We now present our Poisson approximation theorem.
  \begin{proposition}\label{prop:multpoiapprox}
    There is an absolute constant $\Cr{C:mpa}$ such that
    for all strictly $(\log n)$-neat $x$ and all
    $d\geq 3$, $r\geq 4$, and $n$ satisfying
    $\Cr{C:mpa}(\log n)^2(d-1)^{2r-1}<n/2$,
    \begin{align}
      \abs{\frac{\P[\cycprocess(P) = x]}{\P[\pcycpf=x]} -1} &\leq
        \frac{\Cl{C:mpa}(\log n)^2(d-1)^{2r-1}}{n}
         &  \text{for } P&\drawnfrom\pmd,
        \label{eq:pmdapprox}\\
        \intertext{and}
      \abs{\frac{\P[\cycprocess(\phamrv) = x]}{\P[\phamcycpf=x]}-1} &\leq 
        \frac{\Cr{C:mpa}(\log n)^2(d-1)^{2r-1}}{n}
         & \text{for }\phamrv&\drawnfrom\phammd
        \label{eq:phammdapprox}.
    \end{align}
  \end{proposition}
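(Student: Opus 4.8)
The plan is to deduce both \eqref{eq:pmdapprox} and \eqref{eq:phammdapprox} from a Stein's method Poisson-process approximation in which the test function is the point mass $h=\one{\,\cdot\,=x}$. The reason this produces a \emph{multiplicative} error rather than the additive error of a total-variation bound is that, when $h$ is supported on a strictly $(\log n)$-neat configuration, the Stein magic factors associated to $h$ are themselves of order $\P[\pcycpf=x]$, so every error term in the estimate comes multiplied by $\P[\pcycpf=x]$. Write $A=\{\alpha\in\fullcycspace:x_\alpha=1\}$ and let $m_A=\sum_{\alpha\in A}\abs{\alpha}$, so that $x$ uses exactly $2m_A\le(\log n)(d-1)^r$ prevertices; I would organize the computation around the decomposition
\[
\P[\cycprocess(P)=x]=\P\bigl[\,\text{all }\alpha\in A\text{ are present}\,\bigr]\cdot\P\bigl[\,\text{no }\beta\in\fullcycspace\setminus A\text{ is present}\,\big|\,\text{all }\alpha\in A\text{ are present}\,\bigr],
\]
to be compared with the factored form $\P[\pcycpf=x]=\bigl(\prod_{\alpha\in A}\pcycmean\bigr)\exp\bigl(-\sum_{\gamma\in\fullcycspace}\pcycmean[\gamma]\bigr)$.

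The first factor is exact: strict neatness makes the cycles of $A$ vertex-disjoint, hence their $m_A$ pairs involve $2m_A$ distinct prevertices, and the chance all of them appear in a uniform matching of the $nd$ prevertices is $\prod_{j=0}^{m_A-1}(nd-2j-1)^{-1}$. Dividing by $\prod_{\alpha\in A}\pcycmean=(nd)^{-m_A}$ gives $\prod_{j=0}^{m_A-1}\bigl(1-(2j+1)/(nd)\bigr)^{-1}=1+O(m_A^2/(nd))$, which, since $2m_A\le(\log n)(d-1)^r$ and $nd\ge n(d-1)$, is $1+O\bigl((\log n)^2(d-1)^{2r-1}/n\bigr)$. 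Because $\sum_{\alpha\in A}\pcycmean\le m_A/(nd)$ is negligible, it then remains to compare the second factor with $\exp\bigl(-\sum_{\beta\in\fullcycspace\setminus A}\pcycmean[\beta]\bigr)$.

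That comparison is a quantitative Poisson-at-zero estimate for the count of cycles of length at most $r$ in the residual, near-regular pairing model left after the prevertices of $A$ are removed. I would obtain it from the inclusion--exclusion expansion over subfamilies $S\subseteq\fullcycspace\setminus A$, realizing the conditional joint intensities by the size-bias coupling that drives the Stein argument: forcing a cycle $\beta$ into the residual matching and re-sampling alters $\cycprocess(P)$ only on cycles sharing a prevertex with $\beta$. Two discrepancies from the ideal $\exp\bigl(-\sum_{\beta\notin A}\pcycmean[\beta]\bigr)$ must be tracked. First, a cycle $\beta$ is forced out only when it reuses one of the $2m_A$ prevertices of $A$; counting $k$-cycles through a fixed prevertex gives $\sum_{\beta\text{ incompatible with }A}\pcycmean[\beta]=O\bigl(m_A(d-1)^{r-1}/n\bigr)=O\bigl((\log n)(d-1)^{2r-1}/n\bigr)$ --- and it is crucial here that $\beta$ is killed only by reusing a \emph{prevertex}, not merely by meeting a vertex, of $A$, the latter count being larger by a factor $d-1$. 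Second, for a compatible $\beta\in\cycspace[k]$ one has $\P[\beta\text{ present}\mid A]=\prod_{j=0}^{k-1}(nd-2m_A-2j-1)^{-1}=\pcycmean[\beta]\bigl(1+O(km_A/(nd))\bigr)$, and multiplying this relative error by $\sum_{\beta\notin A}\pcycmean[\beta]=O((d-1)^r/r)$ again gives $O\bigl((\log n)(d-1)^{2r-1}/n\bigr)$. The residual gap between the truncated inclusion--exclusion and the exponential is the genuine Poisson-approximation error, controlled by the Chen--Stein clumping sums $\sum_{\beta}\sum_{\gamma:\,\gamma\cap\beta\ne\varnothing}\pcycmean[\beta]\pcycmean[\gamma]$, which are of the same order since few short cycles meet a fixed short cycle. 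Collecting these contributions (each multiplied by $\P[\pcycpf=x]$ via the point-mass Stein factors) and invoking the hypothesis $\Cr{C:mpa}(\log n)^2(d-1)^{2r-1}<n/2$, which keeps the total below $\tfrac12$ and legitimizes the linearizations, yields \eqref{eq:pmdapprox}. For \eqref{eq:phammdapprox} the identical argument runs in $\phammd$ once the superimposed Hamiltonian cycle is exposed and one works conditionally in the residual $\pmd[d-2]$ part; the only new bookkeeping is that a short cycle may use edges of the Hamiltonian cycle, and since that cycle meets any bounded window in a union of paths, the first-order intensity picks up precisely the correction $((-1)^k-1)(nd(d-1))^{-k}$ of \eqref{eq:phamcycmean}, leaving all error orders unchanged.

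The step I expect to be the main obstacle is the multiplicative control in the Stein argument: showing that the magic factors for the point-mass test function $h=\one{\,\cdot\,=x}$ are $O(\P[\pcycpf=x])$ uniformly over all strictly $(\log n)$-neat $x$, and simultaneously checking that every configuration-model error term --- the local dependence among overlapping cycles and the discrepancy of the size-bias coupling in the residual near-regular model --- lands at the advertised scale $O\bigl((\log n)^2(d-1)^{2r-1}/n\bigr)$ rather than a factor $d-1$ coarser. This is exactly what forces the ``prevertex, not vertex'' counting above and a careful accounting of how the strict-neatness bound $2m_A\le(\log n)(d-1)^r$ enters each estimate.
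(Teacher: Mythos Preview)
Your decomposition
\[
\P[\cycprocess(P)=x]=\P[\text{all of }A\text{ present}]\cdot\P[\text{nothing else present}\mid A]
\]
is sound, and your treatment of the first factor is correct. The gap is in the second factor. You reduce it to a multiplicative Poisson-at-zero estimate in the residual matching and propose to obtain this from ``Chen--Stein clumping sums'' and the claim that the Stein solution for the point-mass test function $h=\one{\,\cdot\,=x}$ has magic factors of order $\P[\pcycpf=x]$. Neither of these is available off the shelf. Standard Chen--Stein and Barbour--Holst--Janson bounds control $\lvert\P[N=0]-e^{-\mu}\rvert$ additively, and here $\mu=\sum_\beta\pcycmean[\beta]$ is of order $(d-1)^r/r$, so $e^{-\mu}$ is tiny and an additive bound is useless. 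The magic-factor bounds in the literature scale with $\lVert h\rVert_\infty$, not with $\P[\pcycpf=x]$; the relative bound you need would itself be a new result, and you have not indicated how to prove it. This is exactly the obstacle you flag in your last paragraph, and it is the heart of the proposition rather than a technicality.

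The paper circumvents this by a different mechanism. Rather than attack $\P[\I=0]$ directly, it builds a coupling (Lemma~\ref{lem:perturb} with Propositions~\ref{prop:upwardbound}--\ref{prop:updownconfigbound}) that shows, \emph{conditionally on either $\cycprocess=x-e_\alpha$ or $\conditionalprocess=x$}, that adding or removing a single cycle $\alpha$ changes the point probability by the factor $p_\alpha$ up to a relative error $O(\lambda\abs{\alpha}(d-1)^{r-1}/n)$. Chaining these one-step ratios down from $x$ to $0$ gives $\P[\I=x]/\P[\I=0]\approx\prod_{\alpha\in A}p_\alpha$ (Proposition~\ref{prop:ratio}); the multiplicative estimate of $\P[\I=0]$ itself (Proposition~\ref{prop:prob0}) then follows by summing these ratios over all $\lambda$-neat $x$ and using that the neat configurations carry almost all the mass. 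In other words, the paper bootstraps the zero-probability estimate from the relative estimates, not the reverse.

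Your treatment of \eqref{eq:phammdapprox} also understates the difficulty. In $\phammd$ the prevertices in each bin are scrambled, so one does not see which edges came from the Hamiltonian cycle. The paper works in the unscrambled model $\sepmixedmd$ with edge-colored cycles, where $\lambda$-neatness additionally caps the number of \emph{Hamiltonian} vertices; transferring back to $\phammd$ requires summing over colorings $\ell$ of the cycles in $x$, and a strictly $(\log n)$-neat $x$ can produce colorings $y_\ell$ that are only $d(\log n)$-neat. Controlling the contribution of these bad colorings (Claim~\ref{claim:negligibleclaim}, via an FKG argument) is substantial extra work that your sketch does not anticipate.
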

  To prove this proposition, we develop a variation on Stein's method.
  The approach is similar to the method of size-bias couplings
  for Poisson approximation
  expounded in \cite{BHJ}, and it also has much in common with
  the method of switchings used to derive point probability estimates in~\cite{MWW}
(see \cite[Section~2.4]{WormaldSurvey} for a good, general introduction to the method of 
switchings).
  We discuss our technique more in Section~\ref{subsec:multbounds}.

We also bound the probability that $\cycprocess(P)$ or $\cycprocess(\tilde P)$ is not strictly $(\log n)$-neat to be of order $(d-1)^{2r}/n$ (see Proposition~\ref{prop:strictlneat}).  As a consequence, we can derive total variation bounds.
\begin{corollary} \label{cor:tvbound}
    There is an absolute constant $\Cr{C:mptv}$ such that
    for all
    $d\geq 3$ and $r\geq 4$
    \begin{align}
      \dtv(\cycprocess(P), \pcycpf) &\leq
        \frac{\Cl{C:mptv}(d+(\log n)^2)(d-1)^{2r-1}}{n}
         & \text{for } P&\drawnfrom\pmd,
        \label{eq:pmdtv}\\
        \intertext{and}
      \dtv(\cycprocess(\phamrv), \phamcycpf) &\leq
        \frac{\Cr{C:mptv}(d+(\log n)^2)(d-1)^{2r-1}}{n}
         & \text{for }\phamrv&\drawnfrom\phammd.
        \label{eq:phammdtv}
    \end{align}
\end{corollary}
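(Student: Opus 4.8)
The plan is to deduce the total-variation bounds from the pointwise ratio estimates of Proposition~\ref{prop:multpoiapprox} together with the neatness bound of Proposition~\ref{prop:strictlneat}, by the standard device of splitting the $\ell^1$ distance between mass functions over a ``neat'' region and its complement. Write $p(x)=\P[\cycprocess(P)=x]$ and $q(x)=\P[\pcycpf=x]$ for $x$ ranging over the countable state space $\mathbb{Z}_{\ge 0}^{\fullcycspace}$, let $\mathcal{N}$ be the set of strictly $(\log n)$-neat $\{0,1\}$-vectors, and note that $\dtv(\cycprocess(P),\pcycpf)=\tfrac12\sum_x|p(x)-q(x)|$. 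I would first reduce to the case where $(d+(\log n)^2)(d-1)^{2r-1}/n$ is smaller than any prescribed absolute constant: otherwise, since $\dtv\le 1$, the claimed inequality already holds once $\Cr{C:mptv}$ is taken large enough, and in the reduced regime the smallness hypotheses of Propositions~\ref{prop:multpoiapprox} and~\ref{prop:strictlneat} are automatically satisfied.

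On the neat region I would use that $q(x)>0$ for every $x\in\mathcal{N}$, so Proposition~\ref{prop:multpoiapprox} gives $|p(x)-q(x)|=q(x)\,|p(x)/q(x)-1|\le q(x)\,\Cr{C:mpa}(\log n)^2(d-1)^{2r-1}/n$; summing over $x\in\mathcal{N}$ and using $\sum_{x\in\mathcal{N}}q(x)\le 1$ bounds $\sum_{x\in\mathcal{N}}|p(x)-q(x)|$ by $\Cr{C:mpa}(\log n)^2(d-1)^{2r-1}/n$. On the complement I would bound $\sum_{x\notin\mathcal{N}}|p(x)-q(x)|\le\P[\cycprocess(P)\notin\mathcal{N}]+\P[\pcycpf\notin\mathcal{N}]$: the first probability is the chance that $\cycprocess(P)$ fails to be strictly $(\log n)$-neat, which is $O((d-1)^{2r}/n)$ by Proposition~\ref{prop:strictlneat}, while the second I would not estimate directly but instead bound, using $1-\sum_{x\in\mathcal{N}}p(x)=\P[\cycprocess(P)\notin\mathcal{N}]$ together with $\sum_{x\in\mathcal{N}}(p(x)-q(x))\le\sum_{x\in\mathcal{N}}|p(x)-q(x)|\le\Cr{C:mpa}(\log n)^2(d-1)^{2r-1}/n$ from the previous step, by $\P[\pcycpf\notin\mathcal{N}]=1-\sum_{x\in\mathcal{N}}q(x)\le\P[\cycprocess(P)\notin\mathcal{N}]+\Cr{C:mpa}(\log n)^2(d-1)^{2r-1}/n$. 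Adding the contributions and using $(d-1)^{2r}\le d\,(d-1)^{2r-1}$ collapses everything to $O\big((d+(\log n)^2)(d-1)^{2r-1}/n\big)$, which is \eqref{eq:pmdtv}; the proof of \eqref{eq:phammdtv} is identical, with \eqref{eq:phammdapprox} and the $\phammd$ half of Proposition~\ref{prop:strictlneat} in place of their $\pmd$ counterparts.

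There is no genuinely hard step here---the substance lies in Propositions~\ref{prop:multpoiapprox} and~\ref{prop:strictlneat}---but the one point that needs care, and the mild obstacle, is that $\cycprocess(P)$ is $\{0,1\}$-valued whereas $\pcycpf$ takes values in $\mathbb{Z}_{\ge 0}^{\fullcycspace}$, so the event $\{\pcycpf\notin\mathcal{N}\}$ also absorbs the mass on which some coordinate $Z_\alpha$ exceeds $1$. Routing the bound on $\P[\pcycpf\notin\mathcal{N}]$ through the two estimates already in hand, as above, handles this automatically and avoids a separate Poisson tail computation, which is why I would organize the argument that way rather than bounding the Poisson non-neat probability head-on.
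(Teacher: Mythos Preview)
Your proposal is correct and is essentially the paper's own argument, just written out more explicitly: the paper compresses everything into the one-line observation that if $\sum_{x\in A}|\mu(x)-\nu(x)|\le\epsilon_1$ and $\mu(A^c)\le\epsilon_2$ then $\dtv(\mu,\nu)\le\epsilon_1+\epsilon_2$, and then invokes Propositions~\ref{prop:multpoiapprox} and~\ref{prop:strictlneat} with $A=\mathcal{N}$ after the same reduction to the regime where the error is small. Your routing of $\P[\pcycpf\notin\mathcal{N}]$ through the two bounds already in hand is exactly the content of that general inequality.
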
  
To go with our quantitative Poisson approximations, we will need a quantitative estimate
of the second moment of $\hams$.
Let $\rnd \Def \tfrac{\hams}{\Exp_{\pmd} \hams}$, which
 as explained on p.~\pageref{page:radonnikodym}  
is the Radon-Nikodym derivative of $\phammd$ with respect to $\pmd$.  
We show with an error bound that the second moment of $\rnd$ is approximately $d/(d-2)$
when $d$ is not too large:
\begin{proposition}
\label{prop:Variance}
For any $\alpha$ with $1 \leq \alpha < 8/\sqrt{2},$ there is a constant $M_\alpha$ so that for any $d \leq n^{1/2}/\log n$ 
\[
\Exp_{\pmd}\bigl[\rnd^2\bigr] \leq \frac{d}{d-2}+ M_\alpha\frac{d^{\frac32(1+1/\alpha)}}{\sqrt{n}}.
\]
\end{proposition}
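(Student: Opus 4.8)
The plan is to compute $\Exp_{\pmd}[\rnd^2] = \Exp_{\pmd}[\hams^2]/(\Exp_{\pmd}\hams)^2$ directly from the combinatorics of pairs of Hamiltonian cycles in the configuration model, and then to extract the leading term $d/(d-2)$ together with a quantitative bound on the remainder that is uniform over the stated range of $d$. First I would recall the standard expression for the first and second moments. Writing $N = nd$ for the number of prevertices, one has $\Exp_{\pmd}\hams = \tfrac{(n-1)!}{2}\cdot \tfrac{(d(d-1))^n}{(N-1)!!/(N-2n-1)!!}$-type formula — more precisely, the number of Hamiltonian cycles is a sum over cyclic orderings of vertices and choices of prevertices realizing the consecutive edges, and $\Exp\hams$ divides the resulting count by the number of perfect matchings $(N-1)!!$. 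For the second moment, $\Exp_{\pmd}[\hams^2] = \sum_{H_1, H_2}\P[H_1, H_2 \subseteq P]$, where the sum is over ordered pairs of Hamiltonian cycles; grouping by the structure of the overlap $H_1 \cap H_2$ — which is a union of vertex-disjoint paths (possibly all of $H_1$ when $H_1 = H_2$) — reduces the problem to a sum over the number $j$ of shared edges and the combinatorial pattern by which the shared paths interleave along $H_1$.

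The key step is then to show that, after dividing by $(\Exp\hams)^2$, this sum is dominated by the "few shared edges" regime and that its value there is a geometric-type series summing to $d/(d-2)$ up to controlled error. Concretely, I expect the ratio of the contribution with exactly $j$ shared edges to $(\Exp\hams)^2$ to behave, to leading order, like a term of size roughly $\big(\tfrac{1}{d-1}\big)^{\text{something}}$ times a combinatorial factor, so that summing over $j$ produces $\sum_{j\geq 0}(d-2)^{-j}$-style behavior and hence the constant $d/(d-2) = \sum_{j \geq 0} (2/d)^j$ after the dust settles. The technical content is uniform control of the correction: I would Taylor-expand the ratios of falling factorials $\ff{n}{k}/n^k$ and of double factorials $(N-2a-1)!!/(N-1)!!$ that appear, using $\log(\ff{n}{k}/n^k) = -\binom{k}{2}/n + O(k^3/n^2)$ and similar, to peel off the main term and bound the rest. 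The bound $d \leq n^{1/2}/\log n$ is exactly what makes these error terms summable: it keeps $d^{O(1)}/n$ small enough that the geometric series in $j$ converges with a bounded tail, and the exponent $\tfrac32(1+1/\alpha)$ in the statement presumably arises from trading off the number of terms kept ($j$ up to some cutoff depending on $\alpha$) against the per-term error and the tail of the series beyond the cutoff — larger $\alpha$ lets one keep more terms and get a smaller final exponent, with $\alpha < 8/\sqrt2$ being the threshold beyond which the tail estimate degrades.

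The main obstacle will be the bookkeeping of the overlap structure: unlike the classical fixed-$d$ computation of Robinson–Wormald, where one only needs the leading asymptotics, here every combinatorial factor must come with an explicit, $d$-uniform error bound, and the number of ways two Hamiltonian cycles can share a given set of edges grows combinatorially, so the naive term-by-term bound must be organized carefully (e.g.\ by first summing over the "shape" of the shared sub-forest and only then over which prevertices realize it) to avoid losing powers of $n$. A secondary difficulty is that $\Exp\hams$ itself must be pinned down with a relative error of the same quality, since it enters squared in the denominator; this is a cleaner computation but still requires the same falling-factorial and double-factorial expansions. Once the leading term is isolated as an exact geometric series and the remainder is bounded by $M_\alpha d^{\frac32(1+1/\alpha)}/\sqrt n$, the proposition follows; I would organize the write-up so that the combinatorial identity for $\Exp[\hams^2]/(\Exp\hams)^2$ is stated first as a lemma, and then the analytic estimate of its terms is carried out separately.
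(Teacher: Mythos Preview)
Your approach is the classical Robinson--Wormald route: sum over pairs of Hamiltonian cycles grouped by their overlap, and estimate the ratio term by term. The paper takes a completely different path. It rewrites $\E_{\pmd}[\rnd^2]=\P_{\phammd}[E]/\P_{\pmd}[E]$, where $E$ is the event that a \emph{fixed} Hamiltonian cycle appears, and computes $\P_{\phammd}[E]$ by conditioning on which edges of that fixed cycle come from the superimposed Hamiltonian part vs.\ the configuration part of $\phammd$. This produces an expectation against a factor-model law $\phi$ on two-colourings of an $n$-cycle; after Stirling, the integrand is essentially $\sqrt{d/(d-2)}\,\exp(Z_n^2/d)$ for a centred statistic $Z_n$ that is approximately standard normal (Lemmas~\ref{lem:nvstirling} and~\ref{lem:nvidealization}). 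The proof then replaces $\phi$ by a two-state Markov-chain law $\pi$ (Lemma~\ref{lem:nvmcpi_replacement}) and compares $\E_\pi\exp(Z_n^2/d)$ with the Gaussian value $\sqrt{d/(d-2)}$ via a size-bias Stein's-method argument tailored to the unbounded test function $h(w)=e^{w^2/d}$ (Lemma~\ref{lem:normal_comparison}).

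Your guess about $\alpha$ is where the gap becomes concrete. The parameter $\alpha$ is \emph{not} a cutoff on how many overlap-edges to retain; it is the H\"older exponent used to split products of the form $\exp(Z_n^2/d)\cdot(\text{error})$ into $\bigl(\E\exp(\alpha Z_n^2/d)\bigr)^{1/\alpha}$ times an $L^{\alpha^*}$ norm. The constraint $\alpha<8/\sqrt 2$ comes from a sharp Laplace-transform tail bound on $V_n$ under $\phi$ (Lemma~\ref{lem:nvsharptail}): the moment $\E_\phi\exp(\alpha Z_n^2/d)$ is finite precisely when $\alpha$ lies below the reciprocal of a constant $c_d$ that tends to $\sqrt 2/8$ as $d\to\infty$ (Lemma~\ref{lem:nvgaussianmoments}). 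The exponent $\tfrac32(1+1/\alpha)$ then falls out of the Stein error terms in Lemma~\ref{lem:normal_comparison}, from $\mu/\sigma^3=O(d^{3/2}/\sqrt n)$ combined with the $d^{1/\alpha}$ cost of the H\"older step. A direct combinatorial expansion along your lines, even if made rigorous, would not naturally produce this error shape; you would be proving a differently-structured bound rather than the one stated.
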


This differs from previous work such as \cite{FJMRN} in that we develop a bound that works for a range of $d$ and $n$ simultaneously.  Our methodology differs significantly from their work, and we develop a semi-probabilistic technique for making the comparison.  We show that there is a law $\phi$ of two-colorings of the edges of a cycle so that
\[
\Exp_{\pmd}\bigl[\rnd^2\bigr] \approx \Exp_\phi \sqrt{ \frac{d}{d-2} } \exp( Z_n^2/d),
\]
where $Z_n$ is a statistic of the two-coloring that is approximately standard normal.  
This gives an interpretation of why
a Gaussian integral appears in the purely combinatorial variance calculation of \cite{FJMRN}, as well
as in other applications of the small subgraph conditioning method. 
The measure $\phi$ has the form of a \emph{factor model} or \emph{graphical model} (see~\cite{DemboMontanari} for an overview of the general theory of these objects).  We then show that $\phi$ is very nearly that joint law on $\{0,1\}^n$ that would come from a $2$-state Markov chain $\pi,$ so that $\Exp_\phi \exp( Z_n^2/d) \approx \Exp_\pi \exp(Z_n^2/d)$.  Under this law, $Z_n$ can be understood as a centered, scaled additive functional on the Markov chain.  Thus, we are able to approximate it 
very precisely by a Gaussian using size-bias coupling (see Appendix~\ref{sec:markov}).  We then compare these expectations by a modification of Stein's method suitable to comparing expectations of test functions of the form $h(x) = e^{a x^2}$ for positive $a$.   

\subsection{Organization}
This paper is organized into four sections and one appendix.  We begin in Section~\ref{sec:supporting} with some preliminary calculations and lemmas that are useful throughout the 
paper.  
In Section~\ref{sec:poisson} we prove the multiplicative Poisson bound Proposition~\ref{prop:multpoiapprox} and estimates for the number of
$(\log n)$-neat graphs.
In Section~\ref{sec:variance} we prove the variance bound Proposition~\ref{prop:Variance}; we do not include the Markov chain estimates here.    In Section~\ref{sec:proof} we prove the main theorems using the tools developed.  Finally, we include Appendix~\ref{sec:markov} in which we prove precise estimates for a $2$-state Markov chain.

\subsection{Notation}
Here and throughout the paper, we use the $O(\cdot), \Omega(\cdot), \Theta(\cdot)$ to mean something stronger than their usual meaning.  We always mean the implied constants are independent of $d,n,$ and $r,$ and that these bounds hold for all $d,n,$ and $r$ in the ranges considered. 

We will also make use of the following falling factorial notation.  We let $\ff{a}{b}$ be the usual falling factorial $\ff{a}{b} = a(a-1)\cdots (a-b+1)$, with $\ff{a}{0}=1$.  We also use the double falling factorial $\dff{a}{b}$ (in analogy with double factorial), which is useful for describing combinatorial quantities arising from matchings.  This is given by $\dff{a}{b} = (a-1)(a-3)\cdots(a-2b+1)$, 
with the caveat that instead of $\dff{2n-1}{n}=0$, we let 
$\dff{2n-1}{n} = \dff{2n-1}{n-1} = (2n-2)!!$  
(this exact expression comes up in Section~\ref{sec:variance}).  
We additionally use the notation $[n]$ to mean the integers $\{1,2,\ldots, n\},$ noting that the falling factorial always has a subscript.

\section{Supporting tools}
\label{sec:supporting}

Here we collect some important technical tools we will use throughout the paper.  
We frequently need to make calculations of statistics
computable in terms of $2$-colorings of a cycle.  Thus, we find some explicit 
expressions for polynomials that can be used to do these calculations.  
Consider edge coloring a cycle $\mathcal{C}_k$ on $k$ vertices by two colors $\{R,B\}$.  Choose an orientation for $\mathcal{C}_k,$ and let $r_1$ denote the number of vertices with an incoming $R$ edge and an outgoing $B$ edge.  Let $r_2$ denote the number of vertices with two incident $R$ edges, and let $b_2$ denote the number of vertices with two incident $B$ edges.  Note that all of these statistics are independent of the orientation chosen.  
Let $\edgesetof(\mathcal{C}_k)$ denote the edge set of $\mathcal{C}_k$, and
define
\begin{align}
\cdp_{k}(a,b,c) = \sum_{f \colon \edgesetof(\mathcal{C}_k) \to \{R,B\} } a^{r_2} b^{r_1} c^{b_2}.\label{eq:cdpdef}
\end{align}
Here and in the following sums, the statistics $r_1$, $r_2$, and $b_2$ refer to the coloring $f$.
Further, define
\begin{align}
p_k(a,b) = \cdp_{k}(a,b,1) = \sum_{f \colon \edgesetof(\mathcal{C}_k) \to \{R,B\} } a^{r_2}b^{r_1}.\label{eq:pkdef}
\end{align}
Note that $k = r_2 + b_2 + 2r_1,$ and hence $\cdp_{k} = c^kp_k(a/c,b/c^2)$.  Thus, it suffices to compute $p_k$.

To compute $p_k,$ we will break the cyclic structure and instead work with analogous polynomials with respect to colorings of the directed path $\mathcal{P}_{k+2}$ on $k+2$ vertices.  We will identify $\edgesetof(\mathcal{P}_{k+2})$ with $[k+1]$ in the natural way and define
\begin{align*}
p_k^{R}(a,b) &= \sum_{\substack{f \colon \edgesetof(\mathcal{P}_{k+2}) \to\{R,B\} \\ f(1) = f(k+1) = R}} a^{r_2}b^{r_1},
  \\
p_k^{B}(a,b) &= \sum_{\substack{f \colon \edgesetof(\mathcal{P}_{k+2}) \to\{R,B\} \\ f(1) = f(k+1) = B}} a^{r_2}b^{r_1}.
\end{align*}
By identifying the first and last edges of this path, we have that
\[
p_k(a,b)= p_k^{R}(a,b) + p_k^{B}(a,b).
\]
Beyond their use for computing $p_k(a,b),$ these polynomials are also needed in Section~\ref{sec:poisson}.

    We use the method of the transfer matrix to find generating
    functions for these expressions.
    Consider a color pattern $f\in \{B,R\}^{k+1}$ as the walk
    on the following directed graph
    of length $k$ whose vertices spell out~$f$:
    \begin{center}
      \begin{tikzpicture}[>=stealth,scale=2]
         \path (0,0) node[fill,shape=circle,text=white,inner sep=0.25em] (R) 
                     {\textit{\textbf{R}}}
               (2,0) node[fill,shape=circle,text=white,inner sep=0.25em] (B) 
                     {\textit{\textbf{B}}};
         \begin{scope}[->, thick]
           \draw (R) to[bend left,looseness=0.75] node[auto] {$b$} (B);
           \draw (B) to[bend left,looseness=0.75] node[auto] {$1$} (R);
           \draw (R) to[loop left,min distance=0.75cm] node[auto] {$a$} (R);
           \draw (B) to[loop right,min distance=0.75cm] node[auto] {$1$} (B);
         \end{scope}
      \end{tikzpicture}
    \end{center}
    The product of the edge weights is $a^{r_2}b^{r_1}$.
    Let $A=\left[\begin{smallmatrix}a&b\\1&1
      \end{smallmatrix}\right]$.  Then
    \begin{align*}
      A^k = \begin{bmatrix}
        p_k^{R}(a,b) & p_k^{RB}(a,b)\\[0.4em]
        p_k^{BR}(a,b) & p_k^{B}(a,b)
      \end{bmatrix}
    \end{align*}
    with $p_k^{BR}$ and $p_k^{RB}$ defined analogously to
    $p_k^{R}$ and $p_k^{B}$.
    By Theorem~4.7.2 in \cite{EC},
    we find the following generating functions:
    \begin{align*}
      \sum_{k\geq 0} p_k^{R}(a,b)t^k &= \frac{1-t}{(a-b)t^2-(a+1)t+1},\\
      \sum_{k\geq 0} p_k^{B}(a,b)t^k &= \frac{1-at}{(a-b)t^2-(a+1)t+1}.
    \end{align*}
    Using partial fraction expansions, we arrive at
    \begin{align*}
      p_k^{R}(a,b) &= \frac{t_+ t_-}{t_+-t_-} \left(
        \frac{1-t_-}{t_-^{k+1}}-
          \frac{1-t_+}{t_+^{k+1}} \right),\\
      p_k^{B}(a,b) &= \frac{t_+ t_-}{t_+ - t_-}
        \left( \frac{1-at_-}{t_-^{k+1}} - \frac{1-at_+}{t_+^{k+1}}\right)
    \end{align*}
    where
    \begin{align*}
      t_\pm &= \frac{a+1 \pm \sqrt{(a-1)^2+4b}}{2(a-b)}.
    \end{align*}
    It is now a simple exercise to produce an expression for $\cdp_k$.  Letting
    \[
      \cdproot_{\pm} = \frac{c+a \pm \sqrt{(c-a)^2 + 4b}}{2},
    \]
    we have that
    \begin{align}
      \label{eq:cdp}
      \cdp_{k}(a,b,c) = \cdproot_+^k + \cdproot_-^k.
    \end{align}

\vskip 10pt
\noindent \emph{Poisson tails}
\vskip 10pt

We will 
frequently require tail estimates of functions of Poisson fields.  For this purpose, 
we use a bound that can be derived from modified log-Sobolev inequalities.
  \begin{lemma}
    \label{lem:PoissonTail}
      Let $\pi$ be a product measure of $q$ Poisson laws, with means $m_i$ for $1 \leq i \leq q$.  
      Let $F$ be a function from $\mathbb{N}^q \to \R,$ and define 
      $\nabla_i F( \mathbf{x} ) = F( \mathbf{x} + e_i) - F(\mathbf{x})$ with $e_i$ a 
      standard basis vector.  Further, let 
      $\| \nabla_i F \| = \sup_{ \mathbf{x} \in \mathbb{N}^q} |\nabla_i F( \mathbf{x})|$.  
      If there are positive reals $M_1$ and $M_2$ so that
      \[
        \sum_{i=1}^q \left\| \nabla_i F \right\|^2 m_i \leq M_1 \text{~~~and~~~}
        \max_{i=1,\ldots,q} \left\| \nabla_i F \right\| \leq M_2,
      \]
      then for every $r \geq 0,$
      \[
        \pi \left( F \geq \expect F + r \right) \leq
          \exp\left(
            -\frac{r}{2M_2}\log\left(1+\frac{M_2 r}{M_1}\right)
          \right).
      \]
  \end{lemma}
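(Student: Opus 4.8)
The plan is to run the entropy method (Herbst's argument) for the product Poisson measure $\pi$, fed by the modified log-Sobolev inequality for a single Poisson law; this yields the Poisson analogue of Bennett's inequality, which one then weakens to the stated form. First I would reduce to the centered case: replacing $F$ by $F-\E F$ changes neither $\nabla_i F$ nor $M_1,M_2$, and the claim becomes $\pi(F\geq r)\leq \exp\bigl(-\tfrac{r}{2M_2}\log(1+\tfrac{M_2 r}{M_1})\bigr)$. Since each $\nabla_i F\leq M_2$ we have $F(\mathbf x)\leq F(\mathbf 0)+M_2\sum_i x_i$, so $\Lambda(\theta)\Def\log\E_\pi[e^{\theta F}]$ is finite and smooth for all $\theta$. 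Setting $G(\theta)=\Lambda(\theta)/\theta$ for $\theta>0$, the elementary identity $\Ent_\pi(e^{\theta F}) = e^{\Lambda(\theta)}\bigl(\theta\Lambda'(\theta)-\Lambda(\theta)\bigr) = \theta^2\,\E_\pi[e^{\theta F}]\,G'(\theta)$ reduces everything to a differential inequality for $G$, with initial value $G(0^+)=\Lambda'(0)=\E F=0$.

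The key input is the Poisson modified log-Sobolev inequality: for $X\sim\Poisson(m)$, any $g\colon\mathbb N\to\R$, writing $\nabla g(x)=g(x+1)-g(x)$ and $\tau(u)=ue^u-e^u+1$,
\[
  \Ent(e^g)\leq m\,\E\bigl[e^g\,\tau(\nabla g)\bigr],
\]
which is sharp when $g$ is linear. Applying subadditivity of entropy (tensorization) over the $q$ independent coordinates with $g=\theta F$ gives $\Ent_\pi(e^{\theta F})\leq\sum_{i}m_i\,\E_\pi[e^{\theta F}\tau(\theta\nabla_i F)]$. Next I would invoke the elementary fact that $u\mapsto\tau(u)/u^2$ is nondecreasing on $\R$ (extended by $\tfrac12$ at $0$): since $\nabla_i F\leq\norm{\nabla_i F}\leq M_2$ this gives, pointwise, $\tau(\theta\nabla_i F)\leq(\theta\nabla_i F)^2\,\tau(\theta M_2)/(\theta M_2)^2$. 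Using the hypothesis $\sum_i m_i\norm{\nabla_i F}^2\leq M_1$ then yields $\Ent_\pi(e^{\theta F})\leq \tfrac{M_1\tau(\theta M_2)}{M_2^2}\E_\pi[e^{\theta F}]$, hence $G'(\theta)\leq \tfrac{M_1}{M_2^2}\,\tau(\theta M_2)/\theta^2$. Since $\tfrac{d}{d\theta}\bigl[(e^{\theta M_2}-\theta M_2-1)/\theta\bigr]=\tau(\theta M_2)/\theta^2$, integrating from $0$ produces the sub-Poisson Laplace bound $\Lambda(\theta)\leq\tfrac{M_1}{M_2^2}\bigl(e^{\theta M_2}-\theta M_2-1\bigr)$ for all $\theta\geq0$.

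Finally I would apply the Chernoff bound $\pi(F\geq r)\leq\inf_{\theta>0}e^{\Lambda(\theta)-\theta r}$. The optimal $\theta$ solves $e^{\theta M_2}-1=M_2 r/M_1$, i.e.\ $\theta M_2=\log(1+M_2r/M_1)$, and gives $\pi(F\geq r)\leq\exp\bigl(-\tfrac{M_1}{M_2^2}h(\tfrac{M_2 r}{M_1})\bigr)$ with $h(x)=(1+x)\log(1+x)-x$ the Bennett function. The slightly weaker stated bound follows from the elementary inequality $h(x)\geq\tfrac x2\log(1+x)$ for $x\geq0$ --- equivalently $\log(1+x)\geq 2x/(2+x)$ --- which turns $\tfrac{M_1}{M_2^2}h(\tfrac{M_2 r}{M_1})$ into $\tfrac{r}{2M_2}\log(1+\tfrac{M_2 r}{M_1})$.

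The step I expect to be the main obstacle is obtaining the sub-Poisson Laplace bound with the correct constant: one must use the sharp form of the Poisson modified log-Sobolev inequality (with $\tau(u)=ue^u-e^u+1$) together with the monotonicity of $\tau(u)/u^2$. A cruder modified log-Sobolev inequality, e.g.\ $\Ent(e^g)\leq m\,\E[e^g(e^{\nabla g}-1)\nabla g]$, only delivers $\Lambda(\theta)\lesssim\tfrac{M_1\theta}{M_2}(e^{\theta M_2}-1)$, which is off by a factor of $\theta M_2$ in the relevant regime and is too weak for the Chernoff optimization to close. It is also worth noting that only the one-sided bound $\nabla_i F\leq M_2$ on the upward jumps is actually used --- automatic here from $\norm{\nabla_i F}\leq M_2$ --- and the monotonicity of $\tau(u)/u^2$ on all of $\R$ is exactly what permits $\nabla_i F$ to take either sign without any lower bound.
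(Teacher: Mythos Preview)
Your argument is correct and is essentially the proof of Wu's Proposition~3.1, which is exactly what the paper invokes: the paper does not give its own proof but simply cites \cite{Wu00}. You have reconstructed Wu's argument in full --- the entropy method via the sharp Poisson modified log-Sobolev inequality with $\tau(u)=ue^{u}-e^{u}+1$, tensorized and integrated via the Herbst trick, followed by the Chernoff optimization yielding the Bennett bound and the weakening $h(x)\geq\tfrac{x}{2}\log(1+x)$ --- so your proposal matches the cited source rather than diverging from it.
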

  \begin{proof}
    This is a special case of the stronger theorem of Wu~\cite{Wu00}, Proposition 3.1.
  \end{proof}

\section{Poisson approximations}
\label{sec:poisson}
\newcommand{\Pairings}{\mathscr{P}}

\newcommand{\Hams}{\mathscr{H}}

\newcommand{\gis}{\mathcal{I}}

\newcommand{\gind}{F}

\newcommand{\I}{\mathbf{I}}

\newcommand{\Ww}{\mathcal{W}}

\newcommand{\err}{\mathcal{E}}

\newcommand{\Poilimit}{\mathbf{Y}}
\newcommand{\poilimit}[1]{Y_{#1}}
\newcommand{\poilimitE}[1]{Z_{#1}}

\newcommand{\conditionalprocess}{J_{\omitted\alpha}}

In this section, we will prove Proposition~\ref{prop:multpoiapprox}, establishing
Poisson approximations for the cycle process $\cycprocess(P)$ when
$P\drawnfrom \pmd$ or $P\drawnfrom \phammd$.
Let $\rHam$ be a uniformly sampled Hamiltonian cycle on vertices
$\{1,\ldots,n\}$, and let $\rP\drawnfrom \pmd[d-2]$.
Recall that by representing $\rHam$ as a pairing, combining this pairing
with $P$, and randomly reordering the prevertices in each
vertex bin, we obtain the model $\phammd$. Our strategy
will be to avoid this complication for as long as possible,
and rather to work directly with $(\rHam, \rP)$. Let $\sepmixedmd$ be
the distribution 
of $(\rHam, \rP)$, which we call the unscrambled mixed model.
 
  We will represent 
  $(\rHam, \rP)$ as a pseudograph formed by superimposing their 
  projections.
  Further, we color the edges from
  the configuration model \emph{red} (denoted simply $R$), and edges from the 
  Hamiltonian cycle \emph{blue} (denoted simply $B$), which agrees with our 
  terminology in other sections.  We will
  label each endpoint of a red edge by the
  prevertex in $\rP$ from which it comes (see Figure~\ref{fig:labeledgraph}).  
  This provides the same information
  as $(\rHam, \rP)$, and we will go back and forth between the two views
  as the situation demands.
  
  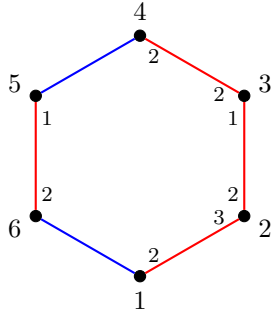
\begin{figure}
    \begin{center}
      \begin{tikzpicture}[scale=1.6,vert/.style={circle,fill,inner sep=0,
              minimum size=0.15cm,draw},>=stealth]
          \path  (270:1) node[vert] (s0)  {}   +(270:.2) node {$1$}
                 (330:1) node [vert] (s1)  {} +(330:0.2) node {$2$}
                 (30:1) node[vert] (s2) {} +(30:0.2) node {$3$}
                 (90:1) node[vert] (s3) {} +(90:0.2) node {$4$}
                 (150:1) node[vert] (s4) {} +(150:0.2) node {$5$}
                 (210:1) node[vert] (s5) {} +(210:0.2) node {$6$};
            \draw[thick, blue] (s3) -- (s4);
            \draw[thick,blue] (s5) -- (s0);
            \draw[thick,red] (s0) -- (s1) node[black,pos=0.15,font=\footnotesize,
                                               shift=(300:-0.15)] {$2$}
                                          node[black,pos=0.85,font=\footnotesize,
                                               shift=(300:-0.15)] {$3$};
            \draw[thick,red] (s1) -- (s2) node[black,pos=0.15,font=\footnotesize,
                                               shift=(0:-0.15)] {$2$}
                                          node[black,pos=0.85,font=\footnotesize,
                                               shift=(0:-0.15)] {$1$};
            \draw[thick,red] (s2) -- (s3) node[black,pos=0.15,font=\footnotesize,
                                               shift=(60:-0.15)] {$2$}
                                          node[black,pos=0.85,font=\footnotesize,
                                               shift=(60:-0.15)] {$2$};

              \draw[thick,red] (s4) -- (s5) node[black,pos=0.15,font=\footnotesize,
                                               shift=(180:-0.15)] {$1$}
                                          node[black,pos=0.85,font=\footnotesize,
                                               shift=(180:-0.15)] {$2$};
      \end{tikzpicture}
    \end{center}
    \caption{A representation of a cycle $\alpha\in\fullmixedcycspace$
    on vertices $1,\ldots,6$
    as a graph with labeled edges. Edges in $\rHam$ are colored blue, and edges in
    $\rP$ are colored red. The ends of each red edge are labeled with a number
    from $1,\ldots,d-2$ to indicate which prevertices the edge comes from.}
    \label{fig:labeledgraph}
  \end{figure}

We define
$\mixedcycspace[k]$ to be the set of all possible cycles of length $k$
  in $(\rHam, \rP)$, in analogy with $\cycspace[k]$.  
  As with the graph representation of $(\rHam, \rP)$, we represent these by
  $\{R,B\}$--edge-colored cycles, with prevertex labels on each red edge.
  We will refer to the \emph{color pattern} of an element
  of $\mixedcycspace$ as a
  sequence of $R$s and $B$s of length $k$ identified up to rotation and reversal,
  corresponding to the order in which the edge colors appear on the cycle.
  Define $\fullmixedcycspace = \bigcup_{k=1}^r \mixedcycspace$.
  Let $I_\alpha$ be the indicator that $(\rHam, \rP)$ contains the
  cycle $\alpha$, for any $\alpha\in\fullmixedcycspace$, and let
  $\mixedcycprocess = (I_\alpha,\,\alpha\in\fullmixedcycspace)$.
  \label{page:mixedcycprocessdef}
  Our goal is to show Poisson approximations for $\mixedcycprocess$,
  and then to transfer these results over to Poisson approximations
  for $\cycprocess(P)$ with $P\drawnfrom\phammd$.
  
  All in all, this section is quite technical and delicate. 
  For the reader who wants to skip to the chase,
  we recommend focusing on the arguments for $\cycprocess(P)$, which typically
  use the same ideas as those for
  $\mixedcycprocess$ but have fewer technical details.
  The most important part of our argument is in Section~\ref{subsec:multbounds},
  from Lemma~\ref{lem:perturb} to Corollary~\ref{cor:relativepointprobs}.
  
  In Section~\ref{subsec:Ecycles}, we use the polynomials from
  Section~\ref{sec:supporting} to compute the expected number of cycles
  of each size in $(\rHam,\rP)$, as well as a few related quantities.
  Section~\ref{subsec:exceptional} is devoted to a bound on the probability
  that the cycles in $P$ or $(\rHam,\rP)$ are exceptional, in that they
  overlap each other or
  there are an unusually large number of them.
  In Section~\ref{sec:couplings}, we give a coupling of the model
  $\pmd$ with a conditioned version of itself, and we do the same thing
  for $\sepmixedmd$. Finally, in Section~\ref{subsec:multbounds}, we give the 
  main argument and prove Proposition~\ref{prop:multpoiapprox}.

  \subsection{Expectations of cycle counts}\label{subsec:Ecycles}
    Our first job is to use the polynomials from Section~\ref{sec:supporting}
    to compute the expected number of cycles in $(\rHam, \rP)$.
    We will need the following facts about the asymptotics of
    $\ff{n}{k}$ and $\dff{n}{k}$, which are elementary to check.
    \begin{lemma}\label{lem:ffstirling}
      For all $k < n/2$,
      \begin{align*}
        n^ke^{-O(k^2/n)}
          \leq \ff{n}{k}  &\leq n^k,\\
        \intertext{and}
        n^ke^{-O(k^2/n)}
          \leq \dff{n}{k}  &\leq n^k.
      \end{align*}
    \end{lemma}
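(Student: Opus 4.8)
I will prove each of the four bounds by comparing the relevant product with $n^k$ factor by factor. The two upper bounds are immediate: $\ff{n}{k}=\prod_{i=0}^{k-1}(n-i)\le n^k$ and $\dff{n}{k}=\prod_{i=0}^{k-1}(n-2i-1)\le n^k$, since every factor is at most $n$. (The exceptional case in the definition of $\dff{\cdot}{\cdot}$, namely $\dff{2m-1}{m}$, never occurs under the hypothesis $k<n/2$: it would require ``$n$''$=2m-1$ and ``$k$''$=m>(n-1)/2$.)

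For the lower bounds, write $\ff{n}{k}=n^k\prod_{i=0}^{k-1}(1-i/n)$ and $\dff{n}{k}=n^k\prod_{i=0}^{k-1}\bigl(1-(2i+1)/n\bigr)$; it then suffices to show that
\[
-\log\frac{\ff{n}{k}}{n^k}=\sum_{i=0}^{k-1}\psi\Bigl(\tfrac{i}{n}\Bigr)
\qquad\text{and}\qquad
-\log\frac{\dff{n}{k}}{n^k}=\sum_{i=0}^{k-1}\psi\Bigl(\tfrac{2i+1}{n}\Bigr),
\quad\text{where }\psi(x)=-\log(1-x),
\]
are each $O(k^2/n)$; the hypothesis $k<n/2$ guarantees that every argument of $\psi$ appearing here lies in $[0,1)$. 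I would not simply linearize $\psi(x)\approx x$, since in the double-factorial sum the argument $(2i+1)/n$ can be arbitrarily close to $1$ when $k$ approaches $n/2$. Instead, since $\psi$ is nonnegative and increasing on $[0,1)$, I use the Riemann comparison: for equally spaced arguments $x_0<\cdots<x_{k-1}$ with step $\delta$ one has $\delta\sum_{i}\psi(x_i)\le\int_{0}^{x_{k-1}+\delta}\psi(x)\,dx$. An antiderivative gives $\int_0^a\psi(x)\,dx=(1-a)\log(1-a)+a$, and one checks by elementary calculus that this is at most $a^2$ for every $a\in[0,1]$: the difference $a^2-(1-a)\log(1-a)-a$ vanishes at $a=0$ and $a=1$ and has derivative $2a+\log(1-a)$, which is positive on $(0,a^\ast)$ and negative on $(a^\ast,1)$ for a single $a^\ast$, so the difference attains its minimum over $[0,1]$ at an endpoint and is therefore nonnegative. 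Applying the comparison with $\delta=1/n$ and $a=k/n$ gives $-\log(\ff{n}{k}/n^k)\le n(k/n)^2=k^2/n$, and with $\delta=2/n$ and $a=(2k+1)/n\le1$ it gives $-\log(\dff{n}{k}/n^k)\le\tfrac n2\bigl((2k+1)/n\bigr)^2\le 9k^2/(2n)$ for $k\ge1$ (the case $k=0$ being trivial). Exponentiating yields both lower bounds.

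The only delicate point is the one flagged above: the factors $1-(2i+1)/n$ degenerate as $k\uparrow n/2$, so any argument relying on $\log(1-x)\ge -cx$ over a fixed subinterval of $[0,1)$ breaks down. What rescues the $O(k^2/n)$ estimate, uniformly over the whole range $k<n/2$, is the fact that $\int_0^a\psi$ stays bounded (indeed $\le1$) as $a\uparrow1$. Everything else is routine bookkeeping.
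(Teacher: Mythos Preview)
Your proof is correct. The paper itself omits the argument entirely, remarking only that the bounds ``are elementary to check,'' so there is no proof to compare against; what you have written is a clean and complete justification, and your use of the integral bound $\int_0^a(-\log(1-x))\,dx=(1-a)\log(1-a)+a\le a^2$ is a nice way to get a constant that is uniform over the full range $k<n/2$, including the regime where $(2k-1)/n$ approaches $1$ and a naive linearization of $\log(1-x)$ would fail.
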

    \begin{lemma}\label{lem:cycleprob}  
    Suppose $\alpha$ is an $\{R,B\}$--edge-colored $k$-cycle.  
    Let $r_1$ and $r_2$ 
    refer to the color pattern of $\alpha$,
    as in Section~\ref{sec:supporting},
    If $k<n/2$ and $r_1\geq 1$,
    \begin{align*}
      p_\alpha := \E I_\alpha &= 
        \frac{2^{r_1}}{\ff{n-1}{k-r_2-r_1}\dff{n(d-2)}{r_2+r_1}}\\
        &= \frac{2^{r_1}}{n^k(d-2)^{r_2+r_1}}
            \exp\left(O\left( \frac{(k-r_1-r_2)^2}{n}+\frac{(r_1+r_2)^2}{nd}
            \right)\right)\\
        &= \frac{2^{r_1}}{n^k(d-2)^{r_2+r_1}}
        e^{O(k^2/n)}.
    \end{align*}
  \end{lemma}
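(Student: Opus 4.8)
The plan is to use the independence of $\rHam$ and $\rP$ (the defining structure of $\sepmixedmd$), which makes $p_\alpha=\E I_\alpha$ factor as $\P[\text{the blue edges of }\alpha\text{ are edges of }\rHam]\cdot\P[\text{the red part of }\alpha\text{ is realized in }\rP]$, and to compute the two factors separately. First I would record the bookkeeping for the color pattern of a $k$-cycle $\alpha$: it has $r_1+r_2$ red edges and $k-r_1-r_2=b_2+r_1$ blue edges, and --- this is where the hypothesis $r_1\ge1$ enters --- its blue edges form a linear forest made up of exactly $r_1$ vertex-disjoint paths, one per maximal blue run. (If $r_1=0$ the cycle is monochromatic and this structure degenerates, which is why that case is excluded.) The restriction $k<n/2$ keeps everything well-defined: $r_1+r_2\le k<n/2\le n(d-2)/2$ and $k-r_1-r_2\le k-1<(n-1)/2$, so all falling factorials below make sense and the hypotheses of Lemma~\ref{lem:ffstirling} are met.

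For the red factor, the $r_1+r_2$ red edges of $\alpha$ carry prevertex labels, and since each prevertex lies in a unique pair of the matching, they specify $r_1+r_2$ pairwise-disjoint pairs of prevertices of $\rP$. Revealing the matched partners of these prevertices one at a time in the uniform matching of the $n(d-2)$ prevertices, the probability that all $r_1+r_2$ prescribed pairs occur is $\prod_{i=0}^{r_1+r_2-1}\bigl(n(d-2)-2i-1\bigr)^{-1}=1/\dff{n(d-2)}{r_1+r_2}$.

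For the blue factor, I would count the Hamiltonian cycles of $\{1,\ldots,n\}$ whose edge set contains the blue forest $F$ of $\alpha$, which has $r_1$ paths and $m\Def k-r_1-r_2$ edges. Contracting each path of $F$ to a single vertex leaves $n-m$ vertices, and extending a Hamiltonian cycle on these back to one on $\{1,\ldots,n\}$ that contains $F$ amounts to choosing an orientation for each of the $r_1$ contracted paths; passing to directed cycles to sidestep the reflection symmetry, this gives $2^{r_1}(n-m-1)!$ directed, hence $2^{r_1-1}(n-m-1)!$ undirected, Hamiltonian cycles containing $F$. Dividing by the total number $(n-1)!/2$ of Hamiltonian cycles on $\{1,\ldots,n\}$ gives the blue factor $\frac{2^{r_1}(n-m-1)!}{(n-1)!}=\frac{2^{r_1}}{\ff{n-1}{m}}=\frac{2^{r_1}}{\ff{n-1}{k-r_2-r_1}}$. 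Multiplying the two factors yields the first displayed equality.

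The asymptotic forms then follow from Lemma~\ref{lem:ffstirling} applied to both falling factorials: $\ff{n-1}{k-r_1-r_2}=(n-1)^{k-r_1-r_2}e^{O((k-r_1-r_2)^2/n)}=n^{k-r_1-r_2}e^{O((k-r_1-r_2)^2/n)}$, where $(1-1/n)^{k-r_1-r_2}$ is absorbed into the error using $k-r_1-r_2\ge1$; and $\dff{n(d-2)}{r_1+r_2}=\bigl(n(d-2)\bigr)^{r_1+r_2}e^{O((r_1+r_2)^2/(n(d-2)))}=n^{r_1+r_2}(d-2)^{r_1+r_2}e^{O((r_1+r_2)^2/(nd))}$, using $d-2\ge d/3$ for $d\ge3$ to pass from $n(d-2)$ to $nd$ in the error. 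The powers of $n$ combine to $n^{(k-r_1-r_2)+(r_1+r_2)}=n^k$, which is the second displayed line, and bounding $(k-r_1-r_2)^2/n$ and $(r_1+r_2)^2/(nd)$ both by $k^2/n$ gives the third. I expect the Hamiltonian-cycle count to be the only genuinely subtle point --- recognizing that $r_1\ge1$ is precisely what makes $F$ a proper linear forest rather than all of $\rHam$, and getting the contraction/orientation bijection right --- while the rest is arithmetic with falling factorials.
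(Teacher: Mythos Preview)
Your proof is correct and follows essentially the same approach as the paper's: factor $p_\alpha$ via the independence of $\rHam$ and $\rP$, compute the red factor as $1/\dff{n(d-2)}{r_1+r_2}$ and the blue factor as $2^{r_1}/\ff{n-1}{k-r_1-r_2}$, then apply Lemma~\ref{lem:ffstirling}. The paper's own proof is extremely terse (three sentences), so your contraction-and-orientation argument for the Hamiltonian-cycle factor simply fills in the details the paper leaves implicit.
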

  \begin{proof}
    The cycle $\alpha$ contains $r_1+r_2$ red edges and $k-r_1-r_2$
    blue edges. The probability that $\rP$ contains all of these
    red edges is $1/\dff{n(d-2)}{r_1+r_2}$. The blue edges form
    $r_1$ disjoint paths, and the probability that
    $\rHam$ contains these paths is $2^{r_1}/\ff{n-1}{k-r_1-r_2}$.
    The approximations follow from 
    Lemma~\ref{lem:ffstirling}.
  \end{proof}
  The restriction $r_1\geq 1$ in the previous theorem is because an all-blue
  edge-colored cycle cannot appear in $(H,Q)$. 
  Now, we compute the expected number of cycles of each length
  in $(\rHam,\rP)$, as well as the expected number
  of red and blue edges in these cycles:
  \begin{corollary}\label{cor:expectations}
    Recall that 
    $I_\alpha$ is the indicator that $(\rHam, \rP)$ contains the cycle
    $\alpha$.  Let $s_\alpha$ be the number of red edges 
    and $t_\alpha$ be the number of blue edges in $\alpha$.
    Then
    \begin{align}
      \E \sum_{\alpha\in\mixedcycspace} s_\alpha I_\alpha &=
      \frac{(d-2)(d-1)^k + 2(-1)^k}{2d}
      e^{O(k^2/n)},        \label{eq:Escycles}\\
      \E \sum_{\alpha\in\mixedcycspace} t_\alpha I_\alpha &=
      \frac{2(d-1)^k + (d-2)(-1)^k-d}{2d}
        e^{O(k^2/n)},\label{eq:Etcycles}\\
      \E \sum_{\alpha\in\mixedcycspace} I_\alpha &=
        \frac{(d-1)^k + (-1)^k - 1}{2k}e^{O(k^2/n)}. \label{eq:Ecycles}
    \end{align}
  \end{corollary}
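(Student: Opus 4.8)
The plan is to write each quantity as $\sum_{\alpha\in\mixedcycspace}w_\alpha\,\E I_\alpha$ with $w_\alpha\in\{1,s_\alpha,t_\alpha\}$, to evaluate $\E I_\alpha=p_\alpha$ by Lemma~\ref{lem:cycleprob}, and to group the sum by the color pattern of $\alpha$ so that the polynomials $p_k$ and $\cdp_k$ from Section~\ref{sec:supporting} finish the computation. The one combinatorial input I need beyond Lemma~\ref{lem:cycleprob} is the number of $\alpha\in\mixedcycspace$ realizing a prescribed color pattern. Fix a coloring $f\colon\edgesetof(\mathcal{C}_k)\to\{R,B\}$ with statistics $r_1=r_1(f)$ and $r_2=r_2(f)$, and note $f$ cannot be all blue, since $\rHam$ is a single $n$-cycle and so contains no shorter cycle. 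Such an $\alpha$ is a cyclic arrangement of $k$ distinct vertices together with a prevertex of $\rP$ at each end of each red edge, the two prevertices at a vertex incident to two red edges being required to be distinct. There are $\ff{n}{k}/2k$ cyclic arrangements (as with the identity $\abs{\cycspace}=\ff{n}{k}(d(d-1))^k/2k$, there is no overcounting: each of the $2k$ symmetries of a $k$-cycle acts freely on the rooted, oriented configurations, because a nontrivial symmetry would repeat a vertex or identify the two prevertices at a red edge), and there are $(d-2)^{2r_1}\bigl((d-2)(d-3)\bigr)^{r_2}$ prevertex labelings: each of the $2r_1$ red ends at a red-degree-one vertex has $d-2$ choices, and each of the $r_2$ red-degree-two vertices has $(d-2)(d-3)$ ordered choices of distinct prevertices.

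Combining this count with $p_\alpha=\tfrac{2^{r_1}}{n^k(d-2)^{r_1+r_2}}e^{O(k^2/n)}$ from Lemma~\ref{lem:cycleprob} (which, though stated for $r_1\ge 1$, also holds for the all-red pattern, where it reduces to $p_\alpha=1/\dff{n(d-2)}{k}$ together with Lemma~\ref{lem:ffstirling}), the powers of $d-2$ telescope, and with $\ff{n}{k}=n^ke^{O(k^2/n)}$ one is led to
\begin{align*}
  \E\sum_{\alpha\in\mixedcycspace}I_\alpha
    = \frac{e^{O(k^2/n)}}{2k}\Biggl(\sum_{f\colon\edgesetof(\mathcal{C}_k)\to\{R,B\}}(d-3)^{r_2}\bigl(2(d-2)\bigr)^{r_1}-1\Biggr),
\end{align*}
the $-1$ removing the (nonexistent) all-blue cycle. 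By \eqref{eq:pkdef} the inner sum equals $p_k(d-3,2(d-2))=\cdp_k(d-3,2(d-2),1)$, and \eqref{eq:cdp} now does something pleasant: at $(a,b,c)=(d-3,2(d-2),1)$ the discriminant is $(c-a)^2+4b=(4-d)^2+8(d-2)=d^2$, so $\cdproot_\pm=\tfrac{(d-2)\pm d}{2}\in\{d-1,-1\}$ and hence $p_k(d-3,2(d-2))=(d-1)^k+(-1)^k$. This is exactly \eqref{eq:Ecycles}.

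For \eqref{eq:Escycles} I would rerun the same bookkeeping with the extra weight $s_\alpha=r_1+r_2$ (the number of red edges), which inside the color-pattern sum amounts to applying $a\partial_a+b\partial_b$ to $p_k(a,b)$ before substituting, the all-blue term now contributing nothing on its own. Differentiating $\cdproot_\pm$ and once more using $\sqrt{(c-a)^2+4b}=d$ at the relevant point gives $(a\partial_a+b\partial_b)\cdproot_+=\tfrac{(d-1)(d-2)}{d}$ and $(a\partial_a+b\partial_b)\cdproot_-=-\tfrac{2}{d}$, hence $(a\partial_a+b\partial_b)p_k\big|_{(a,b)=(d-3,2(d-2))}=\tfrac{k}{d}\bigl((d-2)(d-1)^k+2(-1)^k\bigr)$, which yields \eqref{eq:Escycles}. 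Finally, $t_\alpha=k-s_\alpha$ since every edge of $\alpha$ is red or blue, so \eqref{eq:Etcycles} follows either by running the same computation with weight $k-(r_1+r_2)$ or by combining \eqref{eq:Ecycles} and \eqref{eq:Escycles} — this last carried out at the level of the polynomial identities, before the $e^{O(k^2/n)}$ factor is pulled out, so as not to lose control of the error.

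The step I expect to require the most care is the prevertex-labeling count, and in particular the $(d-3)$ rather than $(d-2)$ at red-degree-two vertices: that is exactly the factor collapsing the discriminant to the perfect square $d^2$ and producing the clean roots $d-1$ and $-1$. It is also what makes the $d=3$ case self-consistent (there $\rP$ is a perfect matching, a red-degree-two vertex cannot occur, and $(d-3)^{r_2}$ duly vanishes unless $r_2=0$) and keeps the small cases $k\le 2$ in line. One must also verify that the $e^{O(k^2/n)}$ error in Lemma~\ref{lem:cycleprob} is uniform in $f$; it is, because $r_1+r_2\le k$ and $k-r_1-r_2\le k$ bound the two pieces of that error, so it factors out of the color-pattern sum cleanly. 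For \eqref{eq:Etcycles} it is safest to carry out the weighted sum directly rather than subtracting two separate $e^{O(k^2/n)}$ estimates, though as the main terms are $\Theta\bigl((d-1)^k\bigr)$ for $k\ge 2$ this is not essential.
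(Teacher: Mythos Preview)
Your argument is correct. For \eqref{eq:Ecycles} you do exactly what the paper does: count rooted, oriented, colored cycles, apply Lemma~\ref{lem:cycleprob}, divide by $2k$, and recognize $p_k(d-3,2(d-2))=(d-1)^k+(-1)^k$ via \eqref{eq:cdp}.

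For \eqref{eq:Escycles} and \eqref{eq:Etcycles} you take a genuinely different route. The paper observes that summing only over colorings with the first edge red counts each $\alpha$ exactly $2s_\alpha$ times, and thus obtains $\E\sum 2s_\alpha I_\alpha$ from the path polynomial $p_k^R(d-3,2(d-2))$; likewise $p_k^B-1$ gives $\E\sum 2t_\alpha I_\alpha$. You instead insert the weight $s_\alpha=r_1+r_2$ algebraically, which amounts to applying the Euler operator $a\partial_a+b\partial_b$ to $p_k(a,b)$, and then differentiate $\cdproot_\pm$ directly. Both arguments are short; yours has the advantage of using only the closed form \eqref{eq:cdp} and not the separate expressions for $p_k^R,p_k^B$, while the paper's has a cleaner combinatorial interpretation (rooting at a red edge) and avoids the chain-rule bookkeeping. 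Your handling of \eqref{eq:Etcycles} at the polynomial level, before extracting the $e^{O(k^2/n)}$, is the right way to avoid subtracting two independent error terms; the paper circumvents the issue by computing $p_k^B-1$ directly.
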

  \begin{proof}
    We begin by counting the expected number of cycles of a given length
    and color pattern.
    Fix an $\{R,B\}$--edge-colored, rooted oriented cycle $\mathcal{C}_k.$
    There are $\ff{n}{k}$ different ways to choose the vertices of
    such a cycle, and there are
    $(d-2)^{2r_1+r_2}(d-3)^{r_2}$ ways to assign 
    labels to the prevertices on the red edges.
    So long as the cycle is not all blue,
    it has the probability given in Lemma~\ref{lem:cycleprob},
    so the expected number of cycles with this color pattern
    is 
    \begin{align*}
      \frac{2^{r_1}\ff{n}{k}(d-2)^{2r_1+r_2}(d-3)^{r_2}}
        {\ff{n-1}{k-r_2-r_1}\dff{n(d-2)}{r_2+r_1}}
        = (2(d-2))^{r_1}(d-3)^{r_2} e^{O(k^2/n)}.
    \end{align*}
    Summing this over all possible edge colorings
    besides $B^k$, the expected number of rooted, oriented
    cycles of length $k$ is
    \begin{align*}
      2k\E\sum_{\alpha\in\mixedcycspace} I_\alpha
        &= \big(p_k(d-3,2(d-2))-1\big)e^{O(k^2/n)},
    \end{align*}
    referring to the polynomials $p_k(a,b)$ from \eqref{eq:pkdef}.
    Applying \eqref{eq:cdp}
    proves \eqref{eq:Ecycles}.
    If we repeat the same counting procedure, but only sum over edge colorings
    of $\mathcal{C}_k$ that color the first edge red,
    we count each cycle $\alpha$ a total of $2s_\alpha$ times. Using the polynomials
    $p_k^R$ from Section~\ref{sec:supporting}, 
    \begin{align*}
      \E\sum_{\alpha\in\mixedcycspace} 2s_\alpha I_\alpha
        &= p_k^R(d-3,2(d-2))e^{O(k^2/n)},
    \end{align*}
    which proves \eqref{eq:Escycles}.  To show \eqref{eq:Etcycles},
    we do the same thing and subtract off the term given by the all blue
    pattern:
    \begin{align*}
      \E\sum_{\alpha\in\mixedcycspace}2t_\alpha I_\alpha &= \left(p_k^B(d-3,2(d-2))
         -1\right)
         e^{O(k^2/n)}.\qedhere
    \end{align*}
  \end{proof}

  \subsection{Exceptional cycle counts}\label{subsec:exceptional}
  
  If $x=(x_\alpha,\,\alpha\in\fullmixedcycspace)$ 
  or $x=(x_\alpha,\,\alpha\in\fullcycspace)$ with $x_\alpha$ equal to
  zero or one for each $\alpha$, then we interpret $x$
  as a collection of cycles, and we will say that $x$ contains 
  $\alpha$ to mean $x_\alpha=1$.  Our estimates will fail
  for states $x$ that contain too many cycles or overlapping cycles.
  The following definitions describe which states in
  the unscrambled mixed model
  $\sepmixedmd$ and in the pairing model $\pmd$ we will be able to
  analyze:
  \begin{definition}\label{def:lneat}
    For some $\lambda\geq 1$, 
    a vector $x=(x_\alpha,\,\alpha\in\fullmixedcycspace)$ is $\lambda$-neat
    if the following hold:
    \begin{enumerate}[i)]
      \item The vector $x$ does not contain any overlapping cycles;
        that is, if $x_\alpha=x_\beta=1$, then $\alpha$ and $\beta$ share
        no prevertices or Hamiltonian vertices.  
      \item Let $x$ contain a total of $\Phi$ prevertices
        and $\Psi$ Hamiltonian cycle vertices.  These
        two counts satisfy
        \begin{align*}
          \Phi \leq \lambda(d-1)^r,\qquad\qquad \Psi\leq \lambda(d-1)^{r-1}.
        \end{align*}
    \end{enumerate}
  \end{definition}
  \begin{definition}\label{def:lneatpmd}
    For some $\lambda\geq 1$, 
    a vector $x=(x_\alpha,\,\alpha\in\fullcycspace)$ is $\lambda$-neat
    if the following hold:
    \begin{enumerate}[i)]
      \item The vector $x$ does not contain any overlapping cycles;
        that is, if $x_\alpha=x_\beta=1$, then $\alpha$ and $\beta$ share
        no prevertices.
      \item The total number of prevertices contained in $x$ is at most
         $\lambda(d-1)^r$.
    \end{enumerate}
  \end{definition}
  
  The point of this section is to show that when $\lambda$
  grows logarithmically, nearly all graphs have cycle
  counts satisfying these criteria.
  \begin{proposition}\label{prop:pmdneat}
    For $d\geq 3$, and all $r$ and $n$,
    \begin{align*}
      \P[\text{$\cycprocess(P)$ is not $(\log n)$-neat}]
        &\leq \frac{\Cl{C:lneatprob}(d-1)^{2r-1}}{n},\qquad P\drawnfrom{\pmd}.
    \end{align*}
  \end{proposition}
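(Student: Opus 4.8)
The plan is to split the event that $\cycprocess(P)$ fails to be $(\log n)$-neat into two pieces, according to Definition~\ref{def:lneatpmd}: (A) some two distinct present cycles of length at most $r$ share a prevertex, and (B) the present cycles of length at most $r$ contain more than $(\log n)(d-1)^r$ prevertices altogether, i.e.\ $\Phi:=\sum_{k=1}^{r}2kX_k>(\log n)(d-1)^r$, where $X_k:=\sum_{\alpha\in\cycspace[k]}I_\alpha(P)$. I would bound $\P[\mathrm{(A)}]$ and $\P[\mathrm{(B)}]$ separately, each by $O\big((d-1)^{2r-1}/n\big)$. Throughout I assume the asserted bound is $<1$ (otherwise there is nothing to prove); with $\Cr{C:lneatprob}$ chosen large this already forces $n$ to exceed any prescribed absolute constant and forces $(d-1)^{2r-1}<n$, hence $r=O(\log n)$ and $k^2/(nd)=o(1)$ for all $k\le r$, so that Lemma~\ref{lem:ffstirling} gives $\E X_k=(1+o(1))(d-1)^k/(2k)$ and $\dff{nd}{\ell}=(1+o(1))(nd)^\ell$ for every $\ell=O(\log n)$.

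For (A) I would use a first-moment bound, $\P[\mathrm{(A)}]\le\sum\P[\alpha,\beta\text{ both present}]$ over distinct pairs of possible cycles of lengths $\le r$ sharing a prevertex. Since a prevertex lies in a unique pair of any pairing, two present cycles sharing a prevertex in fact share an entire labeled edge (for all other pairs the probability is zero), so I classify the contributing pairs by the number of edges they share. The dominant term is sharing exactly one edge: choosing $\alpha$, the shared edge among its edges, and then completing $\beta$ through that edge, weighted by $\P[\alpha\cup\beta\text{ present}]=\dff{nd}{|\alpha|+|\beta|-1}^{-1}$, a routine count gives a contribution of $O\big((d-1)^{|\alpha|+|\beta|-1}/n\big)$ per pair of lengths, the lone factor $n^{-1}$ being the signature of the single coincident edge. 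Summing over $|\alpha|,|\beta|\le r$ with $\sum_{j=1}^{r}(d-1)^j\le 2(d-1)^r$ gives $O\big((d-1)^{2r-1}/n\big)$; pairs sharing two or more edges (or a longer path, or several paths) have strictly fewer free vertices and are of strictly smaller order, and the few degenerate small cases (self-loops, double edges, $|\alpha|$ or $|\beta|\in\{1,2\}$) are either impossible or checked directly.

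For (B) I would first use the inclusion $\{\Phi>(\log n)(d-1)^r\}\subseteq\bigcup_{k=1}^{r}\{X_k>t_k\}$ with thresholds $t_k:=\gamma(d-1)^k/(2k)$ and $\gamma:=(\log n)(d-1)^r\big/\sum_{j=1}^{r}(d-1)^j$, chosen so that $\sum_{k}2kt_k=(\log n)(d-1)^r$. The crucial features of this choice are that $\tfrac12\le\gamma/\log n\le1$ and $(d-1)^k/(2k)\ge1$ for every $k\ge1$ and $d\ge3$, so that each $t_k$ is at least $(\log n)/2$ \emph{and} exceeds $\E X_k$ by a factor of order $\log n$. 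I would then bound $\P[X_k>t_k]\le\E\bigl[\binom{X_k}{m}\bigr]\big/\binom{\lceil t_k\rceil}{m}$ for an intermediate integer $m$ of order $\log n/\log\log n$. The factorial moment $\E[\binom{X_k}{m}]$ is, up to the lower-order contribution of $m$-subsets containing an overlapping pair (controlled exactly as in (A)), at most $(\mu_k^*)^m/m!$ with $\mu_k^*:=\abs{\cycspace[k]}(nd)^{-k}\le(d-1)^k/(2k)$, times a combinatorial factor $e^{O((mk)^2/(nd))}$ from Lemma~\ref{lem:ffstirling}; and $\binom{\lceil t_k\rceil}{m}\ge(t_k/2)^m/m!$. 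Since $\mu_k^*/t_k=O(1/\log n)$, the ratio is $O(1/\log n)^m\,e^{O((mk)^2/(nd))}$; with $m\asymp\log n/\log\log n$ the first factor is at most $n^{-3/2}$ while $(mk)^2/(nd)=o(1)$ because $m,k=O(\log n)$. Summing over $k\le r=O(\log n)$ and using $\sum_{k=1}^{r}(d-1)^{2k-1}\le 2(d-1)^{2r-1}$ then gives $\P[\mathrm{(B)}]=O\big((d-1)^{2r-1}/n\big)$.

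The delicate point — and the main obstacle — is step (B): one needs the estimate uniform over all admissible $d,r,n$ and sharp enough to give the polynomial rate $(d-1)^{2r-1}/n$ rather than merely some $n^{-c}$, so $m$ must be taken large enough that $(O(1/\log n))^m$ beats $n^{-1}$ yet small enough that the error $e^{O((mk)^2/(nd))}$ remains $O(1)$. These requirements are compatible no matter where $(d-1)^r$ lies between $O(1)$ and $O(\sqrt n)$ precisely because the threshold-to-mean ratio $t_k/\E X_k$ is $\Theta(\log n)$ for every $k$. (If one instead establishes that the cycle-count vector is stochastically dominated by a product of Poissons — which would follow from negative association of the edge-indicators of the uniform matching — then Lemma~\ref{lem:PoissonTail} applied to the linear statistic $\Phi$, whose discrete derivatives are all at most $2r$, yields the tail bound for (B) in one line.)
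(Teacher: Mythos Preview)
Your split into (A) overlap and (B) prevertex overflow, and the first-moment bound for (A), match the paper's proof. For (B) the methods diverge: the paper sets $\textsc{Many}:=\text{(B)}\cap\textsc{Overlap}^c$, takes a union bound over \emph{minimal bad sets} $S$ of pairwise-disjoint cycles, rewrites $\sum_S\dff{nd}{|S|}^{-1}$ exactly as $e^\mu$ times a Poisson-field probability, and then applies Lemma~\ref{lem:PoissonTail} to the linear statistic $F(\pcycpf)=\sum_\alpha 2|\alpha|Z_\alpha$. Your route---pigeonhole to a single cycle length $k$ and control $\P[X_k>t_k]$ by the factorial moment of order $m\asymp\log n/\log\log n$---is more elementary (no Poisson comparison, no log-Sobolev input) and delivers the same rate. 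Two remarks on execution. First, rather than asserting that $m$-subsets containing an overlapping pair are ``controlled exactly as in (A)'' (which only treats pairs, not $m$-tuples), it is cleaner to bound $\P[\{X_k>t_k\}\cap\neg\text{(A)}]$: on that event all present cycles are disjoint, so $\binom{X_k}{m}$ counts only disjoint $m$-subsets and your factorial-moment estimate goes through without qualification---this is precisely why the paper intersects with $\textsc{Overlap}^c$. Second, your parenthetical alternative does not work as written: the edge-indicators of a uniform perfect matching are \emph{not} negatively associated (indicators of vertex-disjoint pairs are positively correlated, since $\dff{nd}{2}<\dff{nd}{1}^2$), so cycle counts are not Poisson-dominated by that route; the paper's Poisson comparison is an algebraic identity, not a stochastic-domination argument.
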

  \begin{proposition}\label{prop:mixedneat}
    For $d\geq 3$, and all $r$ and $n$,
    \begin{align*}
      \P[\text{$\mixedcycprocess$ is not $(\log n)$-neat}]
        &\leq \frac{\Cr{C:lneatprob}(d-1)^{2r-1}}{n}.
    \end{align*}
  \end{proposition}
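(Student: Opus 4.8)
The plan is to prove Proposition~\ref{prop:mixedneat} (and, by the same argument with a little less bookkeeping, Proposition~\ref{prop:pmdneat}) by splitting the failure event according to the three clauses of Definition~\ref{def:lneat}: the overlap event $O$ that two distinct present cycles of $\mixedcycprocess$ share a prevertex of $\rP$ or of $\rHam$; the event $\{\Phi > (\log n)(d-1)^r\}$, where $\Phi = \sum_{k=1}^r\sum_{\alpha\in\mixedcycspace}2 s_\alpha I_\alpha$ is the number of prevertices of $\rP$ lying in cycles of length at most $r$; and the event $\{\Psi > (\log n)(d-1)^{r-1}\}$, where $\Psi = \sum_{k=1}^r\sum_{\alpha\in\mixedcycspace}2 t_\alpha I_\alpha$ is the corresponding count for $\rHam$, with $s_\alpha,t_\alpha$ as in Corollary~\ref{cor:expectations}. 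A union bound reduces the proposition to bounding each of these three probabilities by $O\bigl((d-1)^{2r-1}/n\bigr)$. Throughout we may assume $(d-1)^{2r-1}\le n$, since otherwise the claimed bound is trivial once $\Cr{C:lneatprob}$ is large enough; this forces $r = O(\log n)$ and makes every error factor $e^{O(r^2/n)}$ equal to $1+o(1)$.

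For the overlap event I would use a first moment bound: $\P[O]$ is at most the expected number of ordered pairs of distinct cycles of length at most $r$ sharing a prevertex. A prevertex lies in a unique pair of the matching, so two distinct cycles sharing a prevertex must share the incident edge; hence the union of such a pair is a connected graph of cyclomatic number at least two, and if both cycles have length at most $r$ it has at most $2r-1$ edges. By the cycle-probability estimate of Lemma~\ref{lem:cycleprob} and a transfer-matrix computation of the type used for Corollary~\ref{cor:expectations}, the expected number of ``theta-type'' subgraphs with $e$ edges (with any red/blue edge colouring) in $(\rHam,\rP)$ is $O\bigl((d-1)^{e}/n\bigr)$, the factor $n^{-1}$ coming from the cyclomatic number being two. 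The crucial point is that the number of such shapes that genuinely decompose into two cycles of length at most $r$ and have exactly $e$ edges is $O(2r-e)$ --- when $e$ is near $2r-1$ the shared arc must be short and the two private arcs nearly of length $r$ --- so that $\sum_{e\le 2r-1}O(2r-e)(d-1)^{e}/n$ is a convergent geometric-type sum dominated by its final term, yielding $\P[O] = O\bigl((d-1)^{2r-1}/n\bigr)$.

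For the two counting events I would apply Markov's inequality to a high factorial moment, since first- or second-moment bounds only give $O(1/\log n)$ and the Poisson approximation of Proposition~\ref{prop:multpoiapprox} is unavailable here without circularity. The input is that cycle counts in a configuration-type model are near-Poisson at the level of factorial moments: writing $X$ for the number of cycles of length at most $r$ in $(\rHam,\rP)$ and $\tilde X$ for the number of those containing at least one edge of $\rHam$, one gets $\E[X^{\underline m}]\le(1+o(1))(\E X)^m$ and likewise for $\tilde X$, for $m$ of order $\log n/\log\log n$. Indeed the dominant contribution to the $m$-th factorial moment comes from $m$-tuples of pairwise disjoint cycles, each contributing essentially $\prod_i\P[\alpha_i]$ by Lemma~\ref{lem:cycleprob} (the multiplicative error being $e^{O((mr)^2/n)} = 1+o(1)$ since only $O(mr)$ prevertices are involved), while tuples with a coincidence are controlled by the overlap estimate above and are negligible. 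Now $\Phi\le 2rX$ and $\Psi\le 2r\tilde X$, while $\E X = O\bigl((d-1)^r/r\bigr)$ and $\E\tilde X = O\bigl((d-1)^{r-1}\bigr)$ --- the latter because the all-red cycles form a copy of $\pmd[d-2]$, so $\E\tilde X$ is governed by $\sum_{k\le r}\bigl((d-1)^k-(d-3)^k\bigr)/2k$ --- and one checks by a short case analysis that the thresholds $(\log n)(d-1)^r/(2r)$ and $(\log n)(d-1)^{r-1}/(2r)$ exceed the relevant means by a factor tending to infinity; when $r$ is comparatively large $d$ is forced to be small, and there the complementary estimate $\E\tilde X = O\bigl((d-1)^r/r\bigr)$ has to be used instead. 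Markov's inequality then bounds each probability by a power of the reciprocal of this deviation factor, which for our choice of $m$ is $n^{-1-\Omega(1)}$, hence $O\bigl((d-1)^{2r-1}/n\bigr)$; the finitely many small values of $n$ are absorbed into $\Cr{C:lneatprob}$.

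I expect the counting events, not the overlap event, to be the main obstacle. We really do need stretched-exponential concentration of the cycle statistics $\Phi$ and $\Psi$ about their (small) means, and the natural tool --- the Poisson approximation --- is circular here because its proof rests on the present estimate. The factorial-moment route avoids this, but at the cost of verifying the near-Poisson moment bound uniformly over the whole range of $d$, $r$, $n$ under consideration, keeping the correction factors under control by truncating at $m\asymp\log n/\log\log n$, and --- most delicately --- checking in every parameter regime that the deviation factor between threshold and mean actually grows, which is where the interplay of the bounds $r = O(\log n)$, $\E X = O((d-1)^r/r)$, and the two complementary estimates for $\E\tilde X$ has to be used carefully.
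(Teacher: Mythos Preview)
There is a small gap in your overlap argument and a more serious one in your counting argument.

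For the overlap: in the mixed model two cycles can share a Hamiltonian vertex without sharing any edge, since each vertex is incident to two distinct blue edges of $\rHam$. So ``overlap $\Rightarrow$ shared edge'' is false here, and your reduction to theta graphs is incomplete. The paper treats this case separately (the set $\fullmixedcycspace_\alpha'$ in its proof) and still obtains $O((d-1)^{2r-1}/n)$, the $-1$ in the exponent surviving because $\beta$ is forced to carry a blue edge at the shared vertex.

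For the counting events: your inequality $\Psi\le 2r\tilde X$ discards the weights $t_\alpha$, and this costs you in exactly the regime that matters. Take $d-1=r$ with $(d-1)^{2r-1}\approx n$, so that $r\approx\log n/(2\log\log n)$. Then both of your estimates for $\E\tilde X$ give $\Theta((d-1)^{r-1})$, while the threshold for $\tilde X$ is $(\log n)(d-1)^{r-1}/(2r)$, so the deviation factor is only $\Theta(\log\log n)$. With $m\asymp\log n/\log\log n$ the factorial-moment Markov bound is then $(\Theta(\log\log n))^{-m}=\exp\bigl(-\Theta(\log n\cdot\log\log\log n/\log\log n)\bigr)=n^{-o(1)}$, not the $n^{-1-\Omega(1)}$ you claim. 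You could take $m$ much larger --- the absolute threshold is polynomially large in $n$, so the true Poisson tail is tiny --- but then you must justify the near-Poisson factorial-moment bound for $m$ well beyond $\log n/\log\log n$, and your sketch does not do this. The paper avoids the issue entirely: it writes $\P[\textsc{Many}]\le\sum_S p_S$ over minimal bad sets $S$, bounds $p_S$ by a product so that $\sum_S p_S\le e^{\mu}\,\P[\text{$\mathbf Y$ is bad}]$ for a genuine independent Poisson field $\mathbf Y=(Y_\alpha)$, and then applies the log-Sobolev Poisson tail bound of Lemma~\ref{lem:PoissonTail} directly to the weighted linear functionals $\sum_\alpha 2s_\alpha Y_\alpha$ and $\sum_\alpha 2t_\alpha Y_\alpha$. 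Keeping the weights yields $M_1=O(r(d-1)^{r-1})$ for the second functional instead of $O(r(d-1)^{r})$, and the resulting bound is $n^{-\omega(1)}$ uniformly over all regimes.
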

  These two propositions have essentially the same proof, except that
  the details of the second one are somewhat trickier.
  \begin{proof}[Proof of Proposition~\ref{prop:pmdneat}]
    \newcommand{\setofbadcycles}{\mathcal{B}}
    \newcommand{\Ii}{\fullcycspace}
    \newcommand{\Overlap}{\textsc{Overlap}}
    \newcommand{\Many}{\textsc{Many}}
    Let $\lambda=\log n$.
    We define two events whose probability we wish to bound:
    \begin{align*}
      \Overlap &= \{\text{$P$ contains two cycles of length $r$ or less
        sharing an edge}\},\\
      \Many &= \{\text{$\cycprocess(P)$ contains more than $\lambda(d-1)^r$
        prevertices}\} \cap \Overlap^C.
    \end{align*}
    To bound the probability of $\Overlap$,
    we  bound
    $\sum_{\alpha,\beta}\E I_\alpha I_\beta$, where $\alpha$ and $\beta$
    range over all pairs of overlapping cycles.
    For some $\alpha\in\fullcycspace$, let $\Ii^l_\alpha\subseteq\fullcycspace$
    denote the set of cycles that share exactly $l$ pairs with $\alpha$, 
    but otherwise do not share any prevertices.
    For any $\beta\in\Ii_\alpha^l$,
    \begin{align*}
      \E[I_\alpha I_\beta]=\P[\text{$P$ contains $\alpha$ and $\beta$}]
      = \frac{1}{\dff{nd}{\abs{\alpha}+\abs{\beta}-l}}.
    \end{align*}
    
    \begin{figure}
         \begin{center}
        \begin{tikzpicture}[scale=1.75,vert/.style={circle,fill,inner sep=0,
              minimum size=0.15cm,draw}, H/.style={dashed},>=stealth]
          \begin{scope}
          \path  (270:1) node[vert] (s1)  {}   +(270:.2) node {$1$}
                 (237:1) node [vert] (s2)  {} +(237:0.2) node {$2$}
                 (205:1) node[vert] (s3) {} +(205:0.2) node {$3$}
                 (171:1) node[vert] (s4) {} +(171:0.2) node {$4$}
                 (139:1) node[vert] (s5) {} +(139:0.2) node {$5$}
                 (106:1) node[vert] (s6) {} +(106:0.2) node {$6$}
                 (74:1) node[vert] (s7) {} +(74:0.2) node {$7$}
                 (41:1) node[vert] (s8) {} +(41:0.2) node {$8$}
                 (8:1) node[vert] (s9) {} +(8:0.2) node {$9$}
                 (335:1) node[vert] (s10) {} +(335:0.2) node {$10$}
                 (302:1) node[vert] (s11) {} +(302:0.2) node {$11$};
            \draw[thick] (s1) -- (s2) node[pos=0.15,font=\footnotesize,
                                           shift=(253:-0.15)] {$1$}
                                      node[pos=0.85,font=\footnotesize,
                                           shift=(253:-0.15)] {$1$}
                         (s2) -- (s3) node[pos=0.15,font=\footnotesize,
                                           shift=(220:-0.15)] {$2$}
                                      node[pos=0.85,font=\footnotesize,
                                           shift=(220:-0.15)] {$1$};
            \draw[densely dotted, thick] (s3) -- (s4) node[pos=0.15,font=\footnotesize,
                                           shift=(187:-0.15)] {$3$}
                                      node[pos=0.85,font=\footnotesize,
                                           shift=(187:-0.15)] {$2$}
                         (s4) -- (s5) node[pos=0.15,font=\footnotesize,
                                           shift=(155:-0.15)] {$1$}
                                      node[pos=0.85,font=\footnotesize,
                                           shift=(155:-0.15)] {$1$};
            \draw[thick] (s5) -- (s6) node[pos=0.15,font=\footnotesize,
                                           shift=(122:-0.15)] {$2$}
                                      node[pos=0.85,font=\footnotesize,
                                           shift=(122:-0.15)] {$1$}
                         (s6) -- (s7) node[pos=0.15,font=\footnotesize,
                                           shift=(89:-0.15)] {$2$}
                                      node[pos=0.85,font=\footnotesize,
                                           shift=(89:-0.15)] {$3$};
            \draw[thick, densely dotted](s7) -- (s8) node[pos=0.15,font=\footnotesize,
                                           shift=(57:-0.15)] {$1$}
                                      node[pos=0.85,font=\footnotesize,
                                           shift=(57:-0.15)] {$2$};
            \draw[thick] (s8) -- (s9) node[pos=0.15,font=\footnotesize,
                                           shift=(24:-0.15)] {$3$}
                                      node[pos=0.85,font=\footnotesize,
                                           shift=(24:-0.15)] {$1$};
            \draw[thick, densely dotted] (s9) -- (s10) node[pos=0.15,font=\footnotesize,
                                           shift=(351:-0.15)] {$2$}
                                      node[pos=0.85,font=\footnotesize,
                                           shift=(351:-0.15)] {$1$};
            \draw[thick] (s10) -- (s11) node[pos=0.15,font=\footnotesize,
                                           shift=(318:-0.15)] {$2$}
                                      node[pos=0.85,font=\footnotesize,
                                           shift=(318:-0.15)] {$1$}
                         (s11) -- (s1) node[pos=0.15,font=\footnotesize,
                                           shift=(285:-0.15)] {$2$}
                                      node[pos=0.85,font=\footnotesize,
                                           shift=(285:-0.15)] {$2$};

            \draw (0,-1.5) node[text width=5.8cm,anchor=base]
            {
              The cycle $\alpha$, with $\gamma$ dotted.  The subgraph
              $\gamma$ has components $A_1,\ldots,A_p$.  In this example,
              the number of components of $\gamma$ is $p=3$, 
              the size of $\alpha$ is $k=11$, and the number
              of edges in $\gamma$ is $l=4$.\\[0.1cm]
              In this example, we will construct a cycle $\beta$ of length
              $j=10$
              that overlaps with $\alpha$ at $H$.
            };
          \end{scope}
          \begin{scope}[xshift=3.7cm]
            \path (-1.476,0) node[vert,label=below:{$3$}] (s3) {}
                  -- ++(0.563,0) node[vert,label=below:$4$] (s4) {}
                  -- ++(0.563,0) node[vert,label=below:$5$] (s5) {}
                  -- ++(0.35,0) node[vert,label=below:$10$] (s10) {}
                  -- ++(0.563,0) node[vert,label=below:$9$] (s9){}
                  -- ++(0.35,0) node[vert,label=below:$7$] (s7) {}
                  -- ++(0.563,0) node[vert,label=below:$8$] (s8){};
            \draw[thick] (s3) -- (s4) node[pos=0.15,font=\footnotesize,
                                           shift={(0,0.15)}] {$3$}
                                      node[pos=0.85,font=\footnotesize,
                                           shift={(0,0.15)}] {$2$}
                         (s4) -- (s5) node[pos=0.15,font=\footnotesize,
                                           shift={(0,0.15)}] {$1$}
                                      node[pos=0.85,font=\footnotesize,
                                           shift={(0,0.15)}] {$1$}
                         (s10) -- (s9) node[pos=0.15,font=\footnotesize,
                                           shift={(0,0.15)}] {$1$}
                                      node[pos=0.85,font=\footnotesize,
                                           shift={(0,0.15)}] {$2$}
                         (s7) -- (s8) node[pos=0.15,font=\footnotesize,
                                           shift={(0,0.15)}] {$1$}
                                      node[pos=0.85,font=\footnotesize,
                                           shift={(0,0.15)}] {$2$};
            \draw (0,-1.5) node[text width=5.8cm,anchor=base]
            {
              \textbf{Step 1.} We lay out the components $A_1,\ldots,A_p$.
              We can order and orient $A_2,\ldots,A_p$ however we would like,
              for a total of $(p-1)!2^{p-1}$ choices.
              Here, we have ordered the components $A_1, A_3, A_2$, 
              and we have reversed the orientation of $A_3$.
            };
          \end{scope}
          \begin{scope}[yshift=-4.1cm,new/.style={}]
            \path (-1.476,0) node[vert,label=below:{$3$}] (s3) {}
                  -- ++(0.563,0) node[vert,label=below:$4$] (s4) {}
                  -- ++(0.563,0) node[vert,label=below:$5$] (s5) {}
                  -- ++(0.35,0) node[vert,label=below:$10$] (s10) {}
                  -- ++(0.563,0) node[vert,label=below:$9$] (s9){}
                  -- +(-0.1065,0.553) node[vert,new] (a1){}
                  -- +(0.4565,0.553) node[vert,new] (a2) {}
                  -- ++(0.35,0) node[vert,label=below:$7$] (s7) {}
                  -- ++(0.563,0) node[vert,label=below:$8$] (s8){}
                  -- (0,-0.5) node[vert,new] (a3) {};
            \draw[thick] (s3) -- (s4) node[pos=0.15,font=\footnotesize,
                                           shift={(0,0.15)}] {$3$}
                                      node[pos=0.85,font=\footnotesize,
                                           shift={(0,0.15)}] {$2$}
                         (s4) -- (s5) node[pos=0.15,font=\footnotesize,
                                           shift={(0,0.15)}] {$1$}
                                      node[pos=0.85,font=\footnotesize,
                                           shift={(0,0.15)}] {$1$}
                         (s10) -- (s9) node[pos=0.15,font=\footnotesize,
                                           shift={(0,0.15)}] {$1$}
                                      node[pos=0.85,font=\footnotesize,
                                           shift={(0,0.15)}] {$2$}
                         (s7) -- (s8) node[pos=0.15,font=\footnotesize,
                                           shift={(0,0.15)}] {$1$}
                                      node[pos=0.85,font=\footnotesize,
                                           shift={(0,0.15)}] {$2$};
            \draw[thick,new] (s5) to [out=90,in=90] (s10);
            \draw[thick,new] (s9) to (a1);
            \draw[thick,new] (a1) to (a2);
            \draw[thick,new] (a2) to (s7);
            \draw[thick,new] (s8) to[out=225,in=0] (a3);
            \draw[thick,new] (a3) to[out=180,in=305] (s3);
            
            \draw (0,-0.95) node[anchor=base,text width=5.8cm]
            {
              \textbf{Step 2.} Next, we choose how many edges will go in each
              gap between components.  Each gap must contain at least
              one edge, and we must add a total of $j-l$ edges, giving
              us $\binom{j-l-1}{p-1}$ choices.
              In this example, we have added one edge after $A_1$,
              three after $A_3$, and two after $A_2$.
            };
          \end{scope}
          \begin{scope}[yshift=-4.1cm,xshift=3.7cm,new/.style={}]
            \path (-1.476,0) node[vert,label=left:{$3$}] (s3) {}
                  -- ++(0.563,0) node[vert,label=below:$4$] (s4) {}
                  -- ++(0.563,0) node[vert,label=below:$5$] (s5) {}
                  -- ++(0.35,0) node[vert,label=below:$10$] (s10) {}
                  -- ++(0.563,0) node[vert,label=below:$9$] (s9){}
                  -- +(-0.1065,0.553) node[vert,label={[new]above left:$23$}] (a1){}
                  -- +(0.4565,0.553) node[vert,label={[new]above right:$1$}] (a2) {}
                  -- ++(0.35,0) node[vert,label=below:$7$] (s7) {}
                  -- ++(0.563,0) node[vert,label=right:$8$] (s8){}
                  -- (0,-0.5) node[vert,label={[new]below:$15$}] (a3) {};
            \draw[thick] (s3) -- (s4) node[pos=0.15,font=\footnotesize,
                                           shift={(0,0.15)}] {$3$}
                                      node[pos=0.85,font=\footnotesize,
                                           shift={(0,0.15)}] {$2$}
                         (s4) -- (s5) node[pos=0.15,font=\footnotesize,
                                           shift={(0,0.15)}] {$1$}
                                      node[pos=0.85,font=\footnotesize,
                                           shift={(0,0.15)}] {$1$}
                         (s10) -- (s9) node[pos=0.15,font=\footnotesize,
                                           shift={(0,0.15)}] {$1$}
                                      node[pos=0.75,font=\footnotesize,
                                           shift={(0,0.15)}] {$2$}
                         (s7) -- (s8) node[pos=0.25,font=\footnotesize,
                                           shift={(0,0.15)}] {$1$}
                                      node[pos=0.85,font=\footnotesize,
                                           shift={(0,0.15)}] {$2$}
                         (a1) -- (a2) node[pos=0.15,font=\footnotesize,
                                           shift={(0,0.15)}] {$3$}
                                      node[pos=0.85,font=\footnotesize,
                                           shift={(0,0.15)}] {$2$}
                         (a2) -- (s7) node[pos=0.15,font=\footnotesize,
                                           shift={(0.14,0)}] {$1$}
                                      node[pos=0.65,font=\footnotesize,
                                           shift={(0.1,0)}] {$2$}
                         (s8) to[out=225,in=0] (a3)
                         (a3) to[out=180,in=305] (s3)
                         (s8) node[font=\footnotesize,shift={(-0.06,-0.25)}] {$1$}
                         (a3) node[font=\footnotesize,shift={(0.2,0.15)}] {$1$}
                           node[font=\footnotesize,shift={(-0.23,0.15)}] {$2$}
                         (s3) node[font=\footnotesize,shift={(0.06,-0.25)}] {$1$}
                         (s9) -- (a1) node[pos=0.35,font=\footnotesize,
                                           shift={(-0.16,0)}] {$1$}
                                      node[pos=0.85,font=\footnotesize,
                                           shift={(-0.16,0)}] {$2$};
            \draw        (s5) to [out=90,in=90] (s10);
            \draw (s5) node[font=\footnotesize,
                         shift={(0.025,0.35)}] {$2$}
                  (s10) node[font=\footnotesize,
                                           shift={(-0.025,0.35)}] {$3$};

            \draw (0,-0.95) node[anchor=base, text width=5.8cm]
            {
              \textbf{Step 3.} We can choose the new vertices in
              $[n-p-l]_{j-p-l}$ ways, and we can direct and give labels
              to the new edges in $(d-1)^{j-l+p}d^{j-l-p}$ ways.
            };
          \end{scope}
        \end{tikzpicture}          
         \end{center}
      \caption{Counting the cycles that overlap $\alpha$ at $\gamma$.}
    \end{figure}
    Our plan is to bound the size of $\Ii_\alpha^l$.
   Fix some $\alpha\in\cycspace$ and let $\gamma$ be some set of its edges of
   size $\abs{\gamma}=l$ and with $p$ connected components (and thus $p+l$
   vertices).
   We will show that the number of $j$-cycles that
   overlap with $\alpha$ at $\gamma$ is bounded by
   \begin{align}
     2^{p-1}(p-1)!\binom{j-l-1}{p-1}[n-p-l]_{j-p-l}(d-1)^{j-l+p}
     d^{j-l-p}\label{eq:Hoverlap}
   \end{align}
   Call the components of $\gamma$ $A_1,\ldots,A_p$.  We can construct any
   $\beta\in\Ii_j$ that overlaps with $\alpha$ at $\gamma$ by stringing together
   these components with other edges in between them.
   The components can appear in $\beta$ in any order, and each can appear with
   one of two orientations.  Since the vertices in $\beta$ are only
   given up to cyclic rotation, we can assume without loss of generality
   that component $A_1$ appears first, with some fixed orientation, followed
   by $A_2,\ldots,A_p$ in any order with any orientation, for a total
   of $2^{p-1}(p-1)!$ choices.
   
   Now, imagine the components laid in a line, with gaps between them,
   and count the number of ways to fill the gaps.  Each of the $p$ gaps
   must contain at least one edge, and the total number of edges in
   the gaps is $j-l$.  Thus the total number of possible gap sizes is the
   number of compositions of $j-l$ into $p$ parts, $\binom{j-l-1}{p-1}$.
   
   Now that we have chosen the number of edges to appear in each gap, we choose
   the edges themselves.  We can do this by giving an ordered list
   of $j-p-l$ vertices to go in the gaps, along with a label and an orientation
   for each of the $j-l$ new edges.  There are $[n-p-l]_{j-p-l}$ ways to 
   choose the vertices, and $(d-1)^{j-l+p}d^{j-l-p}$ ways to choose
   the labels.  This establishes \eqref{eq:Hoverlap}.
   It is a bound rather than an equality because some cycles constructed in
   this way might have additional overlap with $\alpha$.
   
   Next, we count the number of ways to choose a subgraph $\gamma$ of $\alpha$
   with $l$ edges and $p$ components.  Let $s_1,\ldots,s_k$ be the vertices
   of $\alpha$, in order.  Suppose that we have sequences of positive
   integers satisfying $a_1+\cdots+a_p=l$ and $b_1+\cdots+b_p=k-l$.
   Then we can obtain a subgraph of $\alpha$ with $l$ edges and
   $p$ components by starting at some vertex $s_i$ and including
   the next $a_1$ edges of $\alpha$ in $\gamma$, then excluding the next
   $b_1$, then including the next $a_2$, and so on.  Every subgraph with
   $p$ components and $l$ edges is given in exactly $p$ ways, since
   $s_i$ can be at the beginning of any of the $p$ components.  The number
   of ways to choose $i$ and the two sequences is
   $k\binom{l-1}{p-1}\binom{k-l-1}{p-1}$, and so the total number
   of such subgraphs is this, divided by $p$.
   
   All together, we have
   \begin{align*}
     \abs{\Ii_\alpha^l\cap\cycspace[j]}&\leq \sum_{p=1}^{l\wedge j-l}
     \frac{k}{p}\binom{l-1}{p-1}\binom{k-l-1}{p-1}
     2^{p-1}(p-1)!\binom{j-l-1}{p-1}\times\phantom{}\\
       &\qquad\qquad[n-p-l]_{j-p-l}(d-1)^{j-l+p}
     d^{j-l-p}.
   \end{align*}
   We apply the bounds
       \begin{align*}
         \binom{l-1}{p-1}&\leq \frac{r^{p-1}}{(p-1)!},\\
         \binom{k-l-1}{p-1},\,\binom{j-l-1}{p-1}&\leq (er/(p-1))^{p-1},
       \end{align*}
  to get
  \begin{align*}
    \abs{\Ii_\alpha^l\cap\Ii_j}&\leq \sum_{p=1}^{l\wedge j-l}
    \frac{k}{p}\left(\frac{2e^2r^3}{(p-1)^2}\right)^{p-1}
      [n-p-l]_{j-p-l}(d-1)^{j-l+p}d^{j-l-p}\\
      &=k(d-1)^{j-l+1}d^{j-l-1}[n-l-1]_{j-l-1}\times\phantom{} \\
      &\qquad\left(1+\sum_{p=2}^{l\wedge j-l}
    \frac{1}{p[n-l-1]_{p-1}}\left(\frac{2(d-1)e^2r^3}{d(p-1)^2}\right)^{p-1}
    \right).
  \end{align*}
  We can assume without loss of generality that
  $r\leq n^{1/10}$, since the proposition holds
  for all $r > n^{1/10}$ just by choosing $\Cr{C:lneatprob}$
  large enough to make $\Cr{C:lneatprob}(d-1)^{2r-1}/n\geq 1$ in this
  case. Thus the sum in the above equation
  is bounded by a universal constant, and
  we can compute
  \begin{align*}
    \sum_{\beta\in\Ii_\alpha^l}\E I_\alpha I_\beta
      &\leq\sum_{j=l+1}^r\sum_{\beta\in\Ii_\alpha^l\cap\cycspace[j]}\frac{1}
        {\dff{nd}{k+j-l}}\\
      &=\sum_{j=l+1}^r 
        O\left(\frac{k(d-1)^{j-l+1}}
        {(nd)^{k+1}}\right)=O\left(\frac{k(d-1)^{r-l+1}}{(nd)^{k+1}}
        \right),
  \end{align*}
  and
  \begin{align}
    \P[\Overlap]\leq \sum_{\alpha\in\fullcycspace}\sum_{l\geq 1}
    \sum_{\beta\in\Ii_\alpha^l}\E I_\alpha I_\beta
    &\leq \sum_{k=1}^r\frac{[n]_kd^k(d-1)^k}{2k}\sum_{l=1}^{k-1}
    O\left(\frac{k(d-1)^{r-l+1}}{(nd)^{k+1}}
        \right)\nonumber\\
      &=\sum_{k=1}^r\frac{[n]_kd^k(d-1)^k}{2k}
      O\left(\frac{k(2d-1)^{r-1}}{(nd)^{k+1}}\right)\nonumber\\
      &=O\left(\frac{(d-1)^{2r-1}}{n}\right).\label{eq:pmdoverlap}
  \end{align}
    
    Now, we bound $\P[\Many]$, using another union bound, which will
    reduce
    the problem to computing tail probabilities of a Poisson process.
    Let $S\subseteq\fullcycspace$, and let $\abs{S}$ denote the total number
    of edges in all cycles in $S$. We call $S$ a bad set if
    $\abs{S}>\lambda(d-1)^r$ and
    its cycles do not overlap at any prevertices. If no proper subset
    of $S$ contains more than $\lambda(d-1)^r$ edges, we call $S$ a minimal
    bad set.
    For any choice of $m$ pairs of distinct prevertices out of
    $nd$, the probability that $P$ contains all of them is exactly
    $1/\dff{nd}{m}$. Thus by a union bound,
    \begin{align*}
      \P[\Many]
       &\leq \sum_{S} 
        \frac{1}{\dff{nd}{\abs{S}}},
    \end{align*}
    where $S$ ranges over all minimal bad subsets of $\fullcycspace$.
    
    If $S$ is a minimal bad set, then it contains at most $\lambda(d-1)^r/2+r$
    edges.
    By Lemma~\ref{lem:ffstirling}, 
    \begin{align*}
      \frac{1}{\ff{nd}{\abs{S}}} &=
      (nd)^{-\abs{S}}\exp\left(O\bigg(\frac{\abs{S}^2}{nd}\bigg)\right)
        \leq (nd)^{-\abs{S}}\exp\left(\frac{\Cl{C:experr}\lambda^2(d-1)^{2r-1}}
          {n}\right)
    \end{align*}
    for some absolute constant $\Cr{C:experr}$.
    Now, we estimate
    \begin{align}
      \sum_{S} 
        \frac{1}{\dff{nd}{\abs{S}}}
        &\leq\exp\left(\frac{\Cr{C:experr}\lambda^2(d-1)^{2r-1}}
          {n}\right)
          \sum_S \prod_{\alpha\in S} (nd)^{-\abs{\alpha}}\nonumber\\
        &\leq \exp\left(\frac{\Cr{C:experr}\lambda^2(d-1)^{2r-1}}
          {n}\right)
          e^\mu \sum_S \P[\pcycpf = \one{S}],\label{eq:pmdub2}
    \end{align}
    where we recall that $\pcycpf=(\poilimitE{\alpha},\,\alpha\in\fullcycspace)$
    has as its
    entries independent Poisson random variables with $\E\poilimitE{\alpha}
    = (nd)^{-\abs{\alpha}}$, and $\mu=\sum_\alpha \E \poilimitE{\alpha}$.
    Define $F(x)$ for $x=(x_\alpha,\,\alpha\in\fullcycspace)$
    by $F(x)=\sum_\alpha 2\abs{\alpha}x_\alpha$, so that
    $F(\one{S})$ is the number of prevertices
    in all cycles in $S$. Now,
    \begin{align*}
      \sum_S \P[\pcycpf = \one{S}] &\leq
        \P[F(\pcycpf) > \lambda(d-1)^r],
    \end{align*}
    and we can bound this probability with the modified log-Sobolev
    inequalities.
    First, we compute
    \begin{align*}
      \E F(\pcycpf) = \sum_{k=1}^r\frac{2k\abs{\cycspace[k]}}
        {(nd)^k} \leq \Cl{C:logsob1}(d-1)^r
    \end{align*}
    for a constant $\Cr{C:logsob1}$.
    In the notation of Lemma~\ref{lem:PoissonTail},
    \begin{align*}
      \sum_{\alpha\in\fullcycspace}\norm{\nabla_\alpha F}^2 \E \poilimitE{\alpha}
      =
      \sum_{\alpha\in\fullcycspace}(2\abs{\alpha})^2\E \poilimitE{\alpha}
        &\leq  2r\sum_{\alpha\in\fullcycspace} 2\abs{\alpha} \E \poilimitE{\alpha}\\
        &= 2r\E F(\pcycpf)\leq 2\Cr{C:logsob1}r(d-1)^r,
    \end{align*}     
    and
    \begin{align*}
      \max_{\alpha\in\fullcycspace}\norm{\nabla_\alpha F} \leq 2r.
    \end{align*}
    By Lemma~\ref{lem:PoissonTail},
    \begin{align*}
      \P\left[F(\pcycpf)>\lambda(d-1)^r\right]
        \leq\exp\left( -\frac{(\lambda-\Cr{C:logsob1})(d-1)^r}{4r}
          \log\left(\frac{\lambda}{\Cr{C:logsob1}}
        \right)\right).
    \end{align*}    
    
    Now, we substitute this back into \eqref{eq:pmdub2}.
    Making sure that we have chosen
    $\Cr{C:logsob1}$ large enough, we have
    $\mu\leq \Cr{C:logsob1}(d-1)^r/r$, and we obtain
    \begin{equation}\label{eq:pmdmanybound}
      \begin{split}
      \P[\Many] &\leq
      \exp\Bigg( -(d-1)^r \bigg(\frac{\log n -
        \Cr{C:logsob1}}{4r}\log\bigg(\frac{
      \log n}{\Cr{C:logsob1}}\bigg) - \frac{\Cr{C:logsob1}}{r} \\
      &\qquad\qquad\qquad\qquad
        \qquad\qquad-\frac{\Cr{C:experr}(\log n)^2 (d-1)^{r-1}}{n} \bigg)\Bigg).
      \end{split}
    \end{equation}
    For all $n$, $d$, $r$ such that $\Cr{C:experr}(\log n)^2 (d-1)^{r-1}>n$,
    the proposition holds trivially by choosing $\Cr{C:lneatprob}$
    sufficiently large. For the remaining values of $n$, $d$, and $r$,
    \eqref{eq:pmdmanybound} shows that $\P[\Many]= O(1/n)$,
    which with \eqref{eq:pmdoverlap} completes the proof.
  \end{proof}
  \begin{proof}[Proof of Proposition~\ref{prop:mixedneat}]
    \newcommand{\Overlap}{\textsc{Overlap}}
    \newcommand{\Many}{\textsc{Many}}
    Define events $\Overlap$ and $\Many$ as in the previous proposition.
    Let $\fullmixedcycspace_\alpha$ consist of all cycles in 
    $\fullmixedcycspace$ that share an entire edge with $\alpha$
    (and possibly other edges and prevertices as well), and let
    $\fullmixedcycspace_\alpha'$ consist of all cycles that share no edges
    with $\alpha$ but do share Hamiltonian vertices.
    
    First, we show that
    \begin{align}
      \sum_{\alpha\in\fullmixedcycspace}\sum_{\beta\in\fullmixedcycspace_\alpha}
        \E I_\alpha I_\beta = O\left(\frac{(d-1)^{2r-1}}{n}\right).
      \label{eq:varoverlap}
    \end{align}
    
    \newcommand{\salpha}{\tilde{\alpha}}
    \newcommand{\sbeta}{\tilde{\beta}}
    \newcommand{\sgamma}{\tilde{\gamma}}
    Let $\salpha$ and $\sbeta$ denote graph cycles without edge colors 
    or labels.
    Let $\sgamma$ be the subgraph made up of edges common to $\salpha$
    and $\sbeta$.
    Suppose that $\sgamma$ consists of $p$ paths, with a total of $l$
    edges.  First, we count how many possible $\salpha$ and $\sbeta$
    can give rise to $\sgamma$ with these properties.
    
    Fix a choice of $\salpha$ and $\sgamma\subseteq\salpha$,
    and we will determine how many possible $\sbeta$ there are.
    Let the components of $\sgamma$ be $A_1,\ldots,A_p$, in the order
    they appear in $\salpha$.  To construct $\sbeta$, imagine laying
    out these components, with $A_2,\ldots,A_p$ ordered and oriented
    any way, a total of $2^{p-1}(p-1)!$ choices.  Then, we will create
    $\sbeta$ by filling in the gaps between these components.
    Each gap between components must contain at least one edge, and
    there are a total of $j-l$ edges to add.  So, the number of possible
    gap sizes is $\binom{j-l-1}{p-1}$, the number of compositions
    of $j-l$ into $p$ parts.
    This creates $j-p-l$ new vertices, and we have less than
    $n^{j-p-l}$ choices for these.  Thus, for this fixed $\salpha$
    and $\sgamma$, there are at most
    \begin{align*}
      2^{p-1}(p-1)!\binom{j-l-1}{p-1}n^{j-p-l}
    \end{align*}
    choices of $\sbeta$.
    
    We choose $\salpha$ from the $\ff{n}{k}/2k< n^k/2k$ possible
    $k$-cycles (without edge labels).
    To count how many $\sgamma\subseteq\salpha$ we can form with $p$ components
    and $l$ total edges, fix a vertex in $\salpha$.
     Then, we can
       specify which edges to include in $\sgamma$ by giving a sequence
       $a_1,b_1,\ldots,a_p,b_p$ instructing us
       to include in $\sgamma$ the first $a_1$ edges after the vertex,
       then to exclude the next $b_1$, then to include the next $a_2$,
       and so on.  Any sequence for which $a_i$ and $b_i$
       are positive integers, $a_1+\cdots+ a_p=l$,
       and $b_1+\cdots+b_p=k-l$ gives us a valid choice of $l$ edges of 
       $\salpha$
       making up $p$ components.  This counts each subgraph $\sgamma$ a total of
       $p$ times, since we could begin with any component of $\sgamma$.
       Hence the number of subgraphs $\sgamma$ with $l$ edges
       and $p$ components is $(k/p)\binom{l-1}{p-1}\binom{k-l-1}{p-1}$.
       In all, there are at most
    \begin{equation}\label{eq:numoverlap}
      \begin{split}
      2^{p-1}(p-1)!\binom{j-l-1}{p-1}&n^{j-p-l}
      \frac{n^k}{2k}
      (k/p)\binom{l-1}{p-1}\binom{k-l-1}{p-1}\\
      &\leq \frac{1}{2p}\left(\frac{2e^2r^3}{(p-1)^2}\right)^{p-1}
      n^{k+j-p-l}
      \end{split}
    \end{equation}
    pairs of cycles $\alpha$ and $\beta$ such that their edges intersect
    to form $p$ paths with a total of $l$ edges.

    Now, we bound all the ways to add edge colors and labels to $\salpha$
    and $\sbeta$ to form $\alpha$ and $\beta$, and the probability
    that $\alpha\cup\beta$ appears in $(\rHam,\rP)$.
    Let $B_1,\ldots,B_p$ be the components of $\sbeta$ that do not
    overlap with $\salpha$, with $B_i$ appearing immediately after $A_i$.
    Let $B_i$ have $m_i$ edges, with $\sum m_i = j-l$.

    Every edge coloring of $\salpha \cup \sbeta$ gives rise to an
    $f \in \{B,R\}^{k},$ the edge coloring of $\salpha$ and 
    $\tilde f_i \in \{B, R\}^{m_i},$ the edge coloring of $B_i$ 
    for $1 \leq i \leq p.$  Define $f_i$ from $\tilde f_i$ by prepending and
    appending a $B$ to each side of $f_i,$ so that $f_i \in B\{B, R\}^{m_i}B.$
    Any edge coloring of $\alpha \cup \beta$ is determined by the
    colorings $f,f_1,\ldots, f_p.$ 

    Let $r_1$ and $r_2$ be the number of times the $RB$ and $RR$ patterns
    respectively
    occur in $f$.  
    Let $r_1^{(i)}$ and $r_2^{(i)}$ be the same quantities for $f_i$.  
    Let $R_1=r_1+\sum r_1^{(i)}$ and $R_2=r_2+\sum r_2^{(i)}$.
    
    We will now turn this procedure around and edge--color $\alpha\cup\beta$ 
    according to the colorings $f, f_1,\ldots,f_p$ as described above.  
    This may not yield a possible coloring of $(\rHam, \rP).$ 
    For example, it may be that $\alpha \cup \beta$ has three blue edges
    incident to a vertex.  For any such nonsense
    coloring, we may simply take $\E I_\alpha I_\beta = 0.$

    Regardless, the subgraph $\alpha\cup\beta$ has a total
    of $R_1+R_2$ red edges and $j+k-l-R_1-R_2$ blue edges.
    It contains at most $R_1+p$ disjoint blue paths.
    Thus for \emph{every} coloring of $\alpha \cup \beta$ attained in this way,
    we have,
    \begin{align}
      \E I_\alpha I_\beta &\leq \frac{2^{R_1+p}}{\ff{n-1}{j+k-l-R_1-R_2}
        \dff{n(d-2)}{R_1+R_2}}\nonumber\\
        &\leq \frac{2^{R_1+p}}{n^{j+k-l}(d-2)^{R_1+R_2}}e^{O(r^2/n)}\label{eq:pp}
    \end{align}
    by Lemma~\ref{lem:ffstirling}.

    Next, we determine how many different ways we can add edge labels to $\alpha$
    and $\beta$, given the edge colors.
    There will be a total of $2R_1+2R_2$ edge labels to assign, two for each
    red edge. Imagine walking 
    around $\alpha\cup\beta$ assigning edge labels to its red edges. 
    At each end of a red edges, we have $d-2$ choices of edge labels if no adjacent
    red edge has had an edge label assigned yet, and we have $d-3$
    choices or fewer otherwise.
    For each $RR$ in $f, f_1,\ldots,f_p$,  there is a vertex label with
    $d-3$ or fewer choices. There are $R_2$ of these, so for at most
    $2R_1+R_2$ of the edge labels do we have $d-2$ choices. So, the number
    of ways to assign edge labels is at most $(d-3)^{R_2}(d-2)^{2R_1+R_2}$.
    (Note that this analysis works even in the $d=3$ case, when containing the pattern
    $RR$ implies the $\alpha\cup\beta$ cannot occur.)
    
    Multiplying this by the bound given in \eqref{eq:pp}, the sum of $\E I_\alpha
    I_\beta$ as $\alpha$ and $\beta$ range over all cycles formed by adding
    edge labels to $\salpha$ and $\sbeta$ is at most
    \begin{align*}
      &\frac{2^pe^{O(r^2/n)}}{n^{j+k-l}}\sum_{f,f_1,\ldots,f_p}\big(2(d-2)\big)^{R_1}(d-3)^{R_2}\\
        &\qquad\qquad= \frac{2^pe^{O(r^2/n)}}{n^{j+k-l}}
           p_k\big(d-3,2(d-2)\big)\prod_{i=1}^pp^B_{m_i+1}\big(d-3,2(d-2)\big)\\
        &\qquad\qquad= \frac{2^pe^{O(r^2/n)}}{n^{j+k-l}}
           O\big((d-1)^k\big)\prod_{i=1}^p \frac52(d-1)^{m_i}\\
        &\qquad\qquad= O(1)\left(\frac{d-1}{n}\right)^{j+k-l}
                   5^p
                   e^{O(r^2/n)}.
    \end{align*}
    We can assume without loss of generality that 
    $r\leq n^{1/10}$, since the proposition holds
    for all $r>n^{1/10}$ by choosing
    $\Cr{C:lneatprob}$ sufficiently large.
    This means that the $e^{O(r^2/n)}$ factor can be absorbed into the $O(1)$ factor.
    Applying \eqref{eq:numoverlap} and summing over all $k$, $j$,
    $p$, and $l$, we have
    \begin{align*}
      \sum_{\alpha\in\fullmixedcycspace}&\sum_{\beta\in\fullmixedcycspace_\alpha}
        \E I_\alpha I_\beta \\
        &\leq \sum_{k,j=1}^r\sum_{l\geq 1}\sum_{p\geq 1}
          \frac{1}{2p}\left(\frac{2e^2r^3}{(p-1)^2}\right)^{p-1}
      n^{k+j-l-p} O\left(5^p\left(\frac{d-1}{n}\right)^{j+k-l}\right)
                 \\
        &\leq \sum_{k,j=1}^r\sum_{l\geq 1}O\left(\frac{(d-1)^{j+k-l}}{n}\right)\sum_{p\geq 1} 
          \frac{1}{2p}\left(\frac{10e^2r^3}{(p-1)^2n}\right)^{p-1}.
    \end{align*}
    Our assumption that $r\leq n^{1/10}$ implies that the innermost sum is bounded
    by an absolute constant, proving \eqref{eq:varoverlap}.
    
    Now, it only remains to bound $\sum_{\alpha\in\fullmixedcycspace}
    \sum_{\beta\in\fullmixedcycspace_\alpha'}\E I_\alpha I_\beta$.
    If $\beta\in\fullmixedcycspace_\alpha'$, then $\E I_\alpha I_\beta
    \leq p_\alpha p_\beta$, so we will use this as a summand.
    We enumerate all $\beta\in\fullmixedcycspace_\alpha'$.
    Choose some Hamiltonian vertex~$v$ of $\alpha$ and an orientation 
    for $\alpha$.
    We will count all possible ways of constructing a rooted, oriented cycle 
    $\beta$ that also contains $v$.
    Fix an edge coloring $f\in B\{B,R\}^{j-1}$ and let 
    $r_1$ and $r_2$ be the values from
    Lemma~\ref{lem:cycleprob} corresponding to $f$.
    There are at most
    $n^{j-1}(d-2)^{2r_1+r_2}(d-3)^{r_2}$ ways to fill in the remaining vertices
    and edge labels of $\beta$.
    Suppose that $\alpha$ contains $\phi$ prevertices.  Then,
    \begin{align*}
      \sum_{\substack{\beta\in\fullmixedcycspace_\alpha',\\
                      \abs{\beta}=j}} p_\beta 
        &\leq (2k-\phi)\smashoperator{\sum_{f\in B\{B,R\}^{j-1}}}
          n^{j-1}(d-2)^{2r_1+r_2}(d-3)^{r_2}
          \frac{2^{r_1}}{\ff{n-1}{j-r_2-r_1}\dff{n(d-2)}{r_2+r_1}}\\
          &\leq  (2k-\phi)p_j^{B}
           (d-3,2(d-2)) O(n^{-1})\\
          &=O\left( \frac{k(d-1)^{j-1}}{n}\right).
    \end{align*}
    Now we have
    \begin{align}
      \sum_{\alpha\in\fullmixedcycspace}\sum_{\beta\in\fullmixedcycspace_\alpha'}
      p_\alpha p_\beta
        &\leq \sum_{\alpha\in\fullmixedcycspace} p_\alpha
         \sum_{j=1}^r O\left( \frac{\abs{\alpha}(d-1)^{j-1}}{n}\right)\nonumber\\
        &=\sum_{\alpha\in\fullmixedcycspace} p_\alpha
          O\left( \frac{\abs{\alpha}(d-1)^{r-1}}{n}\right).\nonumber\\
        &= O\left(\frac{(d-1)^{2r-1}}{n}\right).\label{eq:varpapb}
    \end{align}
    This and \eqref{eq:varoverlap} combine to show
    that $\P[\Overlap] = O((d-1)^{2r-1}/n)$.
    
    Now, we bound $\P[\Many]$.
    We say that $S\subseteq\fullmixedcycspace$ is a minimal bad set if it 
    consists of non-overlapping cycles with a total of either
    more than $\lambda(d-1)^r$ prevertices or more than
    $\lambda(d-1)^{r-1}$ Hamiltonian vertices, and if no proper subset of
    $S$ has this property.

    \newcommand{\ro}[1]{r_1^{(#1)}}
    \newcommand{\rt}[1]{r_2^{(#1)}}
    Let $\ro{\alpha}$ and $\rt{\alpha}$ be the number of $RB$s
    and $RR$s in the color pattern of $\alpha$, as in
    Lemma~\ref{lem:cycleprob}, and let
    \begin{align*}
      \ro{S} = \sum_{\alpha\in S}\ro{\alpha},\qquad\qquad
      \rt{S} = \sum_{\alpha\in S}\rt{\alpha}.
    \end{align*}
    Let $\abs{S}$ denote the total number of edges in $S$.
    Let $p_S$ denote the probability that 
    $(\rHam, \rP)$ contains every cycle in $S$.
    The total number of prevertices in
    a minimally bad set is at most $\lambda(d-1)^r+2r$, and the number
    of Hamiltonian vertices is at most $\lambda(d-1)^{r-1}+r$.
    Thus for a minimal bad set $S$, the total number of red edges,
    $\ro{S}+\rt{S}$, and the total number of blue edges, $\abs{S}-
    \ro{S}+\rt{S}$, satisfy
    \begin{align*}
      \ro{S}+\rt{S} &\leq \frac{\lambda}{2}(d-1)^r +r,\\
      \abs{S}-\ro{S}-\rt{S} &\leq \lambda(d-1)^{r-1}+r-1.
    \end{align*}
    By Lemma~\ref{lem:ffstirling},
    \begin{align*}
      p_S &= \frac{2^{\ro{S}}}{\ff{n-1}{\abs{S}-\ro{S}-\rt{S}}
      \dff{n(d-2)}{\ro{S}+\rt{S}}}\\
        &= \frac{2^{\ro{S}}}{n^{\abs{S}}(d-2)^{\ro{S}+\rt{S}}}
        \exp\left(O\left(\frac{\lambda^2(d-1)^{2r-2}}{n}\right)
         +O\left(\frac{\lambda^2(d-1)^{2r}}{n(d-2)}\right)\right)\\
       &\leq \frac{2^{\ro{S}}}{n^{\abs{S}}(d-2)^{\ro{S}+\rt{S}}}
         \exp\left( \frac{\Cl{C:experr2}\lambda^2(d-1)^{2r-1}}
           {n}
         \right)
    \end{align*}
    for some absolute constant $\Cr{C:experr2}$.
    
    Our goal is to bound $\sum_S p_S$ as $S$ ranges over all minimal bad 
    subsets of $\fullmixedcycspace$.
    Let $\Poilimit = (\poilimit{\alpha},\alpha\in\fullmixedcycspace)$
    be a vector of independent Poisson random variables with
    \begin{align*}
      \E\poilimit{\alpha} = \frac{2^{\ro{\alpha}}}{n^{\abs{\alpha}}
        (d-2)^{\ro{\alpha}+\rt{\alpha}}},
    \end{align*}
    and let $\mu=\sum_{\alpha\in\fullmixedcycspace}\E\poilimit{\alpha}$.
    \begin{align}
      \P[\Many]\leq 
         \sum_S p_S &\leq  \exp\left( \frac{\Cr{C:experr2}\lambda^2(d-1)^{2r-1}}
           {n}\right)
                  \sum_S \prod_{\alpha\in S} \frac{2^{\ro{\alpha}}}
                  {n^{\abs{\alpha}}(d-2)^{\ro{\alpha}+\rt{\alpha}}}\nonumber\\
                &=          \exp\left( \frac{\Cr{C:experr2}\lambda^2(d-1)^{2r-1}}
           {n}
         \right)
         e^{\mu}\sum_S\P[\Poilimit=\one{S}].\label{eq:mixedtoomanybound}
    \end{align}
    For any cycle $\alpha\in\fullmixedcycspace$, let $\phi_\alpha$
    and $\psi_\alpha$ be the number of prevertices and Hamiltonian vertices,
    respectively, in $\alpha$. 
    Let $s_\alpha$ and $t_\alpha$ be the number of red and blue edges,
    respectively,
    in the color pattern of $\alpha$, as in Section~\ref{subsec:Ecycles}.
    Note that $\phi_\alpha=2s_\alpha$ and $\psi_\alpha \leq 2t_\alpha$.
    So,
    \begin{align*}
      \sum_S \P[\Poilimit=\one{S}] &\leq
       \P\left[\text{$\sum_{\alpha\in\fullmixedcycspace} \phi_\alpha 
         \poilimit{\alpha}>\lambda(d-1)^r$
      or $\sum_{\alpha\in\fullmixedcycspace} \psi_\alpha \poilimit{\alpha}>\lambda(d-1)^{r-1}$}\right]\\
      &\leq \P\left[\text{$\sum_{\alpha\in\fullmixedcycspace} 2s_\alpha \poilimit{\alpha}>\lambda(d-1)^r$
      or $\sum_{\alpha\in\fullmixedcycspace} 2t_\alpha \poilimit{\alpha}>\lambda(d-1)^{r-1}$}
        \right].
    \end{align*}
    Now, we use the modified log-Sobolev inequalities to get a tail
    estimate for each of these sums.
    Let $F(x) = \sum_\alpha 2s_\alpha x_\alpha$ for $x=(x_\alpha,\,\alpha\in
    \fullmixedcycspace)$.
    By the same proof as for~\eqref{eq:Escycles},
    \begin{align*}
      \E F(\Poilimit)=\sum_{\alpha\in\fullmixedcycspace}
       2s_\alpha \E\poilimit{\alpha} \leq \Cl{C:logsob2}(d-1)^r
    \end{align*}
    for some absolute constant $\Cr{C:logsob2}$.
    In the notation of Lemma~\ref{lem:PoissonTail},
    \begin{align*}
      \sum_{\alpha\in\fullmixedcycspace}\norm{\nabla_\alpha F}^2 \E \poilimit{\alpha}
      =
      \sum_{\alpha\in\fullmixedcycspace}(2s_\alpha)^2\E \poilimit{\alpha}
        &\leq  2r\sum_{\alpha\in\fullmixedcycspace} 2s_\alpha \E \poilimit{\alpha}\\
        &= 2r\E F(\Poilimit)\leq 2\Cr{C:logsob2}r(d-1)^r,
    \end{align*}     
    and
    \begin{align*}
      \max_{\alpha\in\fullmixedcycspace}\norm{\nabla_\alpha F} \leq 2r.
    \end{align*}
    By Lemma~\ref{lem:PoissonTail},
    \begin{align}\label{eq:red-logsob}
      \P\left[\text{$\sum_\alpha 2s_\alpha \poilimit{\alpha}>\lambda(d-1)^r$}\right]
        \leq\exp\left( -\frac{(\lambda-\Cr{C:logsob2})(d-1)^r}{4r}
          \log\left(\frac{\lambda}{\Cr{C:logsob2}}
        \right)\right).
    \end{align}
    In the same way, if $G(x)=\sum_\alpha 2t_\alpha x_\alpha$, then
    \begin{align*}
      \E G(\Poilimit)\leq \Cr{C:logsob2}(d-1)^{r-1},
    \end{align*}
    and
    \begin{align*}
      \sum_{\alpha\in\fullmixedcycspace}\norm{\nabla_\alpha G}^2 \E \poilimit{\alpha}=
       \sum_{\alpha\in\fullmixedcycspace}(2t_\alpha)^2\E \poilimit{\alpha} &\leq
      2r\sum_{\alpha\in\fullmixedcycspace}2t_\alpha \E \poilimit{\alpha}\\
        &\leq 2r\E G(\Poilimit)\leq 2\Cr{C:logsob2}r(d-1)^{r-1},
    \end{align*}
    and
    \begin{align*}
      \max_{\alpha\in\fullmixedcycspace}\norm{\nabla_\alpha G}\leq 2r.
    \end{align*}
    Thus by Lemma~\ref{lem:PoissonTail},
    \begin{align}\label{eq:blue-logsob}
      \P\left[\text{$\sum_{\alpha\in\fullmixedcycspace} 2t_\alpha \poilimit{\alpha}>\lambda(d-1)^{r-1}$}\right]
        \leq\exp\left( -\frac{(\lambda-\Cr{C:logsob2})(d-1)^{r-1}}{4r}
          \log\left(\frac{\lambda}{\Cr{C:logsob2}}
        \right)\right).
    \end{align}
    Making sure that we have chosen $\Cr{C:logsob2}$ to be large enough,
    by the proof of \eqref{eq:Ecycles}, we have
    $\mu\leq \Cr{C:logsob2}(d-1)^r/r$.
    We now sum \eqref{eq:red-logsob} and \eqref{eq:blue-logsob} to show
    that
    \begin{equation}\label{eq:logsobmany}
      \begin{split}
      &\P\left[\text{$\sum_{\alpha\in\fullmixedcycspace} \phi_\alpha 
         \poilimit{\alpha}>\lambda(d-1)^r$
      or $\sum_{\alpha\in\fullmixedcycspace} \psi_\alpha \poilimit{\alpha}>\lambda(d-1)^{r-1}$}\right]\\  &\qquad\qquad\qquad\qquad\qquad\leq \exp\left( -\frac{(\lambda-\Cr{C:logsob2})d(d-1)^{r-1}}{4r}
          \log\left(\frac{\lambda}{\Cr{C:logsob2}}
        \right)\right),
      \end{split}
    \end{equation}
    and then substitute this into \eqref{eq:mixedtoomanybound}
    to get
    \begin{align*}
      \P[\Many] &\leq 
        \exp\left[
          -(d-1)^r\left(
            \frac{d(\lambda-\Cr{C:logsob2})}{4(d-1)r}\log\Big(\frac{\lambda}
            {\Cr{C:logsob2}}\Big)
             - \frac{\Cr{C:logsob2}}{r}
             - \frac{\Cr{C:experr2}\lambda^2(d-1)^{r-1}}{n}
        \right)\right].
    \end{align*}
    As with $\eqref{eq:pmdmanybound}$, this is $O(1/n)$
    so long as $\Cr{C:experr2}\lambda^2(d-1)^{r-1}>n$, which
    finishes the proof, together with the bound
    on $\P[\Many]$.
  \end{proof}
  
  \begin{proposition}\label{prop:strictlneat}
    For $d\geq 3$ and all $r$ and $n$,
    \begin{align}
      \P[\text{$\cycprocess(P)$ is not strictly $(\log n)$-neat}]
        &\leq \frac{\Cl{C:strictlneat}(d-1)^{2r}}{n},\label{eq:slneatpmd}\\
        \begin{split}
      \P[\text{$\cycprocess(\phamrv)$ is not strictly $(\log n)$-neat}]
        &=   \P[\text{$\mixedcycprocess$ is not strictly $(\log n)$-neat}]
          \\
        &\leq \frac{\Cr{C:strictlneat}(d-1)^{2r}}{n}.
        \end{split}   \label{eq:slneatpham}
    \end{align}
  \end{proposition}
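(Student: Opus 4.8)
The plan is to reduce to Propositions~\ref{prop:pmdneat} and~\ref{prop:mixedneat}, which already control the event of failing to be $(\log n)$-neat, and to treat separately the only additional source of failure of \emph{strict} $(\log n)$-neatness: the presence of two short cycles whose projections share a vertex even though the cycles share no prevertex (and, in the mixed model, no Hamiltonian vertex). I will bound the probability of this extra event by a first moment computation. It costs a factor of about $d-1$ over the bounds in Propositions~\ref{prop:pmdneat} and~\ref{prop:mixedneat}, which is why $(d-1)^{2r}$ appears here in place of $(d-1)^{2r-1}$. Throughout I may assume $r\le n^{1/10}$, since for larger $r$ the proposition is trivial once $\Cr{C:strictlneat}$ is chosen large; in particular all the $e^{O(r^2/n)}$ factors below are $O(1)$.

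\emph{The pairing model.} Comparing Definitions~\ref{def:strictlneat} and~\ref{def:lneatpmd}, the event that $\cycprocess(P)$ is not strictly $(\log n)$-neat is contained in the union of the event that $\cycprocess(P)$ is not $(\log n)$-neat with the event $\mathcal A$ that some two distinct cycles $\alpha,\beta\in\fullcycspace$ appearing in $P$ have projections that share a vertex while $\alpha$ and $\beta$ share no prevertex. By Proposition~\ref{prop:pmdneat} it suffices to show $\P[\mathcal A]=O((d-1)^{2r}/n)$ for $P\drawnfrom\pmd$. If $\alpha,\beta$ share no prevertex they share no edge, so they occupy $\abs\alpha+\abs\beta$ distinct pairs, whence $\E I_\alpha I_\beta=1/\dff{nd}{\abs\alpha+\abs\beta}$; by Lemma~\ref{lem:ffstirling} this is at most $p_\alpha p_\beta e^{O(r^2/n)}$, where $p_\alpha=\E I_\alpha=1/\dff{nd}{\abs\alpha}$. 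Union bounding over the shared projection vertex $v$ and then enlarging the sums over $\alpha,\beta$ to range over all cycles of length at most $r$ through $v$,
\[
  \P[\mathcal A]\le e^{O(r^2/n)}\sum_{v=1}^n\Bigl(\sum_{\substack{\alpha\in\fullcycspace\\v\in\alpha}}p_\alpha\Bigr)^{2}.
\]
By symmetry over vertices, $\sum_{\alpha\in\cycspace[k],\,v\in\alpha}p_\alpha=\tfrac kn\sum_{\alpha\in\cycspace[k]}p_\alpha$, and $\sum_{\alpha\in\cycspace[k]}p_\alpha$ is the expected number of $k$-cycles of $\pmd$, which equals $\tfrac{(d-1)^k}{2k}e^{O(k^2/n)}$ by Lemma~\ref{lem:ffstirling}. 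Summing over $k\le r$ and using $\sum_{k\le r}(d-1)^k=O((d-1)^r)$ for $d\ge 3$ gives $\sum_{\alpha\in\fullcycspace,\,v\in\alpha}p_\alpha=O((d-1)^r/n)$, so $\P[\mathcal A]=O\bigl(n\cdot((d-1)^r/n)^2\bigr)=O((d-1)^{2r}/n)$, which proves~\eqref{eq:slneatpmd}.

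\emph{The mixed model.} Whether $\cycprocess(\phamrv)$ is strictly $(\log n)$-neat depends only on the lengths and vertex sets of the cycles of length at most $r$ in the projection pseudograph, and this pseudograph is unchanged by the reordering of prevertices within bins that turns $\sepmixedmd$ into $\phammd$; this gives the equality of the two probabilities in~\eqref{eq:slneatpham}, so we may work with $\mixedcycprocess$ under $\sepmixedmd$. As before, the event that $\mixedcycprocess$ is not strictly $(\log n)$-neat lies in the union of the event that $\mixedcycprocess$ is not $(\log n)$-neat, controlled by Proposition~\ref{prop:mixedneat}, with the event $\mathcal A'$ that two cycles $\alpha,\beta\in\fullmixedcycspace$ appearing in $(\rHam,\rP)$ have projections sharing a vertex while sharing no prevertex and no Hamiltonian vertex. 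Such $\alpha,\beta$ share no edge, and their blue edges cannot merge into a common path (that would force a shared Hamiltonian vertex), so the blue edges of $\alpha\cup\beta$ form exactly $r_1^{(\alpha)}+r_1^{(\beta)}$ disjoint paths; together with Lemmas~\ref{lem:cycleprob} and~\ref{lem:ffstirling} this again gives $\E I_\alpha I_\beta\le p_\alpha p_\beta e^{O(r^2/n)}$, where now $p_\alpha=\E I_\alpha$ is as in Lemma~\ref{lem:cycleprob}. The same union bound yields $\P[\mathcal A']\le e^{O(r^2/n)}\sum_{v=1}^n(\sum_{\alpha\in\fullmixedcycspace,\,v\in\alpha}p_\alpha)^2$, and by symmetry and~\eqref{eq:Ecycles}, $\sum_{\alpha\in\mixedcycspace[k],\,v\in\alpha}p_\alpha=\tfrac kn\E\sum_{\alpha\in\mixedcycspace[k]}I_\alpha=\tfrac{(d-1)^k+(-1)^k-1}{2n}e^{O(k^2/n)}$; hence $\sum_{\alpha\in\fullmixedcycspace,\,v\in\alpha}p_\alpha=O((d-1)^r/n)$ and $\P[\mathcal A']=O((d-1)^{2r}/n)$, which with Proposition~\ref{prop:mixedneat} proves~\eqref{eq:slneatpham}.

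\emph{Main obstacle.} The one delicate point is keeping the first moment estimate sharp enough to land on $(d-1)^{2r}$ rather than $d^{2r}$ — these differ by the factor $(d/(d-1))^{2r}$, which is unbounded in $r$. Routing the computation through $p_\alpha$ and the expected cycle counts~\eqref{eq:Ecycles} of Corollary~\ref{cor:expectations} (which itself rests on the identity $p_k(d-3,2(d-2))=(d-1)^k+(-1)^k$ coming from~\eqref{eq:cdp}, where the $d-3$ must not be coarsened to $d-2$) achieves this automatically, whereas a crude bound on the number of short cycles through a vertex would only give $d^{2r}$. One also has to arrange the case split so that every pair of cycles sharing an edge is absorbed into the non-strict $(\log n)$-neatness event, since for such a pair $\E I_\alpha I_\beta$ is no longer comparable to $p_\alpha p_\beta$.
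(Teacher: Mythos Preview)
Your argument follows the paper's strategy: reduce to Propositions~\ref{prop:pmdneat} and~\ref{prop:mixedneat} and bound the residual vertex-sharing event by a first-moment computation. Your computation is a bit cleaner than the paper's direct enumeration---by writing $\P[\mathcal A]\le e^{O(r^2/n)}\sum_v\bigl(\sum_{\alpha\ni v}p_\alpha\bigr)^2$ and invoking the expected cycle counts of Corollary~\ref{cor:expectations}, you bypass the explicit labeled-cycle counting that the paper carries out at~\eqref{eq:pmdvertoverlap} and in the analogous display for the mixed model.

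There is one small slip in the mixed-model reduction. The inclusion ``not strictly $(\log n)$-neat $\subseteq$ not $(\log n)$-neat $\cup\,\mathcal A'$'' holds for condition~(i) but not for condition~(ii): Definition~\ref{def:lneat}(ii) bounds $\Phi$ and $\Psi$ separately, whereas Definition~\ref{def:strictlneat}(ii) bounds $\sum_\alpha 2\abs{\alpha}\, x_\alpha=\Phi+2\sum_\alpha t_\alpha$, and $\Phi\le\lambda(d-1)^r$, $\Psi\le\lambda(d-1)^{r-1}$ only yield $\sum_\alpha 2\abs{\alpha}\, x_\alpha\le\lambda(d+1)(d-1)^{r-1}$, which can exceed $\lambda(d-1)^r$. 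The fix is immediate: the \textsc{Many} bound in the proof of Proposition~\ref{prop:mixedneat} applies verbatim with $F(x)=\sum_\alpha 2\abs{\alpha}\, x_\alpha$, since $\E F(\mathbf Y)=O((d-1)^r)$ by~\eqref{eq:Ecycles}; alternatively, failure of Definition~\ref{def:strictlneat}(ii) at level $\lambda$ forces failure of Definition~\ref{def:lneat}(ii) at level $\lambda/2$, and the proof of Proposition~\ref{prop:mixedneat} handles that identically. The paper itself elides the same point with ``the rest of Proposition~\ref{prop:mixedneat} goes through as before.'' For the pairing model there is no such issue, as condition~(ii) is literally the same in Definitions~\ref{def:strictlneat} and~\ref{def:lneatpmd}.
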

  
    \begin{proof}
      \newcommand{\Overlap}{\textsc{Overlap}}
      \newcommand{\Many}{\textsc{Many}}
      We only need to make minor changes to the previous proofs.
      To prove \eqref{eq:slneatpmd}, define
      \begin{align*}
        \Overlap &= \{\text{$P$ contains two cycles of length $r$ or less
        sharing a vertex}\}.
      \end{align*}
      Again, we will bound $\sum_{\alpha,\beta}\E I_\alpha I_\beta$
      where $\alpha$ and $\beta$ range over all overlapping pairs
      of cycles.
      We have already shown in \eqref{eq:pmdoverlap}
      that the sum over the cycles that overlap at an entire edge
      is $O\big((d-1)^{2r-1}/n\big)$. For any $k$-cycle
      $\alpha$, the number of $j$-cycles  with a vertex in common
      with $\alpha$ is at most $kn^{j-1}\big(d(d-1)\big)^j$.
      For any pair $\alpha$ and $\beta$ with a vertex in common but
      no edge in common,
      \begin{align*}
        \E[I_\alpha I_\beta] = \frac{1}{\dff{nd}{j+k}}.
      \end{align*}
      Thus the probability that $P$ contains some a cycle $\alpha$
      and a cycle
      $\beta$ overlapping $\alpha$ at a vertex but at no edges is at most
      \begin{align}
        \sum_{k=1}^r\abs{\cycspace[k]}
          \sum_{j=1}^r \frac{kn^{j-1}\big(d(d-1)\big)^j}{\dff{nd}{j+k}}
            &=\sum_{k=1}^r \frac{\ff{n}{k}\big(d(d-1)\big)^k}
               {2k}\sum_{j=1}^r \frac{kn^{j-1}\big(d(d-1)\big)^j}
               {\dff{nd}{j+k}}\nonumber\\
            &= O\left(\frac{(d-1)^{2r}}{n}\right),\label{eq:pmdvertoverlap}
      \end{align}
      proving that $\P[\Overlap] = O\big((d-1)^{2r}/n)$.
      Combined with the bound on $\P[\Many]$ from
      Proposition~\ref{prop:pmdneat}, this proves
      \eqref{eq:slneatpmd}.
        
      The equality in \eqref{eq:slneatpham} holds because
      if $\phamrv$ is given by 
      scrambling the prevertices in each bin of
      $(\rHam, \rP)$, then $\phamrv$ is strictly $\lambda$-neat
      if and only if $(\rHam, \rP)$ is.
      To adjust the proof of Proposition~\ref{prop:mixedneat},
      we just need to change the definition
      of $\fullmixedcycspace'_\alpha$ to be all cycles that
      share no edges with $\alpha$ but do have a vertex in common,
      Hamiltonian or otherwise, and then do the computations
      leading up to \eqref{eq:varpapb} again.
      To enumerate all cycles in $\fullmixedcycspace_\alpha'$,
      first choose any vertex in $\alpha$.
      Let $\beta$ have color pattern $f$, and let $r_1$ and $r_2$ have their
      usual definitions of the number of $RB$s and $RR$s in $f$.
      The number of ways to fill in the remaining vertices and edge labels
      of $\beta$ is at most $n^{j-1}(d-2)^{2r_1+r_2}(d-3)^{r_2}$.
      Thus
      \begin{align*}
      \sum_{\substack{\beta\in\fullmixedcycspace_\alpha',\\
                      \abs{\beta}=j}} p_\beta 
        &\leq k\smashoperator{\sum_{f\in \{B,R\}^{j}}}
          n^{j-1}(d-2)^{2r_1+r_2}(d-3)^{r_2}
          \frac{2^{r_1}}{\ff{n-1}{j-r_2-r_1}\dff{n(d-2)}{r_2+r_1}}\\
          &\leq  kp_j
           (d-3,2(d-2)) O(n^{-1})\\
          &=O\left( \frac{k(d-1)^{j}}{n}\right).
    \end{align*}
    Now,
    \begin{align*}
      \sum_{\alpha\in\fullmixedcycspace}\sum_{\beta\in\fullmixedcycspace_\alpha'}
      p_\alpha p_\beta
        &\leq \sum_{\alpha\in\fullmixedcycspace} p_\alpha
         \sum_{j=1}^r O\left( \frac{\abs{\alpha}(d-1)^{j}}{n}\right)\\
        &=\sum_{\alpha\in\fullmixedcycspace} p_\alpha
          O\left( \frac{\abs{\alpha}(d-1)^{r}}{n}\right).\\
        &= O\left(\frac{(d-1)^{2r}}{n}\right).
    \end{align*}
    The rest of Proposition~\ref{prop:mixedneat} goes through as before.
    \end{proof}

  \subsection{Couplings}\label{sec:couplings}
    We will employ some variations of Stein's method
    that use coupling techniques, so we will need to define
    couplings between conditioned pairings and
    Hamiltonian cycles and their unconditioned counterparts.

      Suppose that a pairing contains
    the edges $a\sim A$ and $b\sim B$.  We can delete these edges
    and replace them with $a\sim b$ and $A\sim B$ to get a new pairing.  
    We call this \emph{switching} the edges $a\sim A$ and $b\sim B$.
    This only makes sense if $a \neq b$, but it is not a problem
    if $A=b$ and $B=a$, in which case the switching has no effect.
    We will use this operation to define couplings of pairings
    $P$ and $P'$:
    \begin{coupling}\label{coup:Pforward}
      Let $P \drawnfrom \pmd$, and fix distinct prevertices $a_1, \ldots, a_k$
      and $b_1, \ldots, b_k$.
      Let $A_1$ and $B_1$ be the (random) prevertices paired with
      $a_1$ and $b_1$, respectively.
      Define $P'$ to be the pairing obtained by the following procedure:
      Switch $a_1\sim A_1$
      and $b_1\sim B_1$.  Let $A_2$ and $B_2$ be the prevertices
      now paired with $a_2$ and $b_2$, and switch 
      $a_2\sim A_2$ and $b_2\sim B_2$.  Repeat for the remaining $a_i$
      and $b_i$.
    \end{coupling}
    \begin{coupling}\label{coup:Pbackward}
      Fix distinct prevertices $a_1,\ldots,a_k$ and $b_1,\ldots,b_k$,
      and let
      $P'$ be distributed as $\pmd$ conditioned to contain the pairs
      $a_i\sim b_i$
      for $1\leq i\leq k$.  Define $P$ as follows:
      Sample $A_k$ uniformly from all prevertices except
      $a_1,\ldots,a_k$ and $b_1,\ldots, b_{k-1}$, and let $B_k$ denote the
      prevertex for which $B_k\sim A_k$.  
      Switch $a_k\sim b_k$ and $A_k\sim B_k$.
      Then sample $A_{k-1}$ uniformly from all prevertices
      except $a_1,\ldots,a_{k-1}$ and $b_1,\ldots, b_{k-2}$, let
      $B_{k-1}\sim A_{k-1}$, and switch
      $a_{k-1}\sim b_{k-1}$ and $A_{k-1}\sim B_{k-1}$. Repeat another
      $k-2$ times.
    \end{coupling}
    \begin{proposition}\label{prop:Pcouple}
      In both couplings, $P\drawnfrom \pmd$ and $P'$ is distributed
      as $P$ conditioned to contain $a_i\sim b_i$ for $1\leq i\leq k$.
      (In fact, these couplings are the same, though this is not
      important to us.)
    \end{proposition}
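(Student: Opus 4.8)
The plan is to reduce both couplings, and the assertion that they coincide, to a single one-step bijection between pairings. For $0\le j\le k$ let $\mathcal P_j$ be the set of pairings on $[nd]$ containing $a_i\sim b_i$ for every $i\le j$; thus $\mathcal P_0$ is the set of all pairings and $\mathcal P_k$ is the set of pairings compatible with our conditioning. For $1\le j\le k$ set $S_j=\{a_1,\dots,a_j,b_1,\dots,b_{j-1}\}$, a set of $2j-1$ prevertices. I would first prove the following claim: the map $\Phi_j$ that sends a pairing $Q$ to $(Q',A_j)$, where $A_j$ is the partner of $a_j$ in $Q$, $B_j$ is the partner of $b_j$, and $Q'$ is obtained by switching $a_j\sim A_j$ and $b_j\sim B_j$, is a bijection from $\mathcal P_{j-1}$ onto $\mathcal P_j\times\bigl([nd]\setminus S_j\bigr)$, with inverse the reverse switching.

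Proving this claim is where all the real work lies, and it is the main obstacle. For $Q\in\mathcal P_{j-1}$, since $a_i\sim b_i$ for $i<j$ and there are no self-loops, the partner $A_j$ of $a_j$ lies in $[nd]\setminus S_j$, and one distinguishes two sub-cases: either $A_j=b_j$, which happens exactly when $Q$ already contains $a_j\sim b_j$, so that $B_j=a_j$, the switch has no effect, and $Q'=Q$; or $A_j$ lies outside $S_j\cup\{b_j\}$, in which case $a_j,b_j,A_j,B_j$ are four distinct prevertices. In either sub-case $Q'\in\mathcal P_j$: the switch produces the edge $a_j\sim b_j$ while altering only edges incident to $\{a_j,b_j,A_j,B_j\}$, a set disjoint from $\{a_i,b_i:i<j\}$, so the remaining forced pairs survive; and $A_j\in[nd]\setminus S_j$ by construction. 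Running the same dichotomy backwards shows that the reverse switching, applied to any $(Q',A_j)\in\mathcal P_j\times([nd]\setminus S_j)$, produces a pairing in $\mathcal P_{j-1}$ and undoes $\Phi_j$, so $\Phi_j$ is a bijection as claimed. (As a sanity check on cardinalities, a matching on $m$ prevertices can be completed in $(m-1)!!$ ways, so $\abs{\mathcal P_{j-1}}=(nd-2j+1)!!=(nd-2j+1)(nd-2j-1)!!=\abs{[nd]\setminus S_j}\cdot\abs{\mathcal P_j}$.)

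Given the claim, the two couplings drop out. Coupling~\ref{coup:Pforward} is exactly the composite $\Phi_k\circ\cdots\circ\Phi_1$ with the recorded prevertices forgotten: starting from $P\drawnfrom\pmd$, the uniform law on $\mathcal P_0$, each $\Phi_j$ pushes the uniform law on $\mathcal P_{j-1}$ forward to the uniform law on $\mathcal P_j$, because a bijection carries a uniform law to a uniform law and the uniform law on a product set is the product of uniforms; hence after $k$ steps $P'$ is uniform on $\mathcal P_k$, that is, $P$ conditioned to contain every $a_i\sim b_i$, and in particular $P'$ contains these pairs. For Coupling~\ref{coup:Pbackward}, observe that drawing $A_j$ uniformly from ``all prevertices except $a_1,\dots,a_j$ and $b_1,\dots,b_{j-1}$'' is a uniform choice from $[nd]\setminus S_j$, and that following it by the reverse switching $\Phi_j^{-1}$ pushes the uniform law on $\mathcal P_j$ to the uniform law on $\mathcal P_{j-1}$ by the same principle; doing this for $j=k,k-1,\dots,1$ carries $P'$, uniform on $\mathcal P_k$, to $P$, uniform on $\mathcal P_0=\pmd$. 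Finally, since $\Phi_j^{-1}$ is the genuine inverse of $\Phi_j$, the map $P\mapsto(P',A_1,\dots,A_k)$ behind the forward coupling and the map $(P',A_1,\dots,A_k)\mapsto P$ behind the backward coupling are mutually inverse, so the two couplings of $(P,P')$ are identical --- the parenthetical in the proposition.
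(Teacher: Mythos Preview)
Your argument is correct and follows essentially the same route as the paper: both identify the one-step switching map $\mathcal P_{j-1}\to\mathcal P_j$ and observe that it is $(nd-2j+1)$-to-one, which is exactly what is needed to push the uniform law through. Your version is a touch more explicit---you record the extra datum $A_j$ to promote the equal-fibers observation into an honest bijection $\Phi_j\colon\mathcal P_{j-1}\to\mathcal P_j\times([nd]\setminus S_j)$---and this extra bookkeeping is what lets you verify the parenthetical ``the couplings are the same'' cleanly, whereas the paper only asserts it.
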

    \begin{proof}
      Let $\Pairings_i$ be the set of all pairings on $nd$ prevertices
      such that $a_j\sim b_j$ for $j\leq i$, with $\Pairings_0$
      the set of all pairings.  
      Let $p\in\Pairings_{i-1}$, and let $a'\sim a_i$ and
      $b'\sim b_i$.  Define $p'$ from $p$ 
      by switching these two edges, as in Coupling~\ref{coup:Pforward}.
      Let $\varphi_i\colon\Pairings_{i-1}\to
      \Pairings_i$ be given by $\varphi_i(p)=p'$.
            The elements of $\varphi_i^{-1}(p')$ are all given
      by switching the edge $a_i\sim b_i$ in $p'$ with
      some edge other than $a_j\sim b_j$ for $j\leq i$, and other than
      $b_j\sim a_j$ for $j<i$.  (Switching $a_i\sim b_i$ with $b_i\sim a_i$
      does give an element of $\varphi_i^{-1}(p')$, the pairing $p'$ itself.)
      This demonstrates that $\varphi_i^{-1}(p')$ has the same size
      regardless of $p'$, namely $nd-2i+1$.

      Step~$i$ of Coupling~\ref{coup:Pforward} can be interpreted
      as plugging the current random pairing into $\varphi_i$.
      Similarly, step~$i$ (counting backward)
      of Coupling~\ref{coup:Pbackward} can be interpreted
      as randomly choosing one of the preimages under
      $\varphi_i$ of the current pairing.
      Because each preimage of $\varphi_i$ is the same size,
      at step~$i$ of either coupling, the pairing
      is distributed uniformly on $\Pairings_i$.
    \end{proof}
     We now consider similar couplings for random Hamiltonian cycles.
  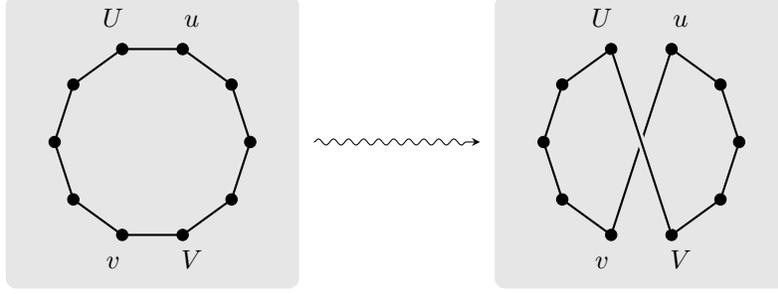
\begin{figure}
    \begin{center}
      \begin{tikzpicture}[scale=1.3,>=stealth]
        \begin{scope}
          \fill[black!10,rounded corners] (-1.5,-1.5) rectangle (1.5,1.5);

          \foreach \deg [count=\i] in {0, 36, ..., 324}
            \node[vert] (v\i) at (\deg: 1) {};
          \path (72:1.3) node[anchor=mid] {$u$}
                (108:1.3) node[anchor=mid] {$U$}
                (-72:1.3) node[anchor=mid] {$V$}
                (-108:1.3) node[anchor=mid] {$v$};
          \draw[thick] (v1) -- (v2) -- (v3) -- (v4) -- (v5) -- (v6)
                     -- (v7) -- (v8) -- (v9) -- (v10) -- (v1);
        \end{scope}
        \draw[->, decorate, decoration={snake,amplitude=.4mm,
          segment length=2mm,post length=1.5mm}] 
          (1.65, 0)--+(1.7,0);
        \begin{scope}[shift={(5,0)}]
          \fill[black!10,rounded corners] (-1.5,-1.5) rectangle (1.5,1.5);

          \foreach \deg [count=\i] in {0, 36, ..., 324}
            \node[vert] (v\i) at (\deg: 1) {};
          \path (72:1.3) node[anchor=mid] {$u$}
                (108:1.3) node[anchor=mid] {$U$}
                (-72:1.3) node[anchor=mid] {$V$}
                (-108:1.3) node[anchor=mid] {$v$};
          \draw[thick] (v1) -- (v2) -- (v3) -- (v8) -- (v7) -- (v6)
                     -- (v5) -- (v4)  (v9) -- (v10)  -- (v1);
          \draw[preaction={draw, line width=3pt, black!10}, thick] (v4) -- (v9);
        \end{scope}
      \end{tikzpicture}
    \end{center}
    \caption{Switching the edges $uU$ and $vV$. These cycles are oriented
    counter-clockwise.}
    \label{fig:hamcond}
  \end{figure}
  Suppose that we have a Hamiltonian cycle $v_1\cdots v_n$.
  We define a switching of the edges $v_iv_{i+1}$ and $v_jv_{j+1}$ as the action of
    deleting these edges and replacing them with $v_iv_j$ and $v_{i+1}v_{j+1}$, 
    as in
    Figure~\ref{fig:hamcond}.  For $i< j$, this leaves us with the cycle
    $v_1\cdots v_iv_jv_{j-1}\cdots v_{i+1}v_{j+1}\cdots v_n$.
    We now give a procedure to successively condition a random Hamiltonian cycle
    to contain given paths.
    \begin{coupling}\label{coup:Hforward}
      Let
      $P_1, \ldots, P_p$ be a set of disjoint paths of vertices
      from $\{1,\ldots,n\}$.
      Let $\rHam$ be a random Hamiltonian cycle conditioned to contain
      the paths $P_1,\ldots,P_{p-1}$. Let $P_p=v_1\cdots v_l$.
      Assign an orientation to $\rHam$, choosing each with probability $1/2$.
      Suppose that $v_1x_1$ and $v_2y_1$ are edges in $\rHam$ consistent with this orientation
      and switch $v_{1}V_1$ and $v_{2}V'_1$, performing the transformation
      \begin{align}
        v_1x_1\cdots x_k v_2 y_1\cdots y_j\mapsto
        v_1v_2x_k\cdots x_1 y_1\cdots y_j. \label{eq:hamtrans}
      \end{align}
      This resulting cycle is oriented according to the right hand side in the above equation. 
      Hence it contains the edges $v_1v_2$ and $v_2x_k.$ 
      It also contains $v_3z$ for some $z.$
      Now, switch the edges $v_2x_k$ and $v_3z$, and then proceed in this way
      for the remaining edges of $P_p$. Let $H'$ be the resulting Hamiltonian cycle.
    \end{coupling}
    \begin{coupling}\label{coup:Hbackward}
      Let
      $P_1, \ldots, P_p$ be a set of disjoint paths of vertices
      from $\{1,\ldots,n\}$.
      Let $\rHam'$ be distributed as a uniformly random Hamiltonian
      cycle conditioned to contain these paths.
      Let $P_p=v_1\cdots v_l$.
      Define $\rHam$ by the following algorithm:
      With probability $1/2$, replace $P_p$ by $v_l\cdots v_1$.
      Orient $\rHam$ so that $v_1\cdots v_l$ is found in that order, and let
      $\rHam$ be $x_1\cdots x_n$ with this orientation. 
      Considering indices modulo $n$,
      choose $I$ uniformly
      from the indices $i\in\{1,\ldots,n\}$ such that $x_ix_{i+1}$ is not contained 
      in any of $P_1,\ldots,P_p$.
      Switch $x_ix_{i+1}$ and $v_{l-1}v_l$.
      Relabel the resulting cycle $x_1\cdots x_n$.
      Repeat the procedure by sampling a new index $I$ such that $x_Ix_{I+1}$
      is not contained in $P_1,\ldots,P_{p-1}$
      nor $v_1\cdots v_{l-1}$, and then switching $x_Ix_{I+1}$ with $v_{l-2}v_{l-1}$.
      Repeat until all the edges of $P_p$ have been switched.
      Let $H$ be the resulting Hamiltonian cycle.
    \end{coupling}
    \begin{proposition}\label{prop:Hcouple}
      In both couplings, $\rHam$ is a uniformly random Hamiltonian cycle 
      conditioned to contain
      paths $P_1\cdots P_{p-1}$,
      and $\rHam'$ is distributed as a uniformly random Hamiltonian cycle
      conditioned to contain
      $P_1,\ldots,P_{p}$.
    \end{proposition}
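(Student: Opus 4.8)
The plan is to mimic the proof of Proposition~\ref{prop:Pcouple}, replacing pairings by Hamiltonian cycles and the switching of two pairs by the cycle-switching of Figure~\ref{fig:hamcond}. Write $P_p=v_1\cdots v_l$, and for $0\le m\le l-1$ let $\Hams_m$ be the set of Hamiltonian cycles on $\{1,\ldots,n\}$ containing all of $P_1,\ldots,P_{p-1}$ together with the initial segment $v_1\cdots v_{m+1}$ of $P_p$; so $\Hams_0\supseteq\cdots\supseteq\Hams_{l-1}$, a uniform Hamiltonian cycle conditioned on $P_1,\ldots,P_{p-1}$ is uniform on $\Hams_0$, and one conditioned on $P_1,\ldots,P_p$ is uniform on $\Hams_{l-1}$. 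Because a switch is only defined once an orientation is fixed, I would work with the cover $\widetilde\Hams_m$ of pairs (cycle in $\Hams_m$, orientation traversing $v_1,\ldots,v_{m+1}$ in that cyclic order): for $m\ge1$ this orientation is unique, so $\widetilde\Hams_m\cong\Hams_m$, while $\widetilde\Hams_0$ is a $2$-to-$1$ cover of $\Hams_0$, and in every case the uniform law on $\widetilde\Hams_m$ projects to the uniform law on $\Hams_m$.

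For $1\le m\le l-1$ define $\psi_m\colon\widetilde\Hams_{m-1}\to\widetilde\Hams_m$: switch the edge leaving $v_m$ with the edge leaving $v_{m+1}$ --- this is exactly \eqref{eq:hamtrans} for $m=1$ and its evident analogue otherwise --- and then re-read the result with the (unique) orientation making $v_1,\ldots,v_{m+1}$ increasing. I would first verify $\psi_m$ is well defined: this type of switch always returns a single cycle rather than a pair; the two edges involved meet only $v_m$ and $v_{m+1}$, which are interior vertices of $P_p$ and hence on no $P_j$, and the edge leaving $v_m$ is not $v_{m-1}v_m$, so the output still contains $P_1,\ldots,P_{p-1}$ and $v_1\cdots v_m$, together with the new edge $v_mv_{m+1}$; hence it lies in $\widetilde\Hams_m$. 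When the two switched edges happen to be consecutive --- in particular when $v_mv_{m+1}$ is already present --- the switch does nothing, exactly as $A=b,B=a$ does nothing in Coupling~\ref{coup:Pforward}.

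The heart of the argument, paralleling the count of $\varphi_i^{-1}(p')$ in Proposition~\ref{prop:Pcouple}, is to show that every fiber of $\psi_m$ has the same cardinality. Given $H'\in\widetilde\Hams_m$, which reads $\cdots v_mv_{m+1}\cdots$, I would enumerate its preimages: one obtains each by switching the edge $v_mv_{m+1}$ of $H'$ with some other edge of $H'$, subject to the constraint that the result still lies in $\widetilde\Hams_{m-1}$ (so the other edge may not be one of the edges of $P_1,\ldots,P_{p-1}$ or of $v_1\cdots v_m$), and one must additionally account for the orientation conventions and for the degenerate, consecutive-edge switches --- which, in particular, produce the preimage equal to $H'$ itself. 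The output of this (somewhat delicate) count is that $|\psi_m^{-1}(H')|$ does not depend on $H'$. Consequently $\psi_m$ carries the uniform law on $\widetilde\Hams_{m-1}$ to the uniform law on $\widetilde\Hams_m$, and sampling a uniformly random $\psi_m$-preimage carries the uniform law on $\widetilde\Hams_m$ back to the uniform law on $\widetilde\Hams_{m-1}$. This fiber computation is the step I expect to be the main obstacle.

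It remains to read off both couplings. In Coupling~\ref{coup:Hforward}, sampling $\rHam$ uniform on $\Hams_0$ and then a uniform orientation produces a uniform element of $\widetilde\Hams_0$, and steps $m=1,\ldots,l-1$ apply $\psi_1,\ldots,\psi_{l-1}$ in succession; by the previous paragraph and induction the output is uniform on $\widetilde\Hams_{l-1}$, so $\rHam'$ is a uniform Hamiltonian cycle conditioned to contain $P_1,\ldots,P_p$, while $\rHam$ has the asserted law by construction. In Coupling~\ref{coup:Hbackward}, sampling $\rHam'$ uniform on $\Hams_{l-1}$ and choosing its orientation by the coin flip that may reverse $P_p$ produces a uniform element of $\widetilde\Hams_{l-1}$, and each successive uniform choice of the index $I$ among the edges avoiding the currently forced paths is precisely a uniformly random preimage under $\psi_{l-1},\psi_{l-2},\ldots,\psi_1$ --- the no-op switch noted above is what supplies the ``stay put'' preimage that is not visible in the bare list of indices. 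Hence the output is uniform on $\widetilde\Hams_0$, so $\rHam$ has the asserted law and $\rHam'$ has it by construction.
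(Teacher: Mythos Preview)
Your proposal is correct and follows essentially the same approach as the paper. The paper works directly with oriented Hamiltonian cycles throughout (its $\Hams_i$ is your $\widetilde\Hams_i$), which spares the extra layer of unoriented $\Hams_m$, and it dispatches the fiber count in one line---preimages of $h$ under $\psi_m$ are exactly the cycles obtained by switching $v_mv_{m+1}$ with any edge not lying in $P_1,\ldots,P_{p-1}$ or in $v_1\cdots v_m$---so the ``main obstacle'' you anticipate is less delicate than you fear.
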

    \begin{proof}
      Let $\Hams_0$ be the set of oriented Hamiltonian cycles containing
      paths $P_1,\ldots,P_{p-1}$, and let $\Hams_{i}$ be the set of oriented Hamiltonian cycles
      containing these paths as well as
      the first $i$ edges of $P_p=v_1\cdots v_l$, ordered in this direction.
      Let $\varphi_0$ be the map on $\Hams_0$ given by performing the switching \eqref{eq:hamtrans} to create
      the edge $v_1v_2$ (this produces a Hamiltonian cycle with a well-defined ordering).
      Observe that $\varphi_0$ maps $\Hams_0$ into $\Hams_1$.
      Let $\varphi_1$ be the map given by performing the next switching, and so on, giving rise
      to the sequence of maps
      \begin{center}
        \begin{tikzpicture}[xscale=1.5]
          \node (H0) at (0,0) {$\Hams_0$};
          \node (H1) at (1,0) {$\Hams_1$};
          \node (dots) at (2,0) {$\cdots$};
          \node (Hl) at (3,0) {$\Hams_l.$};
          \draw[->] (H0) to node[auto] {$\varphi_0$} (H1);
          \draw[->] (H1) to node[auto] {$\varphi_1$} (dots);
          \draw[->] (dots) to node[auto] {$\varphi_{l-1}$} (Hl);
        \end{tikzpicture}
      \end{center}
      We claim that for each $i$, the fibers $\varphi^{-1}_i(h)$ are the same size
      for all $h\in\Hams_{i+1}$.
      Indeed, as in Proposition~\ref{prop:Pcouple}, 
      the elements of $\varphi_{i,j}^{-1}(h)$ are given
      by switching $v_{i}v_{i+1}$ with any edge except those in
      the paths $P_1,\ldots,P_{p-1}$ or in the first $i$ edges of $P_p$.
      
      Coupling~\ref{coup:Hforward} can be seen as choosing a random orientation for $H$
      and then plugging it into $\varphi_0, \ldots,\varphi_{l-1}$ in succession.
      Coupling~\ref{coup:Hbackward} first chooses a random orientation for $H'$. Then, chooses
      uniformly at random from the preimage of $H'$ under $\varphi_{l-1}$, and then from
      the preimage of this under $\varphi_{l-2}$, and so on. It follows from all of the fibers
      having the same size that in each coupling, the uniform measure on $\Hams_i$ is maintained
      at each step.
    \end{proof}
  
  We will apply this collection of couplings to condition a random graph $(H,Q)$ from the unscrambled
  mixed model to contain
  some given cycle. 
  We will refer to the configuration model part and the Hamiltonian
  part of an element $\alpha\in\fullcycspace$, meaning the edges
  of $\alpha$ that come from each respective part of the model.
  We say that two cycles in $\fullcycspace$ \emph{overlap}
  if their configuration
  model parts contain any prevertices in common, or if their Hamiltonian
  cycle parts contain any vertex in common.
  
  \begin{figure}
    \begin{center}
      \begin{tikzpicture}[>=stealth]
         \node[fill=black!10, rounded corners, outer sep=5pt] (before) at (0,4.25)
         {
           \begin{tikzpicture}
             \foreach \i in {1, ..., 7}
               \node[vert, label={[xshift=5pt,yshift=3pt]left:$v_\i$}] (v\i) at (\i, 0) {};
                 
             \node[vert] (w1) at (1,  1) {};
             \node[vert] (w3) at (3,  1) {};
             \node[vert] (w4) at (4,  1) {};
             \node[vert] (w5) at (5,  1) {};
             \path (1, -1) +(-0.175, 0) node[vert] (h1) {}
                           +( 0.175, 0) node[vert] (hh1) {}
                   (2,  1) +(-0.175, 0) node[vert] (w2) {}
                           +( 0.175, 0) node[vert] (ww2) {}
                   (3, -1) +(-0.175, 0) node[vert] (h3) {}
                           +( 0.175, 0) node[vert] (hh3) {}
                   (4, -1) +(-0.175, 0) node[vert] (h4) {}
                           +( 0.175, 0) node[vert] (hh4) {}
                   (5, -1) +(-0.175, 0) node[vert] (h5) {}
                           +( 0.175, 0) node[vert] (hh5) {}
                   (6, -1) +(-0.175, 0) node[vert] (h6) {}
                           +( 0.175, 0) node[vert] (hh6) {}
                   (7, -1) +(-0.175, 0) node[vert] (h7) {}
                           +( 0.175, 0) node[vert] (hh7) {};
             \foreach \i in {1, 3, 4, 5, 6, 7}
               \draw[thick, blue] (h\i) to[bend left] (v\i) 
                                  (v\i) to[bend left] (hh\i);
             \foreach \i in {1, 2, 3, 4, 5}
               \draw[thick, red] (v\i) -- (w\i);
             \draw[thick, red] (v2) -- (ww2);
             
           \end{tikzpicture}
         };
         
         \node[fill=black!10, rounded corners, outer sep=5pt] (after) at (0,0)
         {
           \begin{tikzpicture}
             \foreach \i in {1, ..., 7}
               \node[vert, label={[yshift=-4pt]above:$v_\i$}] (v\i) at (\i, 0) {};
                 
             \node[vert] (w1) at (1,  1) {};
             \node[vert] (w3) at (3,  1) {};
             \node[vert] (w4) at (4,  1) {};
             \node[vert] (w5) at (5,  1) {};
             \path (1, -1) +(-0.175, 0) node[vert] (h1) {}
                           +( 0.175, 0) node[vert] (hh1) {}
                   (2,  1) +(-0.175, 0) node[vert] (w2) {}
                           +( 0.175, 0) node[vert] (ww2) {}
                   (3, -1) +(-0.175, 0) node[vert] (h3) {}
                           +( 0.175, 0) node[vert] (hh3) {}
                   (4, -1) +(-0.175, 0) node[vert] (h4) {}
                           +( 0.175, 0) node[vert] (hh4) {}
                   (5, -1) +(-0.175, 0) node[vert] (h5) {}
                           +( 0.175, 0) node[vert] (hh5) {}
                   (6, -1) +(-0.175, 0) node[vert] (h6) {}
                           +( 0.175, 0) node[vert] (hh6) {}
                   (7, -1) +(-0.175, 0) node[vert] (h7) {}
                           +( 0.175, 0) node[vert] (hh7) {};
             \draw[thick, red] (v1) -- (v2) -- (v3) (v4) -- (v5);
             \draw[thick, blue] (v3) -- (v4) (v5) -- (v6) -- (v7);
             \draw[thick, blue] (v1) to[bend right, looseness=0.5] (v7);
             
             \draw[thick, red] (w1) -- (w2)  
                               (ww2) -- (w3)  
                               (w4) -- (w5);
             \draw[preaction={draw, line width=3pt, black!10},
                   thick, blue, bend left] 
                   (h3) to (v3)
                   (h4) to (v4)
                   (h5) to (v5);
             \draw[thick, blue, bend left]
                   (hh4) to (hh3)
                   (hh6) to (hh5)
                   (h6) to (h7)
                   (hh7) to[looseness=0.4] (h1)
                   (v1) to (hh1);
           \end{tikzpicture}
         };
         \draw[->, decorate, decoration={snake,amplitude=.4mm,
          segment length=2mm,post length=1.5mm}] (before) -- (after);
      \end{tikzpicture}
    \end{center}
    
    \caption{Conditioning a graph formed by a random Hamiltonian cycle
    superimposed on the configuration model to contain a cycle
    $v_1\cdots v_7$.
    Edges from the configuration model are colored red, and edges
    from the Hamiltonian cycle model are colored blue.
    The color pattern
    of the cycle to be created is red, red, blue, red, blue, blue, blue.}
    \label{fig:cycleconditioning}
  \end{figure}
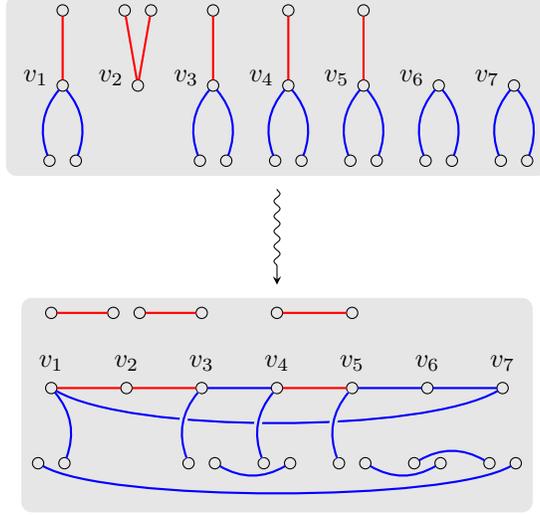

    For any cycle $\alpha\in\fullmixedcycspace$, we define a
  pairing $\rP_\alpha$  conditioned to contain the configuration
  model part of $\alpha$, coupled with $\rP$ 
  according to Coupling~\ref{coup:Pforward}.
  We define a Hamiltonian cycle $\rHam_\alpha$
  conditioned to contain
  the Hamiltonian part of $\alpha$, coupled with $\rHam$
  by successively applying Coupling~\ref{coup:Hforward} 
  to $\rHam$ for each path in the Hamiltonian part of $\alpha$.
  Let $G$ and $G_\alpha$ be the 
  $d$-regular pseudograph given by the projections of
  $(\rHam,\rP)$ and $(\rHam_\alpha,\rP_\alpha)$, respectively.
  See Figure~\ref{fig:cycleconditioning} for an illustration.
  We state some properties of these couplings that
  are apparent from their construction. First, we define a collection
  of subgraphs $K_i$ of $G$, which will be the only parts of $G$ not found in $G_\alpha$.
  \begin{definition}\label{def:K}
    Fix some cycle $\alpha\in\fullmixedcycspace$. Projected onto
    a pseudograph, let its vertices be $v_1,\ldots,v_k$.
    If $v_i$ lies between two red edges in $\alpha$, then there are two prevertices
    $a$ and $b$ used by $\alpha$ at $v_i.$  Each is the endpoint of an edge in
    $\rP.$  Let $K_i$ be the projection of these two red edges and their
    endpoints.
    If $v_i$ lies between two blue edges in $\alpha$, then let $K_i$ be the two 
    blue edges incident to $v_i$ and their endpoints.
    Finally, if $v_i$ lies between one blue edge and one red edge in $\alpha,$
    then let $K_i$ be the two blue edges incident to $v_i,$ the
    red edge labeled by the prevertex used by $\alpha$ and their endpoints.
    These graphs are illustrated by the connected components of the top graph
    in Figure~\ref{fig:cycleconditioning}.
%
%
    
  \end{definition}
  
  \begin{proposition}\label{prop:couplingproperties}\hfill
    \begin{enumerate}[i)]
      \item Suppose that $(\rHam,\rP)$ contains the cycle 
        \label{item:removedcycle}
        $\beta$, but $(\rHam_\alpha,\rP_\alpha)$ does not.
        Then $\beta$ and $\alpha$ overlap.
      \item 
        Suppose that some edge is
        present in $(\rHam_\alpha,\rP_\alpha)$ \label{item:clusters}
        but not in $(\rHam,\rP)$. Then this edge is either
        contained in $\alpha$, or its projection to $G$ is an
        edge 
        between a vertex in $K_i$ and a vertex in $K_{i+1}$ 
        for some $i$ (considering indices 
        modulo~$\abs{\alpha}$).
    \end{enumerate}
  \end{proposition}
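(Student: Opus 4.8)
The plan is to prove both statements by a bookkeeping argument on the switchings that constitute the couplings. Recall that $(\rHam_\alpha,\rP_\alpha)$ is produced from $(\rHam,\rP)$ by running Coupling~\ref{coup:Pforward} once, with the prevertex pairs of the red edges of $\alpha$ as its fixed data, and then running Coupling~\ref{coup:Hforward} once for each maximal blue path of the Hamiltonian part of $\alpha$; each such run is a sequence of switchings, and each switching deletes exactly two edges and creates exactly two edges. Part~(i) will follow from a statement about the deleted edges, and part~(ii) from one about the created edges.

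For (i), the key observation is that \emph{every edge deleted at a switching of either coupling is incident to a prevertex or a Hamiltonian vertex of $\alpha$}. Indeed, step~$i$ of Coupling~\ref{coup:Pforward} deletes $a_i\sim A_i$ and $b_i\sim B_i$, both incident to $a_i$ or $b_i$, which are prevertices used by $\alpha$; and every switching inside a run of Coupling~\ref{coup:Hforward} — the initial one realizing the transformation \eqref{eq:hamtrans} and each subsequent one — deletes two edges each incident to a vertex of the blue path then being inserted, hence a Hamiltonian vertex of $\alpha$. Granting this, suppose a cycle $\beta$ is contained in $(\rHam,\rP)$ but not in $(\rHam_\alpha,\rP_\alpha)$. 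Then some edge $e$ of $\beta$ is present in $(\rHam,\rP)$ and absent from $(\rHam_\alpha,\rP_\alpha)$, so there is a \emph{first} switching over the whole coupling at which $e$ is removed; since $e$ was present until then it is one of the two edges deleted at that switching, hence incident to $\alpha$. As $e$ lies in $\beta$, the cycles $\alpha$ and $\beta$ share a prevertex or a Hamiltonian vertex, i.e.\ they overlap.

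For (ii), dually, at every switching one of the two created edges is an \emph{edge of $\alpha$} — the pair $a_i\sim b_i$ (a red edge of $\alpha$) in Coupling~\ref{coup:Pforward}, or the pair $v_jv_{j+1}$ (a blue edge of $\alpha$) in Coupling~\ref{coup:Hforward} — while the other is a \emph{reconnection edge}. Any edge present in $(\rHam_\alpha,\rP_\alpha)$ but not $(\rHam,\rP)$ was created at some switching and never afterwards deleted; if it is an edge of $\alpha$ we are done, so it suffices to treat a surviving reconnection edge. In the non-interfering case, the reconnection edge created when conditioning on an edge $v_pv_{p+1}$ of $\alpha$ joins the original $\rP$-partner (resp.\ $\rHam$-neighbor) of the prevertex or vertex of $\alpha$ sitting at $v_p$ to the corresponding object at $v_{p+1}$; by Definition~\ref{def:K} these project into $K_p$ and $K_{p+1}$, which are consecutive because $v_pv_{p+1}$ is an edge of $\alpha$. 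For Coupling~\ref{coup:Hforward} this is essentially the whole story, since a blue path of $\alpha$ is built up vertex by vertex in order, so the ``frontier'' of the successive switchings always lies inside a single $K_j$ and the neighbours involved remain the original $\rHam$-neighbours of $v_j$ and $v_{j+1}$.

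The part I expect to require real work is the configuration-model bookkeeping when successive switchings interfere, i.e.\ when an earlier step of Coupling~\ref{coup:Pforward} has modified the pairing at a prevertex used by a later step — which happens exactly when $\rP$ already contains an edge between two prevertices of $\alpha$. Then a reconnection edge is carried along an alternating chain of switchings, matching the standard alternating-path picture of conditioning a matching to contain a fixed partial matching, and one must follow this chain to locate the endpoints of each surviving reconnection edge. Here one always obtains that both endpoints project into $\bigcup_i K_i$ — they are original $\rP$-partners of prevertices of $\alpha$ — which is the robust form of the conclusion; pinning them down to a consecutive pair $K_i,K_{i+1}$, and dealing with degenerate chains (self-loops of $\rP$, and cycles of $\alpha$ that reuse two prevertices at a single vertex), is the delicate step and, I expect, the main obstacle in the proof.
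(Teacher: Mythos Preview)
Your argument for (i) is correct and is exactly the paper's proof, which is one sentence: ``the only edges that are destroyed by the coupling have some $v_i$ as an endpoint.'' For (ii), your decomposition of each switching into one $\alpha$-edge plus one reconnection edge, and your verification that in the non-interfering case the reconnection edge lands between consecutive $K_p$ and $K_{p+1}$, is again the paper's argument --- also one sentence.

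Where you diverge from the paper is in worrying about interference in Coupling~\ref{coup:Pforward}, when two prevertices of $\alpha$ happen to be $Q$-paired and a reconnection edge gets carried along a chain of switchings. The paper's proof does not address this at all. You are right that a surviving reconnection edge then has both endpoints in $\bigcup_i K_i$ but need not land in a \emph{consecutive} pair; this drops out of your own alternating-path description, and you should not expect to force it down to $K_i,K_{i+1}$ in general. But this is also not an obstacle. The only place part~(ii) is used downstream is in the proof of Proposition~\ref{prop:upwardbound}, to bound the probability that the coupling creates a new short cycle, and for that the robust statement ``every new edge not in $\alpha$ has both endpoints in $\bigcup_i K_i$'' already suffices, at the cost of a harmless extra factor of $\abs{\alpha}\leq r$; alternatively, for a uniformly random $\xi$ the interference event itself has probability $O(\abs{\alpha}^2/n(d-2))$ and is absorbed into the existing error terms. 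So the paper's two-sentence proof is asserting the generic picture, your write-up is more careful, and the residual case you flag is harmless rather than an obstacle.
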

  \begin{proof}
  For the first claim, note that the only edges that are destroyed by
  the coupling have some $v_i$ as an endpoint.  For the second, the only edges
  that are created by the coupling appear between some $K_i$ and $K_{i+1}$ at 
  the corresponding step in the coupling algorithm.
  \end{proof}

  \subsection{Poisson approximation with multiplicative bounds}
  \label{subsec:multbounds}
  The usual goal in Poisson approximation is to bound the total
  variation distance between some distribution $\mu$
  and the Poisson distribution. This gives an estimate
  of the point probabilities $\mu(k)$ with a uniform, additive error.
  We, on the other hand, want an approximation of these point
  probabilities in which the error term is relative to the size of
  $\mu(k)$.
  
  First, we present a framework for this form of approximation,
  echoing the one given in \cite{BHJ}.
  Let $F=(F_\alpha,\,\alpha\in\gis)$ be a vector of Bernoulli random
  variables, and let $p_\alpha= \E F_\alpha$.
  Suppose that
  we have a family of random vectors
  $\conditionalprocess = (J_{\beta\alpha},\,\beta\in\gis)$, each coupled
  with $F$, such that
  $\conditionalprocess$ is distributed as $F$ conditioned on $F_\alpha=1$.
  Let $e_\alpha$ denote the standard basis vector equal to one at
  position $\alpha$ and zero elsewhere.
  Define $E_\alpha$ to be the event that $\conditionalprocess = F+e_\alpha,$
  i.e.\ the vectors $\conditionalprocess$ and $F$ are identical except that $F_\alpha=0$
  and $J_{\alpha\alpha}=1$.
  The idea is that bounds on conditional probabilities of $E_\alpha$
  can be turned into 
  estimates on point probabilities relative to each other.
  \begin{lemma}\label{lem:perturb}
    Let $x=(x_\beta,\,\beta\in\gis)$ be a vector of zeros and ones.
    For some $\alpha\in\gis$ with $x_\alpha=1$, suppose that
    $\conditionalprocess$ is coupled with $F$ and distributed as described above.
    If
    \begin{align*}
      \P[E_\alpha\mid F=x-e_\alpha]&\geq 1-\eps,\\
      \P[E_\alpha\mid \conditionalprocess=x] &\geq 1-\eps,
    \end{align*}
    then
    \begin{align*}
      (1-\eps)p_\alpha\P[F=x-e_\alpha]
      \leq \P[F=x]\leq (1-\eps)^{-1}p_\alpha\P[F=x-e_\alpha].
    \end{align*}
  \end{lemma}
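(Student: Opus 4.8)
The plan is to reduce the whole statement to one size-bias identity relating $\P[F=x]$ and $\P[\conditionalprocess=x]$, and then to sandwich $\P[\conditionalprocess=x]$ between multiples of $\P[F=x-e_\alpha]$ by intersecting with the event $E_\alpha$.

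First I would record the basic identity. Since by hypothesis $\conditionalprocess$ is distributed as $F$ conditioned on $\{F_\alpha=1\}$, and the target state $x$ has $x_\alpha=1$ so that $\{F=x\}\subseteq\{F_\alpha=1\}$, we get $\P[\conditionalprocess=x]=\P[F=x\mid F_\alpha=1]=\P[F=x]/p_\alpha$, i.e. $\P[F=x]=p_\alpha\P[\conditionalprocess=x]$. (If $p_\alpha=0$ then $\P[F=x]=0$ because $x_\alpha=1$, and both inequalities are trivial, so we may assume $p_\alpha>0$.) Hence it suffices to prove $(1-\eps)\P[F=x-e_\alpha]\le\P[\conditionalprocess=x]\le(1-\eps)^{-1}\P[F=x-e_\alpha]$.

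The one observation that makes this work is the set identity on $E_\alpha$: on the event $E_\alpha=\{\conditionalprocess=F+e_\alpha\}$ the vectors $\conditionalprocess$ and $F$ agree in every coordinate except $\alpha$, where $F_\alpha=0$ and $J_{\alpha\alpha}=1$; since $x_\alpha=1$, it follows that on $E_\alpha$ one has $\conditionalprocess=x$ if and only if $F=x-e_\alpha$, so $\P[\conditionalprocess=x,\,E_\alpha]=\P[F=x-e_\alpha,\,E_\alpha]$. From here both bounds fall out. For the lower bound, $\P[\conditionalprocess=x]\ge\P[\conditionalprocess=x,\,E_\alpha]=\P[F=x-e_\alpha,\,E_\alpha]=\P[E_\alpha\mid F=x-e_\alpha]\,\P[F=x-e_\alpha]\ge(1-\eps)\P[F=x-e_\alpha]$, using the first hypothesis. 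For the upper bound, run the same chain from the other side: $\P[F=x-e_\alpha]\ge\P[F=x-e_\alpha,\,E_\alpha]=\P[\conditionalprocess=x,\,E_\alpha]=\P[E_\alpha\mid\conditionalprocess=x]\,\P[\conditionalprocess=x]\ge(1-\eps)\P[\conditionalprocess=x]$, using the second hypothesis, and rearrange. Multiplying both chains through by $p_\alpha$ and invoking the identity $\P[F=x]=p_\alpha\P[\conditionalprocess=x]$ yields exactly the claimed two-sided bound.

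I do not expect a genuine obstacle here; the argument is essentially bookkeeping built on the fact that $\conditionalprocess$ is the size-biased version of $F$ at coordinate $\alpha$. The only places that need a moment's care are the degenerate case $p_\alpha=0$ mentioned above, and the verification of the set identity $\{\conditionalprocess=x\}\cap E_\alpha=\{F=x-e_\alpha\}\cap E_\alpha$, which is precisely where the assumption $x_\alpha=1$ is used.
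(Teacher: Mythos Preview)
Your proof is correct and is essentially the same argument as the paper's: both use the identity $\P[F=x]=p_\alpha\P[\conditionalprocess=x]$ together with the observation that on $E_\alpha$ the events $\{\conditionalprocess=x\}$ and $\{F=x-e_\alpha\}$ coincide, and then apply the two conditional-probability hypotheses to obtain the lower and upper bounds.
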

  
  \begin{proof}
    These inequalities follows directly from the definitions:
    \begin{align*}
      \P[F=x] &= \P[\text{$F_\alpha=1$ and $F=x$}]\\
              &= p_\alpha\P[\conditionalprocess=x]\\
              &\geq p_\alpha\P[\text{$F=x-e_\alpha$ and $E_\alpha$}]\\
              &=p_\alpha\P[F=x-e_\alpha]\P[E_\alpha\mid F=x-e_\alpha]\geq (1-\eps)
              p_\alpha \P[F=x-e_\alpha],
    \end{align*}
    and
    \begin{align*}
      \P[F=x-e_\alpha] &\geq \P[\text{$\conditionalprocess=x$ and $E_\alpha$}]\\
      &= \P[\conditionalprocess=x]\P[E_\alpha\mid \conditionalprocess=x]\\
      &\geq (1-\eps)\P[\conditionalprocess=x]
      =(1-\eps)p_\alpha^{-1}\P[F=x].\qedhere
    \end{align*}
  \end{proof}
  \begin{remark}
    The approach to Poisson approximation in \cite{BHJ} is to show that
    \begin{align*}
      \E\biggl|\sum_\beta F_\beta + 1 - \sum_\beta J_{\alpha\beta}\biggr|
    \end{align*}
    is always
    small (see \cite[Theorem~1.B]{BHJ}). This is quite similar
    to proving that $\P[E_\alpha]$ is nearly one. 
    The gist of our method is that by bounding conditional versions of this
    probability, we obtain more information.
    
    This method was partially inspired by the use of \emph{switchings} in
    random graph models, which also relate probabilities of slightly
    perturbed events. In \cite[Theorem~2]{MWW}, for example, the authors 
    estimate
    the probability that a random regular graph has no cycles of size $r$
    or less, with a multiplicative error. 
    Similar techniques are used in \cite{Jan09}.
    See Remark~5.6 in that paper for an interpretation of
    switchings as approximate couplings.
    All of these methods are quite similar to a technique of comparing
    relative probabilities via exchangeable pairs described in
    \cite[Section~2]{St92}. 
  \end{remark}

  Our goal now is to apply Lemma~\ref{lem:perturb} to the cycle processes
  $\cycprocess(P)$ and
    $\mixedcycprocess$ defined on p.~\pageref{page:mixedcycprocessdef}.
  For some absolute constant $\Cr{C:upwardbound}$, the following
  two propositions hold:
  \begin{proposition}\label{prop:upwardbound}
    Fix some $\alpha\in\fullmixedcycspace$, and let
    $x$ be $\lambda$-neat with $x_\alpha=1$.
    Let $\conditionalprocess$ be distributed as $\mixedcycprocess$
    conditioned on $I_\alpha=1$.
    Then $\mixedcycprocess$ and $\conditionalprocess$ can be coupled with
    \begin{align*}
      \P[E_\alpha\mid \mixedcycprocess=x-e_\alpha] &\geq 
        1 - \frac{\Cl{C:upwardbound}\lambda\abs{\alpha}(d-1)^{r-1}}{n}.
    \end{align*}
  \end{proposition}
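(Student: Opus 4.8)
The plan is to use the explicit coupling already built in Section~\ref{sec:couplings}: set $\conditionalprocess=\mixedcycprocess(\rHam_\alpha,\rP_\alpha)$, where $(\rHam_\alpha,\rP_\alpha)$ is produced from $(\rHam,\rP)$ by applying Coupling~\ref{coup:Pforward} to create the configuration-model part of $\alpha$ and Coupling~\ref{coup:Hforward} to create its Hamiltonian part. By Propositions~\ref{prop:Pcouple} and~\ref{prop:Hcouple} this has the correct conditional law, and $(\rHam_\alpha,\rP_\alpha)$ always contains $\alpha$. The event $E_\alpha$ is exactly the assertion that $\mixedcycprocess(\rHam_\alpha,\rP_\alpha)=\mixedcycprocess(\rHam,\rP)+e_\alpha$, i.e.\ that passing to $(\rHam_\alpha,\rP_\alpha)$ neither destroys nor creates any cycle of length at most $r$ other than the newly forced $\alpha$. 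So I must show that, conditionally on $\mixedcycprocess=x-e_\alpha$, this happens with probability at least $1-\Cr{C:upwardbound}\lambda\abs{\alpha}(d-1)^{r-1}/n$.

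First I would rule out destruction. If $\beta$ with $\abs{\beta}\le r$ lies in $(\rHam,\rP)$ but not in $(\rHam_\alpha,\rP_\alpha)$, then Proposition~\ref{prop:couplingproperties}(\ref{item:removedcycle}) forces $\beta$ and $\alpha$ to overlap. But on the event $\mixedcycprocess=x-e_\alpha$ the cycles of length at most $r$ present in $(\rHam,\rP)$ are precisely the $\beta$ with $x_\beta=1$ and $\beta\neq\alpha$, and since $x$ is $\lambda$-neat none of these overlaps $\alpha$. Hence no destruction occurs, and $E_\alpha$ can fail only through the creation of some new cycle $\beta\neq\alpha$ with $\abs{\beta}\le r$.

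Next I would control creation with a first-moment bound. By Proposition~\ref{prop:couplingproperties}(\ref{item:clusters}), every edge present in $(\rHam_\alpha,\rP_\alpha)$ but not in $(\rHam,\rP)$ is either an edge of $\alpha$ or runs between consecutive clusters $K_i,K_{i+1}$; and inspecting the couplings, each of the $\abs{\alpha}$ switchings produces exactly one such inter-cluster ``leftover'' edge, so there are $O(\abs{\alpha})$ newly created edges in all. A newly created cycle must use at least one of them, hence
\begin{align*}
  \P[E_\alpha^{c}\mid \mixedcycprocess=x-e_\alpha]
  \le \sum_{e}\E\bigl[\#\{\beta\neq\alpha:\abs{\beta}\le r,\ e\in\beta\}\bigm|\mixedcycprocess=x-e_\alpha\bigr],
\end{align*}
the sum over the $O(\abs{\alpha})$ newly created edges. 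A cycle of length $j$ through a fixed edge is that edge plus a path of length $j-1$ joining its endpoints, and the computations behind Lemma~\ref{lem:cycleprob} and Corollary~\ref{cor:expectations} give, in the \emph{unconditioned} model, an expected count of such structures (summed over $j\le r$) of order $(d-1)^{r-1}/n$. Two further points: the leftover edges are random, but their endpoints lie in the $O(\abs{\alpha})$-vertex region around $\alpha$, so it suffices to bound the count uniformly over their possible locations; and one must pass from the unconditioned estimate to the law conditioned on $\mixedcycprocess=x-e_\alpha$. For the latter I would use that this event pins down only $\Phi\le\lambda(d-1)^r$ prevertices and $\Psi\le\lambda(d-1)^{r-1}$ Hamiltonian vertices, and estimate the conditional path probabilities directly (via Lemma~\ref{lem:cycleprob}-type expansions, or by re-expressing the conditional measure through the backward couplings of Section~\ref{sec:couplings}), so that this conditioning inflates them by at most a factor of order $\lambda$. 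Multiplying the per-edge bound $O(\lambda(d-1)^{r-1}/n)$ by the $O(\abs{\alpha})$ edges gives $\P[E_\alpha^{c}\mid\mixedcycprocess=x-e_\alpha]=O\bigl(\lambda\abs{\alpha}(d-1)^{r-1}/n\bigr)$, which is the claim.

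The main obstacle is precisely this last step: quantifying how conditioning on the full short-cycle configuration $\mixedcycprocess=x-e_\alpha$ — which imposes both the presence of the cycles of $x-e_\alpha$ and the absence of every other cycle of length at most $r$ — affects the first moment of newly created short cycles. The ``absence'' half only helps; showing the ``presence'' half costs no more than the stated factor of $\lambda$ is where $\lambda$-neatness and the coupling machinery are genuinely used. By contrast the reduction to ``no destruction, plus a first moment over $O(\abs{\alpha})$ edges'' and the unconditioned edge-count $O((d-1)^{r-1}/n)$ are routine.
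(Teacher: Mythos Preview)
Your reduction to ``no destruction, then bound creation'' is sound, and the destruction step via Proposition~\ref{prop:couplingproperties}(\ref{item:removedcycle}) together with $\lambda$-neatness is correct (modulo the special coupling when $\alpha$ is a loop, which needs a one-line extra argument). The gap is exactly where you place it, and it is a genuine one rather than bookkeeping. Conditioning on $\mixedcycprocess=x-e_\alpha$ forces both the presence of the cycles in $x-e_\alpha$ \emph{and} the absence of every other cycle of length at most $r$; the second is a global constraint on the whole graph, and neither of your proposed devices handles it. The backward couplings run in the wrong direction, and a direct estimate of conditional path counts would essentially require the point-probability comparison (Proposition~\ref{prop:multpoiapprox}) that the present proposition is a step toward. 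Your claim that ``the absence half only helps'' is plausible but is precisely the kind of negative-correlation statement that is hard to prove in this setting.

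The paper bypasses the conditioning by a symmetry trick you did not use. Because $\P[E_\alpha\mid\mixedcycprocess=x-e_\alpha]=\P[E_\beta\mid\mixedcycprocess=x-e_\alpha]$ for every $\beta$ with the same colour pattern as $\alpha$ and disjoint from the cycles of $x$, one may replace $\alpha$ by a cycle $\xi$ chosen uniformly at random among all cycles with that colour pattern, \emph{independently of $(\rHam,\rP)$}. The event that $\xi$ overlaps a cycle of $x$ costs $O(\lambda\abs{\alpha}(d-1)^{r-1}/n)$, and this is where the $\lambda$ enters, not through inflated conditional path counts as you guessed. For creation, the clusters $K_i$ are now centred at the vertices $v_i$ of $\xi$, which are uniformly random in $[n]$ independently of the graph; so even after conditioning on all of $(\rHam,\rP)$ (hence on $\mixedcycprocess=x-e_\alpha$) and on $K_i$, the vertex $v_{i'}$ is still uniform on $[n]\setminus\{v_i\}$, and $\P[d(K_i,K_{i'})\le D]=O((d-1)^D/n)$. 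Summing over $i$ and $1\le j\le r/2$ gives $O(\abs{\alpha}(d-1)^{r-1}/n)$ with no $\lambda$. The randomisation converts your hard conditional first-moment problem into a distance estimate between random vertices in a fixed $d$-regular graph, which is trivial.
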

  \begin{proposition}\label{prop:downwardbound}
    Fix some $\alpha\in\fullmixedcycspace$, and let
    $x$ be $\lambda$-neat with $x_\alpha=1$.
    Let $\conditionalprocess$ be distributed as $\mixedcycprocess$
    conditioned on $I_\alpha=1$.
    If 
    Then $\mixedcycprocess$ and $\conditionalprocess$ can be coupled with
    \begin{align*}
      \P[E_\alpha\mid \conditionalprocess=x] &\geq 
        1 - \frac{\Cr{C:upwardbound}\lambda\abs{\alpha}(d-1)^{r-1}}{n}.
    \end{align*}
  \end{proposition}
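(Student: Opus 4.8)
The plan is to generate the coupling \emph{in reverse}. Let $(\rHam_\alpha,\rP_\alpha)$ be distributed as $\sepmixedmd$ conditioned to contain $\alpha$, i.e.\ conditioned on $I_\alpha=1$, and let $\conditionalprocess=\bigl(I_\beta(\rHam_\alpha,\rP_\alpha),\,\beta\in\fullmixedcycspace\bigr)$, which then has the law of $\mixedcycprocess$ conditioned on $I_\alpha=1$. From $(\rHam_\alpha,\rP_\alpha)$ I produce $(\rHam,\rP)$ by running Coupling~\ref{coup:Pbackward} on the configuration-model part of $\alpha$ and Coupling~\ref{coup:Hbackward} on each Hamiltonian path of $\alpha$; by Propositions~\ref{prop:Pcouple} and~\ref{prop:Hcouple} the resulting $(\rHam,\rP)$ is distributed as $\sepmixedmd$, so with $\mixedcycprocess=\cycprocess(\rHam,\rP)$ we get a legitimate coupling with the required marginals, and since the forward and backward couplings agree, it is the same coupling as in Proposition~\ref{prop:upwardbound}. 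The reason for generating things in this order is that conditioning on $\conditionalprocess=x$ restricts only $(\rHam_\alpha,\rP_\alpha)$; the fresh randomness driving Couplings~\ref{coup:Pbackward} and~\ref{coup:Hbackward} — the uniformly chosen switch partners $A_i$, their Hamiltonian analogues, and the uniformly chosen blue edges to be switched — is independent of that conditioning.

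Next I record what must occur for $E_\alpha$ to fail. Running the backward couplings deletes the edges of $\alpha$, deletes $O(\abs{\alpha})$ further edges $A_i\sim B_i$ (and Hamiltonian analogues) whose endpoints are nearly uniform, and creates $O(\abs{\alpha})$ new edges incident to these endpoints and to $\alpha$; let $R$ be the resulting $O(\abs{\alpha})$-element set of affected prevertices and Hamiltonian vertices, and note $(\rHam,\rP)$ and $(\rHam_\alpha,\rP_\alpha)$ agree on every edge not incident to $R$. The event $E_\alpha$ holds exactly when the only net change to the cycle process is the removal of $\alpha$, and by Proposition~\ref{prop:couplingproperties} (read in reverse) it can fail only if (a) every switching of an edge of $\alpha$ happens to be trivial, which, each such coincidence having probability $O(\abs{\alpha}/n)$, occurs with probability $(\abs{\alpha}/n)^{\Omega(\abs{\alpha})}$ and is negligible; (b) some cycle $\beta\neq\alpha$ with $x_\beta=1$ uses one of the randomly deleted edges; or (c) a cycle of length at most $r$ not present in $(\rHam_\alpha,\rP_\alpha)$ is created, necessarily using at least one newly created edge. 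For (b): the $O(\abs{\alpha})$ random prevertices $A_i,B_i$ are uniform over all but $O(\abs{\alpha})$ prevertices and the $O(\abs{\alpha})$ random blue edges over all but $O(\abs{\alpha})$ blue edges, while $\lambda$-neatness of $x$ forces the cycles $\beta$ with $x_\beta=1$ to occupy at most $\lambda(d-1)^r$ prevertices and $\lambda(d-1)^{r-1}$ Hamiltonian vertices; a union bound gives probability $O\!\left(\lambda\abs{\alpha}(d-1)^{r-1}/n\right)$ for (b).

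It remains to bound (c). Write a created cycle $\beta$ of length $j\le r$ as an alternation of newly created edges with maximal ``old'' sub-paths, each old sub-path lying in $(\rHam_\alpha,\rP_\alpha)$; $\beta$ uses at least one new edge. If $\beta$ uses exactly one new edge, say $a_i\sim A_i$, then $(\rHam_\alpha,\rP_\alpha)$ contains a path of length at most $r-1$ from the vertex of $A_i$ to the vertex of $a_i$; since the projected pseudograph is $d$-regular, the $(r-1)$-ball around the vertex of $a_i$ has $O((d-1)^{r-1})$ vertices, so for uniform $A_i$ this has probability $O((d-1)^{r-1}/n)$, and summing over the $O(\abs{\alpha})$ new edges gives $O\!\left(\abs{\alpha}(d-1)^{r-1}/n\right)$. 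If $\beta$ uses two or more new edges, the nearly uniform switch endpoints involved must lie within distance $r$ of one another, or of $\alpha$, or on a common short cycle of $x$, inside $(\rHam_\alpha,\rP_\alpha)$; bounding the number of such configurations is a combinatorial count entirely parallel to those proving \eqref{eq:varoverlap} and \eqref{eq:varpapb} — enumerate the possible old sub-paths, apply the $d$-regular ball bound, and pay an extra $1/n$ for each independent near-uniform choice — and it contributes $O\!\left(\lambda\abs{\alpha}(d-1)^{r-1}/n\right)$ as well; the new edges from the Hamiltonian backward coupling are treated identically. Combining (a), (b), and (c) yields $\P[E_\alpha^c\mid\conditionalprocess=x]=O\!\left(\lambda\abs{\alpha}(d-1)^{r-1}/n\right)$, which is the assertion once $\Cr{C:upwardbound}$ is taken large enough.

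The hard part will be the two-or-more-new-edges sub-case of (c): there one cannot simply invoke the near-uniformity of a single switch endpoint landing near $\alpha$, but must redo — under the conditioning on $\conditionalprocess=x$, which constrains $(\rHam_\alpha,\rP_\alpha)$ only through $\lambda$-neatness — the overlap enumerations of Section~\ref{subsec:exceptional}, checking that every resulting term stays at order $(d-1)^{r-1}/n$ rather than $(d-1)^{r}/n$. The Hamiltonian backward coupling contributes extra bookkeeping (paths instead of single edges, and the orientation choices) but no essentially new difficulty.
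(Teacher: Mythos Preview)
Your setup is right and matches the paper: generate $(\rHam_\alpha,\rP_\alpha)$ first, run Couplings~\ref{coup:Pbackward} and~\ref{coup:Hbackward} to obtain $(\rHam,\rP)$, and decompose $E_\alpha^c$ into ``$\alpha$ survives,'' ``some $\beta$ in $x$ is destroyed,'' and ``a new short cycle is created.'' Your bounds for (a) and (b) are fine. The gap is in (c), precisely the sub-case you flag as hard.

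Your plan for two or more new edges is to redo the overlap enumerations of Section~\ref{subsec:exceptional} under the conditioning $\conditionalprocess=x$. But \eqref{eq:varoverlap} and \eqref{eq:varpapb} are expectation bounds in the \emph{unconditioned} model; here the graph $(\rHam_\alpha,\rP_\alpha)$ is fixed by the conditioning and only the switch targets are random. It is not clear how to transplant those counts, and you have not actually carried it out. There is also a structural issue: your event (a) only controls ``\emph{every} switching is trivial,'' whereas handling (c) cleanly requires knowing that \emph{no} edge of $\alpha$ survives.

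The paper sidesteps both problems with a device you are missing. It broadens $A_1$ to ``\emph{some} edge of $\alpha$ remains in $(\rHam,\rP)$'' (still $O(\abs{\alpha}/n)$ by a union bound over edges), and then bounds only $A_3\cap A_1^c\cap A_2^c$. It processes the coupling \emph{step by step}: at step~$i$, exactly two new edges $uU$ and $vV$ are created. Let $C_i$ be the event that the \emph{first} new cycle appears at step~$i$. If that cycle uses only one of $uU,vV$, the ball argument you give works. If it uses both, then the two old sub-paths of the cycle, together with either $uv$ or $UV$, form a short cycle in the graph \emph{before} step~$i$; since $C_i$ says no new cycle appeared earlier, that cycle was already in $G_\alpha$, and it is destroyed when $uv$ is switched---so either $A_2$ holds, or (in the one degenerate orientation) the deleted edge $UV$ was itself an edge of $\alpha$, forcing $A_1$. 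Thus the two-new-edge case contributes nothing to $A_3\cap A_1^c\cap A_2^c$, and you never have to enumerate configurations using new edges from several different steps. This reduction is the missing idea; with it, (c) collapses to the one-edge ball bound summed over the $\abs{\alpha}$ steps.
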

  \begin{proof}[Proof of Proposition~\ref{prop:upwardbound}]
    Take $(\rHam,\rHam_\alpha)$ from Coupling~\ref{coup:Hforward}.
    We take $(\rP,\rP_\alpha)$ from Coupling~\ref{coup:Pforward},
    unless $\alpha$ is a loop.  In this case,
    we use a slight variation of Coupling~\ref{coup:Pforward}, depicted in
    Figure~\ref{fig:revisedcoupling}.
    \label{par:revisedcoupling}
    Let $\alpha$ be made up of prevertices $a\sim b$, with $a$
  and $b$ belonging to the same vertex.  If $a\sim b$ already in $\rP$, 
  then let $\rP_\alpha=\rP$.  Otherwise, suppose that $A\sim a$ and $B\sim b$ 
  in $\rP$, and choose a prevertex $A'$ uniformly
  out of the all prevertices other than $a$, $b$, $A$, and $B$.  Let $B'\sim B$
  in $\rP$.  To form $\rP_\alpha$, delete $a\sim A$,
  $b\sim B$, and $A'\sim B'$, and replace them with $a\sim b$, $A\sim A'$,
  and $B\sim B'$.  It is straightforward to check that $\rP_\alpha$
  is distributed as $\rP$ conditioned on containing $\alpha$.
  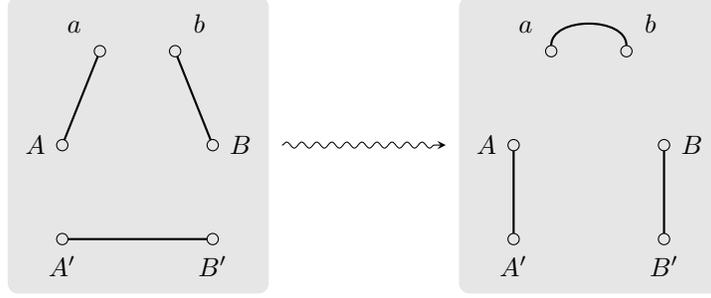
\begin{figure}
    \begin{center}
      \begin{tikzpicture}[>=stealth,label distance=-4pt]
         \node[fill=black!10, rounded corners, outer sep=5pt] (before) at (0,0)
         {
           \begin{tikzpicture}
             \path (-0.5, 0) node[vert,label=above left:$a$] (a) {}
                   ( 0.5, 0) node[vert,label=above right:$b$] (b) {}
                   (-1, -1.25) node[vert,label=left:$A$] (A) {}
                   ( 1, -1.25) node[vert,label=right:$B$] (B) {}
                   (-1, -2.5) node[vert,label=below:$A'$] (Ap) {}
                   ( 1, -2.5) node[vert,label=below:$B'$] (Bp) {};             
             \draw[thick] (a) -- (A)
                          (b) -- (B)
                          (Ap) -- (Bp);
           \end{tikzpicture}
         };
         
         \node[fill=black!10, rounded corners, outer sep=5pt] (after) at (6,0)
         {
           \begin{tikzpicture}
             \path (-0.5, 0) node[vert,label=above left:$a$] (a) {}
                   ( 0.5, 0) node[vert,label=above right:$b$] (b) {}
                   (-1, -1.25) node[vert,label=left:$A$] (A) {}
                   ( 1, -1.25) node[vert,label=right:$B$] (B) {}
                   (-1, -2.5) node[vert,label=below:$A'$] (Ap) {}
                   ( 1, -2.5) node[vert,label=below:$B'$] (Bp) {};
             \draw[thick] (a) to[bend left,out=90,in=90] (b)
                          (A) -- (Ap)
                          (B) -- (Bp);
                          
           \end{tikzpicture}
         };
         \draw[->, decorate, decoration={snake,amplitude=.4mm,
          segment length=2mm,post length=1.5mm}] (before) -- (after);
      \end{tikzpicture}
    \end{center}
    \caption{If $a$ and $b$ belong to the same vertex, then we use a 
    different coupling.  To condition a pairing to contain the loop $a\sim b$,
      we choose a random edge $A'\sim B'$ and reshuffle edges as
      shown.}\label{fig:revisedcoupling}
  \end{figure}

    Let $k=\abs{\alpha}$.  
    Recall that the edge coloring of $\alpha$ specifies which edges
    of $\alpha$ come from the pairing model part of the graph, and which
    edges come from the Hamiltonian part.
    Suppose that $\beta$ is some other $k$-cycle with the same
    {color pattern}\footnote{Recall two colored cycles have the same color
    pattern if and only if there is a graph isomorphism between them that 
    preserves the edge coloring.}
    as $\alpha$, also disjoint from
    all cycles in $x-e_\alpha$.  By the symmetry of our model and our couplings,
    \begin{align*}
      \P[E_\alpha\mid \mixedcycprocess = x-e_\alpha] = 
        \P[E_\beta\mid \mixedcycprocess=x-e_\alpha].
    \end{align*}
    Taking this one step further, this statement
    still holds if $\beta$ is chosen at random
    from all cycles with the same color pattern as $\alpha$
    that are disjoint from $x-e_\alpha$.
    
    Now,
    let $\xi$ be chosen uniformly from the set of cycles in
    $\mixedcycspace$ that share the color pattern of $\alpha$,
    independent of all other random variables.
    A good way to think of $\xi$ is as the cycle given by
    randomizing all the prevertex and vertex
    labels in $\alpha$. Define the event
    \begin{align*}
      F = \{\text{$\xi$ is disjoint from $x$ and $E_\xi$ holds}\}.
    \end{align*}
    Now $\P[E_\alpha\mid \mixedcycprocess=x-e_\alpha]\geq
    \P[F\mid \mixedcycprocess=x-e_\alpha]$, and it suffices to bound this from below.

    We break up the event $F^c$ into three parts, with
    $F^c\subseteq A_1\cup A_2\cup A_3$:
    \begin{align*}
      A_1 &= \{ \text{$\xi$ overlaps with a cycle in $x$} \},\\
      A_2 &= \{ \text{$J_{\beta\xi}=0$ for some $\beta\neq\xi$ with 
          $I_\beta=1$} \},\\
      A_3 &= \{ \text{$J_{\beta\xi}=1$ for some $\beta\neq\xi$
        with $I_\beta=0$} \}.
    \end{align*}
  
    Let $v_1,\ldots,v_k$ be the vertices of $\xi$, with the starting
    vertex and orientation of the cycle
    arbitrarily fixed.  By definition of $\xi$, these vertices are randomly
    chosen without replacement from $\{1,\ldots, n\}$.
    Let $\phi$ and $\psi$ be the number of prevertices and Hamiltonian
    vertices in $\xi$, respectively.
    
    To bound the probability of event $A_1$, we observe that each
    prevertex in $\xi$ is marginally uniform over all
    $n(d-2)$ prevertices and each Hamiltonian vertex is marginally uniform
    over $[n]$.  
    Thus the chance that any particular prevertex in $\xi$ matches one 
    found in a cycle in
    $x$ is at most $\Phi/n(d-2)$, 
    and the the chance that any particular Hamiltonian vertex in 
    $\xi$ matches one in $x$ is at most $\Psi/n$, where $\Phi$ is the total number
    of prevertices and $\Psi$ the total number of Hamiltonian vertices
    in $x$, as in
    Definition~\ref{def:lneat}.  Applying
    this to all $2k$ prevertices in $\xi$ with a union bound,
    \begin{align*}
      \P[A_1\mid \mixedcycprocess=x-e_\alpha]
        &\leq \frac{\phi\Phi}{n(d-2)}+\frac{\psi\Psi}{n}
        \leq \frac{\phi\lambda(d-1)^r}{n(d-2)}+\frac{\psi\lambda(d-1)^{r-1}}{n}\\
          &=O\left(\frac{k\lambda(d-1)^{r-1}}{n}\right).
    \end{align*}
    
    We now consider the event $A_2$.
    Assume first that $\xi$ is not a loop, and our usual coupling is in
    effect.
    Suppose that $I_\beta=1$ and $J_{\beta\xi}=0$.
    Then by Proposition~\ref{prop:couplingproperties}\ref{item:removedcycle}, 
    the cycles $\beta$ and $\xi$ have
    a prevertex or a Hamiltonian vertex in common.
    Thus $A_2\subseteq A_1$.
    If  $\xi$ is a loop and our altered coupling
    of $(\rP, \rP_\xi)$ is in effect, then the situation is similar.
    Suppose that $I_\beta=0$ but $J_{\beta\xi}=0$.
    Then either $\xi$  contains
    a prevertex in $\beta$, or the randomly chosen edge $A'\sim B'$
    in $\rP$ used to define $\rP_\alpha$ contains a prevertex in $\beta$.
    In the first case, event $A_1$ holds.  To bound the second
    case, we observe that both prevertices
    $A'$ and $B'$ are marginally distributed uniformly, and the probability
    that one of them matches a prevertex in $x$ is at most
    $2\Phi/n(d-2)$, which is  $O(\lambda(d-1)^{r-1}/n)$ since $x$
    is $\lambda$-neat. 
    In either case,
    \begin{align}\label{eq:A1A2bound}
      \P[A_1\cup A_2 \mid \mixedcycprocess=x-e_\alpha] =
      O\left(\frac{k\lambda(d-1)^{r-1}}{n}\right).
    \end{align}
    
    In the final step, we will bound the event $A_3\cap A_2^c$
    using an approach similar to the switchings argument
    in \cite{MWW}.  Let $G$ and $G_\xi$ be the pseudographs defined
    by $(\rHam, \rP)$ and $(\rHam_{\xi}, \rP_\xi)$ respectively.
    We start with the case that $\xi$ is not a loop.
    Recall the subgraphs $K_1,\ldots,K_k$ from
    Definition~\ref{def:K}.
    We claim that if
    $A_3\cap A_2^c$ holds, then the following event holds:
    \begin{align*}
      B &= \left\{
      \begin{aligned}
        &\text{For some $1\leq i\leq k$ and $1 \leq j\leq r/2$, 
          the distance in $G$}\\
          &\text{between $K_i$ and $K_{i+j}$
          is less than or equal to $r-j$.}
      \end{aligned}\right\}
    \end{align*}
    (Here and in the rest of the argument, we are considering indices
    modulo $k$.)
    Indeed, suppose $A_3\cap A_2^c$ holds, and there exists some
    $\beta\neq\xi$
    with $I_\beta=0$ and $J_{\beta\xi}=1$.
    By Proposition~\ref{prop:couplingproperties}, the only new edges
    in $G_\xi$ not found in $G$ are the ones in $\xi$, and an
    edge between $K_i$ and $K_{i+1}$ for each $i$.
    If $\beta$ is a loop, 
    then it must consist of one of these edges between 
    $K_i$ and $K_{i+1}$, in which case
    event $B$ holds because $K_i$ and $K_{i+1}$ have distance zero.
    Otherwise, $\beta$
    must contain at least one path in $G\cap G_\xi$.
    Suppose it contains only one such path.  The remainder of $\beta$
    is either a single edge between some $K_i$ and $K_{i+1}$, 
    or a portion of $\xi$.
    In both cases, the existence of this path implies
    event $B$.
    If instead $\beta$ contains more than one path in $G\cap G_\xi$,
    then one of them must have length strictly less than $r/2$.
    For some $i$ and some $0\leq j\leq r/2$, this path
    goes between $K_i$ and $K_{i+j}$.
    If $j\geq 1$, then the
    path implies event $B$.
    If $j=0$, then this path begins and ends at vertices in $K_i$.
    Along with either one or two edges present in $G$ by not in
    $G_\xi$, this forms a cycle in $G$. But then $A_2$ holds,
    contradicting our original assumption.
    
    We now estimate the probability of that $B$ occurs.
    Let $d(K, K')$ denote the distance in $G$ between the two subgraphs
    $K$ and $K'$ (that is, the length of the shortest path between a vertex 
    in one
    subgraph and a vertex in another).
    \begin{claim}\label{claim:clustersfar}
    For any $i\neq i'$, 
      \begin{align*}
        \P[d(K_i, K_{i'}) \leq D \mid \mixedcycprocess=x-e_\alpha] 
          = O\left(\frac{(d-1)^D}{n}\right).
      \end{align*}
    \end{claim}
    \begin{proof}
      Since $G$ is $d$-regular, the number of vertices within distance $D$
      of $K_i$ is $O\big((d-1)^D\big)$.
      Even after conditioning on $\rHam$, $\rP$, and $K_i$,
      the vertex $v_{i'}$ is a uniformly random choice out of
      all vertices except $v_i$. Thus the probability that it is within distance
      $D$ of $K_i$ is 
      $O\big((d-1)^D/n\big)$, as is the probability that one of its
      (at most three) neighbors in $K_i$ are within $D$ of $K_i$.
    \end{proof}
     By this claim, 
     \begin{align}
       \P[B\mid\mixedcycprocess=x-e_\alpha]
         &\leq \sum_{i=1}^k\sum_{j=1}^{r/2}O\left(\frac{(d-1)^{r-j}}{n}\right)
           \nonumber\\
         &=O\left(\frac{k(d-1)^{r-1}}{n}\right).\label{eq:halfthird}
     \end{align}
     
    It only remains to bound the probability of the event
    $A_3\cap A_2^c$ when
    $\xi$ is a loop.  Take $a$, $b$, $A$, $B$, $A'$, and $B'$
    as in the definition of the coupling on
    p.~\pageref{par:revisedcoupling}.
    Let $K$ be the subgraph of $G$ induced by the prevertices
    $A$, $a$, $B$, and $b$, and let $K'$ be the subgraph induced by $A'$
    and $B'$.
    We claim that if $A_3\cap A_2^c$ holds, then $d(K,K')\leq r-1$.
    Indeed, suppose that $A_3$ holds and there exists
    some cycle other than $\xi$
    in $G_\xi$ but not in $G$. This cycle
    must use one of the new edges $A\sim A'$
    or $B\sim B'$.  If it uses only one of them, then $G$ contains a path
    of length $r-1$ or less either from $A$ to $A'$ or from $B$ to $B'$,
    and so $d(K,K')\leq r-1$.
    If it uses both of them, then there are two possibilities:
    either $G$ contains
    a path of length $r-1$ or less between $A$ and $B'$ or $B$ and $A'$,
    in which case $d(K,K')\leq r-1$;
    or $G$  contains a cycle of length $r$ or less involving the edge 
    $A'\sim B'$,
    in which case event $A_2$ holds. 

    By the same reasoning as in
    Claim~\ref{claim:clustersfar},
    \begin{align*}
      \P[d(K,K') \leq r-1 \mid \mixedcycprocess=x-e_\alpha] = O\left(
         \frac{(d-1)^{r-1}}{n}\right).
    \end{align*}
    From this and \eqref{eq:halfthird}, we have shown that in all cases
    \begin{align*}
      \P[A_3\cap A_2^c\mid \mixedcycprocess=x-e_\alpha] = 
       O\left(\frac{k(d-1)^{r-1}}{n}\right).
    \end{align*}
    Combining this with \eqref{eq:A1A2bound} completes the proof.
  \end{proof}

  \begin{proof}[Proof of Proposition~\ref{prop:downwardbound}]
    We take 
    $(\rP,\rP_\alpha)$ and $(\rHam,\rHam_\alpha)$ 
    from Couplings~\ref{coup:Pbackward} and \ref{coup:Hbackward},
    respectively.  Let $k=\abs{\alpha}$, and let $\phi$ and $\psi$ be the
    number of prevertices and Hamiltonian vertices, respectively,
    in $\alpha$.  Let $G$ and $G_\alpha$
    be the pseudographs given by $(\rHam,\rP)$ and $(\rHam_\alpha,\rP_\alpha)$.
    
    Event $E_\alpha^c$ can happen in three ways: $G$ still contains
    $\alpha$, it is missing some cycle
    $\beta\neq \alpha$ present in $G_\alpha$, or it contains
    some cycle $\beta\neq\alpha$ not present in $G_\alpha$.
    We define three events $A_1$, $A_2$, and $A_3$ based on this,
    with $E_\alpha^c\subseteq A_1\cup A_2\cup A_3$:
    \begin{align*}
      A_1 &= \{ \text{$\rP$ or $\rHam$ contains some edge of $\alpha$} \},\\
      A_2 &= \{ \text{$I_{\beta}=0$ for some $\beta\neq\alpha$ with 
          $J_{\beta\alpha}=1$} \},\\
      A_3 &= \{ \text{$I_{\beta}=1$ for some $\beta\neq\alpha$
        with $J_{\beta\alpha}=0$} \}.
    \end{align*}
    Thus
    \begin{equation}\label{eq:EaCbound}
        \P[E_\alpha^c\mid \conditionalprocess=x]  
         \leq \P[A_1 \mid \conditionalprocess=x]
           + \P[A_2 \mid \conditionalprocess=x]
           + \P[A_3\cap A_1^c\cap A_2^c \mid \conditionalprocess=x].
    \end{equation}
    We have made event $A_1$ broader than necessary; 
    this will make it easier to bound the last term of this equation.

    At each step of Coupling~\ref{coup:Pbackward}, an edge $a_ib_i$ in $\alpha$ is
    switched with a random edge.  The edge is preserved only if $A_i = a_i.$
    Otherwise, no later switchings can cause it to return.
    Similarly, at each step of Coupling~\ref{coup:Hbackward}, an edge
    of $\alpha$ is switched with a random edge, and there are only two choices
    of this random edge that do not destroy the edge in $\alpha$.
    Thus
    \begin{align}\nonumber
      \P[A_1 \mid \conditionalprocess=x] &\leq 
      \frac{1}{n(d-2)-\phi+1}+\frac{1}{n(d-2)-\phi+3}+\cdots+\frac{1}{n(d-2)-1}\\&
      \qquad {}+\frac{2}{n-\psi} + \frac{2}{n-\psi+1}+\cdots+\frac{2}{n-1}
        \nonumber\\
      &= O\left(\frac{k}{n}\right).\label{eq:unchanged}
    \end{align}
    
    Next, we consider event $A_2$.
    At each step of Couplings~\ref{coup:Pbackward} and \ref{coup:Hbackward},
    an edge of $\alpha$ is switched with a random edge.
    If $J_{\beta\alpha}=1$, then $I_\beta=0$ only if one of these
    random edges contains
    a prevertex or a Hamiltonian vertex in $\beta$.
    This occurs for some prevertex contained in a cycle
    in $x$ with probability at most
    \begin{align*}
      \frac{2\Phi}{n(d-2)-2\phi+1}+\frac{2\Phi}{n(d-2)-2\phi+3}+\cdots+\frac{2\Phi}
      {n(d-2)-1}\leq
      \frac{2\phi\Phi}{n(d-2)-2\phi+1},
    \end{align*}
    and it occurs for some Hamiltonian vertex in a cycle
    in $x$ with probability at most
    \begin{align*}
      \frac{2\Psi}{n-\psi}+\frac{2\Psi}{n-\psi+1}+\cdots+\frac{2\Psi}{n-1}\leq
        \frac{2\psi\Psi}{n-\psi},
    \end{align*}
    where $\Phi$ is the total number
    of prevertices and $\Psi$ the total number of Hamiltonian vertices
    in $x$, as in
    Definition~\ref{def:lneat}.  
    Since $x$ is $\lambda$-neat, we can sum these to get
  \begin{align}
    \P[A_2 \mid \conditionalprocess=x] = O\left(\frac{k\lambda(d-1)^{r-1}}{n}\right).
      \label{eq:cycledestroyed}
  \end{align}

      Last, we consider the event $A_3\cap A_1^c\cap A_2^c$.
      Consider Coupling~\ref{coup:Hbackward} to take place after 
      Coupling~\ref{coup:Pbackward}, so that we can say that there
      are $k$ steps total to go from $G_\alpha$
      to $G$ and number them from $1$ to $k$.
      Suppose we have just taken the $i$th step in the coupling process,
      switching two edges, whether in the pairing part of the graph
      or the Hamiltonian cycle part of the graph.  Suppose that
      $uv$ and $UV$ are the edges deleted, with $uv$ being part of $\alpha$,
      and $uU$ and $vV$ are the edges created.
      We wish to show that it is
      unlikely that a new cycle has formed involving one of the
      new edges $uU$ and $vV$.  
      More precisely, define $C_i$ to be the event
      that a new cycle is formed in the $i$th step, and that it is the first
      new cycle formed by the coupling process.  We will bound the probability
      of $C_i\cap A_1^c \cap A_2^c$ under the assumption that
      $\conditionalprocess=x$.

      The first thing to notice is that we can ignore the possibility of a new
      cycle forming involving both $uU$ and $vV$.
      Suppose that $C_i$ holds, and that the new cycle formed uses both 
      these edges.  Then this cycle either contains paths between
      $u$ and $v$ and between $U$ and $V$, or paths between $u$ and
      $V$ and between $U$ and $v$.  In the first case,
      $uv$ is part of a cycle destroyed when the edge is switched
      with $UV$.
      This cycle must have been present in $G_\alpha$, since if
      $C_i$ holds, then no new cycles have formed before step~$i$
      in the coupling
      process.  Thus event $A_2$ holds.  In the second case, 
      suppose that $P(u,V)$ is the path from $u$ to $V$,
      and $P(v, U)$ is the path from $v$ to $U$.  
      The newly created cycle is $UuP(u,V)vP(v,U)$.
      In the previous step, $uvP(v,U)VP(V,u)$ is a cycle, and the switching
      deletes it.  If this cycle is anything other than $\alpha$,
      then event $A_2$ holds.  If the cycle is $\alpha$, then event
      $A_1$ holds, since this is the only way that $UV$ can be part of
      $\alpha$, assuming that $\conditionalprocess=x$ where $x$ contains no
      cycles that overlap.

      Thus we need only consider the possibility that $C_i\cap A_1^c\cap A_2^c$
      holds because a new cycle forms at step~$i$ involving only one of $uU$ and
      $vV$.  Before step $i$ in the coupling process, there are at most
      $(d-1)^{j-1}$ paths of length $j-1$ starting from $u$ whose first
      step is not $uv$.  The vertex $U$ must be at the end
      of one of these paths if $uU$ is to form a new cycle
      of length $j$.
      This occurs with probability at most $(d-1)^j/(n(d-2)-\phi+1)$ if
      step $i$ is part of the coupling process for $(\rP,\rP_\alpha)$
      and with probability at most $(d-1)^{j-1}/(n-\psi)$ if step
      $i$ is part of the coupling process for $(\rHam,\rHam_\alpha)$.
      The same is true for forming a new cycle involving $vV$.
      Summing this bound over all $j$ from $1$ to $r$,
      \begin{align*}
        \P[C_i\cap A_1^c\cap A_2^c\mid \conditionalprocess=x]
          &= O\left(\frac{(d-1)^{r-1}}{n}\right).
      \end{align*}
      Applying this for $C_1,\ldots,C_k$, we have
      \begin{align}\label{eq:cyclecreated}
        \P[A_3\cap A_1^c\cap A_2^c\mid \conditionalprocess=x]
        &= O\left(\frac{k(d-1)^{r-1}}{n}\right).
      \end{align}
      Applying \eqref{eq:unchanged}, \eqref{eq:cycledestroyed},
      and \eqref{eq:cyclecreated} to \eqref{eq:EaCbound}
      proves the proposition.
    \end{proof}
    We will also need these results in $\pmd$:
  \begin{proposition}\label{prop:updownconfigbound}
    Let $P\drawnfrom \pmd$.  Let $\conditionalprocess$ be distributed
    as $\cycprocess(P)$ conditioned on $I_\alpha=1$.
    Let $x$ be $\lambda$-neat, and let $x_\alpha=1$.
    Then $\cycprocess(P)$ and $\conditionalprocess$ can be coupled with
    \begin{align*}
      \P[E_\alpha\mid \cycprocess(P)=x-e_\alpha] &\geq 
        1 - \frac{\Cr{C:upwardbound}\lambda\abs{\alpha}(d-1)^{r-1}}{n},\\
      \P[E_\alpha\mid \conditionalprocess=x] &\geq
        1 - \frac{\Cr{C:upwardbound}
          \lambda\abs{\alpha}(d-1)^{r-1}}{n}.      
    \end{align*}
  \end{proposition}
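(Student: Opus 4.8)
The plan is to follow the proofs of Propositions~\ref{prop:upwardbound} and~\ref{prop:downwardbound} almost verbatim, specialized to the configuration model. Since $\pmd$ is the unscrambled mixed model with its Hamiltonian part removed, every cycle in $\fullcycspace$ consists only of red edges, there are no Hamiltonian vertices to keep track of, and a $k$-cycle uses exactly $2k$ prevertices. All the supporting ingredients specialize cleanly: Couplings~\ref{coup:Pforward} and~\ref{coup:Pbackward} already produce $P\drawnfrom\pmd$ and its conditioned version by Proposition~\ref{prop:Pcouple}; the cluster decomposition of Definition~\ref{def:K} becomes the case in which each $K_i$ consists of the two red edges at $v_i$ together with their endpoints; and Proposition~\ref{prop:couplingproperties} and Claim~\ref{claim:clustersfar} hold here by the same reasoning. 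So the work is just the bookkeeping of error terms.

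For the first inequality I would couple $P\drawnfrom\pmd$ with $P_\alpha$, distributed as $P$ conditioned to contain $\alpha$, using Coupling~\ref{coup:Pforward} (or the loop variant of Figure~\ref{fig:revisedcoupling} when $\alpha$ is a loop). By the symmetry of the model and the coupling, $\P[E_\alpha\mid\cycprocess(P)=x-e_\alpha]$ is unchanged when $\alpha$ is replaced by any $k$-cycle disjoint from the cycles of $x-e_\alpha$, hence by a cycle $\xi$ chosen uniformly from all $k$-cycles --- obtained by randomizing the prevertex labels of $\alpha$ --- independent of everything else; it then suffices to bound $\P[F\mid\cycprocess(P)=x-e_\alpha]$ from below, where $F=\{\xi\text{ disjoint from }x\text{ and }E_\xi\text{ holds}\}$. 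As in the mixed case, $F^c\subseteq A_1\cup A_2\cup A_3$, where $A_1$ is the event that $\xi$ overlaps a cycle of $x$, $A_2=\{J_{\beta\xi}=0\text{ for some }\beta\neq\xi\text{ with }I_\beta=1\}$, and $A_3=\{J_{\beta\xi}=1\text{ for some }\beta\neq\xi\text{ with }I_\beta=0\}$. Each of the $2k$ prevertices of $\xi$ is marginally uniform over the $nd$ prevertices, so a union bound together with the $\lambda$-neatness of $x$ gives $\P[A_1]=O(k\lambda(d-1)^{r-1}/n)$; the analogue of Proposition~\ref{prop:couplingproperties}\ref{item:removedcycle} shows $A_2\subseteq A_1$ (plus, in the loop case, the $O(\lambda(d-1)^{r-1}/n)$ chance that the auxiliary edge $A'\sim B'$ meets $x$); and for $A_3\cap A_2^c$ I would run the switchings argument, using that by the analogue of Proposition~\ref{prop:couplingproperties}\ref{item:clusters} the only new edges of $G_\xi$ lie in $\xi$ or join consecutive clusters $K_i$ and $K_{i+1}$, so that a stray cycle forces some $K_i$ and $K_{i+j}$ with $1\le j\le r/2$ to be within graph distance $r-j$, an event of probability $O(k(d-1)^{r-1}/n)$ by the $\pmd$ analogue of Claim~\ref{claim:clustersfar}.

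For the second inequality I would use Coupling~\ref{coup:Pbackward}, with $\conditionalprocess$ the cycle process of the conditioned pairing. Writing $E_\alpha^c\subseteq A_1\cup A_2\cup A_3$, where $A_1=\{P\text{ contains an edge of }\alpha\}$, $A_2=\{I_\beta=0\text{ for some }\beta\neq\alpha\text{ with }J_{\beta\alpha}=1\}$, and $A_3=\{I_\beta=1\text{ for some }\beta\neq\alpha\text{ with }J_{\beta\alpha}=0\}$: at each of the $k$ switching steps the relevant edge of $\alpha$ is preserved only with probability $O(1/n)$ and cannot reappear later, so $\P[A_1]=O(k/n)$; event $A_2$ requires one of the $k$ uniformly chosen switching partners to meet a prevertex of $x$, which is $O(k\lambda(d-1)^{r-1}/n)$ by $\lambda$-neatness; and for $A_3\cap A_1^c\cap A_2^c$, exactly as in Proposition~\ref{prop:downwardbound}, I would bound the event $C_i$ that the first new cycle is created at step $i$: a new cycle through both created edges forces $A_1$ or $A_2$, while a new cycle of length $j$ through a single created edge $uU$ requires $U$ to be an endpoint of one of the at most $(d-1)^{j-1}$ length-$(j-1)$ paths from $u$ that avoid the switched edge, giving $\P[C_i\cap A_1^c\cap A_2^c]=O((d-1)^{r-1}/n)$ after summing over $j\le r$ and $O(k(d-1)^{r-1}/n)$ after summing over $i\le k$. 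Collecting these estimates and taking $\Cr{C:upwardbound}$ large enough proves both inequalities.

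I do not expect a genuine obstacle, since this is the red-only specialization of Propositions~\ref{prop:upwardbound}--\ref{prop:downwardbound}; the most delicate step is the switchings bound on $A_3$, and the only thing worth double-checking is that the loop variant of Figure~\ref{fig:revisedcoupling} and the cluster structure of Definition~\ref{def:K} still behave as claimed with no blue edges present, which they do.
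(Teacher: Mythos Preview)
Your proposal is correct and matches the paper's approach exactly: the paper's own proof is the single sentence ``The proof of Propositions~\ref{prop:upwardbound} and \ref{prop:downwardbound} go through exactly,'' and your write-up is a faithful unpacking of precisely that specialization to the red-only case.
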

  
  \begin{proof}
    The proof of Propositions~\ref{prop:upwardbound} and 
    \ref{prop:downwardbound} go through exactly.
  \end{proof}

  Propositions~\ref{prop:upwardbound}, \ref{prop:downwardbound},
  and \ref{prop:updownconfigbound}
  combine with Lemma~\ref{lem:perturb}
  to give relative estimates on the point probabilities
  of $\mixedcycprocess$ and $\cycprocess(P)$ with 
  $P\drawnfrom\pmd$:
  \begin{cor}\label{cor:relativepointprobs}
    Either suppose that
    $x=(x_\alpha,\,\alpha\in\fullmixedcycspace)$ and
    $\I=\mixedcycprocess$, or suppose that
    $x=(x_\alpha,\,\alpha\in\fullcycspace)$ and $\I=\cycprocess(P)$
    with $P\drawnfrom\pmd$. In either case, suppose that $x$
    is $\lambda$-neat and $\Cr{C:upwardbound}\lambda\abs{\alpha}(d-1)^{r-1}
    \leq n$.
    
    For any $\alpha$ with $x_\alpha=1$,
    \begin{align*}
      (1-\err(x,\alpha))p_\alpha\P[\I=x-e_\alpha]
        \leq \P[\I=x]\leq
        (1-\err(x,\alpha))^{-1}p_\alpha\P[\I=x+e_\alpha],
    \end{align*}
    where $p_\alpha=\E I_\alpha$ and
    \begin{align*}
      \err(x,\alpha) \leq \frac{\Cr{C:upwardbound}
        \lambda\abs{\alpha}(d-1)^{r-1}}{n}.
    \end{align*}
  \end{cor}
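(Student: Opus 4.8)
The plan is to read the corollary off from Lemma~\ref{lem:perturb}, fed with the couplings constructed in Propositions~\ref{prop:upwardbound}, \ref{prop:downwardbound}, and \ref{prop:updownconfigbound}. First I would set
\[
  \err(x,\alpha)\Def\frac{\Cr{C:upwardbound}\lambda\abs{\alpha}(d-1)^{r-1}}{n},
\]
and note that the standing hypothesis $\Cr{C:upwardbound}\lambda\abs{\alpha}(d-1)^{r-1}\le n$ forces $\err(x,\alpha)\le 1$, so that the factors $1-\err(x,\alpha)$ and $(1-\err(x,\alpha))^{-1}$ are nonnegative and the asserted inequalities are not vacuous. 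I would also observe that if $x$ is $\lambda$-neat with $x_\alpha=1$, then $x-e_\alpha$ is again $\lambda$-neat, since deleting the cycle $\alpha$ only removes prevertices and Hamiltonian vertices and cannot create an overlap; hence every process we condition on below satisfies the hypotheses of those propositions.

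The one point needing care is that Lemma~\ref{lem:perturb} is phrased with a single coupling of $F$ and $\conditionalprocess$ yielding both inequalities, whereas here the estimate $\P[E_\alpha\mid\I=x-e_\alpha]\ge 1-\err(x,\alpha)$ and the estimate $\P[E_\alpha\mid\conditionalprocess=x]\ge 1-\err(x,\alpha)$ come from different couplings: the forward couplings (Couplings~\ref{coup:Pforward} and \ref{coup:Hforward}) for the first, the backward ones (Couplings~\ref{coup:Pbackward} and \ref{coup:Hbackward}) for the second. This is harmless: inspecting the proof of Lemma~\ref{lem:perturb}, the lower bound $\P[\I=x]\ge(1-\eps)p_\alpha\P[\I=x-e_\alpha]$ uses only the first hypothesis, the upper bound $\P[\I=x]\le(1-\eps)^{-1}p_\alpha\P[\I=x-e_\alpha]$ uses only the second, and both conclusions refer solely to the unconditional law of $\I$. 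So I would apply Lemma~\ref{lem:perturb} twice — once with $\I$ coupled to $\conditionalprocess$ through the forward couplings, invoking Proposition~\ref{prop:upwardbound} (respectively the first inequality of Proposition~\ref{prop:updownconfigbound} when $\I=\cycprocess(P)$, $P\drawnfrom\pmd$) to obtain the lower bound, and once with $\I$ coupled through the backward couplings, invoking Proposition~\ref{prop:downwardbound} (respectively the second inequality of Proposition~\ref{prop:updownconfigbound}) to obtain the upper bound — and then combine the two. Since in Lemma~\ref{lem:perturb} the quantity $p_\alpha=\E F_\alpha$ is $\E I_\alpha$ in both the mixed-model case $\I=\mixedcycprocess$ and the pairing-model case $\I=\cycprocess(P)$, this is exactly the $p_\alpha$ in the statement.

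I do not anticipate a genuine obstacle: the whole content of the corollary lives in Propositions~\ref{prop:upwardbound}–\ref{prop:updownconfigbound}, and what remains is bookkeeping — tracking which of the forward/backward couplings supplies which half of the two-sided estimate, and checking that $x-e_\alpha$ stays admissible. For consistency with Lemma~\ref{lem:perturb}, the term $\P[\I=x+e_\alpha]$ in the displayed upper bound should read $\P[\I=x-e_\alpha]$.
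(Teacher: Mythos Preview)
Your proposal is correct and matches the paper's approach exactly: the corollary is stated without a separate proof, as an immediate consequence of combining Lemma~\ref{lem:perturb} with Propositions~\ref{prop:upwardbound}, \ref{prop:downwardbound}, and \ref{prop:updownconfigbound}. Your observation that the two halves of Lemma~\ref{lem:perturb} may be established via distinct couplings is exactly right, and your catch that $\P[\I=x+e_\alpha]$ in the displayed upper bound is a typo for $\P[\I=x-e_\alpha]$ is correct and is confirmed by the way the corollary is used in the proof of Proposition~\ref{prop:ratio}.
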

  
  By repeated application of this corollary, we can relate
  the probability of any $\lambda$-neat configuration of cycles
  to the probability that the graph contains no cycles at all
  of length $r$ or less:
  \begin{proposition}\label{prop:ratio}
    Either suppose that
    $x=(x_\alpha,\,\alpha\in\fullmixedcycspace)$ and
    $\I=\mixedcycprocess$, or suppose that
    $x=(x_\alpha,\,\alpha\in\fullcycspace)$ and $\I=\cycprocess(P)$
    with $P\drawnfrom\pmd$. 
    Suppose that $\Cr{C:upwardbound}\lambda r(d-1)^{r-1} < n/2$.
    If $x$ is $\lambda$-neat, then
    \begin{align*}
      \exp\left(-\frac{\Cl{C:ratio}\lambda^2(d-1)^{2r-1}}
        {n}\right)\prod_{\alpha\colon x_\alpha=1} p_\alpha
      \leq\frac{\P[\I=x]}{\P[\I=0]}
      \leq\exp\left(\frac{\Cr{C:ratio}\lambda^2(d-1)^{2r-1}}
        {n}\right)\prod_{\alpha\colon x_\alpha=1} p_\alpha
  \end{align*}
  for some absolute constant $\Cr{C:ratio}$.
\end{proposition}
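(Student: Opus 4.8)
The plan is to peel the cycles of $x$ off one at a time, applying Corollary~\ref{cor:relativepointprobs} at each step, and then to collect the accumulated multiplicative errors into a single exponential factor.

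I would enumerate the cycles present in $x$ as $\alpha_1,\dots,\alpha_m$ (the distinct $\alpha$ with $x_\alpha=1$), put $x^{(0)}=0$ and $x^{(j)}=x^{(j-1)}+e_{\alpha_j}$, so that $x^{(m)}=x$. Each $x^{(j)}$ is obtained from $x$ by discarding cycles, hence has no overlapping cycles and contains no more prevertices — nor, in the mixed-model case, Hamiltonian vertices — than $x$; so every $x^{(j)}$ is $\lambda$-neat. Since $\abs{\alpha_j}\le r$, the standing hypothesis $\Cr{C:upwardbound}\lambda r(d-1)^{r-1}<n/2$ gives both $\Cr{C:upwardbound}\lambda\abs{\alpha_j}(d-1)^{r-1}\le\Cr{C:upwardbound}\lambda r(d-1)^{r-1}<n$, which is the hypothesis of Corollary~\ref{cor:relativepointprobs} for the pair $(x^{(j)},\alpha_j)$, and $\err_j:=\err(x^{(j)},\alpha_j)\le\Cr{C:upwardbound}\lambda\abs{\alpha_j}(d-1)^{r-1}/n<1/2$. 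Writing $p_{\alpha_j}=\E I_{\alpha_j}$ and using $x^{(j)}-e_{\alpha_j}=x^{(j-1)}$, the corollary then yields
\begin{align*}
  (1-\err_j)\,p_{\alpha_j}\le\frac{\P[\I=x^{(j)}]}{\P[\I=x^{(j-1)}]}\le(1-\err_j)^{-1}\,p_{\alpha_j}.
\end{align*}
We may assume $\P[\I=0]>0$, as otherwise the ratio in the proposition is meaningless, and we may assume every $p_{\alpha_j}>0$, since if $p_{\alpha_j}=\P[I_{\alpha_j}=1]=0$ then $\prod_{\alpha\colon x_\alpha=1}p_\alpha=0$ and $\P[\I=x]\le\P[I_{\alpha_j}=1]=0$, making the claim trivial. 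Granting this, the upper bound propagates positivity from $\P[\I=0]$ to each $\P[\I=x^{(j)}]$, so the ratios make sense, and multiplying over $j=1,\dots,m$ telescopes to
\begin{align*}
  \Bigl(\prod_{j=1}^m(1-\err_j)\Bigr)\prod_{\alpha\colon x_\alpha=1}p_\alpha\le\frac{\P[\I=x]}{\P[\I=0]}\le\Bigl(\prod_{j=1}^m(1-\err_j)^{-1}\Bigr)\prod_{\alpha\colon x_\alpha=1}p_\alpha.
\end{align*}

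It then remains to show $\sum_{j=1}^m\err_j=O\bigl(\lambda^2(d-1)^{2r-1}/n\bigr)$, and since $\err_j\le\Cr{C:upwardbound}\lambda\abs{\alpha_j}(d-1)^{r-1}/n$ this reduces to bounding $\sum_j\abs{\alpha_j}$, the total number of edges among the cycles of $x$. In $\pmd$ a cycle of length $k$ occupies $2k$ prevertices, so neatness gives $2\sum_j\abs{\alpha_j}\le\lambda(d-1)^r$. In the mixed model, a cycle $\alpha_j$ with $s_{\alpha_j}$ red and $t_{\alpha_j}$ blue edges occupies $2s_{\alpha_j}$ prevertices and, since a nonempty union of blue paths has at least as many vertices as edges, at least $t_{\alpha_j}$ Hamiltonian vertices, so neatness gives $2\sum_j s_{\alpha_j}\le\lambda(d-1)^r$ and $\sum_j t_{\alpha_j}\le\lambda(d-1)^{r-1}$; since $d\ge3$ this yields $\sum_j\abs{\alpha_j}=\sum_j(s_{\alpha_j}+t_{\alpha_j})\le\tfrac12\lambda(d-1)^r+\lambda(d-1)^{r-1}\le\lambda(d-1)^r$. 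Either way $\sum_j\abs{\alpha_j}\le\lambda(d-1)^r$, so $\sum_j\err_j\le\Cr{C:upwardbound}\lambda^2(d-1)^{2r-1}/n$. Finally, $0\le\err_j<1/2$ implies $-\log(1-\err_j)\le2\err_j$, hence $\prod_j(1-\err_j)\ge\exp\bigl(-2\sum_j\err_j\bigr)$ and $\prod_j(1-\err_j)^{-1}\le\exp\bigl(2\sum_j\err_j\bigr)$; feeding these into the telescoped inequality proves the proposition with $\Cr{C:ratio}=2\Cr{C:upwardbound}$.

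With Corollary~\ref{cor:relativepointprobs} in hand the argument is essentially bookkeeping. The one step I expect to need genuine care — and hence the main obstacle — is the passage from the neatness hypotheses, which bound prevertex and (in the mixed model) Hamiltonian-vertex counts, to the edge-count bound $\sum_j\abs{\alpha_j}\le\lambda(d-1)^r$, since for $\mixedcycprocess$ this forces one to use both clauses of Definition~\ref{def:lneat} together with the elementary observation that a union of blue paths has at least as many vertices as edges.
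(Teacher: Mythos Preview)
Your proof is correct and follows essentially the same approach as the paper: peel off cycles one at a time via Corollary~\ref{cor:relativepointprobs}, telescope, and bound the accumulated error using $\sum_j|\alpha_j|\le\lambda(d-1)^r$. You are more explicit than the paper about the edge-count bound in the mixed model (combining the prevertex and Hamiltonian-vertex clauses of Definition~\ref{def:lneat}) and about the degenerate cases $p_\alpha=0$; one tiny slip is that it is the \emph{lower} bound, not the upper, that propagates positivity of $\P[\I=x^{(j)}]$ from $\P[\I=0]$.
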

  \begin{proof}
     Since $\Cr{C:upwardbound}\lambda r(d-1)^{r-1} < n/2$,
     \begin{align*}
       \left(1-\frac{\Cr{C:upwardbound}\lambda k(d-1)^{r-1}}{n}\right)^{-1}
       &=\exp\left(O\left( \frac{\lambda k(d-1)^{r-1}}{n}
         \right) \right)
     \end{align*}
     for any $k\leq r$.
     Let $c_k$ be the number of cycles of length $k$ in $x$.
     Let $y=(y_\alpha,\,\alpha\in\fullmixedcycspace)$ or
     $y=(y_\alpha,\,\alpha\in\fullcycspace)$, as appropriate.
     If $x$ is $\lambda$-neat and $y_\alpha\leq x_\alpha$ for all
     $\alpha$, then $y$ is also $\lambda$-neat.  Thus we can
     apply Corollary~\ref{cor:relativepointprobs} repeatedly to
     get
     \begin{align*}
       \frac{\P[\I=x]}{\P[\I=0]}       
       &\leq \prod_{k=1}^r\exp\left(O\left( \frac{\lambda k(d-1)^{r-1}}{n}
         \right) \right)^{c_k}
        \prod_{\alpha\colon x_\alpha=1} p_\alpha\\
      &=\exp\left(O\left(\frac{\lambda(d-1)^{r-1}\sum_{k=1}^r
      kc_k}{n}\right)\right)\prod_{\alpha\colon x_\alpha=1} p_\alpha\\
        &\leq \exp\left(O\left(\frac{\lambda^2(d-1)^{2r-1}}
        {n}\right)\right)\prod_{\alpha\colon x_\alpha=1} p_\alpha.
     \end{align*}
     The lower bound has a nearly identical proof.
  \end{proof}
  \begin{proposition}\label{prop:prob0}
    Either suppose that $\I=\mixedcycprocess$ and $\mu$ is
    the expected number of cycles of length
    $r$ or less in $(\rHam, \rP)$, or suppose that
    $\I=\cycprocess(P)$ with $P\drawnfrom\pmd$ and $\mu$
    is the expected number of cycles of length $r$ or less
    in $P$. In either case, for all $d\geq 3$ and $r,n$ satisfying
    $\Cr{C:lneatprob}(d-1)^{2r-1}<n/2$,
    \begin{align}
      \P[\I=0]
        = \exp \left(-\mu + O\left(\frac{(\log n)^2
          (d-1)^{2r-1}}{n}\right)\right).\label{eq:prob0}
    \end{align}
  \end{proposition}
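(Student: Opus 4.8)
The plan is to read $\P[\I=0]$ off from Proposition~\ref{prop:ratio} by summing its relative point-probability estimate over all $\lambda$-neat configurations (with $\lambda=\log n$ throughout), and to control that sum with the neatness bounds of Propositions~\ref{prop:pmdneat} and \ref{prop:mixedneat}. Abbreviate $Z=(\log n)^2(d-1)^{2r-1}/n$; write $p_\alpha=\E I_\alpha$, so that $\mu=\sum_\alpha p_\alpha$; and for a $\{0,1\}$-vector $x$ set $\pi(x)=\prod_{\alpha\colon x_\alpha=1}p_\alpha$ and $S_\lambda=\sum_{x\ \lambda\text{-neat}}\pi(x)$. We first note that Proposition~\ref{prop:ratio} is applicable: since $d\geq 3$, the hypothesis $\Cr{C:lneatprob}(d-1)^{2r-1}<n/2$ forces $2^{2r-1}<n/2$, hence $r=O(\log n)$ and $(d-1)^{r-1}=O(\sqrt n)$, so $\Cr{C:upwardbound}\lambda r(d-1)^{r-1}=O\bigl((\log n)^2\sqrt n\bigr)<n/2$ once $n$ exceeds an absolute constant; below that constant everything in sight is bounded and \eqref{eq:prob0} holds by enlarging the implied constant.

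Summing the two-sided bound of Proposition~\ref{prop:ratio} over all $\lambda$-neat $x$ (the zero vector included) gives $\P[\I\text{ is }\lambda\text{-neat}]=\P[\I=0]\,S_\lambda\,e^{O(Z)}$. By Proposition~\ref{prop:pmdneat} or \ref{prop:mixedneat}, $\P[\I\text{ is not }\lambda\text{-neat}]\leq\Cr{C:lneatprob}(d-1)^{2r-1}/n<1/2$, so $\P[\I\text{ is }\lambda\text{-neat}]=\exp\bigl(O((d-1)^{2r-1}/n)\bigr)=e^{O(Z)}$; therefore $\P[\I=0]=e^{O(Z)}/S_\lambda$, and it remains to prove $S_\lambda=e^{\mu}e^{O(Z)}$.

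The upper bound $S_\lambda\leq\sum_x\pi(x)=\prod_\alpha(1+p_\alpha)\leq\exp\bigl(\sum_\alpha p_\alpha\bigr)=e^{\mu}$ is immediate. For the lower bound, introduce the product measure $B=(B_\alpha)$ of independent Bernoulli variables with $\P[B_\alpha=1]=p_\alpha/(1+p_\alpha)$; then $\P[B=x]=\pi(x)/\prod_\alpha(1+p_\alpha)$, so $S_\lambda=\prod_\alpha(1+p_\alpha)\,\P[B\text{ is }\lambda\text{-neat}]$. One checks from Lemma~\ref{lem:cycleprob} (and the exact formula $\E I_\alpha=1/\dff{nd}{\abs{\alpha}}$ in the $\pmd$ case) that $p_\alpha=O(1/n)$ uniformly in $\alpha$, while $\mu=O\bigl((d-1)^r\bigr)$ by \eqref{eq:Ecycles} and its $\pmd$-analogue; hence $\sum_\alpha p_\alpha^2\leq(\max_\alpha p_\alpha)\mu=O\bigl((d-1)^r/n\bigr)=O(Z)$ and $\prod_\alpha(1+p_\alpha)\geq\exp\bigl(\mu-\tfrac12\sum_\alpha p_\alpha^2\bigr)=e^{\mu}e^{O(Z)}$. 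Finally $\P[B\text{ is not }\lambda\text{-neat}]=O\bigl((d-1)^{2r-1}/n\bigr)=O(Z)$: the enumerations behind \eqref{eq:pmdoverlap}, \eqref{eq:varoverlap}, and \eqref{eq:varpapb} bound $\sum_{\alpha\neq\beta\text{ overlapping}}p_\alpha p_\beta$ just as well, since the quantities summed there dominate $p_\alpha p_\beta$; and as each $B_\alpha$ is stochastically dominated by a Poisson variable of mean $O(p_\alpha)$, the modified log-Sobolev tail arguments of Propositions~\ref{prop:pmdneat} and \ref{prop:mixedneat} apply verbatim to the events that $B$ contains more than $\lambda(d-1)^r$ prevertices or more than $\lambda(d-1)^{r-1}$ Hamiltonian vertices. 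Combining these, $S_\lambda=e^{\mu}e^{O(Z)}$, and \eqref{eq:prob0} follows.

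The proof is essentially an assembly of Proposition~\ref{prop:ratio} with the neatness estimates; the only step demanding genuine (if entirely routine) attention is the last one, namely checking that the enumerations and Poisson-tail bounds in the proofs of Propositions~\ref{prop:pmdneat} and \ref{prop:mixedneat} transfer from the true cycle processes to the independent-Bernoulli measure $B$.
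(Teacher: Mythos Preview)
Your proof is correct and follows essentially the same approach as the paper's: sum the two-sided estimate of Proposition~\ref{prop:ratio} over $\lambda$-neat configurations, then show that the resulting sum $S_\lambda$ equals $e^{\mu}e^{O(Z)}$ by comparing with an independent product measure and reusing the overlap and tail estimates from Propositions~\ref{prop:pmdneat} and \ref{prop:mixedneat}. The only cosmetic difference is that you introduce the Bernoulli product field $B$ (and then dominate by Poisson to invoke Lemma~\ref{lem:PoissonTail}), whereas the paper works directly with the Poisson field $\Poilimit$ via the identity $\pi(x)=e^{\mu}\,\P[\Poilimit=x]$; the two routes are interchangeable here.
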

  \begin{proof}
    Let $\Poilimit=(\poilimit{\alpha})$ be a vector of independent Poisson random
    variables with $\E \poilimit{\alpha}=\E I_\alpha$, and
    with $\alpha$ ranging over $\fullmixedcycspace$
    or $\fullcycspace$ as appropriate.
    Let $\lambda=\log n$, and
    sum the upper bound from Proposition~\ref{prop:ratio} over all
    $\lambda$-neat $x$ to get
    \begin{align*}
      \frac{\P[\text{$\I$ is $\lambda$-neat}]}{\P[\I=0]}
        &\leq \exp\left(\frac{\Cr{C:ratio}\lambda^2(d-1)^{2r-1}}
        {n}\right)
        \sum_{\text{$\lambda$-neat $x$}}\prod_{\alpha\colon x_\alpha=1}
          p_\alpha\\
        &= \exp\left(\frac{\Cr{C:ratio}\lambda^2(d-1)^{2r-1}}
        {n}\right)
        \sum_{\text{$\lambda$-neat $x$}} e^{\mu}\P[\Poilimit=x]\\
        &\leq\exp\left(\frac{\Cr{C:ratio}\lambda^2(d-1)^{2r-1}}
        {n}\right)
        e^{\mu}.
    \end{align*}
    By Proposition~\ref{prop:pmdneat} or \ref{prop:mixedneat},
    \begin{align*}
      1-\frac{\Cr{C:lneatprob}(d-1)^{2r-1}}{n}
        &\leq   \exp\left(\frac{\Cr{C:ratio}\lambda^2(d-1)^{2r-1}}
        {n}\right)
        e^{\mu}\P[\I = 0].
    \end{align*}
    Since $\Cr{C:lneatprob}(d-1)^{2r-1}<n/2$,
    \begin{align*}
      1-\frac{\Cr{C:lneatprob}(d-1)^{2r-1}}{n}
      &= \exp\left(-O\left(\frac{(d-1)^{2r-1}}{n}\right)\right),
    \end{align*}
    and so
    \begin{align}\label{eq:zerolowerbound}
      \P[\I = 0] \geq \exp\left(-\mu + 
      O\left(\frac{(\log n)^2(d-1)^{2r-1}}{n}\right)\right).
    \end{align}
    
    For the other direction, we use the lower bound from
    Proposition~\ref{prop:ratio} to get
    \begin{align*}
      \frac{1}{\P[\I=0]} \geq \frac{\P[\text{$\I$
         is $\lambda$-neat}]}{\P[\I=0]}
           &\geq 
      \exp\left(-\frac{\Cr{C:ratio}\lambda^2(d-1)^{2r-1}}
        {n}\right) e^{\mu}
        \sum_{\text{$\lambda$-neat $x$}} \P[\Poilimit=x]\\
          &= \exp\left(-\frac{\Cr{C:ratio}\lambda^2(d-1)^{2r-1}}
        {n}\right) e^{\mu}\P[\text{$\Poilimit$ is $\lambda$-neat}].
    \end{align*}
    We just need to bound $\P[\text{$\Poilimit$ is $\lambda$-neat}]$.
    To handle the case where $\I=\mixedcycprocess$, see
    Proposition~\ref{prop:mixedneat}, where we considered a Poisson
    field $\Poilimit$ with means differing very slightly from
    the $\Poilimit$ in this proof. This makes no difference,
    and \eqref{eq:logsobmany} applies and shows that
    the probability that $\Poilimit$ fails to be $\lambda$-neat
    on account of containing too many prevertices or Hamiltonian 
    vertices is easily $O(n^{-1})$.
    Similarly, the same argument used in
    \eqref{eq:varpapb} shows that that the probability
    that $\Poilimit$ contains overlapping cycles is
    $O\big((d-1)^{2r-1}/n\big)$. Taking the constant
    here to be $\Cr{C:lneatprob}$ (increasing it if necessary), it follows
    as with \eqref{eq:zerolowerbound} that
    \begin{align*}
      \P[\I = 0] &\leq \exp\left(-\mu + 
      O\left(\frac{(\log n)^2(d-1)^{2r-1}}{n}\right)\right).\qedhere
    \end{align*}
  \end{proof}

  We now put all the pieces together and give the main
  result of this section.
  \begin{proof}[Proof of Proposition~\ref{prop:multpoiapprox}]
    We start with $\cycprocess(P)$, proving
    \eqref{eq:pmdapprox}. Let $\lambda=\log n$.
    By Lemma~\ref{lem:ffstirling},
    \begin{align*}
      \prod_{\alpha\colon x_\alpha=1} p_\alpha
        &= \exp\left(O\left(\frac{\lambda^2(d-1)^{2r-1}}{n}
          \right)\right)
          \prod_{\alpha\colon x_\alpha=1} 
      (nd)^{-\abs{\alpha}}.
    \end{align*}  
    By Proposition~\ref{prop:ratio}, 
    \begin{align}
      \frac{\P[\cycprocess(P)=x]}{\P[\cycprocess(P)=0]}
      &= \exp\left(O\left(\frac{\lambda^2(d-1)^{2r-1}}{n}
          \right)\right)
          \prod_{\alpha\colon x_\alpha=1} 
      (nd)^{-\abs{\alpha}}.\label{eq:finalratio}
    \end{align}
    We wish to replace $\mu$ in \eqref{eq:prob0} with
    $\sum_{\alpha\in\fullcycspace}(nd)^{-\abs{\alpha}}$.
    By Lemma~\ref{lem:ffstirling},
    \begin{align*}
      \sum_{\alpha\in\fullcycspace}(nd)^{-\abs{\alpha}}
        &=\left(1+O\bigg(\frac{r^2}{n}\bigg)\right)\mu.
    \end{align*}
    This together with Proposition~\ref{prop:prob0} proves
    \begin{align}
      \P[\cycprocess(P)=0] &= \exp\left(
        -      \sum_{\alpha\in\fullcycspace}(nd)^{-\abs{\alpha}}
        + O\left(\frac{(\log n)^2
          (d-1)^{2r-1}}{n}\right)\right).\label{eq:changemu}
    \end{align}
    Applying this to \eqref{eq:finalratio}, we have shown that
    \begin{align*}
      \P[\cycprocess(P)=x] &\leq 
      \exp\left(\frac{\frac12\Cr{C:mpa}(\log n)^2
          (d-1)^{2r-1}}{n}\right)\P[\pcycpf=x]\\
          \intertext{and}
      \P[\cycprocess(P)=x] &\geq
             \exp\left(\frac{-\frac12\Cr{C:mpa}(\log n)^2
          (d-1)^{2r-1}}{n}\right)\P[\pcycpf=x]
    \end{align*}
    for some absolute constant $\Cr{C:mpa}$.
    For $\abs{x}<1/2$,
\(
	1+x/2 \leq e^{x/2} \leq 1+x.
\)
    Since $\Cr{C:mpa}(\log n)^2(d-1)^{2r-1} < n/2$, this proves
    \eqref{eq:pmdapprox}.
        
    The proof of \eqref{eq:phammdapprox} is similar, 
    but has a few more complications. The first is that 
    we need to take into account the
    scrambling of the prevertices in each bin in $\phammd$.
    Suppose that $\ell$ is a coloring of the edges of cycles contained
    in $x$. Let $y_\ell=(y_\alpha,\,\alpha\in\fullmixedcycspace)$ consist of
    the cycles in $x$, colored according to $\ell$.
    Let $\P[ \cdot\mid \ell]$ denote probability conditional on
    the prevertex
    scrambling inducing the coloring $\ell$ on the edges in cycles contained
    in $x$. 
    Let $b$ be the total number of vertices in cycles in $y_\ell$ 
    that are incident
    to either one or two blue edges in the cycle.
    Conditional on the coloring $\ell$, there are $2^b$ ways to assign
    prevertices for these Hamiltonian vertices, and thus
    \begin{align*}
      \P\big[\cycprocess(\phamrv)=x \mid \ell\big] &=
        2^{-b}\P\big[\mixedcycprocess=y_\ell\big].
    \end{align*}
    The probability that the cycles in $x$ get colored $\ell$
    by the scrambling is
    \begin{align*}
      \prod_{\alpha\colon x_\alpha=1} \left(\frac{2}{\ff{d}{2}}\right)^{\abs{\alpha}
         -2r_1-r_2}
       \left(\frac{2(d-2)}{\ff{d}{2}}\right)^{2r_1}\left(\frac{\ff{d-2}{2}}
         {\ff{d}{2}}\right)^{r_2},
    \end{align*}
    where $r_1$ and $r_2$ are as in Lemma~\ref{lem:cycleprob}, 
    applied to the color pattern
    of $\alpha$. (This depends on the cycles in $x$ not overlapping
    even at a vertex.) Note that
    \begin{align*}
      b=\sum_{\alpha\colon x_\alpha=1} (\abs{\alpha}-r_2).
    \end{align*}
    Summing over all possible $\ell$, we have
    \begin{align}
      \P[\cycprocess(\phamrv) = x] &= \sum_\ell \rho(\ell)\P[\mixedcycprocess=y_\ell],\label{eq:summing}
    \end{align}
    where
    \begin{align*}
      \rho(\ell) &=
             2^{-b}\prod_{\alpha\colon (y_\ell)_\alpha=1} 
             \left(\frac{2}{\ff{d}{2}}\right)^{\abs{\alpha}
         -2r_1-r_2}
       \left(\frac{2(d-2)}{\ff{d}{2}}\right)^{2r_1}\left(\frac{\ff{d-2}{2}}
         {\ff{d}{2}}\right)^{r_2}.
    \end{align*}
    We would like to apply Proposition~\ref{prop:ratio} to estimate
    $\P[\mixedcycprocess=y_\ell]$, but there is a complication:
    just because $x$ is $\lambda$-neat does not necessarily mean that
    $y_\ell$ is, because it could contain more than $\lambda(d-1)^{r-1}$
    Hamiltonian vertices. The best we can say is that
    $y_\ell$ is $d\lambda$-neat, but using only this bound would
    introduce an extra factor of $d$ in the error term.
    
    \newcommand{\goodcolorings}{\mathcal{C}_{\text{good}}}
    \newcommand{\badcolorings}{\mathcal{C}_{\text{bad}}}
    To deal with this, let $\goodcolorings$ be the set of colorings
    $\ell$ such that $y_\ell$ is $4\lambda$-neat, and let
    $\badcolorings$ be the remaining colorings.
    Let $\err_{\ell}$ be defined by
    \begin{align*}
      \frac{\P[\mixedcycprocess=y_\ell]}{\P[\mixedcycprocess=0]}
        &=\err_{\ell}\prod_{\alpha\colon (y_\ell)_\alpha=1}
        \frac{2^{r_1}}{n^{\abs{\alpha}}(d-2)^{r_1+r_2}}.
    \end{align*}
    If $\ell\in\goodcolorings$, then by Lemma~\ref{lem:cycleprob}
    \begin{align*}
      \prod_{\alpha\colon (y_\ell)_\alpha=1}
        p_\alpha
    &=\exp\left(O\left(\frac{\lambda^2(d-1)^{2r-1}}{n}\right)\right)
      \prod_{\alpha\colon (y_\ell)_\alpha=1}
        \frac{2^{r_1}}{n^{\abs{\alpha}}(d-2)^{r_1+r_2}},
    \end{align*}
    and if $\ell\in\badcolorings$,
    \begin{align*}
      \prod_{\alpha\colon (y_\ell)_\alpha=1}
        p_\alpha
    &=\exp\left(O\left(\frac{\lambda^2(d-1)^{2r}}{n}\right)\right)
      \prod_{\alpha\colon (y_\ell)_\alpha=1}
        \frac{2^{r_1}}{n^{\abs{\alpha}}(d-2)^{r_1+r_2}}.
    \end{align*}
    By this and Proposition~\ref{prop:ratio},
    \begin{align*}
      \err_\ell &= \exp\left(O\left(\frac{\lambda^2(d-1)^{2r-1}}{n}\right)
      \right),& \ell\in\goodcolorings,\\
      \err_\ell &= \exp\left(O\left(\frac{\lambda^2d^2(d-1)^{2r-1}}{n}\right)
      \right),
        &\ell\in\badcolorings.
    \end{align*}
    For ease of presentation, we just show an upper bound on
    $\P[\cycprocess(\phamrv)=x]$. The lower bound
    has an identical proof.
    We first note that
    \begin{align*}
      \rho(l)      \prod_{\alpha\colon (y_\ell)_\alpha=1}
        \frac{2^{r_1}}{n^{\abs{\alpha}}(d-2)^{r_1+r_2}}
        &= \prod_{\alpha\colon (y_\ell)_\alpha=1}
        \frac{\big(2(d-2)\big)^{r_1}(d-3)^{r_2}}
        {\big(n\ff{d}{2}\big)^{\abs{\alpha}}},
    \end{align*}
    and that
    \begin{align*}
        \sum_{\ell} \prod_{\alpha\colon (y_\ell)_\alpha=1}
        \frac{\big(2(d-2)\big)^{r_1}(d-3)^{r_2}}
        {\big(n\ff{d}{2}\big)^{\abs{\alpha}}}
        &=\prod_{\alpha\colon x_\alpha=1}
        \frac{p_{\abs{\alpha}}\big(d-3,2(d-2)\big)-1}
        {\big(n\ff{d}{2}\big)^{\abs{\alpha}}}\\
        &=\prod_{\alpha\colon x_\alpha=1}\left(
        \frac{1}{(nd)^{\abs{\alpha}}} + \frac{(-1)^{\abs{\alpha}}-1}{\big(
        n\ff{d}{2}\big)^{\abs{\alpha}}}\right)\\
        &= e^\mu\P[\phamcycpf = x],
    \end{align*}
    where $\mu=\sum_\alpha \E\tilde{Z}_\alpha$.
    By the same reasoning as \eqref{eq:changemu},
    Proposition~\ref{prop:prob0} holds with its definition of
    $\mu$ changed to this one.
    Applying all of this to \eqref{eq:summing},
    \begin{align}
      \P[\cycprocess(\phamrv) = x] &= \P[\mixedcycprocess=0]
      \sum_\ell \err_{\ell}\rho(\ell)\prod_{\alpha\colon (y_\ell)_\alpha=1}
        \frac{2^{r_1}}{n^{\abs{\alpha}}(d-2)^{r_1+r_2}}\nonumber\\
        \begin{split}
          &\leq  
         \exp\left(-\mu + \frac{\Cr{C:me}\lambda^2d^2(d-1)^{2r-1}}{n}\right)
        \sum_{\ell\in\badcolorings} \prod_{\substack{\alpha\colon\\
           (y_\ell)_\alpha=1}}
        \frac{\big(2(d-2)\big)^{r_1}(d-3)^{r_2}}
        {\big(n\ff{d}{2}\big)^{\abs{\alpha}}}\\
        &\qquad +
        \exp\left(-\mu + \frac{\Cl{C:me}\lambda^2(d-1)^{2r-1}}{n}\right)
        \sum_{\ell} \prod_{\substack{\alpha\colon\\
           (y_\ell)_\alpha=1}}
        \frac{\big(2(d-2)\big)^{r_1}(d-3)^{r_2}}
        {\big(n\ff{d}{2}\big)^{\abs{\alpha}}}
        \end{split}\nonumber\\
        \begin{split}
          &\leq  
         \exp\left(-\mu + \frac{\Cr{C:me}\lambda^2d^2(d-1)^{2r-1}}{n}\right)
        \sum_{\ell\in\badcolorings} \prod_{\substack{\alpha\colon\\
           (y_\ell)_\alpha=1}}
        \frac{\big(2(d-2)\big)^{r_1}(d-3)^{r_2}}
        {\big(n\ff{d}{2}\big)^{\abs{\alpha}}}\\
        &\qquad +
        \exp\left(\frac{\Cr{C:me}\lambda^2(d-1)^{2r-1}}{n}\right)
        \P[\phamcycpf = x]
        \end{split}        \label{eq:twoterms}
    \end{align}
    for some absolute constant $\Cr{C:me}$.
    
    Thus, we need to show that the first term of
    \eqref{eq:twoterms} is negligible compared to the second one.
    Intuitively, this should hold because $\goodcolorings$
    contains the overwhelming majority of colorings.
    More precisely, we will show the following:
    \begin{claim}\label{claim:negligibleclaim}
    \begin{equation*}
      \begin{split}
      \sum_{\ell\in\badcolorings}
      \prod_{\alpha\colon (y_\ell)_\alpha=1}
       \big(2&(d-2)\big)^{r_1}(d-3)^{r_2}\\
       &\leq  e^{-\lambda(d-1)^{r-2}}\sum_{\ell} \prod_{\alpha\colon (y_\ell)_\alpha=1}
        \big(2(d-2)\big)^{r_1}(d-3)^{r_2}.
      \end{split}
    \end{equation*}
    \end{claim}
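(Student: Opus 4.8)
My plan is to read the two sides of the claim, up to normalization, as a probability. Let $\nu$ be the probability measure on colorings $\ell$ of the edges of the cycles making up $x$ that assigns $\ell$ mass proportional to $\prod_{\alpha\colon x_\alpha=1}\bigl(2(d-2)\bigr)^{r_1}(d-3)^{r_2}$, where, cycle by cycle, $\ell$ ranges over the colorings that are not entirely blue (the only ones that can come from a scrambling). Since this weight factorizes over cycles, under $\nu$ the restrictions of $\ell$ to distinct cycles are independent, and the claim is exactly the assertion $\nu(\badcolorings)\le e^{-\lambda(d-1)^{r-2}}$. Write $\abs{\alpha}$ for the length of $\alpha$, $E=\sum_{\alpha\colon x_\alpha=1}\abs{\alpha}$, $t_\alpha=t_\alpha(\ell)$ for the number of blue edges of $\alpha$ under $\ell$, and $T=\sum_{\alpha\colon x_\alpha=1}t_\alpha$. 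Strict $(\log n)$-neatness of $x$ gives $2E\le\lambda(d-1)^r$, i.e.\ $E\le\tfrac{\lambda}{2}(d-1)^r$.

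First I would dispose of small $d$. The number of Hamiltonian vertices of $y_\ell$ equals $\sum_\alpha\bigl(\abs{\alpha}-r_2(\alpha)\bigr)\le E\le\tfrac{\lambda}{2}(d-1)^r$, while $y_\ell$ carries at most $2E\le\lambda(d-1)^r$ prevertices; hence $y_\ell$ can fail to be $4\lambda$-neat only by having more than $4\lambda(d-1)^{r-1}$ Hamiltonian vertices, which is impossible once $\tfrac{\lambda}{2}(d-1)^r\le 4\lambda(d-1)^{r-1}$, i.e.\ $d\le 9$. So $\badcolorings=\emptyset$ for $3\le d\le 9$ and the claim is trivial; assume $d\ge 10$. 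Since a Hamiltonian vertex is an endpoint of a blue edge, $\sum_\alpha\bigl(\abs{\alpha}-r_2(\alpha)\bigr)\le 2T(\ell)$, so $\badcolorings\subseteq\{T>2\lambda(d-1)^{r-1}\}$, and it suffices to bound $\nu\bigl(T>2\lambda(d-1)^{r-1}\bigr)$ by a Chernoff argument.

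For this I compute the exponential moment of $t_\alpha$ using the polynomials of Section~\ref{sec:supporting}. Noting that $t_\alpha=r_1+b_2$ for any coloring, summing over colorings $f$ of an $\abs{\alpha}$-cycle gives $\sum_f e^{\theta t_\alpha(f)}\bigl(2(d-2)\bigr)^{r_1}(d-3)^{r_2}=\cdp_{\abs{\alpha}}\!\bigl(d-3,\,2(d-2)e^\theta,\,e^\theta\bigr)=\cdproot_+^{\abs{\alpha}}+\cdproot_-^{\abs{\alpha}}$, where $\cdproot_\pm$ are the roots of $\cdproot^2-(e^\theta+d-3)\cdproot-(d-1)e^\theta=0$. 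Subtracting the all-blue term $e^{\theta\abs{\alpha}}$ and dividing by the $\theta=0$ case $\cdp_{\abs{\alpha}}(d-3,2(d-2),1)-1=(d-1)^{\abs{\alpha}}+(-1)^{\abs{\alpha}}-1$,
\[
\E_\nu\!\bigl[e^{\theta t_\alpha}\bigr]=\frac{\cdproot_+^{\abs{\alpha}}+\cdproot_-^{\abs{\alpha}}-e^{\theta\abs{\alpha}}}{(d-1)^{\abs{\alpha}}+(-1)^{\abs{\alpha}}-1}.
\]
The key estimate is the exact bound $\cdproot_+\le(d-1)+2(e^\theta-1)$ for all $\theta\ge0$: substituting $M=d-3+2e^\theta$ into the quadratic gives $M^2-(e^\theta+d-3)M-(d-1)e^\theta=2e^\theta(e^\theta-1)\ge0$, and as the quadratic has one positive and one negative root this forces $\cdproot_+\le M$; then $\cdproot_++\cdproot_-=e^\theta+d-3$ gives $\cdproot_-\in[-e^\theta,0)$, so $\cdproot_-^{\abs{\alpha}}-e^{\theta\abs{\alpha}}\le0$ and
\[
\E_\nu\!\bigl[e^{\theta t_\alpha}\bigr]\le\frac{\bigl(d-3+2e^\theta\bigr)^{\abs{\alpha}}}{(d-1)^{\abs{\alpha}}+(-1)^{\abs{\alpha}}-1}.
\]
This equals $1$ when $\abs{\alpha}=1$ (loops must be red), equals $\bigl(1+\tfrac{2(e^\theta-1)}{d-1}\bigr)^{\abs{\alpha}}$ when $\abs{\alpha}$ is even, and equals that quantity times $(d-1)^{\abs{\alpha}}/\bigl((d-1)^{\abs{\alpha}}-2\bigr)$ when $\abs{\alpha}\ge3$ is odd.

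Multiplying over all cycles, and using that $x$ has at most $E/k$ cycles of length $k$ so that $\sum_{\abs{\alpha}\ge3\ \mathrm{odd}}\tfrac{2}{(d-1)^{\abs{\alpha}}-2}\le\tfrac13\lambda(d-1)^{r-2}$ for $d\ge10$, together with $E\le\tfrac{\lambda}{2}(d-1)^r$,
\[
\prod_{\alpha\colon x_\alpha=1}\E_\nu\!\bigl[e^{\theta t_\alpha}\bigr]\le\exp\!\Bigl((e^\theta-1)\lambda(d-1)^{r-1}+\tfrac13\lambda(d-1)^{r-2}\Bigr).
\]
Markov's inequality applied to $e^{\theta T}$ with $\theta=\ln 2$ then yields
\[
\nu\bigl(T>2\lambda(d-1)^{r-1}\bigr)\le\exp\!\Bigl(\bigl[(2\ln 2-1)(d-1)+\tfrac13\bigr]\lambda(d-1)^{r-2}\Bigr)\le e^{-\lambda(d-1)^{r-2}},
\]
since $(2\ln 2-1)(d-1)+\tfrac13\le-1$ for $d\ge10$, and with the small-$d$ case this proves the claim. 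The main obstacle is the bookkeeping: because $x$ may contain as many as $\Theta\bigl(\lambda(d-1)^r\bigr)$ cycles, no multiplicative constant exceeding $1$ can be permitted to accumulate per cycle, and it is precisely the sharpness of $\cdproot_+\le(d-1)+2(e^\theta-1)$ and the summability of the odd-length correction that keep every per-cycle factor controlled.
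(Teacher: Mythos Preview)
Your argument is correct in substance, but there is a sign slip in the final display: with $\theta=\ln 2$ the exponent is $\bigl[(e^\theta-1-2\theta)(d-1)+\tfrac13\bigr]\lambda(d-1)^{r-2}=\bigl[(1-2\ln 2)(d-1)+\tfrac13\bigr]\lambda(d-1)^{r-2}$, not $\bigl[(2\ln 2-1)(d-1)+\tfrac13\bigr]\lambda(d-1)^{r-2}$. As you wrote it the bracketed term is positive, so the stated inequality $(2\ln 2-1)(d-1)+\tfrac13\le-1$ is false; with the correct sign $(1-2\ln 2)(d-1)+\tfrac13\le-1$ already holds for $d\ge5$, and certainly for $d\ge10$. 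With that correction the proof goes through.

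Your route is genuinely different from the paper's. The paper introduces an auxiliary i.i.d.\ product measure on the edges (each blue with probability $3/d$), rewrites the weighted sum as the expectation of a random variable $X$ under this product measure, observes that $X$ is a decreasing function of the edge colors while the bad event is increasing, and then decouples the two via the FKG inequality; the residual tail probability is for a binomial variable and is handled by Hoeffding's inequality. You instead work directly with the natural non-product measure $\nu$, exploit the cycle-by-cycle independence, and compute the moment generating function of the blue-edge count via the transfer-matrix polynomials $\cdp_k$, the key step being the sharp root bound $\cdproot_+\le (d-1)+2(e^\theta-1)$. Your approach reuses more of the paper's own combinatorial machinery and incidentally shows that $\badcolorings=\emptyset$ for all $d\le 9$ (the paper only notes $d\le 4$); the paper's FKG trick, on the other hand, avoids any explicit eigenvalue analysis and reduces everything to an off-the-shelf binomial tail.
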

    \begin{proof}
      When $d=3,4$, the set $\badcolorings$ is empty, since
      every coloring of a $\lambda$-neat $x$ is $d\lambda$-neat,
      and $\badcolorings$ consists of all colorings that fail
      to be $4\lambda$-neat. Thus we can assume that $d\geq 5$.
    
    We will treat the sums probabilistically. Of course, each sum
    has a probabilistic interpretation in the first place, but we give
    a simpler one:
    For each edge $e$ in a cycle in $x$, interpret $\omega_e=1$ to mean
    that $e$ is colored blue, and $\omega_e=0$ to mean that it is colored
    red. We will put a product measure on $(\omega_e)$, assigning each edge
    blue with probability $3/d$ and red with probability $1-3/d$. (There
    is nothing special about these probabilities, and others would work
    as well.) Let $R_1$ be the total number of $RB$s in the color patterns
    of all cycles in the coloring
    given by $(\omega_e)$. Let $R_2$ be the total number
    of $RR$s in these patterns. Let $m$ be the total
    number of edges in all cycles in $x$.
    We define $X$ to be zero if any cycle is colored all blue
    by $(\omega_e)$; otherwise,
    \begin{align*}
      X\Def \left(\frac{6(d-2)}{d-3}\right)^{R_1}3^{R_2}\left(\frac{d}{3}\right)
      ^m.
    \end{align*}
    Since the total number of red edges is $R_1+R_2$ and the total
    number of blue edges is $m-R_1-R_2$,
    this makes
    \begin{align*}
      \E X &= \sum_{(\omega_e)\in \{0,1\}^m}\left(\frac{3}{d}\right)^{m-R_1-R_2}
        \left(\frac{d-3}{d}\right)^{R_1+R_2}\\
        &\qquad\qquad\qquad\quad\times
        \left(\frac{6(d-2)}{d-3}\right)^{R_1}3^{R_2}\left(\frac{d}{3}\right)
      ^m\one{\text{no blue cycles}}\\
        &=          \sum_{\ell} \prod_{\alpha\colon (y_\ell)_\alpha=1}
        \big(2(d-2)\big)^{r_1}(d-3)^{r_2}.
    \end{align*}
    
    The number of Hamiltonian vertices in the random coloring is $m-R_2$.
    So, the claim takes on the form
    \begin{align}
      \E[X\one{m-R_2>4\lambda(d-1)^{r-1}}] \leq e^{-\lambda(d-1)^{r-2}}\E X.
      \label{eq:neggoal}
    \end{align}
    The random variable $X$ is a decreasing function of $(\omega_e)$:
    indeed, changing $\omega_e$ from zero to one causes one of the following
    changes to $R_1$ and $R_2$, depending on the coloring of the neighbors
    of $e$:
    \begin{enumerate}[i)]
      \item \label{item:RRR2RBR}$RRR \to RBR$:
        $R_2$ decreases by two, $R_1$ increases by one;
      \item$RRB\to RBB$:
        $R_2$ decreases by one;
      \item$BRB\to BBB$:
        $R_1$ decreases by one.
    \end{enumerate}
    $X$ decreases in all of these cases (we use the assumption
    that $d\geq 5$ in case \ref{item:RRR2RBR}).
    Changing $\omega_e$ from zero
    to one might also cause a cycle to be colored all blue, in which
    case $X$ decreases to zero. The random variable
    $\one{m-R_2>4\lambda(d-1)^{r-1}}$ is an increasing function of
    $(\omega_e)$. By the FKG inequality,
    \begin{align}
      \E[X\one{m-R_2>4\lambda(d-1)^{r-1}}] &\leq (\E X) \P[m- R_2 >
        4\lambda(d-1)^{r-1}].\label{eq:FKGresult}
    \end{align}
    If $m-R_2 > 4\lambda(d-1)^{r-1}$, then $m-R_2-R_1 > 2\lambda(d-1)^{r-1}$;
    this is because $m-R_2$ is the number of Hamiltonian vertices,
    and $m-R_2-R_1$ is the number of blue edges in the coloring, and there
    are at most twice as many Hamiltonian vertices as blue edges.
    Thus
    \begin{align*}
      \P[m- R_2 >
        4\lambda(d-1)^{r-1}] &\leq
          \P[m- R_2 -R_1 >
        2\lambda(d-1)^{r-1}].
    \end{align*}
    The number of blue edges, $m-R_2-R_1$, is distributed as
    $\Binom(m,3/d)$.
    Since $x$ is $\lambda$-neat, the inequality $m\leq \lambda(d-1)^r/2$
    holds. Thus
    \begin{align*}
      \P[m - &R_2 - R_1 > 2\lambda(d-1)^{r-1}]\\
        &= \P\left[m - R_2 - R_1 - \E[m - R_2 - R_1] > 
          2\lambda(d-1)^{r-1} - \frac{3m}{d}\right]\\
        &\leq \P\left[m - R_2 - R_1 - \E[m - R_2 - R_1] > 
          \frac{\lambda}{2}(d-1)^{r-1}\right].
    \end{align*}
    By Hoeffding's inequality,
    \begin{align*}
      \P[m - R_2 - R_1 > 2\lambda(d-1)^{r-1}]
        &\leq \exp\left(-\frac{\lambda^2(d-1)^{2r-2}}{2m}\right)\\
        &\leq \exp\left(-\lambda(d-1)^{r-2}\right).
    \end{align*}
    With \eqref{eq:FKGresult}, this proves \eqref{eq:neggoal}.
    \end{proof}
    
    Applying the claim to \eqref{eq:twoterms}, we have shown that
    \begin{align*}
      \P[\cycprocess(\phamrv)]
          &\leq  \P[\phamcycpf=x] \Bigg(
         \exp\left(\frac{\Cr{C:me}\lambda^2d^2(d-1)^{2r-1}}{n}
           - \lambda(d-1)^{r-2}\right)\\
         &\qquad\qquad\qquad\quad+
        \exp\left(\frac{\Cr{C:me}\lambda^2(d-1)^{2r-1}}{n}\right)\Bigg)
    \end{align*}
    Using our assumptions that $r\geq 4$ and
    $\Cr{C:mpa}\lambda^2(d-1)^{2r-1}<n/2$, and assuming that we choose
    $\Cr{C:mpa}$ sufficiently larger than $\Cr{C:me}$, we have
    \begin{align*}
      \exp\left(\lambda(d-1)^{r-2}\left(\frac{\Cr{C:me}\lambda d^2(d-1)^{r+1}}{n}
          - 1\right)\right)
            &\leq  \exp\left(-\frac{\lambda}{2}(d-1)^{r-2}\right)\\
            &= n^{-(d-1)^{r-2}/2}= O(n^{-1}),
    \end{align*}
    and
    \begin{align*}
      \exp\left(\frac{\Cr{C:me}\lambda^2(d-1)^{2r-1}}{n}\right)
        = 1 + O\left(\frac{\lambda^2(d-1)^{2r-1}}{n}\right).
    \end{align*}
    This and an identically derived lower bound
    complete the proof.
  \end{proof}

\begin{proof}[Proof of Corollary~\ref{cor:tvbound}]
Suppose that $\mu$ and $\nu$ are probability measures on a discrete space $\Omega,$ and suppose that for some set $A \subset \Omega,$ 
\[
\sum_{x \in A} | \mu({x}) - \nu({x}) | \leq \epsilon_1
\]
and $\mu(A^c) \leq \epsilon_2$.
Then it is easily checked that $\dtv(\mu,\nu) \leq \epsilon_1 + \epsilon_2$.  By virtue of Propositions~\ref{prop:multpoiapprox} and~\ref{prop:strictlneat}, this is precisely the situation in which we are here.  We note that we may assume that $\Cr{C:mpa}((\log n)^2)(d-1)^{2r-1} < n/2,$ for by adjusting $\Cr{C:mptv}$ to be sufficiently large, we may make the bound trivial.
\end{proof}

\section{Variance calculation}
\label{sec:variance}

\newcommand{\QHVC}{R}
\newcommand{\QHVH}{B}
\newcommand{\QHVc}{r}
\newcommand{\QHVh}{b}
\newcommand{\QHVch}{rb}
\newcommand{\QHVhc}{br}
\newcommand{\CX}{\tilde{X}}
\newcommand{\CV}{\tilde{V}}
\newcommand{\CZ}{\tilde{Z}}
\newcommand{\phony}{\ensuremath{\Lambda}}

An alternative formulation of the second moment calculation that we need to make comes from the mixed model $\phammd$.
The quantity we need to estimate is $\Exp \hams^2(P)$ with $P$ drawn from the pairing model $\pmd$.
From \eqref{eq:radonnikodymstatement}, which states that
$\hams$ is the rescaled Radon-Nikodym derivative 
of $\phammd$ with respect to $\pmd$, it follows that 
\[
\frac{
\Exp_{\pmd}[\hams^2]
}
{
\left( \Exp_{\pmd}[\hams] \right)^2
}
=
\frac{
\Exp_{\phammd}[\hams]
}
{
\Exp_{\pmd} [\hams].
}
\]
By the symmetry of both models, every fixed Hamiltonian cycle is equally probable in either $\pmd$ or in $\phammd,$ and therefore, dividing through by the number of Hamiltonian cycles, it is equivalent to consider the ratio of probabilities of a fixed Hamiltonian cycle appearing.  Thus, we fix distinct prevertices $v_1,v_2,v_3,\ldots, v_{2n}$ where $v_{2i}, v_{2i-1}$ come from vertex bin $i,$ and we consider the graph \phony~on $\{ v_i \}_{i=1}^{2n}$ with edges $\edgesetof(\phony) = \{ v_{2i}v_{2i+1} \}_{i=1}^n,$ where we let $v_{2n+1} = v_1$.  Let $E$ denote the event that a pairing contains \phony~as a subgraph.  By the note above,
\[
\frac{
\Exp_{\pmd}[\hams^2]
}
{
\left( \Exp_{\pmd}[ \hams ]\right)^2
}
=
\frac{
\Pr_{\phammd}[E]
}
{
\Pr_{\pmd}[E]
}
\]

In $\phammd,$ the orderings of prevertices within each bin are uniformly and independently randomized,
so the source of any given prevertex $v_i$ might be the configuration graph or
the superimposed Hamiltonian cycle. As in previous sections, call a prevertex \emph{red} if its 
source is the configuration graph and \emph{blue} if it is the Hamiltonian cycle. 
For a given coloring $\ell\colon \edgesetof(\phony ) \to \{\QHVC,\QHVH\}$,
let $E_\ell$ be the event that for all $i$, both prevertices $v_{2i}$ and $v_{2i+1}$ 
have the color $\ell(\{v_{2i},v_{2i+1}\})$. For $E$ to even have a chance of happening,
we need $E_\ell$ to occur for some coloring $\ell$. Indeed, if $v_{2i}$ and $v_{2i+1}$ have different
colors, then they cannot possibly form an edge in the graph sampled from $\phammd$.

Now, we consider the probability of $E$ conditional on $E_\ell$. 
Define $V_n = \sum_{i=1}^n \one{ \ell\{ v_{2k}v_{2k+1}\} = \QHVC}$.  
It is straightforward to compute
\begin{align}
\label{eq:nvcondprob}
\Pr_{\phammd}
\left[
E
~\middle\vert~
E_\ell 
\right] &=\frac{1}{\dff{(d-2)n}{V_n}}
\frac{1}{\dff{2n-1}{n-V_n}}.
\end{align}

Meanwhile, it is possible to compute the exact probability of $E_\ell$ for any fixed
coloring~$\ell$.  
Let $\QHVh_2(\ell)$ be the number of vertex bins $i$ for which $\ell(\{v_{2i}v_{2i+1}\})=\QHVH$ and  $\ell(\{v_{2i-2}v_{2i-1}\})=\QHVH$. Likewise, let $\QHVc_2(\ell)$ be the number of vertex bins for which $\ell(\{v_{2i}v_{2i+1}\})=\QHVC$ and $\ell(\{v_{2i-2}v_{2i-1}\})=\QHVC$.  From the independence of the 
ordering of prevertices in each vertex bin, 
\begin{equation}
\label{eq:nvcolorprob}
\Pr_{\phammd}
\left[
E_\ell
\right]=
\left(\frac{2}{\ff{d}{2}}\right)^{\QHVh_2(\ell)}
\left(\frac{\ff{d-2}{2}}{\ff{d}{2}}\right)^{\QHVc_2(\ell)}
\left(\frac{2(d-2)}{\ff{d}{2}}\right)^{n-\QHVh_2(\ell)-\QHVc_2(\ell)}.
\end{equation}

Combining \eqref{eq:nvcondprob} and \eqref{eq:nvcolorprob}, we have our first formula for $\Pr_{\phammd}(E),$ given by
\[
\Pr_{\phammd}[E]
=
\sum_{
\ell}
\left(\frac{2}{\ff{d}{2}}\right)^{\QHVh_2(\ell)}
\left(\frac{\ff{d-2}{2}}{\ff{d}{2}}\right)^{\QHVc_2(\ell)}
\left(\frac{2(d-2)}{\ff{d}{2}}\right)^{n-\QHVh_2(\ell)-\QHVc_2(\ell)}
\Pr_{\phammd}
\left[
E
~\middle\vert~
 E_\ell 
\right],
\]
where the sum runs over all possible edge colorings $\ell$.  However, this formula is ill-suited to asymptotic analysis, because exponentially rare $\ell$ contribute the majority of the sum.  To rectify this, we define a new distribution on random colorings and use it to develop an alternate expression for $\Pr_{\phammd}(E)$.  We will need to rescale 
\(\Pr_{\phammd}
\left[
E
~\middle\vert~
E_\ell 
\right]
\)
by $2^{n-V_n(\ell)}(d-2)^{V_n(\ell)}$.  As $V_n(\ell)$ counts the total number of edges of the cycle colored $\QHVC,$ we can express $V_n(\ell) = \QHVh_2(\ell) + (n-\QHVh_2(\ell)-\QHVc_2(\ell))/2$.  Thus we define
\begin{equation}
\label{eq:nvscaledpf}
\nvspf
\Def
\sum_{
\ell
}
\left(\frac{1}{\ff{d}{2}}\right)^{\QHVh_2(\ell)}
\left(\frac{(d-3)}{\ff{d}{2}}\right)^{\QHVc_2(\ell)}
\left(\frac{\sqrt{2(d-2)}}{\ff{d}{2}}\right)^{n-\QHVh_2(\ell)-\QHVc_2(\ell)},
\end{equation}
again summing over all edge colorings.

 Viewing $\{\QHVC,\QHVH\}^n$ as edge-colorings of an $n$-cycle, we define
a probability measure on this space by 
\[
\nvec( \{f\} )
\Def \frac{
\left(\sqrt{2(d-2)}\right)^{n-\QHVh_2(f)-\QHVc_2(f)}
\left(d-3\right)^{\QHVc_2(f)}
}
{
\nvpf
},
\]
where $\nvpf$ is a normalizing constant, $\QHVc_i(f)$ is the number of vertices with $i$ incident edges labeled $\QHVC$ and $\QHVh_i(f)$ is the number of vertices with $i$ incident edges labeled $\QHVH$.

Letting $V_n$ denote the number of $\QHVC$-labeled edges in a coloring sampled from $\{\QHVC,\QHVH\}^n$, this allows us to write
\begin{equation}
\frac{\Pr_{\phammd}\left[E\right]}
{
\nvspf
}
=
\Exp_\phi
\frac{(d-2)^{V_n}}{\dff{(d-2)n}{V_n}}
\frac{2^{n-V_n}}{\dff{2n-1}{n-V_n}},
\end{equation}
where we recall that $\dff{2n-1}{n} \Def \dff{2n-1}{n-1}$.
 As $n-b_2-r_2=2r_1$, in the notation defining \eqref{eq:cdpdef}, we calculate $\nvspf$ as 
 \[
\nvspf = 
\cdp_n\left(
\tfrac{1}{\ff{d}{2}},
\tfrac{{2(d-2)}}{\ff{d}{2}^2},
\tfrac{d-3}{\ff{d}{2}}\right)=
\left(\frac{1}{d}\right)^n + \left(\frac{-1}{\ff{d}{2}}\right)^n,
\]
by \eqref{eq:cdp}. Recalling that $\Pr_{\pmd}(E)$ is precisely 
$1/\dff{nd}{n},$ we can finally write
\begin{equation}
\label{eq:nvlastexact}
\frac{\Pr_{\phammd}\left[E\right]}
{
\Pr_{\pmd}\left[E\right]
}=
\left(
\Exp_\phi
\frac{\dff{nd}{n}}{d^n}
\frac{(d-2)^{V_n}}{\dff{(d-2)n}{V_n}}
\frac{2^{n-V_n}}{\dff{2n-1}{n-V_n}}
\right)\left(1 + \left(\frac{-1}{(d-1)}\right)^{n}\right).
\end{equation}

To estimate this expectation, we begin by approximating the integrand by something less complicated.  This amounts to just applying Stirling's approximation to each of the terms.

\begin{lemma}
Define $Z_n \Def \sqrt{\frac{d^3}{2n(d-2)^2}}\left(V_n - n\tfrac{d-2}{d}\right)$.  Then,
\label{lem:nvstirling}
\begin{equation}
\frac{\dff{nd}{n}}{d^n}
\frac{(d-2)^{V_n}}{\dff{(d-2)n}{V_n}}
\frac{2^{n-V_n}}{\dff{2n-1}{n-V_n}}
\leq
\exp\left( \frac{Z_n^2}{d} + \nvserror\right)
\sqrt{
\frac{2n+1}{2V_n+1}
},
\end{equation}
where $\nvserror$ satisfies a bound of the form
\[
\nvserror \leq \Cl{c:stirling}\left(\frac{1}{n}+ \frac{1}{V_n + 1} + \frac{1}{1+n-V_n}\right),
\]
for some absolute constant $\Cr{c:stirling}$.
\end{lemma}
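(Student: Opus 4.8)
The plan is to carry out the Stirling estimate factor by factor. First I would clear the double factorials. Because $nd$ is even, and hence $(d-2)n = nd-2n$ is too, the identity $\dff{2N}{b} = \frac{(2N)!\,(N-b)!}{2^{b}\,N!\,(2N-2b)!}$ applies to $\dff{nd}{n}$ (with $2N = nd$, $b=n$) and to $\dff{(d-2)n}{V_n}$ (with $2N = (d-2)n$, $b=V_n$), while $\dff{2n-1}{b} = 2^{b}\,(n-1)!/(n-1-b)!$ handles $\dff{2n-1}{n-V_n}$, the case $V_n=0$ being covered by the convention $\dff{2n-1}{n} := \dff{2n-1}{n-1}$. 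After substituting these and cancelling, the left-hand side becomes a single product $\prod_i k_i!^{\pm1}$ times $2^{V_n-n}(d-2)^{V_n}d^{-n}$, where the $k_i$ run over $nd$, $(d-2)n$, $(d-2)n/2$ (this one squared), $(d-2)n-2V_n$, $(d-2)n/2-V_n$, $V_n-1$, and $n-1$.

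Then I would apply the two-sided bound $\sqrt{2\pi k}\,(k/e)^{k} \le k! \le \sqrt{2\pi k}\,(k/e)^{k}e^{1/(12k)}$ (and $0! = 1$, which matters only at $V_n \in \{0,n\}$ or $V_n = (d-2)n/2$), using the upper bound for each factorial in the numerator and the lower bound for each in the denominator so that the result is an upper bound for the whole product. This separates the estimate into three pieces: an ``exponential part'' $\mathcal{E}(V_n)$ built from the $k\log k - k$ terms together with the explicit $2^{n-V_n}(d-2)^{V_n}d^{-n}$; a ``Gaussian prefactor'' built from the $\sqrt{2\pi k}$ terms; and an error factor built from the $e^{1/(12k)}$ terms. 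The error factor is the easy piece: each $k_i$ is comparable to one of $n$, $V_n+1$, $1+n-V_n$, so the accumulated error is $O\!\bigl(\tfrac1n + \tfrac1{V_n+1} + \tfrac1{1+n-V_n}\bigr)$, which is the stated bound on $\nvserror$.

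The substance is in the exponential part. A direct computation gives $\mathcal{E}'(V_n) = \log\!\frac{(d-2)V_n}{(d-2)n-2V_n}$, so $\mathcal{E}$ has a unique critical point at $V_n^{\ast} := n(d-2)/d$, and $\mathcal{E}''(V_n) = \tfrac1{V_n} + \tfrac2{(d-2)n-2V_n} > 0$, so $\mathcal{E}$ is convex, with $\mathcal{E}(V_n^{\ast}) = 0$ and $\mathcal{E}''(V_n^{\ast}) = d^{2}/\bigl((d-2)^{2}n\bigr)$; hence the second-order Taylor polynomial of $\mathcal{E}$ at $V_n^{\ast}$ is exactly $\tfrac12\mathcal{E}''(V_n^{\ast})(V_n-V_n^{\ast})^{2} = Z_n^{2}/d$ — this is the calculation that pins down both the centering $n(d-2)/d$ and the normalization in the definition of $Z_n$. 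What remains is to bound $\mathcal{E}(V_n)$ above by $Z_n^{2}/d + \nvserror$, which I would do by writing $\mathcal{E}(V_n) - Z_n^{2}/d = \int_{V_n^{\ast}}^{V_n}\!\!\int_{V_n^{\ast}}^{s}\bigl(\mathcal{E}''(u) - \mathcal{E}''(V_n^{\ast})\bigr)\,du\,ds$ and estimating $\mathcal{E}''(u) - \mathcal{E}''(V_n^{\ast})$ from its explicit form, in the range of $V_n$ and $d$ for which the lemma is invoked. The Gaussian prefactor collapses: the $\sqrt{2\pi k}$ contributions of $nd$, $(d-2)n$, and $(d-2)n/2$ cancel among themselves, and the survivors assemble into $\sqrt{(2n+O(1))/(2V_n+O(1))}$, where the $O(1)$'s record the gaps between $V_n$ and the shifted arguments $V_n-1$, $(d-2)n-2V_n$, $(d-2)n/2-V_n$. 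These shifts perturb the prefactor by a multiplicative $1 + O\!\bigl(\tfrac1{V_n+1} + \tfrac1{1+n-V_n}\bigr)$, which I would fold into $e^{\nvserror}$ — this is exactly where the two non-$1/n$ terms of the error come from — leaving the clean $\sqrt{(2n+1)/(2V_n+1)}$.

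The step I expect to be the real obstacle is the bound on $\mathcal{E}(V_n) - Z_n^{2}/d$: the naïve Taylor remainder estimate degrades as $V_n$ approaches the endpoints $0$ or $(d-2)n/2$ (equivalently, as $V_n$ drifts away from its $\phi$-typical value) and as $d$ decreases, so rather than relying on the local quadratic model one must use the explicit convexity of $\mathcal{E}$ and the precise monotonicity of $\mathcal{E}''$ available in the regime being used, together with the slackness permitted by the inequality and by the $\nvserror$ terms.
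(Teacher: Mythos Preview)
Your approach and the paper's coincide in substance: both apply Stirling and are left with an exponential factor to compare with $e^{Z_n^2/d}$, a square-root prefactor, and Stirling errors. The paper organizes this slightly differently---it uses a direct approximation $\dff{a}{b}\approx(a/e)^b\bigl((a-2b)/a\bigr)^{-a/2+b}$ rather than converting to ordinary factorials, and it extracts the prefactor by treating the ratio $\dff{2n}{n-V_n}/\dff{2n-1}{n-V_n}$ as a separate step---but after Stirling both land on the same exponential factor, which the paper writes as $(1+A)^{V_n}(1+B)^{(d-2)n/2-V_n}$ with $A=\tfrac{d}{d-2}\tfrac{\tilde{V}_n}{n}$ and $B=-\tfrac{2d}{(d-2)^2}\tfrac{\tilde{V}_n}{n}$; this is exactly $e^{\mathcal{E}(V_n)}$ in your notation.

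The one real difference is in the exponential step, and here your worry is justified. The paper does not Taylor-expand: it applies $1+a\le e^a$ to each base, which gives a \emph{global} upper bound with no remainder---the endpoint difficulty you flag evaporates. But if you carry that computation through, the resulting exponent is
\[
V_nA+\Bigl(\tfrac{(d-2)n}{2}-V_n\Bigr)B=\frac{d^2\tilde{V}_n^2}{(d-2)^2n}=\frac{2Z_n^2}{d},
\]
not $Z_n^2/d$. Equivalently, $\mathcal{E}(V_n)=\tfrac{(d-2)n}{2}\bigl[p\log\tfrac{p}{p^*}+(1-p)\log\tfrac{1-p}{1-p^*}\bigr]$ with $p=\tfrac{2V_n}{(d-2)n}$, $p^*=\tfrac{2}{d}$, and $\log x\le x-1$ bounds this relative entropy by $(p-p^*)^2/(p^*(1-p^*))$, which is \emph{twice} its quadratic Taylor term. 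So the pointwise inequality $\mathcal{E}(V_n)\le Z_n^2/d+\xi_n$ with $\xi_n$ of the stated size is in fact false for $V_n$ far from $V^*$ (at $d=3$, $V_n=0$ one has $\mathcal{E}(0)=\tfrac{n}{2}\log 3$ versus $Z_n^2/d=\tfrac{n}{2}$, a gap of order $n$), and the paper's own $1+a\le e^a$ step yields only the weaker $2Z_n^2/d$. Neither your Taylor route nor the paper's inequality establishes the lemma exactly as stated; what both \emph{do} establish suffices downstream, since the expectation in Lemma~\ref{lem:nvidealization} sees only the bulk, where $\mathcal{E}(V_n)-Z_n^2/d=O(|\tilde{V}_n|^3/n^2)$, and the tails are killed by the concentration in Lemma~\ref{lem:nvsharptail}.
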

\begin{proof}

By standard Stirling's approximation, which we write in the form
\[
n! = \sqrt{2\pi n} \left(\frac{n}{e}\right)^n e^{\lambda_n}
\]
for some $\frac{1}{12n+1} \leq \lambda_n \leq \frac{1}{12n},$ we may approximate the $\dff{a}{b}$ terms.  Specifically, we have
\begin{equation}
\label{eq:dff}
\left|\log \frac{\dff{a}{b}}{\left(\frac{a}{e}\right)^b\left(\frac{a-2b}{a}\right)^{-\tfrac{a}{2}+b}}\right| \leq \frac{\Cl{c:stirling1}}{a} + \frac{\Cr{c:stirling1}}{(2+a-2b)}.
\end{equation}
for some absolute constant $\Cr{c:stirling1}$ and any $a \geq 2$ and $b \geq 0$ so that $a-2b\geq 0$.  We take the convention here that $0^{0} = 1$.

By applying this approximation, we get that 

\begin{equation}
\label{eq:nvfirststirling}
\begin{aligned}
\frac{\dff{nd}{n}}{d^n}
\frac{(d-2)^{V_n}}{\dff{(d-2)n}{V_n}}
\frac{2^{n-V_n}}{\dff{2n}{n-V_n}}
&=\\
& \hspace{-1.0in} \left(1+\frac{d}{d-2}\frac{\tilde V_n}{n}\right)^{\tfrac{(d-2)n}{d} + \tilde V_n}
\left(1-\frac{2d}{(d-2)^2}\frac{\tilde V_n}{n}\right)^{\tfrac{(d-2)^2n}{2d}-\tilde V_n}
e^{\nvserror},
\end{aligned}
\end{equation}
where $\tilde V_n = V_n-\tfrac{d-2}{d}n$ and $\nvserror$ is defined implicitly to make this an equality.  Note that $\nvserror$ satisfies the desired error bound by \eqref{eq:dff}.  Also note that the left hand side is not exactly the expression we need to approximate, as we have replaced $\dff{2n-1}{n-V_n}$ by $\dff{2n}{n-V_n}$.

By applying the bound $1+a \leq e^a$ to \eqref{eq:nvfirststirling} we get that
\begin{equation*}
\frac{\dff{nd}{n}}{d^n}
\frac{(d-2)^{V_n}}{\dff{(d-2)n}{V_n}}
\frac{2^{n-V_n}}{\dff{2n-1}{n-V_n}}
\leq
\frac{\dff{2n}{n-V_n}}{\dff{2n-1}{n-V_n}}
\exp\left(\frac{Z_n^2}{d} + \nvserror\right),
\end{equation*}
and hence it suffices to show that there is some other error bound $\nvserror'$ of the right form so that
\[
\frac{\dff{2n}{n-V_n}}{\dff{2n-1}{n-V_n}}
\leq \sqrt{\frac{2n+1}{2V_n+1}} e^{\nvserror'}.
\]
For $V_n \geq 1,$ we have that
\[
\frac{\dff{2n}{n-V_n}}{\dff{2n-1}{n-V_n}}
=
\left( \frac{2n}{2n-1} \right)^n
\left( \frac{2V_n-1}{2V_n} \right)^{V_n}
\sqrt{\frac{2n-1}{2V_n-1}} e^{\nvserror'}.
\]
We bound the exponentials using $1+a \leq e^a$.  As for the radical,
there is an absolute constant $\Cl{c:stirling2}$ so that
for $V_n \geq 1$ we have
\[
\sqrt{\frac{2n-1}{2V_n-1}}
\leq
\sqrt{\frac{2n+1}{2V_n+1}}\left(1+\Cr{c:stirling2}(1/n + 1/V_n)\right).
\]
Hence we get
\[
\frac{\dff{2n}{n-V_n}}{\dff{2n-1}{n-V_n}}
\leq \sqrt{\frac{2n+1}{2V_n+1}} e^{\nvserror''},
\]
for some other error term $\nvserror''$ of the right form.  In the case that $V_n = 0,$ we have
\[
\frac{\dff{2n}{n-V_n}}{\dff{2n-1}{n-V_n}} = { 2n \choose n} \frac{2n}{4^n},
\]
which by direct approximation, is $2\sqrt{n/\pi}(1+O(1/n))$.  This is bounded by $\sqrt{2n}(1+O(1/n)),$ and by adjusting constants, we get that
\[
\frac{\dff{2n}{n-V_n}}{\dff{2n-1}{n-V_n}} 
\leq
\sqrt{\frac{2n+1}{2V_n+1}}\exp\left( \Cr{c:stirling}/n + \Cr{c:stirling}/(V_n+1) \right)
\]
for some absolute constant $\Cr{c:stirling}$.
\end{proof}
We will see that $Z_n$ is approximately standard normal and $\nvserror$ is negligible; making these replacements would give the desired $d/(d-2)$ in this expression.  Executing the actual approximation is delicate, however, due to the Gaussian integral term; especially, we require a very strong Gaussian tail bound on $Z_n$.  This rules out many available techniques for showing Gaussian concentration, as they do not provide sufficiently sharp constants.  We prove a tail bound by a detailed analysis of the Laplace transform that is good enough for these purposes.
\begin{lemma}
\label{lem:nvsharptail}
For all $t \geq 0$
\[
\Pr_{ \phi} \left[
\left| V_n - \tfrac{d-2}{d} n \right| \geq t
\right]
\leq 8 \exp\left( -\frac{t^2}{2nc_d} \right),
\]
where 
\[
c_d = \begin{cases}
\frac{\sqrt{3}}{18} & \text{if } d = 3, \\
\frac{2\sqrt{3}}{27} & \text{if } d = 4, \\
\frac{\sqrt{2(d-3)}}{8\sqrt{d-2}} & \text{if } d \geq 5.
\end{cases}
\]
\end{lemma}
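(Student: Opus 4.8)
The plan is to compute the moment generating function $\E_\phi\bigl[e^{sV_n}\bigr]$ exactly by the transfer-matrix method and then run a Chernoff bound whose sharpness is governed by the second derivative of $\log\tau_+(s)$, where $\tau_+(s)$ is the dominant eigenvalue of an exponentially tilted transfer matrix. First I would observe that $V_n=r_1+r_2$: an $R$-run of length $\ell$ contributes $\ell-1$ vertices with two incident $R$ edges and exactly one $RB$-transition, so summing over $R$-runs gives $r_2=V_n-(\text{number of }R\text{-runs})=V_n-r_1$. Hence $e^{sV_n}=e^{sr_1}e^{sr_2}$, and tilting $\phi$ by $e^{sV_n}$ simply replaces the weights $a=d-3$, $b=2(d-2)$, $c=1$ appearing in $\varrho_n(a,b,c)$ of \eqref{eq:cdpdef} by $(d-3)e^s$, $2(d-2)e^s$, $1$. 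The normalizing constant is $Z_\phi=\varrho_n(d-3,2(d-2),1)$, which by \eqref{eq:cdp} equals $(d-1)^n+(-1)^n$ since at $s=0$ the relevant discriminant is $d^2$. Thus, again by \eqref{eq:cdp},
\[
\E_\phi\bigl[e^{sV_n}\bigr]=\frac{\tau_+(s)^n+\tau_-(s)^n}{(d-1)^n+(-1)^n},\qquad
\tau_\pm(s)=\frac{(d-3)e^s+1\pm\sqrt{(d-3)^2e^{2s}+(6d-10)e^s+1}}{2}.
\]
A one-line estimate gives $\sqrt{(d-3)^2e^{2s}+(6d-10)e^s+1}\ge(d-3)e^s+1\ge 0$, so $\tau_+(s)>0$ and $|\tau_-(s)|\le\tau_+(s)$; together with $(d-1)^n+(-1)^n\ge\tfrac12(d-1)^n$ this yields $\E_\phi[e^{sV_n}]\le 4\bigl(\tau_+(s)/(d-1)\bigr)^n$ for every real $s$.

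Writing $\mu=\tfrac{d-2}{d}$ and $\Lambda(s)=\log\tau_+(s)-\log(d-1)-\mu s$, a short computation gives $\Lambda(0)=0$ and $\Lambda'(0)=\tau_+'(0)/\tau_+(0)-\mu=0$ (one finds $\tau_+'(0)=(d-1)(d-2)/d$), confirming that $\mu$ is the mean rate. The Chernoff bound then reads, for $t\ge 0$ and $s>0$,
\[
\Pr_\phi\bigl[V_n-\mu n\ge t\bigr]\le e^{-s(\mu n+t)}\E_\phi[e^{sV_n}]\le 4\exp\bigl(n\Lambda(s)-st\bigr),
\]
and likewise for the lower tail with $s<0$. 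The crux is the analytic inequality $\Lambda(s)\le\tfrac{c_d}{2}s^2$ for all $s\in\R$: granting it, the choices $s=\pm t/(nc_d)$ give each one-sided bound $4\exp(-t^2/2nc_d)$, and adding them produces the factor $8$. Since $\Lambda$ is smooth with $\Lambda(0)=\Lambda'(0)=0$, Taylor's theorem with integral remainder reduces $\Lambda(s)\le\tfrac{c_d}{2}s^2$ to showing $\Lambda''(s)=\tfrac{d^2}{ds^2}\log\tau_+(s)\le c_d$ for every $s\in\R$.

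This last bound is the main obstacle and is where the ``detailed analysis of the Laplace transform'' is spent; a generic sub-Gaussian constant does not suffice, because the use of this lemma in the proof of Proposition~\ref{prop:Variance} requires $c_d$ small enough (essentially $c_d<(d-2)^2/d^2$, which is tight for small $d$) that $\E_\phi e^{Z_n^2/d}$ is finite and behaves correctly. Concretely, I would parametrize by $\tau_+$ itself: from the characteristic equation $\tau_+^2-((d-3)e^s+1)\tau_+-(d-1)e^s=0$ one solves $e^s=\tau_+(\tau_+-1)/\bigl((d-3)\tau_++d-1\bigr)$, turning $\tfrac{d^2}{ds^2}\log\tau_+(s)\le c_d$ into a rational (and, after clearing the remaining square root, polynomial) inequality that is maximized over the admissible range of $\tau_+$; the interior maximum works out to $\tfrac{\sqrt{2(d-3)}}{8\sqrt{d-2}}$ for $d\ge 5$. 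For $d=3$ the tilted matrix degenerates ($a=d-3=0$), and a direct computation gives $\tfrac{d^2}{ds^2}\log\tau_+(s)=(w^2-1)/4w^3$ with $w=\sqrt{1+8e^s}$, whose maximum over $w>1$ is $\tfrac{\sqrt3}{18}$ at $w=\sqrt3$; the case $d=4$ is handled by the same kind of elementary optimization and yields $\tfrac{2\sqrt3}{27}$. The only delicate point is the bookkeeping in these optimizations and the verification that the bound on $\Lambda''$ holds globally, not merely near $s=0$; everything else is routine.
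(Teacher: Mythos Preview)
Your proposal is correct and follows essentially the same route as the paper: compute the MGF via the transfer-matrix identity $\E_\phi e^{sV_n}=\varrho_n(ae^s,be^s,c)/Z_\phi$, bound it by $4\bigl(\tau_+(s)/(d-1)\bigr)^n$, and reduce the Chernoff bound to a global upper bound $c_d$ on $(\log\tau_+)''$. The only real difference is in how the optimization of $(\log\tau_+)''$ is carried out. The paper introduces $q(s)=e^{-s/2}\tau_+(s)$ and $f(s)=ae^{s/2}-ce^{-s/2}$, obtaining the closed form $(\log q)''=2bf'(s)/\bigl(f(s)^2+4b\bigr)^{3/2}$, whose critical points satisfy a short polynomial equation in $e^s$; this makes the case analysis for $d=3,4,5,\ge 6$ completely explicit. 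Your proposed parametrization by $\tau_+$ through the characteristic equation is a valid alternative, and your $d=3$ computation $(\log\tau_+)''=(w^2-1)/4w^3$ with $w=\sqrt{1+8e^s}$ agrees with the paper's (your maximizer $w=\sqrt3$ corresponds to their $e^s=\tfrac14$). The paper's $f$-substitution is somewhat cleaner for the general-$d$ optimization, but the two arguments are otherwise interchangeable.
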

\begin{remark}
\label{remark:insufficiency} This tail bound is the principal reason that the error term in Proposition~\ref{prop:Variance} has suboptimal $d$-dependence.  The term $Z_n$ is chosen to have limiting variance $1,$ and thus $c_d$ would ideally behave more like $1/d$.  
\end{remark}
\begin{proof}
The key to computing the Laplace transform is the polynomials $\cdp_k(a,b,c)$ from Section~\ref{sec:supporting}.  These polynomials give an explicit expression for the Laplace transform of $V_n$.  
Observe that $V_n$ can be written as 
$r_2 + r_1,$ 
so that
\[
  \Exp_\phi \exp(s V_n) = \frac{\cdp_n(ae^{s},be^{s},c)}{Z_\phi},
\]
with $a = d-3,$ $b={2(d-2)}$ and $c=1$.  Note that $Z_\phi = \cdp_n(a,b,c)$.  In both cases, these polynomials can be written as $\cdproot_+^n + \cdproot_{-}^n$ for certain expressions in $a,b,c$.  Explicitly, we recall
\eqref{eq:cdp}:
\[
\cdp_n(a,b,c) = \cdproot_+^n + \cdproot_-^n \text{ for } n \geq 1,
\text{ where }
\cdproot_{\pm} = \frac{c+a \pm \sqrt{(c-a)^2 + 4b}}{2},
\]
for all $a,b,c$.  For all non-negative values of $a,b,c,$ we have that $\cdproot_+ \geq |\cdproot_{-}|$.  

For these specific values; $a = d-3,$ $b={2(d-2)}$ and $c=1,$ we have that 
\[
Z_\phi = \cdp_n(a,b,c) = 
\left({d-1}\right)^n 
+ 
\left({-1}\right)^n 
\geq
\tfrac{1}{2}\left({d-1}\right)^n,
\]
for $n \geq 1$ and $d \geq 3$.
Combining these observations, we have
\[
\Exp_\phi \exp(s V_n) \leq 4 \left(\lplus(s)\right)^n,
\]
where 
\(
\lplus(s) = \frac{ae^{s}+c + \sqrt{(ae^{s}-c)^2 + 4be^{s}}}{2},
\)
with  $a = \tfrac{d-3}{d-1},$ $b=\tfrac{{2(d-2)}}{(d-1)^2}$ and $c=\tfrac{1}{d-1}$. 
We note that $\lplus(0) = 1$ and that 
\[
\lim_{s\to 0}
\frac{
\log\lplus(s)}
{s} = \frac{d-2}{d}.
\] 
We proceed to estimating the derivative $(\log \lplus(s) / s)',$ which we would like to bound by a constant.  First, we note that we can pull out a factor of $e^{s/2}$ and keep the derivative the same, i.e.
\[
\left(\frac{\log \lplus(s)}{s}\right)'
=\left(\frac{\log e^{-s/2}\lplus(s)}{s}\right)'.
\]
So, we define $q(s) = e^{-s/2}\lplus(s)$. 
By doing integration by parts, we have that
\[
\log q(s) = s (\log q(s))' - s\int_0^s t \left( \log q(t) \right)'' dt,
\]
and thus
\begin{equation}
\label{eq:logderivative}
\left(\frac{\log q(s)}{s}\right)'=\frac{1}{s^2}\int_0^s t
\left( \log q(t) \right)'' dt.
\end{equation}
Therefore, it suffices to bound $\left( \log q(t) \right)''$ above.  Let $f(s) = ae^{s/2} - ce^{-s/2},$ in terms of which we can write
\[
q(s) = \frac{2f'(s) + \sqrt{ f(s)^2 + 4b}}{2}.
\] 
Noting that $f''(s) = \tfrac{1}{4}f(s),$ it is easily verified that
\[
(\log q(s))'' = \frac{2b f'(s)}{(f(s)^2 + 4b)^{3/2}}.
\]

This expression is $C^1$ for all $s \in \R$.  Further, it tends to $0$ at both $\pm \infty,$  and so its maximum occurs at one of its critical points.  By squaring and differentiating, it follows that its extrema occur at the roots of
\[
\tfrac12 f'(s) f(s) (f(s)^2 + 4b)^3 - 6(f'(s))^2 (f(s)^2 + 4b)^2 f'(s)f(s) = 0.
\]
When $d \geq 4,$ there are three possible roots, given by the root of $f(s) = 0$ and possibly $2$ roots of $f(s)^2 + 4b - 12 (f'(s))^2 = 0$.  These values are given by
\[
e^s = \frac{1}{d-3}, \text{ or }
e^{s} = \frac{4 \pm \sqrt{16 - 4(d-3)^2}}{2(d-3)^2}.
\]
Thus for $d \geq 6,$ the maximum is given by the first root.  For $d=5,$ the roots all coincide at $e^s = \tfrac12$.  For $d=4,$ there are $3$ distinct roots to check.  

In the $d=3$ case, it is no longer possible for $f(s) = 0,$ but the equation $f(s)^2 + 4b - 12 (f'(s))^2 = 0$ still has a root; however, the expression is no longer quadratic.  We summarize the results of this calculus in the following table~\ref{tab:calc}.
\begin{center}
\begin{tabular}{r  l l l}
\label{tab:calc}
$d$ & Critical points & Maximizers & Maximum \\
\hline
$3$ & $e^s = \tfrac14$ & $\tfrac14$ & $\tfrac{\sqrt{3}}{18}$ \\
$4$ & $e^s = \tfrac{1}{d-3}, 2 \pm \sqrt{3}$ & $2 + \sqrt{3}$ & $\tfrac{2\sqrt{3}}{27}$ \\
$5$ & $e^s = \tfrac12$ & $\tfrac12$ & $\tfrac{\sqrt{2(d-3)}}{8\sqrt{d-2}}$\\
$d \geq 6$ & $e^s = \tfrac{1}{d-3}$ & $\tfrac{1}{d-3}$ & $\tfrac{\sqrt{2(d-3)}}{8\sqrt{d-2}}$ \\
\end{tabular}
\end{center}
All together this shows that, recalling equation~\eqref{eq:logderivative}, that
\[
\left(\frac{\log \lplus(s)}{s}\right)' \leq \frac{c_d}{2}.
\]
Integrating, we have that 
\[
\lplus(s) \leq \exp \left( \tfrac{d-2}{d} s + c_d \tfrac{s^2}{2} \right),
\]
for all $s$ and hence, by Markov's inequality,
\[
\Pr\left[
\left| V_n - \tfrac{d-2}{d}n\right| 
\geq t
\right]
\leq 8\exp \left( n c_d \tfrac{s^2}{2} - st\right),
\]
for all $s$. Optimizing in $s$ produces the stated bound.
\end{proof}

As a consequence, we are able to estimate some small moments of $\exp(Z_n^2/d)$ uniformly in $d$ and $n$.
\begin{lemma}
\label{lem:nvgaussianmoments}
For every $\alpha$ with $1 \leq \alpha < 2/\sqrt{3},$ there is a constant $M_\alpha$ so that
\[
\Exp_\phi \exp(\alpha \tfrac{Z_n^2}{d}) \leq M_\alpha.
\]
Further, for every $\alpha$ with $1 \leq \alpha < 8/\sqrt{2},$ there is a $d_0(\alpha)$ and a constant $M_\alpha$ so that 
\[
\Exp_\phi \exp(\alpha \tfrac{Z_n^2}{d}) \leq M_\alpha
\]
for all $d \geq d_0$. 
\end{lemma}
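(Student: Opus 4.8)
The plan is to convert the sharp sub-Gaussian tail bound of Lemma~\ref{lem:nvsharptail} into a moment bound by the layer-cake formula. The crucial observation will be that, after rescaling, every factor of $n$ cancels, so the resulting estimate is automatically uniform in $n$ and one is left only to analyze a single function of $d$.

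First I would record the algebraic identity coming from the definition of $Z_n$. Writing $\tilde V_n = V_n - \tfrac{d-2}{d}n$, we have $Z_n^2/d = \tfrac{d^2}{2n(d-2)^2}\tilde V_n^2$, so that with $\beta \Def \tfrac{\alpha d^2}{2n(d-2)^2}$ the quantity to be estimated is $\Exp_\phi \exp(\beta \tilde V_n^2)$. Using $e^{\beta Y} = 1 + \int_0^Y \beta e^{\beta u}\,du$ with $Y = \tilde V_n^2\geq 0$, Tonelli, and Lemma~\ref{lem:nvsharptail} (applied with $t = \sqrt u$),
\begin{align*}
  \Exp_\phi \exp(\beta \tilde V_n^2)
    = 1 + \beta\int_0^\infty e^{\beta u}\,\Pr_\phi\bigl[\tilde V_n^2 > u\bigr]\,du
    \leq 1 + 8\beta\int_0^\infty e^{\beta u - u/(2nc_d)}\,du .
\end{align*}
The integral converges precisely when $\beta < 1/(2nc_d)$, and an elementary evaluation, in which the factors of $n$ cancel because $2nc_d\beta = \alpha c_d d^2/(d-2)^2$, gives
\begin{align*}
  \Exp_\phi \exp\!\bigl(\alpha Z_n^2/d\bigr) \leq 1 + \frac{8\alpha}{g(d) - \alpha},
    \qquad g(d) \Def \frac{(d-2)^2}{d^2 c_d},
\end{align*}
valid whenever $\alpha < g(d)$; the bound is already uniform in $n$.

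It then remains to understand $g$ using the explicit values of $c_d$ from Lemma~\ref{lem:nvsharptail}. One computes $g(3) = 2/\sqrt3$, $g(4) = 27/(8\sqrt3)$, and $g(d) = \tfrac{8(d-2)^{5/2}}{\sqrt2\, d^2\sqrt{d-3}}$ for $d\geq 5$; differentiating $\log g$ shows $(\log g)'(d)>0$ for $d>24/7$, so $g$ is increasing on $\{3,4,5,\dots\}$, with $\lim_{d\to\infty}g(d) = 8/\sqrt2$. Hence $\min_{d\geq 3} g(d) = g(3) = 2/\sqrt3$. For the first assertion, $\alpha < 2/\sqrt3 \leq g(d)$ and $g(d)-\alpha \geq 2/\sqrt3-\alpha$ for all $d\geq 3$, so the displayed estimate gives the bound with $M_\alpha \Def 1 + \tfrac{8\alpha}{2/\sqrt3-\alpha}$, uniformly over $d\geq 3$ and $n$. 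For the second assertion, given $\alpha < 8/\sqrt2$ I would choose $d_0(\alpha)$ so large that $g(d) \geq \tfrac12(\alpha + 8/\sqrt2)$ for all $d\geq d_0$, using $g(d)\to 8/\sqrt2$; then $g(d)-\alpha \geq \tfrac12(8/\sqrt2-\alpha)$, and the same estimate gives the bound with $M_\alpha \Def 1 + \tfrac{16\alpha}{8/\sqrt2-\alpha}$ for all $d\geq d_0$ and all $n$.

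The argument is routine once Lemma~\ref{lem:nvsharptail} is available; the only delicate point is the monotonicity-and-limit analysis of $g$, and it is exactly here that the thresholds $2/\sqrt3$ (the $d=3$ value of $g$) and $8/\sqrt2$ (the $d\to\infty$ limit of $g$) appear — the gap between them being the manifestation of the suboptimal $d$-dependence flagged in Remark~\ref{remark:insufficiency}.
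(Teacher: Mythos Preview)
Your proof is correct and follows essentially the same route as the paper: both arguments apply the layer-cake formula to the sub-Gaussian tail bound of Lemma~\ref{lem:nvsharptail}, obtaining a bound controlled by the quantity $g(d)=(d-2)^2/(d^2c_d)$ (the paper writes $\beta=g(d)/\alpha$), and then analyze its minimum and limit over $d$ to produce the thresholds $2/\sqrt3$ and $8/\sqrt2$. Your write-up is in fact slightly more careful, giving the explicit layer-cake identity with the ``$1+$'' term and justifying the monotonicity of $g$ by differentiation, whereas the paper asserts these points more tersely.
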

\begin{proof}
By scaling the tail bound in Lemma \ref{lem:nvsharptail}, we have
\[
\Pr \left[
\tfrac{\alpha}{d}|Z_n|^2 \geq t^2
\right]
\leq 8\exp \left(-t^2 \beta \right),
\]
where
\[
\beta \Def
\frac{(d-2)^2}{\alpha d^2 c_d}.
\]
with equality when $d=3$.
Since, we now take
\[
\Exp_\phi \exp(\alpha \tfrac{Z_n^2}{d})
= \int_{0}^\infty e^t \Pr \left[
\tfrac{\alpha}{d}|Z_n|^2 \geq t
\right]
=\frac{8}{\beta - 1},
\]
provided $\beta > 1$. Thus it suffices to bound $\beta$ from below to control this constant.  On the one hand, we have that for all $d \geq 3,$
\(
\beta
 \geq \frac{ 2 }{\sqrt{3}\alpha},
\)
with equality when $d=3$.  On the other hand, we have that $\beta \to \tfrac{8}{\sqrt{2}\alpha}$ as $d \to \infty,$ from which follows the second statement.
\end{proof}

Using this tail bound, we are able to estimate the contributions of the subexponential terms to the expectation, so that we have
\begin{lemma}
For $d \leq n^{1/2}/\log n,$ 
\label{lem:nvidealization}
\begin{equation*}
\Exp_\phi
\frac{\dff{nd}{n}}{d^n}
\frac{(d-2)^{V_n}}{\dff{(d-2)n}{V_n}}
\frac{2^{n-V_n}}{\dff{2n-1}{n-V_n}}
\leq 
\sqrt{\frac{d}{d-2}}
\Exp_\phi
\exp\left(\frac{Z_n^2}{d}\right) + O\left(\frac{1}{\sqrt{n}}\right).
\end{equation*} 
\end{lemma}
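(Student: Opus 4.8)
The plan is to take $\Exp_\phi$ of the pointwise estimate in Lemma~\ref{lem:nvstirling} and control the resulting expectation by splitting on whether $V_n$ is close to its typical value $\tfrac{d-2}{d}n$. Fix an absolute constant $\kappa$ (to be chosen large), set $t_n=\kappa\sqrt{n\log n}$ and $\tilde V_n=V_n-\tfrac{d-2}{d}n$ as in the proof of Lemma~\ref{lem:nvstirling}, and let $G=\{\abs{\tilde V_n}\le t_n\}$. Lemma~\ref{lem:nvstirling} gives
\[
\Exp_\phi\frac{\dff{nd}{n}}{d^n}\frac{(d-2)^{V_n}}{\dff{(d-2)n}{V_n}}\frac{2^{n-V_n}}{\dff{2n-1}{n-V_n}}\le\Exp_\phi\left[\exp\left(\frac{Z_n^2}{d}+\nvserror\right)\sqrt{\frac{2n+1}{2V_n+1}}\,\right],
\]
and I would bound the contributions of $G$ and $G^c$ to the right-hand side separately.

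On $G$: here $V_n\ge\tfrac{d-2}{d}n-t_n\gtrsim n$, so $\tfrac1n+\tfrac1{V_n+1}=O(1/n)$; and since $n-V_n=\tfrac{2n}{d}-\tilde V_n\ge\tfrac{2n}{d}-t_n$, the hypothesis $d\le\sqrt n/\log n$ forces $n-V_n\gtrsim\sqrt n\,\log n$, whence $\tfrac1{1+n-V_n}=O(1/\sqrt n)$. Together these give $\nvserror=O(1/\sqrt n)$ on $G$, so $e^{\nvserror}=1+O(1/\sqrt n)$ there. Next, writing $a=\tfrac{2n+1}{2V_n+1}$ and $b=\tfrac d{d-2}$, the identity $\sqrt a-\sqrt b=(a-b)/(\sqrt a+\sqrt b)$ together with a direct computation of $a-b$ yields
\[
\left\lvert\sqrt{\tfrac{2n+1}{2V_n+1}}-\sqrt{\tfrac d{d-2}}\,\right\rvert\le\frac{2d\abs{\tilde V_n}+2}{(2V_n+1)(d-2)}=O\!\left(\frac{\abs{\tilde V_n}+1}{n}\right)\quad\text{on }G.
\]
Hence the $G$-contribution is at most $\sqrt{\tfrac d{d-2}}\,\Exp_\phi[e^{Z_n^2/d}]\bigl(1+O(1/\sqrt n)\bigr)+O(1/n)\,\Exp_\phi\bigl[(\abs{\tilde V_n}+1)e^{Z_n^2/d}\bigr]$. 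By Lemma~\ref{lem:nvgaussianmoments} with $\alpha=1$ we have $\Exp_\phi[e^{Z_n^2/d}]=O(1)$, and applying Hölder with a conjugate pair $(p,q)$, $1<q<2/\sqrt3$, together with the bound $\Exp_\phi\abs{\tilde V_n}^p=O(n^{p/2})$ obtained by integrating the sub-Gaussian tail of Lemma~\ref{lem:nvsharptail}, gives $\Exp_\phi[\abs{\tilde V_n}e^{Z_n^2/d}]=O(\sqrt n)$. Since also $\sqrt{d/(d-2)}\le\sqrt3$, the $G$-contribution is $\sqrt{\tfrac d{d-2}}\,\Exp_\phi[e^{Z_n^2/d}]+O(1/\sqrt n)$.

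On $G^c$: I would bound everything crudely, using $\sqrt{\tfrac{2n+1}{2V_n+1}}\le\sqrt{2n+1}$ and $e^{\nvserror}=O(1)$, so that the $G^c$-contribution is $O(\sqrt n)\,\Exp_\phi[e^{Z_n^2/d}\1_{G^c}]$. Hölder with the same exponent $q$ gives $\Exp_\phi[e^{Z_n^2/d}\1_{G^c}]\le M_q^{1/q}\,\Pr_\phi[G^c]^{1-1/q}$, and since $c_d$ is bounded above by an absolute constant, Lemma~\ref{lem:nvsharptail} gives $\Pr_\phi[G^c]\le8\exp\bigl(-\Omega(\kappa^2\log n)\bigr)$; choosing $\kappa$ large enough makes $\Pr_\phi[G^c]^{1-1/q}=O(1/n)$, so the $G^c$-contribution is $O(1/\sqrt n)$. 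Adding the two contributions proves the lemma.

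The main obstacle is the accounting on $G$: one cannot simply replace $\sqrt{\tfrac{2n+1}{2V_n+1}}$ by the constant $\sqrt{d/(d-2)}$ up to a multiplicative $(1+o(1))$ factor, because the $\Theta(\sqrt n)$ fluctuations of $V_n$ would then contribute an error of order $\sqrt{\log n/n}$ rather than $1/\sqrt n$. The point is that the replacement error is in fact $O(\abs{\tilde V_n}/n)$, and that $\Exp_\phi[\abs{\tilde V_n}e^{Z_n^2/d}]$ is only $O(\sqrt n)$ — which needs $e^{Z_n^2/d}$ to have a moment of order slightly larger than $1$, and the margin $2/\sqrt3>1$ in Lemma~\ref{lem:nvgaussianmoments} is exactly what supplies it. The condition $d\le\sqrt n/\log n$ enters only, but essentially, to make the $\tfrac1{1+n-V_n}$ part of $\nvserror$ negligible on $G$.
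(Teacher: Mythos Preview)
Your argument is correct and uses the same ingredients as the paper: Lemma~\ref{lem:nvstirling} for the pointwise bound, Lemma~\ref{lem:nvgaussianmoments} (with an exponent strictly between $1$ and $2/\sqrt{3}$) to supply the extra integrability needed in H\"older, and Lemma~\ref{lem:nvsharptail} for the sub-Gaussian tails of $\tilde V_n$. The only difference is organizational: the paper treats the square-root replacement and the removal of $e^{\nvserror}$ as two separate error terms $E_1$ and $E_2$, each controlled by a H\"older inequality together with an internal split on whether $V_n$ (respectively $n-V_n$) is in its typical range, whereas you make a single explicit good/bad decomposition on $G=\{\lvert\tilde V_n\rvert\le\kappa\sqrt{n\log n}\}$ and handle both issues at once on $G$. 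Your identification of the key point---that the replacement error for the square root is $O(\lvert\tilde V_n\rvert/n)$ rather than merely $o(1)$, and that the margin $2/\sqrt{3}>1$ in Lemma~\ref{lem:nvgaussianmoments} is exactly what lets one bound $\Exp_\phi[\lvert\tilde V_n\rvert e^{Z_n^2/d}]=O(\sqrt n)$---matches the paper's reasoning precisely.
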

\begin{proof}
Our starting point is Lemma~\ref{lem:nvstirling}; we must bound
\[
\Exp_\phi 
\exp\left( \frac{Z_n^2}{d} + \nvserror\right)
\sqrt{
\frac{2n+1}{2V_n+1}
}.
\]
We first approximate this sum by replacing the $V_n$ in the square root by $\tfrac{d-2}{d}n$.  Thus, we seek to estimate 
\[
E_1 \Def
\Exp_\phi 
\exp\left( \frac{Z_n^2}{d} + \nvserror\right)
\left(
\sqrt{
\frac{2n+1}{2V_n+1}
}
-
\sqrt{
\frac{2n+1}{2\tfrac{d-2}{d}n+1}
}
\right),
\]
from above. Let $f(x) = \sqrt{
\frac{2n+1}{2x+1}
}$. Note that there is a constant $\Cl{c:nvs}$ so that $\exp(\nvserror) \leq \Cr{c:nvs}$ with probability $1$ for all $n$ and $d$,
so that
\[
E_1 \leq \Cr{c:nvs}
\Exp_\phi 
\exp\left(\frac{Z_n^2}{d}\right)
\left|f(V_n) - f(\tfrac{d-2}{d}n)\right|. 
\]
Fix some $\alpha$ with $1 < \alpha < 2/\sqrt{3}$ and apply H\"older's inequality with exponent $\alpha$ and conjugate $\alpha^*$ to get
\begin{equation}
\label{eq:nvfirstholder}
E_1 \leq 
\Cr{c:nvs}
\left(
\Exp_\phi
\exp\left(\alpha \frac{Z_n^2}{d}\right)
\right)^{1/\alpha}
\left(
\Exp_\phi
\left|f(V_n) - f(\tfrac{d-2}{d}n)\right|^{\alpha^*} 
\right)^{1/\alpha^*}.
\end{equation}
We note that $\tfrac{d-2}{d} \geq \tfrac13$ for all $d\geq 3$, and therefore by Lemma~\ref{lem:nvsharptail}, there is some absolute constant $\Cl{c:nvt1}$ so that
\begin{equation}
\label{eq:nvdumbbound}
\Pr \left[ V_n \leq \tfrac 16 n\right] \leq \tfrac{1}{\Cr{c:nvt1}} e^{-\Cr{c:nvt1} n}.
\end{equation}
The largest possible value of $f(V_n)$ is $\sqrt{2n + 1},$ and thus we have 
\[
\Exp_\phi
\left|f(V_n) - f(\tfrac{d-2}{d}n)\right|^{\alpha^*}
= 
O(n^{\alpha^*/2}e^{-\Cr{c:nvt1}n})
+
\Exp_\phi
\left|f(V_n) - f(\tfrac{d-2}{d}n)\right|^{\alpha^*} \one{V_n \geq \tfrac{1}{6} n}.
\] 
To estimate this other bit, we note that $|f'(x)/n|$ is bounded uniformly in $n$ for $x \geq \tfrac{1}{6}n,$ and hence
\[
\Exp_\phi
\left|f(V_n) - f(\tfrac{d-2}{d}n)\right|^{\alpha^*} \one{V_n \geq \tfrac{1}{6} n}
= O\biggl(\Exp_\phi
\biggl|\frac{V_n - \tfrac{d-2}{d}n}{n}\biggr|^{\alpha^*} 
\biggr)
= O( n^{-\alpha^*/2}).
\]
Combining everything, we have that
\(
E_1 = O(n^{-1/2}),
\)
and we have therefore shown that
\[
\Exp_\phi 
\exp\left( \frac{Z_n^2}{d} + \nvserror\right)
\sqrt{
\frac{2n+1}{2V_n+1}
} \leq
\Exp_\phi 
\exp\left( \frac{Z_n^2}{d} + \nvserror\right)
\sqrt{
\frac{2n+1}{2\tfrac{d-2}{d}n+1}} + O(n^{-1/2}).
\]
Note that this radical is always less than $\sqrt{
\frac{d}{d-2}},$ and so we turn to removing $\nvserror;$ we must now bound
\[
E_2 \Def
\sqrt{
\frac{d}{d-2}
}
\Exp_\phi
\exp\left( \frac{Z_n^2}{d}\right)
\left|
e^{\nvserror}
-1 \right|.
\]
Again, we apply H\"older's inequality with the same $\alpha$ and in the same way as~\eqref{eq:nvfirstholder} to get
\[
E_2 = O\biggl(
\left(
\Exp_\phi
\left| e^{\nvserror} - 1 \right|^{\alpha^*}
\right)^{1/\alpha^*}
\biggr).
\]
Since $\nvserror$ is bounded uniformly in $n$, we have by Taylor approximation that 
\[
\left| e^{\nvserror} - 1 \right|
\leq \Cl{nv:nvsexp}\left( \frac{1}{V_n+1} + \frac{1}{1+n-V_n}\right)
\]
for some absolute constant $\Cr{nv:nvsexp},$ which follows from Lemma~\ref{lem:nvstirling}.  Thus, by the triangle inequality, it suffices to bound
\[
E_2 = O\biggl(
\biggl(
\Exp_\phi
\left| \frac{1}{V_n+1} \right|^{\alpha^*}
\biggr)^{1/\alpha^*}
+\biggl(
\Exp_\phi
\left| \frac{1}{1+n-V_n} \right|^{\alpha^*}
\biggr)^{1/\alpha^*}
\biggr).
\]

For the first one, we have that by~\eqref{eq:nvdumbbound}, the $\frac{1}{V_n+1}$ term is $O(n^{-1})$ except for with probability $O(e^{-\Cr{c:nvt1}n})$.  For the second, we note that $\frac{1}{1+n-V_n}$ has more complicated $d$ dependence, as when $d$ is large, the mean of $V_n$ is nearly $n$.  That said, there is some absolute constant $\Cl{c:nvt2}>0$ so that
\begin{equation*}
\Pr \left[ V_n \geq \tfrac{d-1}{d} n\right] \leq \tfrac{1}{\Cr{c:nvt2}} e^{-\Cr{c:nvt2}n/d^2},
\end{equation*}
which follows immediately by Lemma~\ref{lem:nvsharptail}.  Thus 
the $\frac{1}{1+n-V_n}$ term is $O(d/n)$ except for with probability $O(e^{-\Cr{c:nvt2}n/d^2})$.  By assumption that
$d \leq \sqrt{n}/\log n$, this probability decays faster than any power of $n,$ and certainly it is $O(1/\sqrt{n})$.   

Combining these bounds, we get that
\[
\left(\Exp_\phi \left| e^{\nvserror} - 1 \right|^{\alpha^*}\right)^{1/\alpha^{*}}
=O\left(\frac{1}{\sqrt{n}}\right).
\]
Thus $E_1 = O(1/\sqrt{n})$ and $E_2 =O(1/\sqrt{n}),$ which completes the proof.
\end{proof}

\subsection{Markov chain approximation}

We will replace $\nvec$ with a distribution that is amenable to easier analysis.  Underlying this replacement is the idea that a random coloring $f$ drawn from $\nvec$ produces a vector $(f(1), f(2), \ldots, f(n))$ that has nearly the same distribution as $(\nvmc{1},\nvmc{2}, \ldots, \nvmc{n})$ where $\nvmc{k}$ is 
the Markov chain on $\{\QHVC,\QHVH\}$ with transition probabilities 
\(
\Pr \left[ \nvmc{k+1} = y
~\mid~
\nvmc{k} = x\right] = p(x,y),
\)
and where $p(x,y)$ is given by
\begin{align*}
\label{eq:nvmc_transitions}
p(\QHVC,\QHVC) &= \frac{d-3}{d-1}, 
&
p(\QHVC,\QHVH) &= \frac{2}{d-1}, \\
p(\QHVH, \QHVC) &= \frac{d-2}{d-1},
&
p(\QHVH,\QHVH) &= \frac{1}{d-1}.
\end{align*}
This chain is easily checked to have stationary distribution that puts mass $(d-2)/d$ on $\{\QHVC\}$ and mass $2/d$ on $\{\QHVH\},$ and we will consider this chain started from stationarity.  

This is a rapidly mixing chain, and its mixing properties can be controlled by the contraction coefficient $\ccoeff,$ which for this chain is
\begin{equation}
\label{eq:ccoeff}
\ccoeff \Def \dtv
\left(
\lawof(X_2 \mid X_1 = \QHVC),\,
\lawof(X_2 \mid X_1 = \QHVH)
\right) = \frac{1}{d-1},
\end{equation}
with $\lawof$ denoting the law of a random variable.
This gives a simple bound for the rate at which two 
Markov chains with the same transitions as $X_k$ can be coupled.  Suppose that $\{X_k^{1}\}$ and 
$\{X_{k}^2\}$ are two chains with the same transitions as $X_k$ but with different starting states.  There is a coupling of these two chains so that $\tau = \inf \left\{ k \geq 0~\middle\vert~ X_k^1 = X_k^2\right\}$ has $\Pr\left[ \tau > k \right] \leq \ccoeff^k$.

The chain implicitly defines a distribution on edge colorings by simply defining a coloring 
$f \in \{\QHVC,\QHVH\}^n$ by $f(k) \Def \nvmc{k}$.  We will refer to the law on colorings defined in this way as $\nvmcpi$.  The precise relationship between $\nvec$ and $\nvmcpi$ is that $\nvec$ is absolutely continuous with respect to $\nvmcpi$,
and the unscaled Radon-Nikodym derivative of $\nvec$ with respect to $\nvmcpi$ is
\begin{equation}
\label{eq:nvmcrnd_def}
\nvmcrnd(f)
\Def\begin{cases}
2(d-3) & \text{if }f(1)=f(n)=\QHVC \\
2(d-2) & \text{if }(f(n),f(1)) = (\QHVC,\QHVH) \\
2(d-2) & \text{if }(f(n),f(1)) = (\QHVH,\QHVC) \\
(d-2) & \text{if }f(1)=f(n)=\QHVH.
\end{cases}
\end{equation}
\begin{lemma}
With $\pi$ as defined above,
\label{lem:nvmarkovrnd}
\[
\frac{d\nvec}{d\nvmcpi}(f)
=
\frac{\nvmcrnd(f)}
{
\Exp_{\nvmcpi} \nvmcrnd(f) 
}.
\]
\end{lemma}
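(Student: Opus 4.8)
The plan is to recognize the chain $(\nvmc 1,\ldots,\nvmc n)$ as the ground-state (Doob $h$-)transform of the symmetric $2\times2$ transfer matrix that defines $\nvec$, after which the stated identity falls out of a short telescoping computation. Throughout, identify a coloring $f\in\{\QHVC,\QHVH\}^n$ with the trajectory $(\nvmc 1,\ldots,\nvmc n)$, and use the convention $f(n+1)=f(1)$, so that the $n$ vertices of the $n$-cycle correspond to the consecutive pairs $(f(k),f(k+1))$, $k=1,\ldots,n$. First I would write
\[
  \nvmcpi(\{f\}) \;=\; \mu(f(1))\prod_{k=1}^{n-1}p\bigl(f(k),f(k+1)\bigr),
\]
where $\mu$ is the stationary law, $\mu(\QHVC)=\tfrac{d-2}{d}$, $\mu(\QHVH)=\tfrac2d$. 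Introduce the vertex-weight matrix
\[
  A=\begin{pmatrix} d-3 & \sqrt{2(d-2)}\\ \sqrt{2(d-2)} & 1\end{pmatrix},
\]
with rows and columns indexed by $\QHVC,\QHVH$. Using the identity $n-b_2(f)-r_2(f)=2r_1(f)$ together with the definition of $\nvec$, one has $\nvec(\{f\})=\nvpf^{-1}\prod_{k=1}^{n}A(f(k),f(k+1))$, so that $\nvpf=\cdp_n(d-3,2(d-2),1)$ is the cyclic partition function of $A$. The key algebraic fact is that $A$ is symmetric with Perron eigenvalue $d-1$ (its eigenvalues are $\cdproot_{\pm}$ for $(a,b,c)=(d-3,2(d-2),1)$, namely $d-1$ and $-1$), right Perron eigenvector $h=(1,\sqrt{2/(d-2)})$, and $\mu(x)\propto h(x)^2$; a one-line check then gives $p(x,y)=A(x,y)h(y)/\bigl((d-1)h(x)\bigr)$ for all $x,y$.

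Substituting this relation into the product for $\nvmcpi(\{f\})$, the factors $h(f(k))$ telescope and only the endpoint weights survive:
\[
  \nvmcpi(\{f\}) \;=\; \frac{h(f(1))\,h(f(n))}{(d-1)^{\,n-1}\sum_x h(x)^2}\;\prod_{k=1}^{n-1}A\bigl(f(k),f(k+1)\bigr),
\]
while $\nvec(\{f\})=\nvpf^{-1}\,A(f(n),f(1))\prod_{k=1}^{n-1}A(f(k),f(k+1))$. Dividing,
\[
  \frac{d\nvec}{d\nvmcpi}(f) \;=\; \frac{(d-1)^{\,n-1}\sum_x h(x)^2}{\nvpf}\cdot\frac{A(f(n),f(1))}{h(f(1))\,h(f(n))}.
\]
Evaluating the last fraction on the four possible pairs of colors of $f(1),f(n)$ gives exactly $\tfrac12\nvmcrnd(f)$ — for instance $A(\QHVC,\QHVC)/h(\QHVC)^2=d-3$, $A(\QHVH,\QHVH)/h(\QHVH)^2=\tfrac{d-2}{2}$, and $A(\QHVC,\QHVH)/\bigl(h(\QHVC)h(\QHVH)\bigr)=d-2$ — so $d\nvec/d\nvmcpi$ is a constant multiple of $\nvmcrnd$. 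Since $\nvec$ is a probability measure, integrating this equality against $\nvmcpi$ pins the constant down as $1/\Exp_{\nvmcpi}\nvmcrnd$, which is the assertion.

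The only delicate part is the bookkeeping at the ``seam'' between $f(n)$ and $f(1)$: the chain supplies only $n-1$ of the $n$ cyclic transitions, and the factor $\nvmcrnd(f)$ is precisely what encodes the missing pair $(f(n),f(1))$ together with the two endpoint $h$-weights. I expect verifying the $h$-transform identity and this seam accounting to be the whole content of the argument. One can also avoid transfer matrices entirely: expand $\nvmcpi(\{f\})$ and $\nvec(\{f\})$ directly in terms of $r_1(f),r_2(f),b_2(f)$, split into four cases according to the colors of $f(1)$ and $f(n)$, and use that along the path the number of $\QHVC\QHVH$ steps minus the number of $\QHVH\QHVC$ steps equals $\one{f(n)=\QHVH}-\one{f(1)=\QHVH}$; each case then yields the same constant $\tfrac{2(d-2)}{d}(d-1)^{-(n-1)}\nvpf$.
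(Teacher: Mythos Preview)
Your proof is correct, but it takes a more structural route than the paper's. The paper argues directly in terms of the counting statistics: writing $rb$ and $br$ for the number of vertices $j$ with $(f(j-1),f(j))=(\QHVC,\QHVH)$ and $(\QHVH,\QHVC)$ respectively, one has $r_1=rb+br$ and, crucially, $rb=br$ on a cycle. Then
\[
  \nvmcpi(\{f\})\,\nvmcrnd(f)\ \propto\ (d-3)^{r_2}\,2^{rb}\,(d-2)^{br}\,1^{b_2},
  \qquad
  \nvec(\{f\})\ \propto\ (d-3)^{r_2}\,\sqrt{2(d-2)}^{\,r_1}\,1^{b_2},
\]
and $rb=br=r_1/2$ makes these proportional in one line.

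Your $h$-transform computation is a nice conceptual explanation of \emph{why} the chain $\pi$ is the right object: it exhibits $p(x,y)$ as the Doob transform of the transfer matrix $A$, so the path and cyclic Gibbs measures differ only by the ``seam'' factor $A(f(n),f(1))/(h(f(1))h(f(n)))$. This buys insight and would generalize cleanly to larger state spaces, at the cost of a bit more setup than the paper needs. The alternative you sketch at the end---splitting into four endpoint cases and using that the signed count of $\QHVC\QHVH$ versus $\QHVH\QHVC$ transitions along the path is $\one{f(n)=\QHVH}-\one{f(1)=\QHVH}$---is essentially the paper's argument, which collapses the four cases into the single identity $rb=br$.
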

\begin{proof}
For an edge coloring $f$ of the cycle, recall that $\QHVc_i$ denotes the number of vertices with $i$ neighboring edges colored $\QHVC$ and $\QHVh_i$ denotes the number of vertices with $i$ neighboring edges colored $\QHVH$.  Likewise, let $\QHVch$ denote the number of vertices $j$ with $f(j-1) = \QHVC$ and $f(j) = \QHVH,$ with the addition done mod $n$.  Similarily, let $\QHVhc$ denote the number of vertices $j$ with $f(j-1) = \QHVH$ and $f(j) = \QHVC,$ with the addition done mod $n$.  Then, it follows that $\QHVc_1 = \QHVh_1 = \QHVhc + \QHVch,$ but also, because this a cycle, it must be that $\QHVhc = \QHVch$.  

For any coloring $f,$
\[
\nvmcpi(\{f\}) \nvmcrnd(f)~\propto~(d-3)^{\QHVc_2} 2^{\QHVch} (d-2)^{\QHVhc} 1^{\QHVh_2}.
\]
On the other hand, 
\[
\nvec(\{f\})~\propto~(d-3)^{\QHVc_2} \sqrt{2(d-2)}^{\QHVc_1} 1^{\QHVh_2}.
\]
Using that $\QHVc_1 = 2 \QHVhc = 2 \QHVch,$ it now follows that $\nvmcrnd$ is the unscaled Radon-Nikodym derivative.

\end{proof}
Using the Radon-Nikodym derivatives, we can transfer moment estimates from $\nvec$ to $\nvmcpi$ with little effort. 
\begin{lemma}
\label{lem:nvgaussianmomentspi}
For every $\alpha$ with $1 \leq \alpha < 8/\sqrt{2},$ there is a constant $M_\alpha$ and a constant $d_0(\alpha)$ so that
\[
\Exp_\nvmcpi \exp(\alpha \tfrac{Z_n^2}{d}) \leq M_\alpha d
\]
for all $n$ and all $d \geq d_0$.  If $\alpha < 2/\sqrt{3},$ we can take $d_0 = 3$.
Furthermore, we have that for all $t \geq 0,$ 
\[
\Pr \left[
\left|Z_n\right| \geq t\sqrt{d}
\right] \leq dM_\alpha \exp(-\alpha t^2).
\]
\end{lemma}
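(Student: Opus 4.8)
The plan is to transport the moment estimate from $\nvec$, where it is supplied by Lemma~\ref{lem:nvgaussianmoments}, to $\nvmcpi$ by a change of measure, using the Radon--Nikodym description in Lemma~\ref{lem:nvmarkovrnd} together with the fact that $\nvmcrnd$ is bounded above and below by quantities comparable to $d$. On the support of $\nvec$, Lemma~\ref{lem:nvmarkovrnd} gives $\frac{d\nvmcpi}{d\nvec}(f)=\Exp_{\nvmcpi}[\nvmcrnd]/\nvmcrnd(f)$, and from \eqref{eq:nvmcrnd_def} the function $\nvmcrnd$ takes only the values $2(d-3)$, $2(d-2)$, and $d-2$. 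For $d\geq 4$ all three lie in $[d-2,\,2(d-2)]$; for $d=3$ the value $2(d-3)=0$ occurs only on colorings with two $\QHVC$-edges meeting at a vertex, which are exactly the colorings outside the support of $\nvec$, so on that support $\nvmcrnd\in\{d-2,2(d-2)\}$ in every case. Hence $d\nvmcpi/d\nvec\leq 2$ on the support of $\nvec$, and since $\nvec$ and $\nvmcpi$ agree on where $\nvmcrnd>0$, for every nonnegative $G$ we get
\[
  \Exp_{\nvmcpi}[G]\;\leq\;2\,\Exp_{\nvec}[G]\;+\;\Exp_{\nvmcpi}\!\big[G\,\one{\nvmcrnd=0}\big].
\]

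For $d\geq 4$ the correction term is absent, because $\nvec$ has full support and so $\nvmcpi\ll\nvec$. Taking $G=\exp(\alpha Z_n^2/d)$ and invoking Lemma~\ref{lem:nvgaussianmoments} yields $\Exp_{\nvmcpi}\exp(\alpha Z_n^2/d)\leq 2M_\alpha$ for all $n$ and all $d\geq\max\{4,d_0(\alpha)\}$, which is comfortably within the claimed bound $M_\alpha d$; for $\alpha<2/\sqrt 3$ the first half of Lemma~\ref{lem:nvgaussianmoments} applies to all $d\geq 3$, so this covers every $d\geq 4$. The tail bound then follows by Markov's inequality, exactly as in the proof of Lemma~\ref{lem:nvgaussianmoments}:
\[
  \Pr_{\nvmcpi}\big[|Z_n|\geq t\sqrt{d}\,\big]
    =\Pr_{\nvmcpi}\big[\exp(\alpha Z_n^2/d)\geq e^{\alpha t^2}\big]
    \leq e^{-\alpha t^2}\,\Exp_{\nvmcpi}\exp(\alpha Z_n^2/d).
\]

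The one case requiring care is $d=3$, which is precisely the case needed for the assertion that $d_0=3$ is admissible when $\alpha<2/\sqrt 3$. Here $\nvmcpi$ charges the set $\mathcal{A}=\{f(1)=f(n)=\QHVC\}$, on which $\nvmcrnd=0$, and this is the source of the correction term $\Exp_{\nvmcpi}[\exp(\alpha Z_n^2/3)\,\one{\mathcal{A}}]$. Since $p(\QHVC,\QHVC)=0$ for $d=3$, every $f\in\mathcal{A}$ of positive $\nvmcpi$-mass has $f(2)=\QHVH$; I would pair such an $f$ with the coloring $f'$ obtained by recoloring the first edge $\QHVH$. Then $f'$ lies in the support of $\nvec$, a one-line computation with the transition probabilities gives $\nvmcpi(\{f\})=\nvmcpi(\{f'\})$, and $V_n(f)-V_n(f')=1$, so $Z_n(f)=Z_n(f')+O(n^{-1/2})$ and hence $\exp(\alpha Z_n(f)^2/3)\leq(1+o(1))\exp\!\big(\alpha(1+o(1))Z_n(f')^2/3\big)$ uniformly over $\mathcal{A}$. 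Summing along the injective map $f\mapsto f'$ bounds the correction term by $(1+o(1))\cdot 2\,\Exp_{\nvec}\exp(\alpha' Z_n^2/3)$ with $\alpha'=\alpha(1+o(1))<2/\sqrt 3$ once $n$ is large, which is $O(1)$ by Lemma~\ref{lem:nvgaussianmoments} (the finitely many small $n$ being bounded trivially since $Z_n$ then takes finitely many bounded values). An essentially equivalent route is to note that conditioning on $\mathcal{A}$ turns the coloring into a Markov bridge from $\QHVH$ to $\QHVH$, whose Laplace transform of $V_n$ has, by the transfer-matrix formula, the same exponential growth rate in $n$ as the one analyzed in Lemma~\ref{lem:nvsharptail}, the boundary vectors contributing only factors bounded uniformly in $n$; the slack in the factor $d$ makes either argument routine.

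I expect the $d=3$ bookkeeping to be the only genuine obstacle: one must verify that the single recoloring (equivalently, the boundary conditioning) perturbs neither the $\nvmcpi$-mass nor $Z_n$ by more than a harmless amount, so that the exponential rate of the Laplace transform of $V_n$ is unchanged. Everything for $d\geq 4$ is an immediate consequence of Lemmas~\ref{lem:nvmarkovrnd} and~\ref{lem:nvgaussianmoments}.
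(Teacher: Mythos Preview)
Your proof is correct and follows essentially the same approach as the paper: transfer the $\nvec$-moment bound from Lemma~\ref{lem:nvgaussianmoments} to $\nvmcpi$ via the Radon--Nikodym derivative $\nvmcrnd$, and handle the $d=3$ singularity (where $\nvmcrnd$ can vanish) by a one-coordinate recoloring that preserves $\nvmcpi$-mass and perturbs $Z_n$ by $O(n^{-1/2})$. The only notable differences are cosmetic: you use the sharper lower bound $\nvmcrnd\geq d-2$ rather than the paper's $\nvmcrnd\geq 1$, which yields $\Exp_\nvmcpi\exp(\alpha Z_n^2/d)\leq 2M_\alpha$ for $d\geq 4$ rather than the stated $M_\alpha d$; and for $d=3$ you recolor $f(1)$ and restrict attention to the exact null set $\{f(1)=f(n)=\QHVC\}$, whereas the paper swaps $f(n)$ and uses H\"older's inequality (instead of your implicit Young's inequality) to absorb the cross term $2qZ_n$.
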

\begin{proof}
The second conclusion of the lemma follows immediately from the first by Markov's inequality.  As for the first, in the case that $d\geq 4,$ this is simply a consequence of Lemma~\ref{lem:nvgaussianmoments} and the fact that $\nvmcrnd$ is bounded below by $1$ $\pi$-almost surely; note
\[
M_\alpha \geq
\Exp_\nvec \exp\left(\alpha \frac{Z_n^2}{d} \right)
=\Exp_\nvmcpi \frac{\nvmcrnd(f)}{\Exp_{\nvmcpi} \nvmcrnd(f)}
\exp\left( \alpha \frac{Z_n^2}{d} \right)
\geq \frac{1}{\Exp_\nvmcpi \nvmcrnd(f)}\Exp_\nvmcpi {\exp\left(\alpha \frac{Z_n^2}{d} \right)},
\]
so that rearranging,
\[
\Exp_\nvmcpi {\exp\left(\alpha \frac{Z_n^2}{d} \right)} 
\leq \Exp_\nvmcpi \nvmcrnd(f) M_\alpha,
\]
and the result now follows from having $\Exp_\nvmcpi \nvmcrnd(f) = O(d)$.

However, when $d=3,$ we require an additional argument, because $\rho$ can be $0$.  Consider the involution $\iota$ on colorings that swaps the color $f(n)$ between $\QHVC$ and $\QHVH$.  Let $\iota^*(Z_n(f))$ denote the random variable $Z_n(\iota(f)),$ so that we have
\[
\exp\left(\alpha \frac{Z_n^2}{3} \right)
\one{f(n)=\QHVC}
=
\exp\left(\alpha \frac{\iota^*(Z_n)^2 +2\iota^*(Z_n)q + q^2}{3} \right)
\one{\iota(f)(n)=\QHVH},
\]
where $q = \sqrt{\frac{3^3}{2n(3-2)^2}}$.  For any coloring $f$ with $f(n)=\QHVC,$ meanwhile, it must be that $f(n-1)=\QHVH$ else $\nvmcpi(\{f\})=0$.  Thus, for any coloring with $f(n-1)=\QHVH$ and $f(n)=\QHVC,$ we have that
\[
\nvmcpi(\{f\}) = {\nvmcpi(\{\iota(f)\})}.
\]
Thus, we can change the integration and get that
\[
\Exp_{\nvmcpi}
\exp\left(\alpha \frac{Z_n^2}{3} \right)
\one{f(n)=\QHVC}
=
\Exp_{\nvmcpi}
\exp\left(\alpha \frac{Z_n^2 +2Z_nq + q^2}{3} \right)
\one{f(n)=\QHVH}.
\]
This right hand side can now be bounded in terms of $\nvec$ by
\[
\Exp_{\nvmcpi}
\exp\left(\alpha \frac{Z_n^2 +2Z_nq + q^2}{3} \right)
\one{f(n)=\QHVH}
\leq
\Cl{c:nvpi3}
\Exp_{\nvec}
\exp\left(\alpha \frac{Z_n^2 +2Z_nq}{3} \right),
\]
for some absolute constant $\Cr{c:nvpi3}$ as when $f(n)=\QHVH,$ $\nvmcrnd(f)$ is bounded below.  
Pick $\alpha'$ so that $\frac{2}{\sqrt{3}} > \alpha' > \alpha$.  By H\"older's inequality, we have that
\[
\Exp_{\nvec}
\exp\left(\alpha \frac{Z_n^2 +2Z_nq}{3} \right)
\leq 
\biggl(
\Exp_{\nvec}
\exp\left(\alpha' \frac{Z_n^2}{3} \right)
\biggr)^{\tfrac{\alpha}{\alpha'}}
\biggl(
\Exp_{\nvec}
\exp\left(\frac{\alpha'\alpha}{\alpha'-\alpha} \frac{2Z_nq}{3} \right)
\biggr)^{\tfrac{\alpha'-\alpha}{\alpha'}},
\]
which is bounded uniformly in $n$ by Lemma~\ref{lem:nvgaussianmoments}.
\end{proof}

The Radon-Nikodym derivative can be seen to be approximately independent of $Z_n,$ as $Z_n$ is insensitive to a change of only $2$ coordinates.  For this reason, we can prove
\begin{lemma}
\label{lem:nvmcpi_replacement}
\[
\Exp_\nvec \exp\left( \frac{Z_n^2}{d} \right)
=\Exp_\nvmcpi \frac{\nvmcrnd(f)}{\Exp_{\nvmcpi} \nvmcrnd(f)}
\exp\left( \frac{Z_n^2}{d} \right)
=\Exp_\nvmcpi \exp\left( \frac{Z_n^2}{d}\right)
+O\left(\sqrt{\frac{d}{{n}}}\right).
\]
\end{lemma}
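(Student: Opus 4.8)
This is immediate from Lemma~\ref{lem:nvmarkovrnd}, applied with the test function $f\mapsto\exp(Z_n(f)^2/d)$: that lemma says $\tfrac{d\nvec}{d\nvmcpi}=\nvmcrnd/\Exp_\nvmcpi\nvmcrnd$, which is exactly the reweighting appearing in the middle term.

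\textbf{Step 2: reduction and the two structural facts.} Write $h=\exp(Z_n^2/d)$ and $W=\nvmcrnd/\Exp_\nvmcpi\nvmcrnd$, so $\Exp_\nvmcpi W=1$ and the second equality becomes $\Exp_\nvmcpi[(W-1)h]=O(\sqrt{d/n})$. First, by \eqref{eq:nvmcrnd_def} $\nvmcrnd$ takes at most four values, each of order $d$ when $d\geq 4$ (and of order $1$ when $d=3$), hence $\Exp_\nvmcpi\nvmcrnd$ has the same order and $W$ is bounded above and below by absolute constants; in particular $|W-1|=O(1)$, and moreover for $d\geq 4$ the reverse derivative $\Exp_\nvmcpi\nvmcrnd/\nvmcrnd$ is bounded, so the exponential moment bound of Lemma~\ref{lem:nvgaussianmoments} transfers to $\nvmcpi$ with an $O(1)$ (not $O(d)$) constant; for $d$ bounded all quantities below are handled as in the proof of Lemma~\ref{lem:nvgaussianmomentspi}. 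Second, $W$ depends only on the two boundary colors $f(1)=X_1$, $f(n)=X_n$, whereas $Z_n=\kappa\bigl(V_n-\tfrac{d-2}{d}n\bigr)$ with $\kappa=\sqrt{d^3/(2n(d-2)^2)}=O(\sqrt{d/n})$ is a rescaling of the number $V_n$ of $R$-edges; changing $m$ coordinates of $f$ moves $V_n$ by at most $m$ and so $Z_n$ by at most $m\kappa$. Combining $|e^{a}-e^{b}|\le e^{\max(a,b)}|a-b|$, $|Z'^2-Z''^2|\le|Z'-Z''|(|Z'|+|Z''|)$, and the elementary bound $|z|e^{z^2/d}\le\sqrt d\,e^{3z^2/(2d)}$ with the $O(1)$ exponential moments, one obtains the perturbation estimate: if $Z',Z''$ are two colorings' values of $Z_n$ with $|Z'-Z''|\le m\kappa$ and $m\le\log n$, then
\[
\Exp_\nvmcpi\bigl|e^{(Z')^2/d}-e^{(Z'')^2/d}\bigr|=O\!\left(\frac{m\kappa}{\sqrt d}+\frac{m^2\kappa^2}{d}\right)=O\!\left(\frac m{\sqrt n}\right).
\]

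\textbf{Step 3: peeling.} Let $\hat Z^{(k)}$ be the analogue of $Z_n$ computed only from $X_{k+2},\dots,X_{n-k-1}$, so $\hat Z^{(0)}$ drops just the two endpoints, $|\hat Z^{(k)}-\hat Z^{(k+1)}|\le2\kappa$, and $\hat Z^{(k)}$ is $\sigma(X_{k+2},\dots,X_{n-k-1})$-measurable. Split $\Exp_\nvmcpi[(W-1)h(Z_n)]=\Exp_\nvmcpi\bigl[(W-1)(h(Z_n)-h(\hat Z^{(0)}))\bigr]+\Exp_\nvmcpi\bigl[(W-1)h(\hat Z^{(0)})\bigr]$; the first term is $O(1/\sqrt n)$ by Step~2. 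For the second, observe that by the Markov property and the tower property $\Exp_\nvmcpi[W-1\mid X_{k+2},\dots,X_{n-k-1}]=w_k(X_{k+2},X_{n-k-1})$, and since conditionally on $(X_{k+2},X_{n-k-1})$ the endpoints $X_1,X_n$ are independent, each within total variation $O(\ccoeff^{\,k+1})$ of the chain's stationary law $\pi_0$ (by the mixing estimate following \eqref{eq:ccoeff}, with $\ccoeff=1/(d-1)\le\tfrac12$), while $\Exp_\nvmcpi W=1$ differs from $\Exp_{\pi_0\otimes\pi_0}W$ by $O(\ccoeff^{\,n-1})$, we get $\|w_k\|_\infty=O(\ccoeff^{\,k})$. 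Then
\[
\Exp_\nvmcpi\bigl[w_k\,h(\hat Z^{(k)})\bigr]=\Exp_\nvmcpi\bigl[w_k\,(h(\hat Z^{(k)})-h(\hat Z^{(k+1)}))\bigr]+\Exp_\nvmcpi\bigl[w_{k+1}\,h(\hat Z^{(k+1)})\bigr],
\]
the last term because $\Exp_\nvmcpi[w_k\mid X_{k+3},\dots,X_{n-k-2}]=w_{k+1}(X_{k+3},X_{n-k-2})$. Each peeling step costs $\|w_k\|_\infty\cdot O(1/\sqrt n)=O(\ccoeff^{\,k}/\sqrt n)$; the geometric sum over $k$ contributes $O(1/\sqrt n)$, and after $k=C\log n$ steps the leftover is at most $\|w_{C\log n}\|_\infty\,\Exp_\nvmcpi h(\hat Z^{(C\log n)})=O(\ccoeff^{\,C\log n})\cdot O(d)=O(1/n)$ for $C$ large. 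Summing, $\Exp_\nvmcpi[(W-1)h]=O(1/\sqrt n)=O(\sqrt{d/n})$.

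\textbf{Main obstacle.} The delicate point is the accounting in the perturbation estimate: the only tail control available for $Z_n$ (Lemma~\ref{lem:nvgaussianmomentspi}, cf.\ Remark~\ref{remark:insufficiency}) is at scale $\sqrt d$, so a bare Hölder split would lose a factor of $d$ and yield $O(d/\sqrt n)$; it is essential to pull the factor $|Z_n|$ against the exponential via $|z|e^{z^2/d}\le\sqrt d\,e^{3z^2/(2d)}$, using only the $O(1)$ exponential moment and recovering exactly the compensating $\sqrt d$, and to check that the perturbed functionals $\hat Z^{(k)}$ — which differ from $Z_n$ by $O(\kappa\log n)=o(\sqrt d)$ — retain uniformly bounded exponential moments of the needed order. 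Likewise the bound $\|w_k\|_\infty=O(\ccoeff^{\,k})$ with $\ccoeff\le\tfrac12$ is what keeps the total cost of peeling at $O(1/\sqrt n)$ rather than $O(\log n/\sqrt n)$, which matters since $d$ may be as small as $3$.
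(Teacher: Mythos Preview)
Your argument is essentially correct and proceeds by a genuinely different route from the paper's. The paper conditions directly on $(X_1,X_n)=(x,y)$ and couples the conditioned chain to the stationary one via the optimal two-state Markovian coupling; the key structural fact there is that under this coupling the running sums differ by at most $1$ (because before coalescence the two chains alternate between $(0,1)$ and $(1,0)$), so $|Z_n'-Z_n|\le q=\kappa$ deterministically. A single application of H\"older then gives the bound, in fact $O(d/n)$, stronger than the stated $O(\sqrt{d/n})$. Your peeling argument instead telescopes through the auxiliary functionals $\hat Z^{(k)}$, trading the coupling for the exponential decay $\|w_k\|_\infty=O(\ccoeff^{\,k})$ coming from mixing; this avoids the two-state-specific observation entirely and would generalize to richer state spaces or to functionals of the chain that are not simple sums, at the cost of a slightly weaker (but still sufficient) bound $O(1/\sqrt n)$.

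Two small points. First, your explicit inequality $|z|e^{z^2/d}\le\sqrt d\,e^{3z^2/(2d)}$ requires the $\tfrac32$ exponential moment under $\nvmcpi$, which is not available at $d=3$ (the threshold there is $2/\sqrt3\approx1.155$); the fix you implicitly rely on is to replace $3/2$ by $1+\epsilon$ via $|z|e^{z^2/d}\le C_\epsilon\sqrt d\,e^{(1+\epsilon)z^2/d}$, which is valid for every $\epsilon>0$ and keeps the exponent below threshold. Second, your observation that for $d\ge4$ the ratio $W=\nvmcrnd/\Exp_\nvmcpi\nvmcrnd$ is bounded above and below by absolute constants, so that $\Exp_\nvmcpi e^{\alpha Z_n^2/d}=O(1)$ rather than the $O(d)$ of Lemma~\ref{lem:nvgaussianmomentspi}, is correct and is exactly what makes your accounting close without the spurious factor of $d$; it is worth stating this explicitly rather than folding it into Step~2.
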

\begin{proof}
  We need to prove that 
\[
\Exp_\nvmcpi \left[\exp\left( \frac{Z_n^2}{d}\right)~\middle\vert~X_1=x,X_n=y
\right] - 
\Exp_\nvmcpi \exp\left( \frac{Z_n^2}{d}\right)
\] 
is small, regardless of $x$ and $y$.
  To simplify notation, replace $\QHVH$ and $\QHVC$ with $0$ and $1$.
  Let $\{Y_1,\ldots,Y_n\}$ be a Markov chain with the same transition probabilities
  as $\{X_1,\ldots,X_n\}$, but started at $Y_1=x$.
  We take the two chains to have the optimal Markovian coupling:
  conditional on $X_i$ and $Y_i$, the random variables $X_{i+1}$ and $Y_{i+1}$ 
  are coupled by the optimal total variation coupling. Let $\tau$ be the first
  time that the two chains coincide (after which they stay together), or $\infty$
  if they never do. For a chain on $\{0,1\}$ with transition probability from
  $0$ to $0$ smaller than from $1$ to $0$, this coupling has the property that
  \begin{align*}
    (X_i,Y_i)&=(1-x,\,x) \qquad\text{for all odd $i<\tau$,}\\
    (X_i,Y_i)&=(x,\,1-x) \qquad\text{for all even $i<\tau$.}
  \end{align*}
  Thus the sums of two chains differ by at most one,
  indicating that this statistic is quite insensitive to the starting point of the chain.
  We will write $\E[\cdot]$ with no subscript to indicate expectations with
  respect to this coupling, reserving the notation $\E_\pi[\cdot]$ for expectations
  that depend only on the first chain.
  
  Let $q =\sqrt{\frac{d^3}{2n(d-2)^2}}$.
Let $V_n'=\sum_{i=1}^nY_i$,
and let $Z_n'=q(V_n' - dn/(d-2))$.
We rewrite the conditional expectation as
\begin{align}
  \Exp \left[\exp\left( \frac{Z_n^2}{d}\right)~\middle\vert~X_1=x,X_n=y
\right] &= 
   \frac{\E\left[\exp\left(\frac{Z_n'^2}{d}\right)\1\{Y_n=y\} \right]}
   {\P[Y_n=y]}\nonumber\\
   &\leq
      \frac{\E\left[\exp\left(\frac{Z_n'^2}{d}\right)\1\{Y_n=y\} \right]}
   {\mu(y)-(d-1)^{1-n}}
\label{eq:term1}
\end{align}
By the properties of the coupling mentioned above, $Z'_n\leq Z_n+q$.
So long as $\tau\neq\infty$, we have $X_n=Y_n$, and so
\begin{align}\label{eq:tausplit}
  \begin{split}
  \E\left[\exp\left(\frac{Z_n'^2}{d}\right)\1\{Y_n=y\} \right]
    &\leq \E_\nvmcpi\biggl[\exp\left(\frac{(Z_n+q)^2}{d}\right)\1\{X_n=y\}\biggr] \\
      &\qquad\qquad\qquad+\E\biggl[\exp\left(\frac{Z_n'^2}{d}\right)\1\{\tau=\infty\}\biggr].
  \end{split}
\end{align}
If $\tau=\infty$ and $n$ is even, then $V_n'=n/2$, and 
\begin{align*}
  \frac{Z_n'^2}{d} &= \frac{(d-4)^2}{8(d-2)^2}n \leq \frac{n}{8}.
\end{align*}
If $\tau=\infty$ and $n$ is odd, then $V_n'=(n\pm 1)/2$, and some algebra shows that
$Z_n'^2/d\leq (n+11)/8 $.
Thus
\begin{align}\label{eq:tt1}
  \E\left[\exp\left(\frac{Z_n'^2}{d}\right)\1\{\tau=\infty\}\right]
    &\leq\mu(1-x)(d-1)^{1-n}\exp\left(\frac{n+11}{8}\right),
\end{align}
which is easily $O(1/n)$.

To deal with the first term of \eqref{eq:tausplit}, we use the reversibility
of the Markov chain to rewrite it as
\begin{align*}
  \E_\nvmcpi\left[\exp\left(\frac{(Z_n+q)^2}{d}\right)\1\{X_n=y\}\right]
  &= \mu(y)\E_\nvmcpi\left[\exp\left(\frac{(Z_n+q)^2}{d}\right)\:\middle\vert\:
   X_1=y\right].
\end{align*}
As before, there exists a coupling of $Z_n$ with a random variable $Z_n''$ such that
$Z''_n$ is distributed as $Z_n$ conditioned on $X_1=y$, and $Z''_n\leq Z_n+q$.
Thus
\begin{align}\label{eq:tt3}
  \E_\nvmcpi\left[\exp\left(\frac{(Z_n+q)^2}{d}\right)\1\{X_n=y\}\right]
  &\leq \mu(y)\E_\nvmcpi\exp\left(\frac{(Z_n+2q)^2}{d}\right).
\end{align}
Fix some $1<\alpha<2/\sqrt{3}$ and apply H\"older's inequality to get
\begin{align*}
  \E_\nvmcpi\exp\left(\frac{(Z_n+2q)^2}{d}\right) -
  \E_\nvmcpi\exp\left(\frac{Z_n^2}{d}\right)
  &= \E_\nvmcpi\left[\exp\left(\frac{Z_n^2}{d}\right)\left(
    \exp\left(\frac{4qZ_n+4q^2}{d}\right)-1\right)
    \right]\\
  &\leq (d M_\alpha)^{1/\alpha}
    \biggl(
\Exp_\nvmcpi
\left|
\exp\left(\frac{4qZ_n + 4q^2}{d}\right)-1 
\right|^{\alpha^*}
\biggr)^{1/\alpha^*}.
\end{align*}
By applying the bounds that $|e^x - 1| \leq |x|e^{|x|}$ and that $q^2/d = O(1/n)$,
there is some absolute constant $\Cl{c:371}$ so that
\[
\Exp_\nvmcpi
\left|
\exp\left(\frac{4qZ_n + 4q^2}{d}\right)-1 
\right|^{\alpha^*}
\leq 
\Cr{c:371}
\Exp_\nvmcpi
\left|
\frac{Z_nq}{d}
\exp\left(\frac{\Cr{c:371}Z_nq}{d}\right) 
\right|^{\alpha^*}.
\]
Note that $\frac{q}{d} = O(1/\sqrt{dn})$ and hence by once again applying H\"older's inequality and using the second part of Lemma~\ref{lem:nvgaussianmomentspi}, we conclude that 
\[
\Exp_\nvmcpi
\left|
\frac{Z_nq}{d}
\exp\left(\frac{\Cr{c:371}Z_nq}{d}\right) 
\right|^{\alpha^*}
=O(d/n^{\alpha^*}).
\]
This shows that
\begin{align*}
    \E_\nvmcpi\exp\left(\frac{(Z_n+2q)^2}{d}\right) -
  \E_\nvmcpi\exp\left(\frac{Z_n^2}{d}\right)
  &\leq O\left(\frac{d}{n}\right).
\end{align*}
Applying \eqref{eq:tt1} and \eqref{eq:tt3} to \eqref{eq:tausplit}
and substituting into \eqref{eq:term1},
\begin{align*}
  \Exp_\nvmcpi \left[\exp\left( \frac{Z_n^2}{d}\right)~\middle\vert~X_1=x,X_n=y
    \right] &\leq
      \frac{\mu(y)}{\mu(y)-(d-1)^{1-n}}\left(
         \E_\nvmcpi\exp\left(\frac{Z_n^2}{d}\right) + O\left(\frac{d}{n}\right)\right)\\
    &\leq \E_\nvmcpi\exp\left(\frac{Z_n^2}{d}\right) + \Cl{c:372}\frac{d}{n}.
\end{align*}
 for some absolute constant $\Cr{c:372}$, uniformly in $x$ and $y$.
The conclusion of the lemma now follows by integrating
\begin{align*}
\Exp_\nvmcpi \frac{\nvmcrnd(f)}{\Exp_{\nvmcpi} \nvmcrnd(f)}
\exp\left( \frac{Z_n^2}{d}\right)
&=
\Exp_\nvmcpi
\biggl[
\Exp_\nvmcpi\left[
 \frac{\nvmcrnd(f)}{\Exp_{\nvmcpi} \nvmcrnd(f)}
\Exp_\nvmcpi \exp\left( \frac{Z_n^2}{d}\right)
~\middle\vert~
X_1,X_2
\right]
\biggr]
 \\
&\leq
\Exp_\nvmcpi
 \frac{\nvmcrnd(f)}{\Exp_{\nvmcpi} \nvmcrnd(f)}
\left[
\Exp_\nvmcpi \exp\left( \frac{Z_n^2}{d}\right)
+\Cr{c:372}\frac{d}{n}
\right] \\
&=
\Exp_\nvmcpi \exp\left( \frac{Z_n^2}{d}\right)
+O(d/n).\qedhere
\end{align*}
\end{proof}

\subsection{Comparison with a standard normal by size-bias coupling}

The remainder of the work is to compare these expectations in $Z_n$ with that which we would get for a standard normal.  For this task, we develop a modification of Stein's method for normal approximation that allows us to directly compare these expectations.  The basic outline of this approach follows the general method of size-bias couplings for normal approximation.%
\footnote{
See Ross's excellent survey \cite{Ross} for an overview; we will frequently reference general results surrounding Stein methodology from this source.}

We define $h(w) = \exp(w^2/d),$ and let $\Phi(h)=\sqrt{\frac{d}{d-2}}$ denote the expectation of $h$ applied to a standard normal.  We let $\nvstein$ be the solution to the differential equation
\begin{equation}
\label{eq:nvsteineq}
\nvstein'(w) - w \nvstein(w) = h(w) - \Phi(h)
\end{equation}
that is given by the formulae
\begin{align*}
\nvstein 
&= \exp(w^2/2)\int_{w}^\infty \exp(-t^2/2) \left( \Phi(h) - h(t) \right)\,dt \\
&= -\exp(w^2/2)\int_{-\infty}^w \exp(-t^2/2) \left( \Phi(h) - h(t) \right)\,dt. 
\end{align*}

In the usual Stein's method setup, the function $h$ is bounded, from which it follows that $\nvstein'$ and $\nvstein''$ are also bounded.  This is not the case here, but it is easily verified that the growth rates of $\nvstein$ and its derivatives are commensurate to the growth rate of $h$.
\begin{lemma}
\label{lem:nvsteingrowth}
There is an absolute constant $\Cl{c:resolvent}$ so that
\begin{align*}
\left| \nvstein (w) \right| &\leq \Cr{c:resolvent} \Phi(h)(1+|w|)^{-1}h(w) \\ 
\left| \nvstein' (w) \right| &\leq \Cr{c:resolvent} \Phi(h)h(w) \\ 
\left| \nvstein'' (w) \right| &\leq \Cr{c:resolvent} \Phi(h)(1+|w|)h(w). 
\end{align*}
\end{lemma}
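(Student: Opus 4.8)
The plan is to read off the bound on $\nvstein$ from its explicit integral formula via a Mills-ratio estimate, and then obtain the bounds on $\nvstein'$ and $\nvstein''$ essentially for free by differentiating the Stein equation \eqref{eq:nvsteineq}. Two elementary observations set everything up. First, since $d\geq 3$ we have $\tfrac1d\leq\tfrac13$, so $a\Def\tfrac12-\tfrac1d$ lies in the fixed interval $[\tfrac16,\tfrac12)$, and there is an absolute constant $\Cl{c:mills}$ (uniform over this range of $a$) with
\[
\int_w^\infty e^{-at^2}\,dt\;\leq\;\frac{\Cr{c:mills}}{1+w}\,e^{-aw^2}\qquad\text{for all }w\geq 0,
\]
which one checks by splitting at $w=1$: for $w\geq 1$ bound the integrand by $\tfrac tw e^{-at^2}$ and integrate exactly, and for $0\leq w\leq 1$ bound the integral crudely by $\int_0^\infty e^{-at^2}\,dt=\tfrac12\sqrt{\pi/a}$. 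Second, $\Phi(h)=\sqrt{d/(d-2)}$ satisfies $1\leq\Phi(h)\leq\sqrt3$ while $h(w)=e^{w^2/d}\geq 1$, so $|\Phi(h)-h(t)|\leq\Phi(h)+h(t)\leq 3h(t)$ for every real $t$.

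For the first inequality, fix $w\geq 0$ and use the representation $\nvstein(w)=e^{w^2/2}\int_w^\infty e^{-t^2/2}(\Phi(h)-h(t))\,dt$. Then
\[
|\nvstein(w)|\;\leq\;3\,e^{w^2/2}\int_w^\infty e^{-t^2/2}h(t)\,dt\;=\;3\,e^{w^2/2}\int_w^\infty e^{-at^2}\,dt\;\leq\;\frac{3\Cr{c:mills}}{1+w}\,e^{(1/2-a)w^2}\;=\;\frac{3\Cr{c:mills}}{1+w}\,h(w),
\]
and since $\Phi(h)\geq 1$ this is at most $3\Cr{c:mills}\,\Phi(h)(1+|w|)^{-1}h(w)$. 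For $w<0$, substituting $t\mapsto -t$ in the second formula for $\nvstein$ gives $\nvstein(w)=-e^{w^2/2}\int_{|w|}^\infty e^{-s^2/2}(\Phi(h)-h(s))\,ds$, and the identical computation with $|w|$ in place of $w$ (using $h(w)=h(|w|)$) yields the same bound; in particular this also shows $\nvstein$ is odd.

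Since $\nvstein$ is an integral of a smooth function it is $C^2$, and the derivative bounds need no further integration. From \eqref{eq:nvsteineq}, $\nvstein'(w)=w\nvstein(w)+h(w)-\Phi(h)$, so using $|w|/(1+|w|)\leq 1$ and the bound just proved we get $|w\nvstein(w)|\leq 3\Cr{c:mills}\Phi(h)h(w)$, while $h(w)+\Phi(h)\leq 2\Phi(h)h(w)$; adding these gives the middle inequality. Differentiating once more, $\nvstein''(w)=\nvstein(w)+w\nvstein'(w)+h'(w)$ with $h'(w)=\tfrac{2w}{d}h(w)$, so $|h'(w)|\leq\tfrac23|w|h(w)\leq(1+|w|)\Phi(h)h(w)$, $|\nvstein(w)|\leq 3\Cr{c:mills}\Phi(h)h(w)$, and $|w\nvstein'(w)|$ is bounded by $|w|$ times the middle bound; collecting the three contributions gives $|\nvstein''(w)|\leq\Cr{c:resolvent}\Phi(h)(1+|w|)h(w)$ once $\Cr{c:resolvent}$ is taken to be the maximum of the constants produced along the way.

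The only point requiring care is the Mills-ratio estimate with an \emph{absolute} constant: one must verify that $\Cr{c:mills}$ does not degenerate as $a\downarrow\tfrac16$ or $a\uparrow\tfrac12$, which it does not since $a$ stays in a fixed compact subinterval of $(0,\infty)$ — this is the only place where the hypothesis $d\geq 3$ (equivalently $1/d<1/2$, so that $e^{-t^2/2}h(t)$ is integrable) is used. Everything else is routine bookkeeping with \eqref{eq:nvsteineq} and the trivial facts $h\geq 1$ and $\Phi(h)\in[1,\sqrt3]$.
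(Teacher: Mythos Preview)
Your proof is correct and follows essentially the same route as the paper: a Mills-ratio bound on the Gaussian-weighted integral to control $\nvstein$, then the Stein equation \eqref{eq:nvsteineq} and its derivative to read off the bounds on $\nvstein'$ and $\nvstein''$. Your packaging differs only cosmetically---you bound $|\Phi(h)-h(t)|\leq 3h(t)$ in one stroke and use $1/(1+|w|)$ rather than $1\wedge 1/|w|$---but the substance is the same.
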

\begin{proof}
We begin by noting that for all $w \neq 0,$
\begin{equation*}
\exp(\alpha w^2)\int_{|w|}^\infty
\exp(-\alpha x^2)~dx \leq 
\exp(\alpha w^2)\int_{|w|}^\infty
\frac{x}{|w|}\exp(-\alpha x^2)~dx \leq 
\frac{1}{2\alpha |w|}.
\end{equation*}
From this, we observe that for all $w,$ 
\[
\exp(\alpha w^2)\int_{|w|}^\infty
\exp(-\alpha x^2)~dx \leq \sqrt{\pi/4\alpha},
\]
as its derivative in $w$ is negative for $w>0$.  It follows that there is an absolute constant $\Cl{c:res1}$ so that
\[
\left|
\nvstein(w)
\right| 
\leq \Cr{c:res1}\Phi(h)h(w)(1 \wedge \tfrac{1}{|w|}).
\]
From the differential equation~\eqref{eq:nvsteineq}, we have that
\[
\left|\nvstein'(w)\right|
\leq |w\nvstein |
+ h(w) + \Phi(h) \leq \Cl{c:res2}\Phi(h)h(w)
\] 
for some larger absolute constant $\Cr{c:res2}$.  By differentiating the Stein equation \eqref{eq:nvsteineq}, we may also bound
\[
\left|\nvstein''(w)\right|
\leq |\nvstein | + |w \nvstein' |
+ |h'(w)| \leq \Cl{c:res3}\Phi(h)(1+|w|) h(w)
\]
for some other absolute constant $\Cr{c:res3}$.
\end{proof}

Using the basic Stein's method setup for size-bias coupling (see equation (3.25) of~\cite{Ross}), we have the following lemma, 
which refers to a size-bias coupling $(V_n^s,V_n)$ and an associated probability
space constructed in the appendix.
\begin{lemma}
\label{lem:sizebiaslemma}
Let $\mu = n(d-2)/d = \Exp_\nvmcpi V_n$ and $\sigma^2 = \frac{2n(d-2)^2}{d^3}$.
For any $\sigma$-algebra~$\nvsteinsigma$ containing $\sigma(Z_n)$, 
\begin{align*}
\left| \Exp_\nvmcpi h(Z_n) - \Phi(h) \right|
&\leq
\Exp
\left|
f_h'(Z_n)
\left(
1 - \frac{\mu}{\sigma^2}
\Exp \left[
V_n^s - V_n ~\middle\vert~ \nvsteinsigma
\right]
\right)
\right| \\
&\quad\quad+ \frac{\mu}{2\sigma^3}
\Exp
\left|
f_h''\left(Z_n^*\right) (V_n^s - V_n)^2
\right|,
\end{align*}
where $Z_n^*$ is in the interval with endpoints $Z_n$ and $(V_n^s-\mu)/\sigma$.
\end{lemma}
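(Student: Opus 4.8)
The plan is to combine three standard ingredients: the Stein equation \eqref{eq:nvsteineq} satisfied by $\nvstein$, the defining identity of the size-bias transform, and a first-order Taylor expansion of $\nvstein$, and then to introduce the conditioning on $\nvsteinsigma$ via the tower property. Since $0\le V_n\le n$ and $0\le V_n^s\le n$ almost surely, every random variable below is bounded, so all expectations are finite and all manipulations are legitimate; the growth estimates of Lemma~\ref{lem:nvsteingrowth} are needed only to recognize the right-hand side in its final form (and, downstream of this lemma, to bound it).

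First I would record the size-bias identity. Because $V_n\ge 0$ and $\Exp_\nvmcpi V_n=\mu\in(0,\infty)$, the size-biased variable $V_n^s$ satisfies $\mu\,\Exp[g(V_n^s)]=\Exp[V_n\,g(V_n)]$ for every bounded $g$, hence $\Exp[(V_n-\mu)g(V_n)]=\mu\,\Exp[g(V_n^s)-g(V_n)]$. Taking $g(v)=\nvstein\!\left((v-\mu)/\sigma\right)$ and using $Z_n=(V_n-\mu)/\sigma$ gives
\[
\Exp\!\left[Z_n\nvstein(Z_n)\right]=\frac{\mu}{\sigma}\,\Exp\!\left[\nvstein\!\left(\tfrac{V_n^s-\mu}{\sigma}\right)-\nvstein(Z_n)\right].
\]
Plugging $w=Z_n$ into \eqref{eq:nvsteineq} and taking expectations then yields
\[
\Exp_\nvmcpi h(Z_n)-\Phi(h)=\Exp\!\left[\nvstein'(Z_n)\right]-\frac{\mu}{\sigma}\,\Exp\!\left[\nvstein\!\left(\tfrac{V_n^s-\mu}{\sigma}\right)-\nvstein(Z_n)\right].
\]

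Next I would expand the difference of $\nvstein$-values along the segment joining $Z_n$ and $(V_n^s-\mu)/\sigma$. Writing it as $\int_0^{(V_n^s-V_n)/\sigma}\nvstein'(Z_n+t)\,dt$ and substituting $t=u/\sigma$ turns the subtracted term into $\frac{\mu}{\sigma^2}\Exp\!\left[\int_0^{V_n^s-V_n}\nvstein'(Z_n+u/\sigma)\,du\right]$. Splitting $\nvstein'(Z_n+u/\sigma)=\nvstein'(Z_n)+\left(\nvstein'(Z_n+u/\sigma)-\nvstein'(Z_n)\right)$ produces a main term $\frac{\mu}{\sigma^2}\Exp[(V_n^s-V_n)\nvstein'(Z_n)]$ plus a remainder. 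Combining the main term with $\Exp[\nvstein'(Z_n)]$ and using that $\nvstein'(Z_n)$ is $\nvsteinsigma$-measurable (since $\sigma(Z_n)\subseteq\nvsteinsigma$), the tower property gives $\Exp\!\left[\nvstein'(Z_n)\left(1-\tfrac{\mu}{\sigma^2}\Exp[V_n^s-V_n\mid\nvsteinsigma]\right)\right]$, which is the first term of the claimed bound after $|\Exp\,\cdot\,|\le\Exp|\,\cdot\,|$. For the remainder I would use $|\nvstein'(Z_n+u/\sigma)-\nvstein'(Z_n)|\le\frac{|u|}{\sigma}|\nvstein''(Z_n^*)|$, where $Z_n^*$ is chosen on the compact segment with endpoints $Z_n$ and $(V_n^s-\mu)/\sigma$ to maximize $|\nvstein''|$; then $\int_0^{V_n^s-V_n}\frac{|u|}{\sigma}\,du=\frac{(V_n^s-V_n)^2}{2\sigma}$ gives the bound $\frac{\mu}{2\sigma^3}\Exp\!\left|\nvstein''(Z_n^*)(V_n^s-V_n)^2\right|$, and a triangle inequality completes the proof.

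The argument itself is routine ``Stein plumbing'': the only point requiring care is the bookkeeping in the change of variables and the Taylor step, so that the remainder lands in exactly the stated shape (with $\nvstein''$ evaluated at a single worst-case point of the coupling segment, and with $Z_n^*$ lying in the interval with endpoints $Z_n$ and $(V_n^s-\mu)/\sigma$ as asserted). The genuinely hard work — constructing a coupling for which $\Exp[V_n^s-V_n\mid\nvsteinsigma]$ is close to $\sigma^2/\mu$ and $\Exp(V_n^s-V_n)^2$ is small, and then controlling the unbounded factors $\nvstein'(Z_n)$, $\nvstein''(Z_n^*)$ through Lemmas~\ref{lem:nvsteingrowth} and~\ref{lem:nvgaussianmomentspi} — is deferred to the appendix and the subsequent estimates, not to this lemma.
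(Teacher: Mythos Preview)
Your proof is correct and is precisely the standard size-bias Stein argument the paper invokes by citing equation~(3.25) of \cite{Ross}; the paper gives no self-contained proof of this lemma, so your write-up simply fills in the details of that citation. The bookkeeping (size-bias identity, Taylor expansion with a worst-case $Z_n^*$ on the coupling segment, tower property to insert $\nvsteinsigma$) matches the intended derivation exactly.
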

Using this lemma, we finally estimate the difference in the expectations.
\begin{lemma}
\label{lem:normal_comparison}
For $d \leq \sqrt{n},$ and for any $\alpha < 8/\sqrt{2},$ we have that 
\[
\Exp_\nvmcpi 
\left[ \exp\left(\frac{Z_n^2}{d}\right)
\right] = \sqrt{ \frac{d}{d-2}} +
O\left( \frac{d^{\frac{3}{2}(1+1/\alpha)}}{\sqrt{n}}\right).
\]
\end{lemma}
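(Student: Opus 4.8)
The plan is to apply the size-bias Stein estimate of Lemma~\ref{lem:sizebiaslemma} and bound its two terms. First record the scalings implied by $\mu=n(d-2)/d$ and $\sigma^2=2n(d-2)^2/d^3$: since $Z_n=(V_n-\mu)/\sigma$, we have $\mu/\sigma^2=d^2/\big(2(d-2)\big)=\Theta(d)$, $1/\sigma=\Theta\big(\sqrt{d/n}\big)$, and $\mu/\sigma^3=\Theta\big(d^{3/2}/\sqrt n\big)$. The last of these is essentially the order of the target error, so the quadratic remainder term in Lemma~\ref{lem:sizebiaslemma} is the dominant one, and the first term should be shown to be of the same order or smaller.

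For the quadratic term $\tfrac{\mu}{2\sigma^3}\Exp\big|f_h''(Z_n^*)(V_n^s-V_n)^2\big|$, I would use the growth bound $|f_h''(w)|=O\big(\Phi(h)(1+|w|)h(w)\big)$ from Lemma~\ref{lem:nvsteingrowth}, with $h(w)=\exp(w^2/d)$ and $\Phi(h)=\sqrt{d/(d-2)}$. Since $Z_n^*$ lies between $Z_n$ and $(V_n^s-\mu)/\sigma$, which differ by at most $|V_n^s-V_n|/\sigma=O\big(|V_n^s-V_n|\,\sqrt{d/n}\big)$, and since $d\leq\sqrt n$, one may first replace $Z_n^*$ by $Z_n$ at the cost of a multiplicative correction of the form $\exp\big(O(|Z_n|\,|V_n^s-V_n|/\sqrt{dn}+|V_n^s-V_n|^2/n)\big)$ inside the expectation. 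Then apply Hölder's inequality with an exponent $\alpha<8/\sqrt2$: the factor $\exp(Z_n^2/d)$ contributes $\big(\Exp_\nvmcpi\exp(\alpha Z_n^2/d)\big)^{1/\alpha}=O(d^{1/\alpha})$ by Lemma~\ref{lem:nvgaussianmomentspi}, while the remaining factors $(1+|Z_n|)(V_n^s-V_n)^2$ and the correction exponentials are estimated in $L^{\alpha^*}$ using the sub-Gaussian tail of $Z_n$ at scale $\sqrt d$ from Lemma~\ref{lem:nvgaussianmomentspi} together with the moment bounds on $V_n^s-V_n$ coming from the fast-mixing coupling of Appendix~\ref{sec:markov} (the chain has contraction coefficient $1/(d-1)$, so its coupling time has bounded moments uniformly in $d$). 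Keeping track of the powers of $d$ produced by the Hölder step then yields a bound of the claimed order $O\big(d^{\frac32(1+1/\alpha)}/\sqrt n\big)$.

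For the linear term $\Exp\big|f_h'(Z_n)\big(1-\tfrac{\mu}{\sigma^2}\Exp[V_n^s-V_n\mid\nvsteinsigma]\big)\big|$, the point is that $\Exp[V_n^s-V_n]=\Var_\nvmcpi(V_n)/\Exp_\nvmcpi V_n=\sigma^2/\mu$ exactly, so the bracket has mean zero; what must be controlled is the fluctuation of the conditional expectation $\Exp[V_n^s-V_n\mid\nvsteinsigma]$ about $\sigma^2/\mu$. Because the size-bias increment is localized — it is built by resampling the chain near a uniform site and recoupling, and the chain couples in a geometric number of steps — the coupling of Appendix~\ref{sec:markov} makes $\big|\Exp[V_n^s-V_n\mid\nvsteinsigma]-\sigma^2/\mu\big|$ small enough that, after multiplying by $\mu/\sigma^2=\Theta(d)$ and applying $|f_h'(Z_n)|=O\big(\Phi(h)\exp(Z_n^2/d)\big)$ with the same Hölder step, this term is $O\big(d^{\frac32(1+1/\alpha)}/\sqrt n\big)$ as well. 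Adding the two contributions gives the lemma.

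The main obstacle is that the test function $h(w)=\exp(w^2/d)$ is unbounded, so classical Stein bounds do not apply and one must combine the tailored resolvent growth estimates of Lemma~\ref{lem:nvsteingrowth} with a \emph{sharp} sub-Gaussian control of $Z_n$. The sub-Gaussianity available (Lemma~\ref{lem:nvsharptail}, via the Laplace transform of $V_n$) has variance proxy of order $d$ rather than the ideal $O(1)$, which is exactly why the Hölder exponent is capped at $8/\sqrt2$ and why the $d$-dependence of the error is suboptimal, as noted in Remark~\ref{remark:insufficiency}; one also has to verify that $Z_n^*$ inherits essentially the same tail as $Z_n$, which is where the standing hypothesis $d\leq\sqrt n$ enters.
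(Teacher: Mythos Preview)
Your approach is essentially the paper's: apply Lemma~\ref{lem:sizebiaslemma}, bound the linear and quadratic terms separately via the resolvent growth estimates of Lemma~\ref{lem:nvsteingrowth}, H\"older with exponent $\alpha$, the $\pi$-moment bound of Lemma~\ref{lem:nvgaussianmomentspi}, and the size-bias coupling estimates of Appendix~\ref{sec:markov}. Two points need correcting, though.

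First, your claim that $\Exp[V_n^s-V_n]=\sigma^2/\mu$ ``exactly'' is false. The size-bias identity gives $\Exp[V_n^s-V_n]=\Var_\nvmcpi(V_n)/\Exp_\nvmcpi V_n$, but $\sigma^2=2n(d-2)^2/d^3$ is a \emph{limiting} normalization, not the exact variance of $V_n$ under $\pi$. The paper computes $\Exp[V_n^s-V_n]$ explicitly via Proposition~\ref{prop:sbc_expectation} and finds $\Exp[V_n^s-V_n]=\sigma^2/\mu+O(1/(d^2n))$; this residual, after multiplying by $\mu/\sigma^2=\Theta(d)$ and by $\Exp|f_h'(Z_n)|=O(d)$, contributes $O(1/n)$, which is harmless but must be accounted for.

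Second, for the quadratic term you only invoke that the coupling time has bounded moments uniformly in $d$. That is not sharp enough to recover the stated exponent on $d$: the paper uses that $V_n^s=V_n$ with probability at least $p=(d-2)/d$ (Proposition~\ref{prop:sbc_tail1}), so every absolute moment of $V_n^s-V_n$ is $O(1/d)$, not merely $O(1)$. This extra factor of $d^{-1/\alpha^*}$ is exactly what brings the bound down to $d^{\frac32(1+1/\alpha)}/\sqrt n$; without it you would overshoot by $d^{1-1/\alpha}$. Also, rather than your multiplicative correction for replacing $Z_n^*$ by $Z_n$, the paper uses the simpler observation that $(1+|w|)h(w)$ is monotone in $|w|$, so $|f_h''(Z_n^*)|$ is bounded by the sum of the corresponding expressions at $Z_n$ and at $Z_n^s=(V_n^s-\mu)/\sigma$; this avoids the extra exponential factors entirely.
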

\begin{proof}
We consider the size-bias coupling considered in the appendix, and the only probability space under consideration in this proof will be the one constructed there.
We start from Lemma~\ref{lem:sizebiaslemma}, by virtue of which we need only bound
\[
E_1 \Def\Exp
\left|
f_h'(Z_n)
\left(
1 - \frac{\mu}{\sigma^2}
\Exp \left[
V_n^s - V_n \,\middle\vert\, \nvsteinsigma
\right]
\right)
\right|,
\quad\text{where }\nvsteinsigma\Def\sigma(X_1,\ldots,X_n),
\]
and 
\[
E_2 \Def
\frac{\mu}{2\sigma^3}
\Exp
\left|
f_h''\left(Z_n^*\right) (V_n^s - V_n)^2
\right|.
\]
For $E_1,$ it will turn out that the expectation of $V_n^s - V_n$ is not exactly $\sigma^2/\mu$.  On the other hand, by Proposition~\ref{prop:sbc_expectation}, we have an exact expression for $\Exp [V_n^s - V_n]$.  We note that, in the notation of that
section, $\lambda = -1/(d-1)$ and that $p = (d-2)/d$.  It follows that
\[
\Exp [V_n^s - V_n]
= \frac{2(d-2)}{(d-1)^2}
+ \frac{4(d-1)}{d^3n}\left( 1 - \left(\frac{-1}{d-1}\right)^n\right)
= \frac{\sigma^2}{\mu} + O\left(\frac{1}{d^2n}\right).
\]
From Lemmas~\ref{lem:nvsteingrowth} and \ref{lem:nvgaussianmomentspi} we have that 
\[
\Exp |f_h'(Z_n)| = O\left( \Exp \left[\exp(Z_n^2/d)\right]\right) = O(d).
\]
Applying this to $E_1,$ we conclude that
\[
E_1 = 
\frac{\mu}{\sigma^2}
\Exp
\left|
f_h'(Z_n)
\left(
\Exp \left[
V_n^s - V_n
\right] - 
\Exp \left[
V_n^s - V_n ~\middle\vert~ \nvsteinsigma
\right]
\right)
\right| + O\left(\frac{1}{n}\right).
\]

From Corollary~\ref{cor:sbc_tail2}, we have a uniform Gaussian tail bound on 
\[
\Exp \left[
V_n^s - V_n
\right] - 
\Exp \left[
V_n^s - V_n ~\middle\vert~ \nvsteinsigma
\right].
\]
In the notation of that corollary, we have  $\ccoeff = \tfrac{1}{d-1}$.
If $d=3$, then $\delta=2$ and $\gamma=\tfrac{3}{4}$, and if $d>3$, then
$\delta=1$ and $\gamma=\tfrac{2}{3}$.
Thus the corollary implies that there is an absolute constant $\Cl{c:mtail}>0$ so that for any $t \geq 0,$
\[
\Pr \left(
\left|
\Exp\left[ V_n^s - V_n \right]
-
\Exp \left[V_n^s - V_n ~\middle\vert~ \nvsteinsigma\right]
\right| \geq \frac{t}{\sqrt{n}}
\right) \leq 2 
\exp \left(
-\Cr{c:mtail}t^2
\right).
\]
In particular, this implies that for each fixed $t > 0,$
\[
\Exp
\left|
\Exp\left[ V_n^s - V_n \right]
-
\Exp \left[V_n^s - V_n ~\middle\vert~ \nvsteinsigma\right]
\right|^{t} = O(n^{-t/2}).
\]
By applying H\"older's inequality for $1 < \alpha < 8/\sqrt{2}$ we get
\begin{multline*}
\Exp
\left|
f_h'(Z_n)
\left(
\Exp \left[
V_n^s - V_n
\right] - 
\Exp \left[
V_n^s - V_n ~\middle\vert~ \nvsteinsigma
\right]
\right)
\right| \\
\leq\left( 
\Exp \left| f_h'(Z_n) \right|^{\alpha}
\right)^{1/\alpha}
\left( 
\Exp \left|
\Exp \left[V_n^s - V_n\right] - 
\Exp \left[V_n^s - V_n ~\middle\vert~ \nvsteinsigma\right]
 \right|^{\alpha^*}
\right)^{1/\alpha^*}
=O\left(\frac{d^{1/\alpha}}{\sqrt{n}}\right).
\end{multline*}
Note that for $\alpha > 2/\sqrt{3},$ this only holds for $d \geq d_0$ for some $d_0$, while for $\alpha < 2/\sqrt{3}$, this holds for all $d \geq 3.$  Thus for any $\alpha < 8/\sqrt{2},$ we may choose the implied constants sufficiently large that the inequality holds for all $d \geq 3.$  Hence,
\[
E_1 = O\left(\frac{d^{1+1/\alpha}}{\sqrt{n}}\right).
\]
We now turn to bounding $E_2,$ which we recall is given by
\[
E_2 =
\frac{\mu}{2\sigma^3}
\Exp
\left|
f_h''\left(Z_n^*\right) (V_n^s - V_n)^2
\right|.
\]
From Lemma~\ref{lem:nvsteingrowth}, we have that 
\(
f_h''\left(Z_n^*\right)
=O( (1+|Z_n^*|)h(Z_n^*) ).
\)
This is a monotone upper bound, and hence it suffices to bound
\[
E_3 \Def
\frac{\mu}{2\sigma^3}
\Exp
\left|
(1+|Z_n|)h(Z_n)(V_n^s - V_n)^2
\right| 
\]
and
\[
E_4 \Def
\frac{\mu}{2\sigma^3}
\Exp
\left|
(1+|Z_n^s|)h(Z_n^s)(V_n^s - V_n)^2
\right|,
\]
where we let $Z_n^s = (V_n^s - \mu)/\sigma$.  In either case, we proceed along the usual line of applying H\"older's inequality for $1 < \alpha < 8/\sqrt{2}$.  We show the bound for $E_4,$ as the bound for $E_3$ follows from a nearly identical argument. 
Thus we have
\[
E_4 \leq 
\frac{\mu}{2\sigma^3}
\left(
\Exp\left[
(1+|Z_n^s|)h(Z_n^s)\right]^\alpha
\right)^{1/\alpha}
\left(
\Exp\left|
V_n^s - V_n
\right|^{2\alpha^*}
\right)^{1/\alpha^*}.
\]
By Proposition~\ref{prop:sbc_tail1}, the variable is nonzero with probability at most $O(1/d),$ and conditional on being nonzero, it has a subgeometric tail that is uniform in $n$ and $d$.
Therefore, all the absolute moments of $V_n^s - V_n$ are of order $O(1/d)$.  Meanwhile from the definition of the size-bias distribution, we have that 
\begin{align*}
\Exp\left[
(1+|Z_n^s|)h(Z_n^s)\right]^\alpha
&=
\Exp\frac{V_n}{\mu}\left[
(1+|Z_n|)h(Z_n)\right]^\alpha \\
&\leq
\Exp
(1+\tfrac{|Z_n|}{\mu})\left[(1+|Z_n|)
h(Z_n)\right]^\alpha \\
&=
O\left( \left(1+\tfrac{\sqrt{d}}{n}\right) d^{\alpha +1/2}\right).
\end{align*}
Using that $\mu/\sigma^3 = O(d^{3/2}/\sqrt{n})$ and that $\sqrt{d} \leq n,$ we have that
\[
E_4 = O\left(\frac{d^{5/2+ 1/2\alpha - 1/\alpha^*}}{\sqrt{n}}\right)
= O\left(\frac{d^{\frac{3}{2}(1+1/\alpha)}}{\sqrt{n}}\right).\qedhere
\]
\end{proof}

\subsection{Summary}

These lemmas taken together prove
the needed variance bound.  We will recapitulate them to prove Proposition~\ref{prop:Variance}.
\begin{proof}[Proof of Proposition~\ref{prop:Variance}]
We start with 
\eqref{eq:nvlastexact}.
\begin{equation*}
\frac{\Pr_{\phammd}\left[E\right]}
{
\Pr_{\pmd}\left[E\right]
}=
\left(
\Exp_\phi
\frac{\dff{nd}{n}}{d^n}
\frac{(d-2)^{V_n}}{\dff{(d-2)n}{V_n}}
\frac{2^{n-V_n}}{\dff{2n-1}{n-V_n}}
\right)\left(1 + \left(\frac{-1}{(d-1)}\right)^{n}\right).
\end{equation*}
We apply Stirling's approximation and bound away the subexponential factors
using Lemma~\ref{lem:nvidealization}, so that
\[
\frac{\Pr_{\phammd}\left[E\right]}
{
\Pr_{\pmd}\left[E\right]
}
\leq 
\sqrt{\frac{d}{d-2}}
\Exp_\phi\left[
\exp\left(\frac{Z_n^2}{d}\right)\right] + O\left(\frac{1}{\sqrt{n}}\right).
\]
We then change the measure in the expectation from $\nvec$ to the Markov chain measure $\nvmcpi,$ using Lemma~\ref{lem:nvmcpi_replacement}, to get
\[
\frac{\Pr_{\phammd}\left[E\right]}
{
\Pr_{\pmd}\left[E\right]
}
\leq 
\sqrt{\frac{d}{d-2}}
\Exp_\nvmcpi \left[\exp\left( \frac{Z_n^2}{d}\right)\right]
+O\left(\sqrt{\frac{d}{n}}\right).
\]
Finally, we apply Stein's method machinery to approximate the expectation by one with respect to Gaussian measure to conclude
\[
\frac{\Pr_{\phammd}\left[E\right]}
{
\Pr_{\pmd}\left[E\right]
}
\leq
{\frac{d}{d-2}}
+O\left(\frac{d^{\frac{3}{2}(1+1/\alpha)}}{\sqrt{n}}\right).\qedhere
\]
\end{proof}

\section{Main results}
\label{sec:proof}
\newcommand{\Simple}{\textsc{Simple}}

We will now turn to proving our main results.   
We start with a few definitions. Recall that $f_n=H_n/\E_{\pmd} H_n$ is the Radon-Nikodym
derivative of $\phammd$ with respect to $\pmd$, as explained on p.~\pageref{page:radonnikodym}.
For any 
$x \in \left\{ 0,1 \right\}^{\fullcycspace},$
we define
\begin{align}
  \rndcyc(x) \Def \E_{\pmd} [f_n \mid \cycprocess = x],
  \label{eq:rndcyc}
\end{align}
recalling that $\cycprocess$ is the process of indicators defined in Section~\ref{subsec:quant_est}.
It follows that for any $x \in \left\{ 0,1 \right\}^{\fullcycspace},$
\begin{align*}
  \frac{\P_{\phammd} [\cycprocess=x]}{\P_{\pmd}[\cycprocess=x]} &= \rndcyc(x).
\end{align*}
In other words, $\rndcyc$ can be viewed as the Radon-Nikodym derivative between
the push-forwards of $\phammd$ and $\pmd$ under $\cycprocess.$
We let $\rndpf[r][n]$ refer to the Radon-Nikodym derivative between the Poisson laws of $\phamcycpf$ and $\pcycpf,$  defined in Section~\ref{subsec:quant_est}.  This Radon-Nikodym derivative
has an explicit form that we will need to use.
  Let $x=(x_\alpha,\,\alpha\in\fullcycspace)$, and let
  $c_k=\sum_{\alpha\in\cycspace[k]} x_\alpha$, the number of $k$-cycles
  represented by $x$.
  Recalling \eqref{eq:pcycmean} and \eqref{eq:phamcycmean},
\begin{equation}
  \begin{aligned}
    \rndpf(x) &= \prod_{\alpha\in\fullcycspace}e^{\pcycmean-\phamcycmean}
      \left(\frac{\phamcycmean}{\pcycmean}\right)^{x_\alpha}\\
      &= \prod_{\substack{1\leq k\leq r\\\text{$k$ odd}}}
        \exp\left( \frac{\ff{n}{k}}{kn^k}\right)
          \left(1-\frac{2}{(d-1)^k}\right)^{c_k} \\
      &= \prod_{\substack{1\leq k\leq r\\\text{$k$ odd}}}
        \exp\left(\frac{1}{k} +O\Big(\frac{k}{n}\Big)\right) 
          \left(1-\frac{2}{(d-1)^k}\right)^{c_k}\\
      &= e^{O(r^2/n)} \prod_{\substack{1\leq k\leq r\\\text{$k$ odd}}}
        e^{1/k}
          \left(1-\frac{2}{(d-1)^k}\right)^{c_k}.  
  \end{aligned}
\label{eq:rndpf}
\end{equation}
Note that $\rndpf$ is always positive for $d \geq 4$.  For $d=3,$ we have $\rndpf=0$ precisely when $c_1 > 0.$

For any $r,$ define the limiting second moment expression
\begin{equation}
\label{eq:varDefinition}
\log\limvar \Def \sum_{k=1}^r \frac{\left( (-1)^k - 1 \right)^2}{2k(d-1)^k}.
\end{equation}
Our strategy in this section is to show that the second moment of 
$\rndcyc$ is approximately $\limvar$ (see Lemma~\ref{lem:cvb}).  
This is a truncation of a convergent series for
\begin{equation}
\log\limvar[\infty] \Def \sum_{k=1}^\infty \frac{\left( (-1)^k - 1 \right)^2}{2k(d-1)^k}=\log\left( \frac{d}{d-2}\right).
\end{equation}
Our approximation for $\rndcyc$ in terms of $\limvar$ will lead to an approximation of $f_n$ in terms
of $\limvar[\infty]$, which is the contents of Proposition~\ref{prop:Variance}.

  As we will need to condition on graphs being simple, we define the pairing event
\(
\Simple = \{\text{$P$ simple}\}.
\)
Applying Proposition~\ref{prop:prob0} with $r=2$, we have that
\begin{align}
\label{eq:pmdsimple}
\Pr_{\pmd}[\Simple] &= \exp\left( -\frac{d-1}{2} - \frac{(d-1)^2}{4} + O\left(\frac{(\log n)^2
          d^3}{n}\right)\right), \\
\label{eq:phammdsimple}
\Pr_{\phammd}[\Simple] &=\exp\left( -\frac{d-3}{2} - \frac{(d-1)^2}{4} + O\left(\frac{(\log n)^2
          d^3}{n}\right)\right).
\end{align}
Equation~\eqref{eq:pmdsimple} is also obtained in \cite{MW91} without the $(\log n)^2$ in the error term.

\begin{lemma}
\label{lem:cvb}
We set $\cvberror$ to be 
\[
\cvberror \Def \frac{\left[d + \Cr{C:mpa}(\log n)^2\right](d-1)^{2r-1}}{n}.
\]
There is an absolute constant $\Cl{C:cvb}$ so that for $r\geq 4$ 
\[
  \Exp_{\pmd} [\rndcyc^2\circ \cycprocess]
\geq
\limvar\left( 1 - \Cr{C:cvb}\cvberror\right).
\]
\end{lemma}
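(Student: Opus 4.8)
The plan is to use the observation from Section~\ref{sec:proof} that $\rndcyc$ is the Radon--Nikodym derivative between the pushforwards of $\phammd$ and $\pmd$ under $\cycprocess$, which turns the target into a chi-square type quantity:
\[
  \Exp_{\pmd}\bigl[\rndcyc^2\circ\cycprocess\bigr]
  = \sum_{x\in\{0,1\}^{\fullcycspace}}\frac{\P_{\phammd}[\cycprocess=x]^2}{\P_{\pmd}[\cycprocess=x]},
\]
with the convention $0/0=0$. Since every term is nonnegative, I would first discard all $x$ that are not strictly $(\log n)$-neat, which only decreases the sum. On the remaining terms Proposition~\ref{prop:multpoiapprox} applies: writing $\delta=\Cr{C:mpa}(\log n)^2(d-1)^{2r-1}/n$, it gives $\P_{\pmd}[\cycprocess=x]\le(1+\delta)\P[\pcycpf=x]$ and $\P_{\phammd}[\cycprocess=x]\ge(1-\delta)\P[\phamcycpf=x]$, so (using $\delta<\tfrac12$, which holds under the hypothesis of that proposition)
\[
  \Exp_{\pmd}\bigl[\rndcyc^2\circ\cycprocess\bigr]
  \ge (1-3\delta)\sum_{\text{$x$ strictly $(\log n)$-neat}}\frac{\P[\phamcycpf=x]^2}{\P[\pcycpf=x]}.
\]
When $\cvberror$ is not small the lemma is immediate, since $\Exp_{\pmd}[\rndcyc\circ\cycprocess]=\Exp_{\pmd}f_n=1$ forces $\Exp_{\pmd}[\rndcyc^2\circ\cycprocess]\ge1$ while $\limvar\le d/(d-2)\le 3$; so I may assume everything is small.

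Next I would evaluate the \emph{unrestricted} Poisson sum $\Exp_{\pcycpf}[\rndpf^2]$ exactly. Because $\pcycpf$ and $\phamcycpf$ are product Poisson fields, $\Exp_{\pcycpf}[\rndpf^2]=\prod_{\alpha}e^{(\phamcycmean[\alpha]-\pcycmean[\alpha])^2/\pcycmean[\alpha]}$ by the elementary identity $\Exp_{\mathrm{Poi}(m)}\bigl[(p_{m'}/p_m)^2\bigr]=e^{(m'-m)^2/m}$. From \eqref{eq:pcycmean} and \eqref{eq:phamcycmean}, $(\phamcycmean[\alpha]-\pcycmean[\alpha])^2/\pcycmean[\alpha]=((-1)^k-1)^2/\bigl((nd)^k(d-1)^{2k}\bigr)$ for $\alpha\in\cycspace[k]$, and summing over the $|\cycspace[k]|=\ff nk(d(d-1))^k/2k$ such $\alpha$ gives $\log\Exp_{\pcycpf}[\rndpf^2]=\sum_{k=1}^{r}\frac{\ff nk}{n^k}\cdot\frac{((-1)^k-1)^2}{2k(d-1)^k}$. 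Comparing with \eqref{eq:varDefinition} and using $1\ge\ff nk/n^k\ge e^{-O(k^2/n)}$ from Lemma~\ref{lem:ffstirling}, the two exponents differ by $O(1/n)$ (the odd-$k$ terms are summable against $k/(d-1)^k$), so $\Exp_{\pcycpf}[\rndpf^2]=\limvar\bigl(1-O(1/n)\bigr)=\limvar\bigl(1-O(\cvberror)\bigr)$.

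The last task, which I expect to be the real obstacle, is to show the strictly-neat sum differs from $\Exp_{\pcycpf}[\rndpf^2]$ only by an $O(\cvberror)$ fraction; a uniform bound on $\rndpf$ over the bad set, or Cauchy--Schwarz against $\P[\phamcycpf\text{ bad}]$, is too lossy by powers of $d-1$ and $\log n$. Instead I would observe that $x\mapsto\bigl(\P[\phamcycpf=x]^2/\P[\pcycpf=x]\bigr)/\Exp_{\pcycpf}[\rndpf^2]$ is itself a product Poisson law $\nu$, whose $\alpha$-th coordinate is $\mathrm{Poisson}\bigl(\phamcycmean[\alpha]^2/\pcycmean[\alpha]\bigr)$ --- the exponential-family fact that this square-ratio tilt of a Poisson is again a Poisson. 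Since $\phamcycmean[\alpha]^2/\pcycmean[\alpha]\le\pcycmean[\alpha]$ for every $\alpha$ (the two means agree when the cycle length is even, and the odd-length correction only decreases $\phamcycmean[\alpha]$), $\nu$ is coordinatewise dominated by $\pcycpf$; and because the event that $x$ is not strictly $(\log n)$-neat is increasing in the coordinates, the $\nu$-mass of it is at most its $\pcycpf$-mass. That in turn is $O\bigl((d-1)^{2r}/n\bigr)$ by the second-moment and modified-log-Sobolev estimates underlying Proposition~\ref{prop:strictlneat}, applied to the Poisson field exactly as in the proof of Proposition~\ref{prop:prob0} (the chance some coordinate exceeds $1$ contributes a further $O(1/n)$). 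Since $(d-1)^{2r}/n\le d(d-1)^{2r-1}/n\le\cvberror$ and $\delta\le\cvberror$, assembling the three displays gives $\Exp_{\pmd}[\rndcyc^2\circ\cycprocess]\ge\limvar(1-O(\cvberror))$, which is the lemma for $\Cr{C:cvb}$ chosen large enough. The genuinely delicate points are the identification of the tilted law $\nu$ and the verification that its means do not exceed those of $\pcycpf$; everything else is Stirling-type bookkeeping already packaged in Lemma~\ref{lem:ffstirling} and Proposition~\ref{prop:strictlneat}.
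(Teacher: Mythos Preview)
Your proposal is correct and follows essentially the same route as the paper: both restrict to strictly $(\log n)$-neat $x$, invoke Proposition~\ref{prop:multpoiapprox} to pass to the Poisson quantities, and then recognize that $\P[\phamcycpf=x]^2/\P[\pcycpf=x]$ is, up to the normalizing constant $\exp\bigl(\sum_\alpha(\phamcycmean-\pcycmean)^2/\pcycmean\bigr)=\limvar(1+O(\cvberror))$, the product Poisson law with means $\phamcycmean^2/\pcycmean$ (called $\cvbpf$ in the paper), whose probability of being not strictly neat is $O(\cvberror)$. The only cosmetic difference is that you invoke the inequality $\phamcycmean^2/\pcycmean\le\pcycmean$ via stochastic domination to transfer the not-neat bound to $\pcycpf$, whereas the paper uses the same inequality directly inside the log-Sobolev and overlap estimates for $\cvbpf$ (equations~\eqref{eq:cvbTail} and~\eqref{eq:cvbOverlap}).
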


\begin{proof}
We may assume that $\cvberror \leq \tfrac12,$ for by adjusting $\Cr{C:cvb}$ to be at least $2$, we may then make the bound trivial.  Further we take $\lambda = \log n,$ so that
for any strictly $\lambda$-neat cycle space point $x$, Proposition~\ref{prop:multpoiapprox} implies that
\[
\rndcyc(x)^2
\Pr_{\pmd} \left[ \cycprocess = x \right] 
= \frac{\Pr_{\phammd} \left[ \cycprocess = x \right] ^2}
  {\Pr_{\pmd} \left[ \cycprocess = x \right] }
\geq \rndpf[r][n](x)^2
\Pr\left[ \pcycpf = x \right]
\tfrac{\left(1-\cvberror\right)^2}
{\left(1+\cvberror\right)}.
\]
By ignoring the non-neat cycle space points, we can immediately bound
\begin{equation}
\label{eq:cvbOne}
\Exp_{\pmd} \left[\rndcyc^2 \circ \cycprocess \right]
\geq
\Exp\left[\rndpf[r][n]^2(\pcycpf)
\one{\pcycpf~\text{strictly $\lambda$-neat}} 
\right]
(1-O(\cvbeps))
.
\end{equation}

To complete the lower bound, we need to estimate the contribution of the non-neat cycles to right hand side, and so we estimate 
\(
\Exp\left[\rndpf[r][n]^2(\pcycpf)
\one{\pcycpf~\text{not strictly $\lambda$-neat}} 
\right]
\) from above.

The key to making this estimate is to realize that $\rndpf[r][n]^2(x)$ is a rescaled Radon-Nikodym for yet another Poisson law. 
Let $\cvbpf=(\cvbvar,\,\alpha\in\fullcycspace)$ be a vector whose coordinates are independent Poisson random variables with $\E \cvbvar=\tfrac{\phamcycmean^2}{\pcycmean}$ for $\alpha\in\fullcycspace$.  It is easily checked that for any cycle space point $x$
\[
\rndpf[r][n]^2(x)
\Pr \left[ \pcycpf = x \right] 
=
\exp\biggl(
\sum_{ \substack{\alpha \in \cycspace \\ k \leq r}}
\frac{(\phamcycmean - \pcycmean)^2}{\pcycmean}
\biggr)
\Pr \left[ \cvbpf = x \right].
\]  

Further, we note that this renormalization constant is precisely 
\begin{align}
\label{eq:cvbTwo}
\Exp
\left[
\rndpf[r][n]^2(\pcycpf)
\right]
&=
\exp\biggl(
\sum_{k=1}^r
\frac{\left((-1)^k-1\right)^2\ff{n}{k}}{2k(d-1)^kn^k}
\biggr) \\
&=\limvar(1-O(r^2/n))=\limvar(1-O(\cvbeps)). \notag
\end{align}
Therefore, we have reduced the problem to estimating 
\(
\Pr \left[
\text{$\cvbpf$ not strictly $\lambda$-neat}
\right]
\).
We first apply Lemma~\ref{lem:PoissonTail} to bound the probability of $\cvbpf$ having too many cycles.  Specifically, we define 
\[
F(x) \Def\sum_{ \substack{\alpha \in \cycspace \\ k \leq r}} 
|\alpha| \cvbvar.
\]
We note that $\|\nabla_\alpha F\| = |\alpha|,$ that $\tfrac{\phamcycmean^2}{\pcycmean} \leq \pcycmean$ and hence that
\begin{align*}
\sum_{\substack{\alpha \in \cycspace \\ k \leq r}} 
|\alpha|^2 \tfrac{\phamcycmean^2}{\pcycmean}
&\leq
\sum_{\substack{\alpha \in \cycspace \\ k \leq r}} 
|\alpha|^2 \pcycmean \\
&=
\sum_{k=1}^r
\frac{k^2\ff{n}{k}(\ff{d}{2})^k}{2k} \frac{1}{(nd)^k}=O(r(d-1)^r).
\end{align*}
By applying Lemma~\ref{lem:PoissonTail}, we conclude that for $t>0,$
\[
\Pr \left[ F(\cvbpf) \geq \expect F(\cvbpf) + t\right] \leq
\exp\left(
-\frac{t}{2r}\log\left(1+\frac{t}{\Cl{c:cvb1}(d-1)^r}\right)
\right),
\]
for some absolute constant $\Cr{c:cvb1}$.  Finally we bound the expectation of $F(\cvbpf)$ with
\[
\Exp F(\cvbpf)
=
\sum_{\substack{\alpha \in \cycspace \\ k \leq r}} 
|\alpha| \tfrac{\phamcycmean^2}{\pcycmean}
\leq
\sum_{k=1}^r
\frac{k\ff{n}{k}\ff{d}{2}^k}{2k} \frac{1}{(nd)^k}\leq (d-1)^r.
\]
Thus we conclude that 
\begin{equation}
\label{eq:cvbTail}
\Pr \left[  F(\cvbpf) \geq \lambda (d-1)^r\right]
\leq\frac{1}{\Cl{c:cvb2}}
\exp\left(
-\Cr{c:cvb2}\left(\frac{(d-1)^r}{r} \lambda \log\lambda\right)
\right)
\end{equation}
for some absolute constant $\Cr{c:cvb2}$.
It remains to estimate the probability under $\cvbpf$ that two cycles share a vertex.  There are $\ff{n}{k-1}\ff{d}{2}^{k}/2k$ many $\alpha\in\cycspace$ that use any given vertex.  Thus,
taking a union bound over all $1\leq k \leq r$ and all $1 \leq l \leq r,$
\begin{align}
\notag
\Pr\left[
\exists~\alpha\in\cycspace[k],\beta\in\cycspace[l]\text{ sharing a vertex so that }\cvbvar[\alpha] = \cvbvar[\beta] = 1
\right]
\hspace{-3in} & \hspace{3in} \\
\label{eq:cvbOverlap}
&
=\sum_{1 \leq k,l \leq r}
n
\frac{\ff{n}{k-1}\ff{d}{2}^{k}}{2k}
\frac{\ff{n}{l-1}\ff{d}{2}^{l}}{2l}
\frac{1}{(nd)^k}
\frac{1}{(nd)^l}
\leq \frac{(d-1)^{2r}}{n},
\end{align}
where we have used that $\Pr[ \cvbvar = 1] \leq \Exp \cvbvar \leq \frac{1}{(nd)^{|\alpha|}}$. By combining equations~\eqref{eq:cvbTail} and \eqref{eq:cvbOverlap}, we conclude that
\begin{equation}
\label{eq:cvbNotNeat}
\Pr\left[
\text{$\cvbpf$ not strictly $\lambda$-neat}
\right]
\leq \frac{(d-1)^{2r}}{n}
+ \frac{1}{\Cr{c:cvb2}}\exp\left(
-\Cr{c:cvb2}\frac{(d-1)^r}{r} \lambda \log\lambda
\right).
\end{equation}
By applying this bound, we conclude that
\begin{align}
\notag
\Exp
\left[
\rndpf[r][n]^2(\pcycpf)
\one{\pcycpf~\text{not strictly $\lambda$-neat}} 
\right]
\hspace{-1in} & \hspace{1in}
\\
\notag
&=
\limvar(1+O(\cvbeps))
\Pr \left[
\text{$\cvbpf$ not strictly $\lambda$-neat}
\right] \\ 
\label{eq:cvbNotNeatVar}
&=
\limvar
O(\cvbeps).
\end{align}
We now combine \eqref{eq:cvbOne}, \eqref{eq:cvbTwo}, and \eqref{eq:cvbNotNeatVar} to derive the lower bound
\begin{equation*}
  \Exp_{\pmd} \left[\rndcyc^2\circ \cycprocess \right]
\geq
\limvar(1-O(\cvbeps)),
\end{equation*}
which completes the proof.
\end{proof}

The lower bound on the conditional variance combined with the upper bound on the variance (Proposition~\ref{prop:Variance}) shows that $\rnd$ and $\rndcyc$ are close in $L^2(\pmd)$.
\begin{lemma}
\label{lem:rndl2}
For every $\alpha$ with $1 < \alpha < 8/\sqrt{2}$ there is a constant $M_\alpha$ so that for all $3 \leq d \leq \sqrt{n}/\log n$ and all $r \geq 4,$
\[
  \Exp_{\pmd} \left| \rnd - \rndcyc \circ \cycprocess \right|^2 \leq M_\alpha \left( (d-1)^{-r-1} + \cvberror + d^{\frac{3}{2}(1+1/\alpha)}/\sqrt{n}\right).
\]
\end{lemma}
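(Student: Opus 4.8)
The plan is to exploit the elementary fact that, by the definition \eqref{eq:rndcyc}, $\rndcyc\circ\cycprocess = \E_{\pmd}[\rnd\mid\cycprocess]$ is a genuine conditional expectation, so that the orthogonality of conditional expectation in $L^2(\pmd)$ gives
\[
\Exp_{\pmd}\bigl|\rnd - \rndcyc\circ\cycprocess\bigr|^2 = \Exp_{\pmd}[\rnd^2] - \Exp_{\pmd}[\rndcyc^2\circ\cycprocess].
\]
Into the first term I would substitute the upper bound from Proposition~\ref{prop:Variance}, and into the second the lower bound from Lemma~\ref{lem:cvb}, obtaining
\[
\Exp_{\pmd}\bigl|\rnd - \rndcyc\circ\cycprocess\bigr|^2 \leq \left(\frac{d}{d-2} - \limvar\right) + \Cr{C:cvb}\,\limvar\,\cvberror + M_\alpha\frac{d^{\frac32(1+1/\alpha)}}{\sqrt n}.
\]
Since $\limvar \leq \limvar[\infty] = \frac{d}{d-2} \leq 3$ for $d\geq 3$, the middle term is $O(\cvberror)$, so it remains only to control the truncation gap $\frac{d}{d-2}-\limvar$.

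For that, I would compare the definitions of $\limvar$ and $\limvar[\infty]$ directly: since $\log\limvar[\infty] = \log\frac{d}{d-2}$ and $\log\limvar$ is the truncation of this same series at $k=r$,
\[
\log\frac{d}{d-2} - \log\limvar = \sum_{k>r}\frac{\bigl((-1)^k-1\bigr)^2}{2k(d-1)^k} = 2\!\!\sum_{\substack{k>r\\ k\text{ odd}}}\!\frac{1}{k(d-1)^k} = O\!\left(\frac{(d-1)^{-r-1}}{r}\right),
\]
the last bound using that for $d\geq 3$ the geometric ratio $(d-1)^{-1}$ is at most $1/2$. Exponentiating and again using $\limvar\leq 3$ yields $\frac{d}{d-2}-\limvar = O\bigl((d-1)^{-r-1}\bigr)$. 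Feeding this back in and enlarging $M_\alpha$ to absorb the various absolute constants produces the stated bound $M_\alpha\bigl((d-1)^{-r-1} + \cvberror + d^{\frac32(1+1/\alpha)}/\sqrt n\bigr)$.

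Finally I would dispose of the degenerate parameter range. When the hypothesis of Proposition~\ref{prop:multpoiapprox} fails, i.e.\ when $\Cr{C:mpa}(\log n)^2(d-1)^{2r-1}\geq n/2$, one has $\cvberror\geq \tfrac12$, and then the claim is trivial: $\Exp_{\pmd}|\rnd-\rndcyc\circ\cycprocess|^2\leq\Exp_{\pmd}[\rnd^2]\leq 3 + M_\alpha d^{\frac32(1+1/\alpha)}/\sqrt n$ by Proposition~\ref{prop:Variance} (valid since $3\leq d\leq\sqrt n/\log n$), and the bounded term $3$ is absorbed into $M_\alpha\cvberror$. I do not expect a genuine obstacle in this proof — all the analytic content already resides in Proposition~\ref{prop:Variance} and Lemma~\ref{lem:cvb} — so the only thing demanding care is the bookkeeping: checking that the range $3\leq d\leq\sqrt n/\log n$ is precisely where Proposition~\ref{prop:Variance} applies, handling the degenerate case above, and tracking which constant absorbs the $O(1)$ factor $\limvar$ and the truncation gap.
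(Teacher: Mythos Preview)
Your proposal is correct and follows exactly the same approach as the paper: the orthogonality identity $\Exp_{\pmd}|\rnd-\rndcyc\circ\cycprocess|^2 = \Exp_{\pmd}[\rnd^2]-\Exp_{\pmd}[\rndcyc^2\circ\cycprocess]$, then Proposition~\ref{prop:Variance} for the first term, Lemma~\ref{lem:cvb} for the second, and finally the estimate $\limvar[\infty]-\limvar=O((d-1)^{-r-1})$ via the tail of the defining series. Your explicit treatment of the degenerate range $\cvberror\geq\tfrac12$ is a little more careful than the paper, which silently relies on Lemma~\ref{lem:cvb} already absorbing that case.
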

\begin{proof}
By orthogonality, we have that
\(
\Exp_{\pmd} \left| \rnd - \rndcyc \circ \cycprocess \right|^2 
=\Exp_{\pmd}
\left[ 
\rnd^2
-\rndcyc^2\circ \cycprocess
\right],
\)
and so by Proposition~\ref{prop:Variance} and Lemma~\ref{lem:cvb}, we have that for any $\alpha$ with $1 < \alpha < 8/\sqrt{2}$
\[
  \Exp_{\pmd} \left| \rnd- \rndcyc\circ \cycprocess \right|^2 
= \limvar[\infty] - \limvar + O(\cvberror + d^{\frac{3}{2}(1+1/\alpha)}/\sqrt{n}).
\] 
We note that there is some absolute constant $\Cl{c:analy}$ so that $0 \leq \log \limvar[\infty] - \log \limvar \leq \Cr{c:analy}(d-1)^{-r-1}$.  This in turn implies that $\limvar[\infty] - \limvar = O( (d-1)^{-r-1}),$ and hence we have completed the proof.
\end{proof}

We now develop estimates for $\rndcyc$ by comparing with the limiting Poisson structure.

\begin{lemma}
\label{lem:rndcyctail}
There is a constant $\Cl{C:cvb_small}$ so that for $\delta \geq \log r \geq \log 4,$ $\cvberror \leq \tfrac 12$ and $d \geq 4,$

\begin{align*}
  \Pr_{\pmd}\left[
\left|\log \rndcyc\circ\cycprocess\right| \geq \delta~\wedge~
\text{$\cycprocess$ $(\log n)$-neat}
\right]
\hspace{-1in}&\hspace{1in} \leq
\frac{1}{\Cr{C:cvb_small}}\exp\left(
-\Cr{C:cvb_small}d\delta\log\delta
\right).
\intertext{
In the case that $d=3,$ }
\Pr_{\pmd} \left[
\left|\log \rndcyc \circ \cycprocess\right| \geq \delta
~\wedge~\text{$\cycprocess$ $(\log n)$-neat}
~\wedge~ \Simple
\right] 
\hspace{-1in} 
& \hspace{ 1in} \leq
\frac{\Pr_{\pmd}[\Simple]}{\Cr{C:cvb_small}}\exp\left(
-\Cr{C:cvb_small}d\delta\log\delta
\right).
\end{align*}
\end{lemma}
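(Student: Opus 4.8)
The plan is to push the question into the Poisson world, where $\rndcyc$ is replaced by the explicit Radon--Nikodym derivative $\rndpf$ of \eqref{eq:rndpf}, and then to observe that $-\log\rndpf$ is, up to an additive term of size $O(\log r)$, a positively weighted sum of independent Poisson cycle counts, to which the modified log-Sobolev tail bound (Lemma~\ref{lem:PoissonTail}) applies with room to spare.

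\emph{Step 1: reduction to the Poisson field.} On the event that $\cycprocess$ is $(\log n)$-neat, Proposition~\ref{prop:multpoiapprox}, together with the identity $\rndcyc(x)=\P_{\phammd}[\cycprocess=x]/\P_{\pmd}[\cycprocess=x]$, gives $\rndcyc(x)=(1+O(\cvberror))\rndpf(x)$, and since $\cvberror\le\tfrac12$ this yields $\lvert\log\rndcyc(x)-\log\rndpf(x)\rvert=O(1)$ (in particular $\rndcyc(x)>0$). The same proposition replaces $\P_{\pmd}[\cycprocess=x]$ by $\P[\pcycpf=x]$ up to a factor $1+O(\cvberror)$, so summing over the admissible configurations reduces the $d\ge 4$ bound to $\P[\,\lvert\log\rndpf(\pcycpf)\rvert\ge\delta-O(1),\ \pcycpf\ (\log n)\text{-neat}\,]\le \tfrac1C\exp(-Cd\delta\log\delta)$. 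For $d=3$ the factor $(1-2/(d-1))^{c_1}$ in \eqref{eq:rndpf} vanishes when $c_1>0$, which is exactly why one conditions on $\Simple$: on $\Simple$ there are no $1$- or $2$-cycles, so $c_1=c_2=0$ and $\rndpf>0$ (and for $d=3$ a $(\log n)$-neat configuration is automatically strictly neat). Since the coordinates of $\pcycpf$ are independent and $\P[c_1(\pcycpf)=c_2(\pcycpf)=0]$ agrees with $\P_{\pmd}[\Simple]$ up to a $1+o(1)$ factor --- both being $e^{-\mu}$ with $\mu$ the mean number of cycles of length at most $2$, by Proposition~\ref{prop:prob0} and \eqref{eq:pmdsimple} --- the $d=3$ statement follows from the same estimate applied to the law of $\pcycpf$ conditioned on $\{c_1=c_2=0\}$, which is again a product of independent Poissons with unchanged means for $k\ge 3$.

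\emph{Step 2: the Poisson tail.} Write $c_k(x)$ for the number of $k$-cycles in $x$ and, for odd $k$, $w_k:=-\log(1-2/(d-1)^k)>0$. By \eqref{eq:rndpf}, $\log\rndpf(x)=A-F(x)$, where $A=O(r^2/n)+\sum_{k\le r,\ k\text{ odd}}1/k=O(\log r)$ and $F(x)=\sum_{\alpha\in\fullcycspace,\ \lvert\alpha\rvert\text{ odd}}w_{\lvert\alpha\rvert}x_\alpha\ge 0$ (with the $k=1$ term absent once we are on $\Simple$, in the $d=3$ case). Because $A$ and $\E F(\pcycpf)$ are both $O(\log r)\le O(\delta)$, for $\delta$ above an absolute constant the event $\lvert\log\rndpf(\pcycpf)\rvert\ge\delta-O(1)$ is contained in $\{\lvert F(\pcycpf)-\E F(\pcycpf)\rvert\ge\delta/2\}$, and the remaining range $\log 4\le\delta=O(1)$ is absorbed into $\Cr{C:cvb_small}$. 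Applying Lemma~\ref{lem:PoissonTail} to $F$ and to $-F$ (same gradient norms), with $\lVert\nabla_\alpha F\rVert=w_{\lvert\alpha\rvert}$, and using $w_k\le 4(d-1)^{-k}$ for $d\ge 5$ (the $w_k$ are bounded by an absolute constant for $d\in\{3,4\}$) together with the fact that the expected number of $k$-cycles in $\pcycpf$ is at most $(d-1)^k/2k$, one gets $M_1=\sum_\alpha w_{\lvert\alpha\rvert}^2\,\E Z_\alpha=O(1/d)$ and $M_2=\max_\alpha w_{\lvert\alpha\rvert}=O(1/d)$. Lemma~\ref{lem:PoissonTail} then yields a bound of the form $2\exp\!\bigl(-\tfrac{d\delta}{c_1}\log(1+c_2\delta)\bigr)\le 2\exp(-cd\delta\log\delta)$, valid for all $\delta\ge\log 4$, which is the required estimate.

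\emph{The main obstacle} is getting the \emph{sharp} rate $d\delta\log\delta$: this rests on the geometric decay of the weights $w_k$, which forces the log-Sobolev parameters $M_1,M_2$ to be $O(1/d)$ rather than $O(1)$; bounding $w_1$ crudely, or using all cycle lengths instead of only the odd ones, would lose the factor $d$. A secondary point needing care is the bookkeeping that makes the $\Simple$-conditioned $d=3$ bound emerge with the correct prefactor $\P_{\pmd}[\Simple]$ (this is where the independence of the coordinates of $\pcycpf$ is used), together with the passage from $\rndcyc$ to $\rndpf$ on configurations that are $(\log n)$-neat but not strictly so --- a situation that can only arise for $d\ge 4$ and is dispatched by re-running the $\pmd$-side ratio estimate of Proposition~\ref{prop:ratio} (which needs no vertex-disjointness) and the scrambling comparison in the proof of Proposition~\ref{prop:multpoiapprox} with $\lambda$ replaced by $d\lambda$, the bound $\cvberror\le\tfrac12$ keeping all resulting error factors under control.
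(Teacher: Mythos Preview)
Your proposal is correct and follows essentially the same route as the paper: pass from $\rndcyc$ to the explicit Poisson Radon--Nikodym derivative $\rndpf$ via Proposition~\ref{prop:multpoiapprox}, rewrite $-\log\rndpf$ as an $O(\log r)$ constant plus a nonnegative linear functional of the Poisson cycle counts with weights of order $(d-1)^{-k}$, and then apply Lemma~\ref{lem:PoissonTail} with $M_1,M_2=O(1/d)$. One small point worth tightening is the reduction ``$|\log\rndpf|\ge\delta-O(1)\Rightarrow|F-\E F|\ge\delta/2$'': the stated reason ``$A$ and $\E F$ are both $O(\log r)$'' is by itself borderline (the two $(\log r)/2$ contributions just barely fail the triangle inequality), and what actually makes it work is that $A-\E F=\E\log\rndpf=O(1)$ uniformly---the paper's proof is equally terse at this same step.
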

\begin{remark}
The same bound holds for $\phammd$ as well, and the proof is identical, but we will not need it.
\end{remark}
\begin{proof}
We will show the proof for $d \geq 4$.  The proof for $d=3$ follows by the same argument.
We apply the multiplicative Poisson bound (Proposition~\ref{prop:multpoiapprox})
 to get that
for any strictly $(\log n)$-neat cycle space point $x$, 
\[
\rndpf[r][n](x)
\tfrac{1+\cvberror}
{1-\cvberror}
\geq
\rndcyc(x)
\geq \rndpf[r][n](x)
\tfrac{1-\cvberror}
{1+\cvberror}.
\]
We may therefore bound 
\begin{multline*}
  \Pr_{\pmd} \left[
\left| \log \rndcyc \circ \cycprocess  \right| \geq \delta~\wedge~
\text{$\cycprocess$ strictly $(\log n)$-neat}
\right]
\\
\leq
\Pr_{\pmd} \left[
\log |\rndpf | \geq \delta - \log\tfrac{1+\cvberror}
{1-\cvberror}~\wedge~
\text{$\cycprocess$ strictly $(\log n)$-neat}
\right].
\end{multline*}
As this probability is restricted to strictly $(\log n)$-neat $x,$ the multiplicative Poisson bound implies that 
\begin{multline*}
  \Pr_{\pmd } \left[
\left| \log \rndpf\right| \geq \delta +\log\tfrac{1+\cvberror}
{1-\cvberror}
~\wedge~
\text{$\cycprocess$ strictly $(\log n)$-neat}
\right]
\leq
\Pr \left[
\left| \log \rndpf(\pcycpf)\right|
\geq \delta + \log\tfrac{1+\cvberror}
{1-\cvberror}
\right] 
(1+\cvberror).
\end{multline*}
As we have that $\cvberror \leq \tfrac12,$ it suffices to prove that there is an absolute constant $\Cl{c:cvb_s0} > 0$ so that for all $\delta \geq \log r \wedge 1$
\[
\Pr\left[
\left| \log \rndpf(\pcycpf)\right|
\geq \delta 
\right] 
\leq
\frac{1}{\Cr{c:cvb_s0}}\exp\left(
-\Cr{c:cvb_s0}d\delta\log\delta
\right)
\]
by adjusting constants.

For this purpose we note that the identity that for any cycle space 
point~$x$,
\[
-\log \rndpf(x)
=
\sum_{\substack{\alpha \in \cycspace,\\
1\leq k \leq r}}
\left[-x_\alpha \log \tfrac{\phamcycmean}{\pcycmean}-\pcycmean + \phamcycmean\right].
\]
There is an absolute constant $\Cr{C:cvb_taylor}>0$ so that for all $\alpha,$ $\frac{1}{\Cl{C:cvb_taylor}}(d-1)^{-|\alpha|} \leq -\log \tfrac{\phamcycmean}{\pcycmean} \leq \Cr{C:cvb_taylor} (d-1)^{-|\alpha|}$.  Thus, we define
\[
F(x) \Def
\sum_{\substack{\alpha \in \cycspace,\\ 1\leq k \leq r}}
\frac{x_\alpha}{ (d-1)^{|\alpha|}},
\]
for cycle space point $x$.  Note that the added constant is
\[
0 \leq 
\sum_{\substack{\alpha \in \cycspace,\\
1\leq k \leq r}}
\left[-\pcycmean + \phamcycmean\right]
\leq 
\sum_{k=1}^r
\frac{\ff{n}{k}\ff{d}{2}^k}{n^k\ff{d}{2}^k2k}=O\left(\log r \right).
\]
Also note that the expectation of $F(\pcycpf)$ is
\[
\Exp F(\pcycpf)
=
\sum_{\substack{\alpha \in \cycspace \\ k \leq r}} 
(d-1)^{-k}{\pcycmean}
\leq
\sum_{k=1}^r
\frac{\ff{n}{k}\ff{d}{2}^k}{(d-1)^{k}2k} \frac{1}{(nd)^k}=O(\log r).
\]
Combining these observations, we note that it suffices to prove that there is an absolute constant $\Cl{C:cvb_s}>0$ so that for all $\delta \geq \log r \wedge 1,$
\[
\Pr\left[
\left|
F(\pcycpf)
-
\Exp F(\pcycpf)\right| \geq \delta
\right]
\leq
\frac{1}{\Cr{C:cvb_s}}\exp\left(
-\Cr{C:cvb_s}d\delta\log\delta
\right),
\]
for by again adjusting constants, we may conclude the desired inequality.


This now follows from the modified log-Sobolev inequality bounds.  We note that $\|\nabla_\alpha F\| = (d-1)^{-|\alpha|}$  and hence that
\begin{align*}
\sum_{\substack{\alpha \in \cycspace \\ k \leq r}} 
(d-1)^{-2k}{\pcycmean}
&=
\sum_{k=1}^r
{\ff{n}{k}\ff{d}{2}^k}{(d-1)^{2k}2k} \frac{1}{(nd)^k}=O\left(\frac{1}{d}\right).
\end{align*}
By applying Lemma~\ref{lem:PoissonTail}, we conclude that for $t>0,$
\[
\Pr \left[ |F(\pcycpf) - \expect F(\pcycpf)| \geq t\right] \leq
2\exp\left(
-\frac{dt}{\Cl{C:cvb_tail}}\log\left(1+\frac{t}{\Cr{C:cvb_tail}}\right)
\right),
\]
for some absolute constant $\Cr{C:cvb_tail}>0$. 
\end{proof}

\begin{lemma}
\label{lem:b1}
There is an absolute constant $\Cl{c:b1}$ so that for any $\delta \geq \log r\geq \log 4,$ any $d \geq 4$ and any pairing event $A,$
\[
\Pr_{\pmd}[ A ]
\leq\Cr{c:b1}\left(
\cvberror
+
e^{2\delta}\Var_{\pmd} [\rnd -\rndcyc \circ \cycprocess]
+
\exp\left(
-\Cr{C:cvb_small}d\delta\log\delta
\right)
+e^{\delta}\Pr_{\phammd}[ A ]
\right).
\]
If $A \subseteq \Simple$ the same statement holds for $d=3$.
\end{lemma}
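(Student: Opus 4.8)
The plan is to run the classical small subgraph conditioning reduction quantitatively: on a ``good'' event the Radon--Nikodym derivative $\rnd=\hams/\E_{\pmd}\hams$ of $\phammd$ with respect to $\pmd$ is bounded below by a multiple of $e^{-\delta}$, so that $\Pr_{\pmd}$ restricted to that event is at most $O(e^{\delta})\Pr_{\phammd}$ by \eqref{eq:radonnikodymstatement}, while the good event fails only with probability $O(\cvberror + e^{2\delta}\Var_{\pmd}[\rnd-\rndcyc\circ\cycprocess] + \exp(-\Cr{C:cvb_small}d\delta\log\delta))$. First I would note that if $\cvberror>\tfrac12$ the asserted bound is trivial (take $\Cr{c:b1}\geq 2$), so we may assume $\cvberror\leq\tfrac12$ and hence apply Lemma~\ref{lem:rndcyctail}. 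Then I would set
\[
G = \bigl\{\text{$\cycprocess$ is $(\log n)$-neat}\bigr\}\cap
    \bigl\{\log\rndcyc\circ\cycprocess \geq -\delta\bigr\}\cap
    \bigl\{\rnd \geq \rndcyc\circ\cycprocess-\tfrac12 e^{-\delta}\bigr\}.
\]

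On $G$ one has $\rnd\geq e^{-\delta}-\tfrac12 e^{-\delta}=\tfrac12 e^{-\delta}$, whence
\[
\Pr_{\phammd}[A]=\E_{\pmd}[\rnd\,\one{A}]\geq\E_{\pmd}[\rnd\,\one{A\cap G}]\geq\tfrac12 e^{-\delta}\Pr_{\pmd}[A\cap G],
\]
i.e.\ $\Pr_{\pmd}[A\cap G]\leq 2e^{\delta}\Pr_{\phammd}[A]$. So it remains to bound $\Pr_{\pmd}[A\cap G^c]\leq\Pr_{\pmd}[G^c]$. Using $\delta\geq 0$ to absorb $\{\log\rndcyc\circ\cycprocess<-\delta\}$ into the second set,
\[
G^c\subseteq\bigl\{\text{$\cycprocess$ not $(\log n)$-neat}\bigr\}\cup\bigl(\{|\log\rndcyc\circ\cycprocess|\geq\delta\}\cap\{\text{$\cycprocess$ $(\log n)$-neat}\}\bigr)\cup\bigl\{|\rnd-\rndcyc\circ\cycprocess|>\tfrac12 e^{-\delta}\bigr\}.
\]
The first event has $\pmd$-probability $O((d-1)^{2r-1}/n)=O(\cvberror)$ by Proposition~\ref{prop:pmdneat}; the second has probability $\tfrac{1}{\Cr{C:cvb_small}}\exp(-\Cr{C:cvb_small}d\delta\log\delta)$ by Lemma~\ref{lem:rndcyctail}, whose hypotheses $\delta\geq\log r\geq\log 4$, $\cvberror\leq\tfrac12$, $d\geq 4$ are in force; and the third has probability at most $4e^{2\delta}\Var_{\pmd}[\rnd-\rndcyc\circ\cycprocess]$ by Chebyshev's inequality (note $\E_{\pmd}[\rnd-\rndcyc\circ\cycprocess]=0$ since $\rndcyc\circ\cycprocess=\E_{\pmd}[\rnd\mid\cycprocess]$). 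Summing and folding the numerical constants into $\Cr{c:b1}$ proves the statement for $d\geq 4$.

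For $d=3$ with $A\subseteq\Simple$ the argument is identical, with one change: $A\cap G^c\subseteq\Simple\cap G^c$, so the middle term above becomes $\Pr_{\pmd}[\{|\log\rndcyc\circ\cycprocess|\geq\delta\}\cap\{\text{$\cycprocess$ $(\log n)$-neat}\}\cap\Simple]$, which the second part of Lemma~\ref{lem:rndcyctail} bounds by $\tfrac{\Pr_{\pmd}[\Simple]}{\Cr{C:cvb_small}}\exp(-\Cr{C:cvb_small}d\delta\log\delta)\leq\tfrac{1}{\Cr{C:cvb_small}}\exp(-\Cr{C:cvb_small}d\delta\log\delta)$; the remaining two bounds hold after dropping the restriction to $\Simple$.

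I do not expect a genuine obstacle, since this lemma is largely an assembly of the multiplicative-Poisson tail bound for $\rndcyc$ (Lemma~\ref{lem:rndcyctail}), the neatness estimate (Proposition~\ref{prop:pmdneat}), and the $L^2$-closeness of $\rnd$ and $\rndcyc\circ\cycprocess$ (controlled later via Proposition~\ref{prop:Variance} and Lemma~\ref{lem:cvb}), combined through Chebyshev. The two points that require care are the choice of the Chebyshev threshold as $\tfrac12 e^{-\delta}$, so that the variance term enters with exactly the stated weight $e^{2\delta}$, and the bookkeeping for $d=3$: there self-loops force $\rndcyc$ to vanish on non-simple states, so one must stay inside $\Simple$ throughout in order to legitimately invoke Lemma~\ref{lem:rndcyctail}.
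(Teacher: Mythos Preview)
Your proof is correct and follows essentially the same route as the paper: split off a good event on which $\rnd\geq \tfrac12 e^{-\delta}$ so that $\Pr_{\pmd}[A\cap G]\leq 2e^{\delta}\Pr_{\phammd}[A]$, and bound $G^c$ via Proposition~\ref{prop:pmdneat} (or Proposition~\ref{prop:strictlneat}), Lemma~\ref{lem:rndcyctail}, and Chebyshev on $\rnd-\rndcyc\circ\cycprocess$. The paper writes the decomposition directly as $\one{A}\leq \one{\rnd\leq e^{-\delta}/2,\ \text{neat}}+\one{\text{not neat}}+2e^{\delta}\rnd\one{A}$ rather than naming a good event, and uses ``strictly $(\log n)$-neat'' in place of your ``$(\log n)$-neat'', but these are cosmetic differences.
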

\begin{proof}
We set $\lambda  = \log n,$ and we bound
\begin{align*}
\Pr_{\pmd}
\left[
A
\right]
\leq
&
\Pr_{\pmd}\left[
\rnd \leq e^{-\delta}/2~\wedge~
\text{$\cycprocess$ strictly $\lambda$-neat}
\right] \\
&+\Pr_{\pmd} \left[
\text{$\cycprocess$ not strictly $\lambda$-neat}
\right] \\
&+
\Exp_{\pmd} \left[
2e^{\delta}f_n \one{A}
\right].
\end{align*}
We note that the second line is $O(\cvberror)$ by Proposition~\ref{prop:strictlneat}.  The third line is precisely $2e^{\delta}\Pr_{\phammd} [A]$.  To bound the first line, we write 
\begin{multline*}
\Pr_{\pmd}\left[
\rnd \leq e^{-\delta}/2~\wedge~
\text{$\cycprocess$ strictly $\lambda$-neat}
\right] \\
\leq
\Pr_{\pmd}\left[
\rndcyc \circ \cycprocess \leq e^{-\delta}~\wedge~
\text{$\cycprocess$ strictly $\lambda$-neat}
\right]
+
\Pr_{\pmd}\left[
|
\rnd
-\rndcyc \circ \cycprocess
| \geq e^{-\delta}/2.
\right]
\end{multline*}
The first of these we bound by Lemma~\ref{lem:rndcyctail}, and the second we bound by Chebyshev's inequality, completing the Lemma.
\end{proof}

\begin{lemma}
\label{lem:b2}
There is an absolute constant $\Cl{c:b2}$ so that for any pairing event $A$ and any $r\geq 4,$
\[
\Pr_{\phammd}[ A ]
\leq\Cr{c:b2}\left(
\cvberror
+
\sqrt{
\Var_{\pmd} [\rnd - \rndcyc \circ \cycprocess] 
}
\sqrt{
\Pr_{\pmd}[ A ]
}
+r\Pr_{\pmd}[ A ]
\right).
\]
\end{lemma}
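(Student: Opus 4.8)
The plan is to start from the Radon--Nikodym identity $\Pr_{\phammd}[A] = \E_{\pmd}[\rnd\1_A]$, which holds because $\rnd$ is the Radon--Nikodym derivative of $\phammd$ with respect to $\pmd$, and to split $\rnd = (\rnd - \rndcyc\circ\cycprocess) + \rndcyc\circ\cycprocess$. Since $\E_{\pmd}\rnd = \E_{\pmd}[\rndcyc\circ\cycprocess] = 1$, the random variable $\rnd - \rndcyc\circ\cycprocess$ has mean zero, so Cauchy--Schwarz gives $\E_{\pmd}[(\rnd - \rndcyc\circ\cycprocess)\1_A] \le \sqrt{\Var_{\pmd}[\rnd - \rndcyc\circ\cycprocess]}\sqrt{\Pr_{\pmd}[A]}$, which is precisely the second term in the claimed bound. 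It then remains to show $\E_{\pmd}[\rndcyc\circ\cycprocess\1_A] = O(\cvberror + r\Pr_{\pmd}[A])$.

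For this I would split according to whether $\cycprocess$ is strictly $(\log n)$-neat. We may assume $\cvberror \le \tfrac12$, since otherwise $\Pr_{\phammd}[A]\le 1 \le \Cr{c:b2}\cvberror$ makes the lemma trivial once $\Cr{c:b2}\ge 2$; note this assumption also forces $(d-1)^{2r-1} < n/2$ (hence $r^2 < n/2$ and the hypotheses of Propositions~\ref{prop:multpoiapprox} and~\ref{prop:strictlneat} hold). On the strictly neat event, Proposition~\ref{prop:multpoiapprox} gives $\rndcyc(x) \le \tfrac{1+\cvberror}{1-\cvberror}\rndpf(x) \le 3\rndpf(x)$, and the explicit formula~\eqref{eq:rndpf}, together with $\sum_{k\le r}\tfrac1k = O(\log r)$ and the bound $(1-2/(d-1)^k)^{c_k}\le 1$, shows $\rndpf(x) = O(r)$ uniformly in $x$ (for $d=3$ the factor for $1$-cycles is $0^{c_1}$, so $\rndpf$ vanishes whenever a loop is present). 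Hence $\E_{\pmd}[\rndcyc\circ\cycprocess\,\1_A\1_{\{\cycprocess\text{ str.\ neat}\}}] = O(r)\Pr_{\pmd}[A]$. On the complementary event, $\{\cycprocess\text{ not strictly neat}\}$ is $\sigma(\cycprocess)$-measurable, so $\E_{\pmd}[\rndcyc\circ\cycprocess\,\1_{\{\cycprocess\text{ not str.\ neat}\}}] = \Pr_{\phammd}[\cycprocess\text{ not str.\ neat}] \le \Cr{C:strictlneat}(d-1)^{2r}/n \le \Cr{C:strictlneat}\cvberror$ by Proposition~\ref{prop:strictlneat}. Adding the three contributions proves the lemma.

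I do not expect a serious obstacle; the only step requiring a little care is the uniform bound $\rndcyc(x) = O(r)$ on strictly neat $x$, which combines the multiplicative Poisson approximation with the product formula for $\rndpf$ and the observation that the harmonic factor $\prod_{k\le r,\,k\text{ odd}} e^{1/k}$ is only polynomially large in $r$, plus handling the degenerate $d=3$ case where $\phamcycmean$ vanishes for $1$-cycles. Everything else is routine bookkeeping, and the reduction to $\cvberror\le\tfrac12$ conveniently disposes of any issues with large $r$.
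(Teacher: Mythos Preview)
Your proposal is correct and follows essentially the same route as the paper: both arguments write $\Pr_{\phammd}[A]=\E_{\pmd}[\rnd\1_A]$, apply Cauchy--Schwarz to $\rnd-\rndcyc\circ\cycprocess$, bound $\rndcyc$ by $O(r)$ on strictly $(\log n)$-neat states via Proposition~\ref{prop:multpoiapprox} and the explicit product form of $\rndpf$, and absorb the non-neat part into the $\cvberror$ term using Proposition~\ref{prop:strictlneat}. The only cosmetic difference is that the paper first replaces $A$ by $A\cap\{\text{strictly neat}\}$ before the Cauchy--Schwarz step, whereas you keep $A$ intact and split the $\rndcyc\circ\cycprocess$ integral afterwards; your observation that $\E_{\pmd}[\rndcyc\circ\cycprocess\,\1_{\{\text{not neat}\}}]=\Pr_{\phammd}[\text{not neat}]$ is a clean way to handle that piece.
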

\begin{proof}
We may assume that $\cvberror \leq \tfrac12,$ for by adjusting $\Cr{c:b2} \geq 2,$ we may make the bound trivial.
Let $E$ be the event $E=\{\text{$\cycprocess$ strictly $\lambda$-neat } \},$  
\[
\Pr_{\phammd}[ A ]
\leq
\Pr_{\phammd}[ A \cap E ]
+
\Pr_{\phammd}[ E ].
\]
As we have that $\Pr_{\phammd}[ E ]=O(\cvberror)$ from Proposition~\ref{prop:strictlneat}, it suffices to show the bound for $A \subseteq E$ by passing to $A \cap E$. 

In this case, we have that for any strictly $\lambda$-neat cycle space point $x,$
\[
\rndcyc(x)
\leq 
\tfrac{1+\cvberror}
{1-\cvberror}
\rndpf[r][n](x)
\leq 3
\prod_{\substack{\alpha \in \cycspace,\\
1\leq k \leq r}}
 \exp\left(-\phamcycmean + \pcycmean\right)
=O(r).
\]
By applying Cauchy-Schwarz, we have that 
\begin{align*}
\Exp_{\pmd} \rnd \one{A}
&\leq
\sqrt{
\Var_{\pmd} [\rnd - \rndcyc \circ \cycprocess] 
}
\sqrt{
\Pr_{\pmd}[ A ]
}
+
\Exp_{\pmd} \rndcyc \circ \cycprocess \one{A},
\end{align*}
and we conclude the lemma, noting that $\rndcyc \circ \cycprocess\one{A}$ can be bounded by $Cr\one{A}$ for an absolute constant $C$.
\end{proof}

We now turn to proving the main theorems.

\begin{proof}[Proof of Theorem~\ref{thm:contig}]
Fix a given sequence $D(n) \to \infty$ with $\log D(n)/ \log n \to 0$.  By passing to subsequences, it suffices to show the cases, where $d(n) \leq D(n)$ and where $d(n) \geq D(n)$.  In the latter case, we need only prove the total variation bound.  This, in turn follows from the simple inequality
\begin{align*}
\left| 
\Pr_{\pmd}( A )
-\Pr_{\phammd}( A )
\right|
\leq \sqrt{ \Var_{\pmd} [\rnd] }.
\end{align*}
From Proposition~\ref{prop:Variance}, we therefore have the bound that for any $1 < \alpha < 8/\sqrt{2}$
\[
\dtv( \pmd, \phammd) = O\left(\frac{2}{d-2} + \frac{d^{\frac{3}{2}(1+1/\alpha)}}{\sqrt{n}} \right).
\]
For $d(n) \leq n^{\alpha_0-\epsilon}$ where $\alpha_0 = \frac{8}{3(8+\sqrt{2})}$ we may therefore choose an $\alpha$ so that this tends to 0.

In the former case, we show the contiguity arguments one bound at a time.  We start by assuming that $\Pr_{\pmd}[A_n]\to0$.  We then choose an integer sequence $r(n) \to \infty$ sufficiently slowly that $r(n)\Pr_{\pmd}[A_n]\to0$ and $\cvberror \to 0$.  From Lemma~\ref{lem:rndl2}, we have that 
\[
\Var_{\pmd} [\rnd - \rndcyc \circ \cycprocess] \to 0,
\]
and hence by Lemma~\ref{lem:b2}, $\Pr_{\phammd}[A_n] \to 0$.  

Suppose now that $\Pr_{\phammd}[A_n]\to0$.  We may choose $r(n)$ an integer sequence so that $r(n) \to \infty,$ $r(n)\Pr_{\phammd}[A_n] \to 0$ and $r(n)^2 \cvberror \to 0$.  Apply Lemma~\ref{lem:b1} with $\delta=\log r(n),$ and note that we have 
\[
e^{2\delta(n)}\Var_{\pmd} [\rnd - \rndcyc \circ \cycprocess] \to 0,
\]
so that $\Pr_{\pmd}[A_n]\to0$. 
%
\end{proof}

\begin{proof}[Proof of Theorem~\ref{thm:qcontig}]
The statememts for $\umd$ and $\phamcondmd$ follow immediately from those for $\pmd$ and $\phammd$ together with the observation that for $d=o(\sqrt{\log n}),$ both $\log \Pr_{\pmd}[\Simple] = o(\log n)$ and $\log \Pr_{\phammd}[\Simple] = o(\log n)$ (see~\eqref{eq:pmdsimple} and~\eqref{eq:phammdsimple}).

For the $\pmd$ case, we assume that $\log d(n)/ \log n \to 0,$ and we may choose $r(n) = \left\lfloor \frac{\log n}{3\log(d(n)-1)} \right\rfloor.$
Note that this implies that for all $\epsilon > 0,$ 
\[
\cvberror =\frac{\left[d + \Cr{C:mpa}(\log n)^2\right](d-1)^{2r-1}}{n}=O(n^{-1/3+\epsilon}).
\]
Likewise, by Lemma~\ref{lem:rndl2} we have that for all $\epsilon > 0,$
\[
\Var_{\pmd} [\rnd - \rndcyc \circ \cycprocess] = 
O( (d-1)^{-r-1} + \cvberror + d^3/\sqrt{n})
=
O(n^{-1/3+\epsilon}).
\]
Suppose that $\Pr_{\phammd}[A_n]=O(n^{-\alpha})$.  Then by taking $\delta = \epsilon \log n,$ we conclude by Lemma~\ref{lem:b1} that
\[
\Pr_{\pmd}[A_n] = O( n^{-1/3 + 2\epsilon} + n^{-\alpha+2\epsilon}),
\]
completing the proof.

On the other hand, suppose that $\Pr_{\pmd}[A_n]=O(n^{-\alpha})$.  By Lemma~\ref{lem:b2} we conclude that
\[
\Pr_{\phammd}[A_n] = O( n^{-\alpha/2 - 1/6 + \epsilon/2} + n^{-1/3+\epsilon} + n^{-\alpha+\epsilon}).
\]
Note that $\frac{\alpha}{2}+\frac{1}{6} \geq \alpha \wedge \frac13,$ completing the proof.
\end{proof}

\begin{lemma}
\label{lem:rndlognorm}
Let $r=r(n) \geq 4$ and $d=d(n) \to \infty$ satisfy
\[
(d+(\log n)^2) (d-1)^{2r-1} = o(n),
\]
then
\[
\frac{\log \rndpf( P )}{\sqrt{2/d}} \weakto N(0,1),
\]
where $P \sim \pmd$ and $N(0,1)$ is a standard normal.
\end{lemma}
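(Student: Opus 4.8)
\emph{Approach.} The plan is to reduce to the independent Poisson model via the total variation estimate already proved, and then to run an elementary central limit argument. Here $\rndpf(P)$ is understood as $\rndpf$ evaluated at the cycle vector $\cycprocess(P)$, so that it is the Radon--Nikodym derivative between the push-forwards of $\phammd$ and $\pmd$ under $\cycprocess$. Since the hypothesis $(d+(\log n)^2)(d-1)^{2r-1}=o(n)$ is exactly the condition under which Corollary~\ref{cor:tvbound} gives $\dtv(\cycprocess(P),\pcycpf)\to0$ for $P\drawnfrom\pmd$, and total variation does not increase under a push-forward, it suffices to prove that $\log\rndpf(\pcycpf)/\sqrt{2/d}\weakto N(0,1)$. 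Writing $Z_\alpha$ for the coordinates of $\pcycpf$ and setting $S_k=\sum_{\alpha\in\cycspace[k]}Z_\alpha$, the variables $S_1,S_3,S_5,\dots$ are independent with $S_k\drawnfrom\Poisson(\tilde\lambda_k)$, $\tilde\lambda_k:=\abs{\cycspace[k]}(nd)^{-k}=\tfrac{\ff nk}{n^k}\tfrac{(d-1)^k}{2k}$, and the exact product expression in \eqref{eq:rndpf} yields the identity
\begin{equation*}
  \log\rndpf(\pcycpf)=\sum_{\substack{1\le k\le r\\ k\text{ odd}}}\frac{\ff nk}{kn^k}
    +\sum_{\substack{1\le k\le r\\ k\text{ odd}}}\log\Bigl(1-\frac{2}{(d-1)^k}\Bigr)S_k .
\end{equation*}
Thus $\log\rndpf(\pcycpf)$ is a deterministic constant plus a weighted sum of independent Poissons; write $b_k=\log(1-2/(d-1)^k)$.

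\emph{Mean and variance.} Using $b_k\tfrac{(d-1)^k}{2}=-1-O((d-1)^{-k})$ and $\ff nk/n^k=1+O(k^2/n)$, the $k$-th term of $\mu_n:=\E\log\rndpf(\pcycpf)=\sum_{k\text{ odd}}^r(\tfrac{\ff nk}{kn^k}+b_k\tilde\lambda_k)$ equals $-O(\tfrac1{k(d-1)^k})$, so $\mu_n=O(1/d)$ and $\mu_n/\sqrt{2/d}\to0$. Similarly $b_k^2\tfrac{(d-1)^k}{2}=\tfrac{2}{(d-1)^k}(1+O((d-1)^{-k}))$, so
\begin{align*}
  \sigma_n^2:=\Var\log\rndpf(\pcycpf)&=\sum_{\substack{1\le k\le r\\ k\text{ odd}}}b_k^2\tilde\lambda_k\\
    &=\sum_{\substack{1\le k\le r\\ k\text{ odd}}}\frac{2}{k(d-1)^k}+O\Bigl(\tfrac1{d^2}+\tfrac1{nd}\Bigr)
      =\log\frac{d}{d-2}+O\Bigl(\tfrac1{d^2}+\tfrac1{nd}+(d-1)^{-r-1}\Bigr),
\end{align*}
using $\sum_{k\text{ odd}}^\infty\tfrac{2}{k(d-1)^k}=\log\tfrac{d}{d-2}$ and bounding the tail by $O((d-1)^{-r-1})$. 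Since $r\ge4$ and $d\to\infty$ force $d(d-1)^{-r-1}\le d(d-1)^{-5}\to0$ and $r^2/n\to0$, this gives $\tfrac d2\sigma_n^2\to1$.

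\emph{Identifying the limit.} Almost all of $\sigma_n^2$ is carried by $k=1$: since $\ff n1/n=1$, we have $\tilde\lambda_1=\tfrac{d-1}{2}\to\infty$ and $b_1^2\tilde\lambda_1/\sigma_n^2\to1$. By the classical central limit theorem for $\Poisson(\lambda)$ as $\lambda\to\infty$ (immediate from characteristic functions), $(S_1-\tilde\lambda_1)/\sqrt{\tilde\lambda_1}\weakto N(0,1)$, and since $|b_1|\sqrt{\tilde\lambda_1}/\sigma_n\to1$ is a deterministic sequence, Slutsky's theorem together with the symmetry of $N(0,1)$ gives $b_1(S_1-\tilde\lambda_1)/\sigma_n\weakto N(0,1)$. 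The remaining terms satisfy $\Var\bigl(\sum_{3\le k\le r,\,k\text{ odd}}b_k(S_k-\tilde\lambda_k)\bigr)/\sigma_n^2=(\sigma_n^2-b_1^2\tilde\lambda_1)/\sigma_n^2\to0$, hence tend to $0$ in probability after division by $\sigma_n$ by Chebyshev. Therefore $(\log\rndpf(\pcycpf)-\mu_n)/\sigma_n\weakto N(0,1)$, and multiplying by $\sigma_n/\sqrt{2/d}\to1$ and adding $\mu_n/\sqrt{2/d}\to0$ completes the proof. I expect no genuine obstacle; the only delicate point is the bookkeeping of the error terms in $\mu_n$ and $\sigma_n^2$ — in particular, verifying that the Stirling-type errors $O(k^2/n)$, the Taylor remainders in $b_k$, and the tail $\sum_{k>r}$ are all $o(1/d)$ under the stated hypothesis.
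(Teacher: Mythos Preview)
Your proof is correct and follows essentially the same approach as the paper: reduce to the Poisson field $\pcycpf$ via the total variation bound of Corollary~\ref{cor:tvbound}, expand $\log\rndpf(\pcycpf)$ using~\eqref{eq:rndpf}, isolate the $k=1$ term as the dominant contribution, and apply the Poisson CLT to $S_1\sim\Poisson((d-1)/2)$. The only organizational difference is that the paper controls the remainder $\log\rndpf(\pcycpf)-(1-\tfrac{2}{d-1}Z_1)$ in a single $L^1$ bound, whereas you separate mean and variance and handle the centered $k\ge3$ terms by Chebyshev; this is cosmetic.
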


\begin{proof}[Proof of Lemma~\ref{lem:rndlognorm}]
We once again write 
\[
\cvberror = \frac{\left[d + \Cr{C:mpa}(\log n)^2\right](d-1)^{2r-1}}{n}.
\]
From the total variation bound Corollary~\ref{cor:tvbound}, we can construct a probability space on which $\cycprocess[r](P)$ and $\pcycpf$ are defined and satisfy 
\[
\Pr \left[
\cycprocess[r](P) \neq \pcycpf
\right] = \dtv(\cycprocess(P),\pcycpf) = O(\cvberror).
\]
On this space we have that
\[
\Pr \left[
\log \rndpf ( \cycprocess[r](P)) \neq
\log \rndpf (\pcycpf )
\right] = O(\cvberror).
\]
Writing $Z_k=\sum_{\alpha\in\cycspace[k]} Z_\alpha,$ we have by \eqref{eq:rndpf} that
\[
\log \rndpf (\pcycpf)
= O(r^2/n) + \sum_{\substack{1 \leq k \leq r \\ k \text{ odd}}} 
\left[
\frac{1}{k} + Z_k \log\left(1- \frac{2}{(d-1)^k}\right)
\right].
\]
Note that $Z_k \sim \Poisson\left( \frac{\ff{n}{k}(d-1)^k}{2kn^k}\right),$ and hence
\begin{multline*}
\Exp
\left|
\frac{1}{k} + Z_k \log\left(1- \frac{2}{(d-1)^k}\right)
\right| \\
\leq 
\left|\frac{1}{k} + \frac{\ff{n}{k}(d-1)^k}{2kn^k}\log\left(1- \frac{2}{(d-1)^k}\right)  \right|
+\left|\log\left(1- \frac{2}{(d-1)^k}\right)\right|\sqrt{\Var Z_k}.
\end{multline*}
As the $\log$ terms can be approximated by $\frac{-2}{(d-1)^k}$ uniformly in $k$ and $d,$ it follows that
\[
\Exp
\left|\log \rndpf (\pcycpf)
- 1 + \frac{2}{d-1}Z_1\right| = O(r^3/n + 1/d).
\]
Now from the classical central limit theorem, we have that
\[
\frac{\tfrac{d-1}{2} - Z_1}{\sqrt{d/2}} \weakto N(0,1),
\]
which completes the proof.
\end{proof}

\begin{proof}[Proof of Theorem~\ref{thm:lognorm}]

Take
\(
r(n) = \left\lfloor \frac{\log n}{3\log(d(n)-1)} \right\rfloor.\)
By Lemma~\ref{lem:rndlognorm} it suffices to show that 
\[
\frac{\log \rnd(P) -
\log \rndpf ( \cycprocess[r](P))
}{\sqrt{2/d}} \probto 0.
\]
On the one hand, we have that
\[
\frac{\log \rndcyc(P) -
\log \rndpf ( \cycprocess[r](P))
}{\sqrt{2/d}} \probto 0,
\]
as for a cycle space point $x,$ that is $(\log n)$--neat, we have by Proposition~\ref{prop:multpoiapprox}
\[
\rndpf[r][n](x)
\tfrac{1+\cvberror}
{1-\cvberror}
\geq
\rndcyc(x)
\geq \rndpf[r][n](x)
\tfrac{1-\cvberror}
{1+\cvberror}.
\]
As $\cycprocess(P)$ is $(\log n)$--neat with probability going to $1$ and $\cvberror d^{1/2} \to 0,$ the desired statement on the logarithms follow.  This reduces the problem to showing that  
\[
\frac{\log \rnd(P) -
\log \rndcyc(\cycprocess(P))
}{\sqrt{2/d}} \probto 0.
\]

We note that we have
\[
\Pr_{\pmd} \left[
\left| \rnd - 1 \right| \geq \tfrac12
\right] \leq 4\Var_{\pmd}[\rnd] \to 0
\]
by Proposition~\ref{prop:Variance}.  Likewise, we get
\[
\Pr_{\pmd} \left[
\left| \rndcyc \circ \cycprocess- 1 \right| \geq \tfrac12
\right] \leq 4 \Var_{\pmd}[\rndcyc \circ \cycprocess]
\leq 4 \Var_{\pmd}[\rnd] \to 0,
\]
by the contraction properties of the conditional expectation.  By the mean value theorem, we have that
\begin{align*}
\Pr_{\pmd} \left[ 
\left|
\log \rnd - \log \rndcyc \circ \cycprocess
\right| > t \sqrt{2/d} 
\right]
& \\
& \hspace{-1in}\leq
\Pr_{\pmd} \left[ 
2
\left|
\rnd - \rndcyc \circ \cycprocess
\right| > t\sqrt{2/d}
\right] + 8 \Var_{\pmd}[\rnd].
\end{align*}
Hence, by Lemma \ref{lem:rndl2}, we have that for all $k$,  $\Var_{\pmd}[ \rnd - \rndcyc \circ \cycprocess] = o(d^{-k})$, and hence this term goes to $0.$
\end{proof}

  \newcommand{\CNBW}[2][\infty]{\mathrm{CNBW}_{#2}^{(#1)}}
  \newcommand{\Cy}[2][\infty]{C_{#2}^{(#1)}}
  It remains to prove Theorem~\ref{thm:evalhams} on approximating
  the number of Hamiltonian cycles by a graph's eigenvalues.
  We give some combinatorial definitions.
A closed non-backtracking walk on a graph is a walk
that begins and ends at the same vertex, and that never follows
an edge and immediately follows that same edge backwards.
If the last step of a closed non-backtracking walk is anything other
than the reverse of the first step, we say that the walk is \emph{cyclically
  non-backtracking}.
Let $P\drawnfrom\pmd$, and 
let $\CNBW[n]{k}$ denote the number of closed cyclically
non-backtracking walks of length $k$ on the pseudograph given by $P$.

Let $T_i(x)$ be the Chebyshev polynomial of the first kind of degree $i$
on the interval $[-1,1]$.
We define a set of polynomials
\begin{align*}
    \Gamma_0(x)&=1,\\
    \Gamma_{2i}(x)&=2T_{2i}\Big(\frac{x}{2}\Big) + \frac{d-2}{(d-1)^i} \quad \text{for $i\geq 1$,}\\
    \Gamma_{2i+1}(x)&=2T_{2i+1}\Big(\frac{x}{2}\Big) \quad \text{for $i\geq 0$.}
\end{align*}
These polynomials allow us to count cyclically non-backtracking walks from a graph's
eigenvalues.
Let $\tr f(P)=\sum_{i=1}^nf(\lambda_i/\sqrt{d-1})$, where $\lambda_1,\ldots,\lambda_n$
are the eigenvalues of the adjacency matrix of $P$.
\begin{proposition}[Proposition~32 in \cite{DJPP}]
  \label{prop:CNBWpoly}
  \begin{align*}
    \tr \Gamma_k(P) &= (d-1)^{-k/2}\CNBW[n]{k}.
  \end{align*}
\end{proposition}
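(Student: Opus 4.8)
The plan is to reduce Proposition~\ref{prop:CNBWpoly} to the spectral theory of the non-backtracking (Hashimoto) operator of the pseudograph given by $P$, and then to match the resulting trace formula term-by-term with the polynomials $\Gamma_k$. Throughout, fix $P\drawnfrom\pmd$, write $A$ for its adjacency matrix and $\lambda_1,\dots,\lambda_n$ for its eigenvalues, set $m=nd/2$, and let $B$ be the $nd\times nd$ matrix indexed by oriented edges (each edge of $P$ taken with both orientations, and each loop contributing two oriented copies) with $B_{ef}=1$ exactly when the head of $e$ equals the tail of $f$ and $f$ is not the reversal $\bar e$ of $e$.

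First I would establish the combinatorial identity $\CNBW[n]{k}=\tr B^k$ for every $k\geq 1$. Expanding $\tr B^k=\sum_e(B^k)_{ee}$ counts sequences of oriented edges $e=e_0,e_1,\dots,e_k=e$ in which $e_{i+1}$ follows $e_i$ without backtracking for every $i$ modulo $k$; these are precisely the closed length-$k$ walks that are cyclically non-backtracking, recorded together with a marked initial oriented edge, so the sum equals $\CNBW[n]{k}$. (The degenerate case $k=0$ is handled separately: there is one length-$0$ closed walk at each vertex, so $\CNBW[n]{0}=n=\tr\Gamma_0(P)$.) The oriented-edge bookkeeping is exactly what makes this go through cleanly for pseudographs with loops and repeated edges; this is the convention used in~\cite{DJPP}.

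Second I would invoke the Ihara--Bass description of the spectrum of $B$ for a $d$-regular pseudograph. The determinant identity $\det(I-uB)=(1-u^2)^{m-n}\det\!\bigl(I-uA+(d-1)u^2I\bigr)$ shows that the eigenvalues of $B$ are: for each $i$, the two roots $\mu_{i,\pm}$ of $\mu^2-\lambda_i\mu+(d-1)=0$; together with $m-n=\tfrac{n(d-2)}{2}$ eigenvalues equal to $+1$ and the same number equal to $-1$. Hence
\[
\tr B^k=\sum_{i=1}^n\bigl(\mu_{i,+}^k+\mu_{i,-}^k\bigr)+\frac{n(d-2)}{2}\bigl(1+(-1)^k\bigr).
\]
Since $\mu_{i,+}+\mu_{i,-}=\lambda_i$ and $\mu_{i,+}\mu_{i,-}=d-1$, the power sums $p_k=\mu_{i,+}^k+\mu_{i,-}^k$ obey $p_0=2$, $p_1=\lambda_i$, $p_{k+1}=\lambda_ip_k-(d-1)p_{k-1}$; after substituting $\lambda_i=2\sqrt{d-1}\,x$ this is the Chebyshev recursion for $2T_k$, giving the polynomial identity $\mu_{i,+}^k+\mu_{i,-}^k=2(d-1)^{k/2}T_k\!\bigl(\tfrac{\lambda_i}{2\sqrt{d-1}}\bigr)$, valid for all $\lambda_i$. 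Dividing by $(d-1)^{k/2}$,
\[
(d-1)^{-k/2}\tr B^k=\sum_{i=1}^n2T_k\!\Bigl(\tfrac{\lambda_i}{2\sqrt{d-1}}\Bigr)+\frac{n(d-2)}{2}\,(1+(-1)^k)\,(d-1)^{-k/2}.
\]
When $k$ is odd the last term vanishes and the right side is $\sum_i\Gamma_k(\lambda_i/\sqrt{d-1})=\tr\Gamma_k(P)$; when $k=2i$ the last term equals $\sum_{j=1}^n(d-2)/(d-1)^i$, which is exactly the sum over the $n$ eigenvalues of the constant added in the definition of $\Gamma_{2i}$, so the right side is again $\tr\Gamma_k(P)$. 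Combined with $\CNBW[n]{k}=\tr B^k$, this gives the claim.

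The step I expect to be the main obstacle is the second one: justifying the precise spectrum of $B$ --- in particular the $\pm1$ eigenvalues and their exact multiplicities --- for a general pseudograph produced by the configuration model, with loops and multiple edges allowed. On a simple $d$-regular graph this is classical, but checking that the Ihara--Bass determinant identity (or an equivalent direct argument) survives the loop and multi-edge conventions fixed above is precisely the technical content, and is where~\cite{DJPP} does the careful work; the proof here would either cite their Proposition~32 directly or reproduce that determinant computation.
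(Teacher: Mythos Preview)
Your argument is correct: the identification $\CNBW[n]{k}=\tr B^k$, the Ihara--Bass factorisation of $\det(I-uB)$, the resulting decomposition of the spectrum of $B$ into the $2n$ roots of $\mu^2-\lambda_i\mu+(d-1)=0$ together with $n(d-2)/2$ copies each of $\pm1$, and the Chebyshev identification $(d-1)^{-k/2}(\mu_{i,+}^k+\mu_{i,-}^k)=2T_k\bigl(\lambda_i/(2\sqrt{d-1})\bigr)$ all check out, and the even/odd case split matches the definition of $\Gamma_k$ exactly.

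As for the comparison: the present paper does not prove this proposition at all --- it is stated purely as a citation of Proposition~32 in \cite{DJPP} and used as a black box. So there is no ``paper's own proof'' to compare against here; what you have written is essentially the standard Ihara--Bass derivation that one expects \cite{DJPP} carries out. Your closing caveat is well placed: the only genuine content is verifying that the determinant identity and the $\pm1$ multiplicities go through for pseudographs with loops and multi-edges under the oriented-edge conventions you fixed, and that is precisely what the cited reference handles.
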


We will define another set of polynomials whose traces give
the cycle counts of $P$,
with high probability. 
Let $\mu$ be the M\"obius function, given by
  \begin{align*}
    \mu(n) = \begin{cases}
       1 & \text{if $n=1$,}\\
       (-1)^a&\text{if $n$ is the square-free product of $a$  primes,}\\
       0&\text{otherwise.}
    \end{cases}
  \end{align*}
For $k\geq 1$, define
\begin{align}
    \Xi_k(x) \Def \frac{1}{2k}\sum_{j\divides k}\mu
    \left(\frac{k}{j}\right)
      (d-1)^{j/2}\Gamma_j(x).\label{eq:fbasis}
  \end{align}
\begin{proposition}\label{prop:Xipoly}
  With probability at least $1-O(r(d-1)^r/n)$, 
  the number of cycles of length $k$ in $P$
  is $\tr\Xi_k(P)$ for all $1\leq k\leq r$.
\end{proposition}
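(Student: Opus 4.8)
\emph{Proof proposal.} The plan is to recover the cycle counts from the counts of cyclically non-backtracking walks by Möbius inversion, after restricting to a high-probability event on which every short such walk is simply a cycle traversed some number of times. First I would define the \emph{good} event $\mathcal{G}$ to be the event that the projection of $P$ contains no connected submultigraph with at most $r$ edges, minimum degree at least $2$, and cyclomatic number at least $2$ — equivalently, no ``double cycle'' (a theta subgraph, two cycles joined by a path, or two cycles meeting at a single vertex) using at most $r$ edges in total.

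On $\mathcal{G}$, the edge set traced by any closed cyclically non-backtracking walk of length $k\le r$ is a connected subgraph with at most $k$ edges and minimum degree at least $2$ (at each visit to a vertex the walk enters and leaves along distinct edges), hence — since $\mathcal{G}$ forbids cyclomatic number $\ge 2$ with so few edges, and a connected graph of minimum degree $\ge 2$ with cyclomatic number $1$ is a single cycle — it is exactly one cycle; such a walk therefore runs around a cycle of some length $j$ with $j\mid k$ a total of $k/j$ times. Conversely, each cycle of length $j$ with $j\mid k$ produces exactly $2j$ such walks (a choice of starting vertex and of orientation). Consequently, on $\mathcal{G}$,
\begin{align*}
  \CNBW[n]{k} = \sum_{j\mid k} 2j\,\Cy[n]{j}, \qquad 1\le k\le r,
\end{align*}
where $\Cy[n]{k}$ denotes the number of $k$-cycles of $P$; the degenerate cases $k=1,2$ (loops and multiple edges) are accommodated by this identity under the conventions used in defining $\Gamma_k$ and the adjacency matrix, which I would verify separately. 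Möbius inversion over divisors then gives $2k\,\Cy[n]{k}=\sum_{j\mid k}\mu(k/j)\CNBW[n]{j}$ for all $1\le k\le r$, and substituting $\CNBW[n]{j}=(d-1)^{j/2}\tr\Gamma_j(P)$ from Proposition~\ref{prop:CNBWpoly} and using linearity of $\tr(\cdot)(P)$ in the polynomial argument yields
\begin{align*}
  \Cy[n]{k} = \tr\!\left(\frac{1}{2k}\sum_{j\mid k}\mu(k/j)(d-1)^{j/2}\Gamma_j\right)\!(P) = \tr\Xi_k(P)
\end{align*}
for all $1\le k\le r$, by the definition \eqref{eq:fbasis}.

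It remains to bound $\Pr_{\pmd}[\mathcal{G}^{c}]$. This is a first-moment computation of exactly the flavor of those in Section~\ref{subsec:exceptional} (and reminiscent of the proof of Proposition~\ref{prop:pmdneat}): one enumerates the possible shapes of a minimal ``double cycle'' subgraph — theta, dumbbell, figure-eight — having a total of $m\le r$ edges, of which there are at most $O(\mathrm{poly}(m))$ once one accounts for the placement of subdivision vertices, and for each such shape the expected number of copies in $P$ is $O\big((d-1)^{m}/n\big)$, the factor $n^{-1}$ arising because any such subgraph has one more edge than it has vertices. Summing over $m\le r$ gives $\Pr_{\pmd}[\mathcal{G}^{c}]=O\big(r(d-1)^r/n\big)$. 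The main obstacle is carrying out this enumeration carefully enough to obtain the stated linear power of $r$; the combinatorial inversion step above is routine once $\mathcal{G}$ is in hand.
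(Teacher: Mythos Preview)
Your approach is essentially the same as the paper's: restrict to a high-probability event on which every short cyclically non-backtracking closed walk is a repeated traversal of a single cycle, deduce $\CNBW[n]{k}=\sum_{j\mid k}2j\,\Cy[n]{j}$, apply M\"obius inversion, and invoke Proposition~\ref{prop:CNBWpoly}. The inversion step and the identification of $\Xi_k$ are carried out exactly as in the paper.

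The one substantive difference is in the bad event. You take $\mathcal{G}^c$ to be the presence of a ``double cycle'' with at most $r$ \emph{edges}; the paper instead takes the bad event to be the presence of two cycles of lengths $j,k$ at distance $l$ with $j+k+2l\le r$ (the case $l=0$ covering theta and figure-eight configurations). Your event is strictly larger: a dumbbell with cycles $j,k$ and bridge $l$ has $j+k+l$ edges, so it lies in your $\mathcal{G}^c$ whenever $j+k+l\le r$, but it can only be traced by a cyclically non-backtracking walk of length at least $j+k+2l$, since the bridge must be traversed twice. The analogous refinement for thetas is that the shortest such walk has length $m+\min(a,b,c)$. Because your event is larger, the first-moment sum picks up $O(m^2)$ shapes at each edge-count $m$, and $\sum_{m\le r} m^2(d-1)^m/n=O\big(r^2(d-1)^r/n\big)$, one power of $r$ worse than stated. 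The paper's constraint $j+k+2l\le r$ effectively replaces the edge-count parameter by the walk-length parameter, which collapses one of the two free parameters in the shape enumeration and recovers the linear factor in $r$. This is precisely the refinement you flagged as the ``main obstacle''; it is not a new idea but a sharpening of your $\mathcal{G}$, and with it your argument goes through verbatim.
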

\begin{proof}
  We will show that with high probability, all cyclically non-backtracking walks
  in $P$ are repeated walks around cycles. For this to fail, $P$ must contain
  cycles of length $k$ and $j$ at distance $l$ (possibly zero) from each other,
  with $k+j+2l\leq r$.
  
  By a slight variation of \eqref{eq:pmdoverlap} and \eqref{eq:pmdvertoverlap}, the
  probability that $P$ contains cycles of length $j$ and $k$ with $k+j\leq r$ that
  overlap is $O(r(d-1)^r / n)$. The number of possible edge-labeled subgraphs consisting
  of cycles of length $k$ and $j$ with a path of length $l\geq 1$ between them is at most
  \begin{align*}
    \ff{n}{j+k+l-1}(d-1)^{j+k+l+1}d^{j+k+l-1},
  \end{align*}
  and each subgraph is contained in $P$ with probability $1/\dff{nd}{j+k+l}$.
  By a union bound, $P$ contains some such subgraph with probability $O\big((d-1)^{j+k+l}/n\big)$.
  The sum of this over all $1\leq j,k\leq r$ and $l\geq 1$ satisfying
  $j+k+2l\leq r$  is $O(r(d-1)^{r-2}/n)$.
  
  Let $\Cy[n]{k}$ denote the number of cycles of length $k$ in $P$.
  If all cyclically non-backtracking walks are repeated walks around cycles, then
  \begin{align*}
    \CNBW[n]{k} = \sum_{j\divides k}2j\Cy[n]{k}.
  \end{align*}
  The proposition follows by applying the M\"obius inversion formula to write
  $\Cy[n]{k}$ in terms of $\CNBW[n]{j}$ for $j\divides k$, and then applying
  Proposition~\ref{prop:CNBWpoly}.
\end{proof}
We define one last polynomial:
\begin{align}\label{eq:Hampoly}
  \Hampoly(x) = \sum_k \left[\frac{1}{kn}+
    \log\Big(1-\frac{2}{(d-1)^k}\Big)\Xi_k(x)\right],
\end{align}
where the sum ranges over all odd values from $1$ to
$\floor{\log n / 3\log(d-1)}$. 
In $\Hampoly[3,n](x)$, the coefficient
of $\Xi_1(x)$  is $\log 0$. We interpret this as
$-\infty$, and we say that $\tr \Hampoly[3,n](P)=-\infty$ unless
$\tr \Xi_1(P)=0$.
\begin{lemma}\label{lem:Hampoly}
  Let $r = \floor{\log n / 3\log(d-1)}$.
  Suppose that $d-1\leq  n^{1/3}$, so that $r\geq 1$ and
  $\Hampoly$ is nonzero.
  For some absolute constants $\Cr{C:Hampolydist}$ and $\Cr{C:Hampolyprob}$,
  \begin{align*}
    \Pr\left[\abs{\rndcyc(\cycprocess(P)) - \exp(\tr\Hampoly(P))}  > \Cl{C:Hampolydist}
              (\log n)^{5/2}n^{-1/3}
    \right] < \Cl{C:Hampolyprob}n^{-1/3}.
  \end{align*}
\end{lemma}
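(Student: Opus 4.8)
The statement compares two quantities on the event that $\cycprocess(P)$ is $(\log n)$-neat: the conditional Radon--Nikodym derivative $\rndcyc(\cycprocess(P))$, and the explicit eigenvalue expression $\exp(\tr\Hampoly(P))$. The plan is to first replace $\rndcyc(\cycprocess(P))$ by the Poisson-model Radon--Nikodym derivative $\rndpf(\cycprocess(P))$, then to identify $\exp(\tr\Hampoly(P))$ with $\rndpf(\cycprocess(P))$ up to a negligible multiplicative error, on a high-probability event. The parameter choice is forced: take $r=\floor{\log n/3\log(d-1)}$ throughout, which makes $(d-1)^r\leq n^{1/3}$ and hence $\cvberror = [d+\Cr{C:mpa}(\log n)^2](d-1)^{2r-1}/n = O((\log n)^2 n^{-1/3})$.

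\textbf{Step 1: from $\rndcyc$ to $\rndpf$.} On the event that $\cycprocess(P)$ is strictly $(\log n)$-neat, Proposition~\ref{prop:multpoiapprox} gives
\(
\rndpf(\cycprocess(P))\tfrac{1-\cvberror}{1+\cvberror}\leq \rndcyc(\cycprocess(P))\leq \rndpf(\cycprocess(P))\tfrac{1+\cvberror}{1-\cvberror},
\)
so $|\rndcyc(\cycprocess(P))-\rndpf(\cycprocess(P))|\leq 3\cvberror\,\rndpf(\cycprocess(P))$ once $\cvberror\leq\tfrac12$. By Proposition~\ref{prop:strictlneat}, the failure probability of strict $(\log n)$-neatness is $O((d-1)^{2r}/n)=O(n^{-1/3})$. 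To turn the multiplicative bound into the additive bound in the statement I would control $\rndpf(\cycprocess(P))$: it is bounded by $O(r)=O(\log n)$ on $(\log n)$-neat configurations (as in the proof of Lemma~\ref{lem:b2}, using \eqref{eq:rndpf}), except on a set of probability $O(n^{-1/3})$ coming from Lemma~\ref{lem:rndcyctail} applied with $\delta$ a constant multiple of $\log\log n$ (this kills the tail of $\rndpf$ beyond $\mathrm{polylog}(n)$). So outside an event of probability $O(n^{-1/3})$, $|\rndcyc(\cycprocess(P))-\rndpf(\cycprocess(P))| = O(\cvberror\log n)=O((\log n)^3 n^{-1/3})$, which is within the claimed error.

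\textbf{Step 2: from $\rndpf(\cycprocess(P))$ to $\exp(\tr\Hampoly(P))$.} By Proposition~\ref{prop:Xipoly}, with probability $1-O(r(d-1)^r/n)=1-O((\log n)n^{-1/3})$ the cycle count of length $k$ in $P$ equals $\tr\Xi_k(P)$ for all $1\leq k\leq r$. On this event, comparing \eqref{eq:Hampoly} with the last line of \eqref{eq:rndpf}, the exponent $\tr\Hampoly(P)=\sum_{k\text{ odd}}[\tfrac{1}{kn}+\log(1-\tfrac{2}{(d-1)^k})\,c_k]$ where $c_k$ is the number of $k$-cycles in $P$, while $\log\rndpf(\cycprocess(P)) = O(r^2/n)+\sum_{k\text{ odd}}[\tfrac1k+c_k\log(1-\tfrac{2}{(d-1)^k})]$. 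The sums over $c_k\log(\cdots)$ agree exactly, and the discrepancy between the deterministic parts is $\sum_{k\text{ odd},\,k\leq r}(\tfrac1k-\tfrac{1}{kn})+O(r^2/n)$; comparing $\sum_{k\leq r,k\text{ odd}}\tfrac1k$ against $\sum_{k\text{ odd}}\ff{n}{k}/(kn^k)=\sum_{k\leq r, k\text{ odd}}(\tfrac1k+O(k/n))$ — wait, more carefully: in \eqref{eq:Hampoly} the constant term per $k$ is $\tfrac{1}{kn}$, summed over $k$ gives $O(\tfrac{r}{n}\cdot\tfrac1n)$ — actually it is $O((\log n)/n)$, negligible; and in \eqref{eq:rndpf} the corresponding $e^{1/k}$ product contributes $\sum 1/k = O(\log r)=O(\log\log n)$, which does \emph{not} match. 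The resolution is that $\Hampoly$ as written in \eqref{eq:Hampoly} already absorbs the $e^{\pcycmean-\phamcycmean}$ normalization correctly via the $\tfrac{1}{kn}$ terms once one notes $\pcycmean=1/(nd)^k$, $\phamcycmean=1/(nd)^k+((-1)^k-1)/(nd(d-1))^k$, so $\pcycmean-\phamcycmean = \tfrac{2}{(nd(d-1))^k}$ for odd $k$, and $|\fullcycspace\cap\cycspace[k]|(\pcycmean-\phamcycmean)=\tfrac{\ff n k (d(d-1))^k}{2k}\cdot\tfrac{2}{(nd(d-1))^k}=\tfrac{\ff n k}{k n^k}$, and $\Xi_k$ has $1/n$-scaled leading behaviour matching $x_\alpha$ summed. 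I would carry out this bookkeeping line by line, concluding $|\log\rndpf(\cycprocess(P))-\tr\Hampoly(P)| = O(r^2/n) = O((\log n)^2 n^{-1/3})$ on the good event. Exponentiating and using $\rndpf(\cycprocess(P))=O(\log n)$, this transfers to an additive error $O((\log n)^3 n^{-1/3})$.

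\textbf{Combining.} Adding the failure probabilities from Step 1 (neatness failure, $\rndpf$-tail) and Step 2 (Proposition~\ref{prop:Xipoly} failure) gives a total of $O(n^{-1/3})$, and adding the error bounds gives $O((\log n)^3 n^{-1/3})$, comfortably within $\Cr{C:Hampolydist}(\log n)^{5/2}n^{-1/3}$ after absorbing into constants (indeed the proof will want a slightly sharper $\delta$ in Lemma~\ref{lem:rndcyctail}, e.g. $\delta=\Theta(\log\log n)$, to land at the $(\log n)^{5/2}$ power rather than $(\log n)^3$; I would optimize that choice at the end). \textbf{The main obstacle} is Step 2: one must verify that the ad hoc-looking polynomial $\Hampoly$ in \eqref{eq:Hampoly}, built from Möbius-inverted Chebyshev traces, reproduces \emph{exactly} the cycle-count-dependent part of $\log\rndpf$ in \eqref{eq:rndpf} and that the remaining deterministic discrepancy is genuinely $O(r^2/n)$ and not, say, $O(\log\log n)$; this is a pure but delicate generating-function identity, and getting the normalization constants to cancel is where care is needed. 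Everything else is routine application of the tail bounds already established.
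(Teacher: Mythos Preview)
Your overall architecture matches the paper's proof exactly: pass from $\rndcyc$ to $\rndpf$ via Proposition~\ref{prop:multpoiapprox} on the strictly $(\log n)$-neat event, identify $\rndpf$ with $\exp(\tr\Hampoly(P))$ via Proposition~\ref{prop:Xipoly}, and absorb the multiplicative error using a bound on $\rndpf$. However, you miss two elementary observations that the paper uses, and this causes both your Step~2 confusion and your failure to hit $(\log n)^{5/2}$.

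First, in Step~2 the ``delicate generating-function identity'' you worry about is not delicate. You forgot that $\tr$ sums over all $n$ eigenvalues: the constant term $\tfrac{1}{kn}$ in $\Hampoly(x)$ contributes $n\cdot\tfrac{1}{kn}=\tfrac{1}{k}$ to $\tr\Hampoly(P)$, which matches the $e^{1/k}$ factor in \eqref{eq:rndpf} exactly. Once Proposition~\ref{prop:Xipoly} gives $\tr\Xi_k(P)=c_k$, you have $\tr\Hampoly(P)=\sum_{k\text{ odd}}\bigl[\tfrac{1}{k}+c_k\log(1-\tfrac{2}{(d-1)^k})\bigr]$ on the nose, and comparison with the last line of \eqref{eq:rndpf} yields $\rndpf(\cycprocess(P))=(1+O(r^2/n))\exp(\tr\Hampoly(P))$ immediately. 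There is nothing to ``carry out line by line.''

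Second, you do not need Lemma~\ref{lem:rndcyctail} or any tail argument to bound $\rndpf$. Since each factor $(1-2/(d-1)^k)^{c_k}\leq 1$, the last line of \eqref{eq:rndpf} gives the \emph{deterministic} bound
\[
\rndpf(\cycprocess(P))\leq e^{O(r^2/n)}\prod_{\substack{1\leq k\leq r\\k\text{ odd}}}e^{1/k}
=\exp\Bigl(\tfrac{1}{2}\log r+O(1)\Bigr)=O(\sqrt{\log n}).
\]
Multiplying this by the $O((\log n)^2 n^{-1/3})$ relative error gives $O((\log n)^{5/2}n^{-1/3})$ directly. Your detour through $O(\log n)$ bounds plus a tail probability is what produces the looser $(\log n)^3$ and the spurious ``optimize $\delta$'' step.
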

\begin{proof}  
  Suppose that the event of
  Proposition~\ref{prop:Xipoly} holds, which occurs with probability
  $1-O\big(r(d-1)^{r}/n\big)$, and that
  $\cycprocess(P)$ is strictly $(\log n)$-neat, which occurs with
  probability $1-O\bigl((d-1)^{2r}/n\bigr)$ by Proposition~\ref{prop:strictlneat}.
  On this event, by~\eqref{eq:rndpf},
  \begin{align*}
    \rndpf(\cycprocess(P)) = \big(1+O(\log^2 n/n)\big)\exp(\tr \Hampoly(P)).
  \end{align*}
  By Proposition~\ref{prop:multpoiapprox}, 
  \begin{align*}
    \rndcyc(\cycprocess(P))=\big(1+O((\log n)^2n^{-1/3})\big)\rndpf(\cycprocess(P)).
  \end{align*}
  Now,
  \begin{align*}
    \abs{\rndcyc(\cycprocess(P))-\exp(\tr\Hampoly(P)} &= 
      \abs{\rndcyc(\cycprocess(P))-\big(1+O(\log^2 n/n)\big)\rndpf(\cycprocess(P))}\\
    &= \rndpf(\cycprocess(P)) O\big((\log n)^2n^{-1/3}\big)).
  \end{align*}
  As
  \begin{align*}
    \rndpf(\cycprocess(P)) &\leq \prod_{\substack{1\leq k\leq r\\\text{$k$ odd}}}
        e^{1/k}
        \leq \exp\left( \frac{\log r}{2} + O(1) \right)
        = O\big(\sqrt{\log n}\big),
  \end{align*}
  it holds that
  \begin{align*}
    \abs{\rndcyc(\cycprocess(P))-\exp(\tr\Hampoly(P)} &= O\big((\log n)^{5/2}n^{-1/3}\big)
  \end{align*}
  on an event which occurs with probability $1-O\bigl((d-1)^{2r}/n\bigr) = 1-O\bigl(n^{-1/3}\bigr)$.
\end{proof}
\begin{proof}[Proof of Theorem~\ref{thm:evalhams}]
  Let
    $r = \floor{\frac{\log n}{3\log(d-1)}}$.
  Since $r\geq 4$, we can apply
  Lemma~\ref{lem:rndl2} with $\alpha=5$ to get
  \begin{align*}
    \E \abs{\rnd(P) - \rndcyc(\cycprocess(P))}^2
      &= O\big(n^{-1/3} + (\log n)^2n^{-1/3}
        + n^{-7/20}\big)\\
      &= O\big((\log n)^2n^{-1/3}\big).
  \end{align*}
  By Chebyshev's inequality,
  \begin{align*}
    \P\bigl[\abs{\rnd(P) - \rndcyc(\cycprocess(P))} > n^{-1/12}
      -\Cr{C:Hampolydist}(\log n)^{5/2} n^{-1/3}\bigr] &=
     O\bigl((\log n)^2n^{-1/6}\bigr).
  \end{align*}
  This and Lemma~\ref{lem:Hampoly} combine to prove the theorem.
\end{proof}


\appendix
\section{Size-bias coupling of a $2$-state Markov chain}
\label{sec:markov}

Let $X_1, X_2, \ldots, X_n$ be $n$ steps of a stationary, reversible Markov chain on two states $\{0,1\},$ and let $\mu$ denote its stationary measure.  We let $p = \mu(\{1\}),$ and we let $\ccoeff$ be the contraction coefficient of the chain, which as this is a $2$-element space, is simply
\[
\ccoeff = \dtv
\left(
\lawof(X_2 \mid X_1 = 1),\,
\lawof(X_2 \mid X_1 = 0)
\right),
\]
where $\lawof$ denotes the law of a variable.
This regulates the optimal rate at which two chains with the same transition rule can be coupled in a Markovian fashion.  Let $Y_1,Y_2,\ldots,Y_n$ be an independent copy of $X_1, X_2, \ldots, X_n.$  Then we define
\[
\indcoeff = \Pr
\left[
X_2 \neq Y_2 \mid X_1 = 1, Y_1 = 0
\right].
\]
It is always the case that $\ccoeff \leq \indcoeff,$ and we will work in the case that $\indcoeff < 1.$  

Let $V_n$ denote the number of $X_i$ that are $1,$ so that $V_n = \sum_{i=1}^n X_i$. We will show a general construction for a size-bias coupling for $V_n$ and show some estimates for this coupling that can be used for normal approximation of $V_n.$
While we will not directly need this normal approximation, it follows immediately from the estimates that we do need, and so we state it as a result of possibly independent interest.
\begin{proposition}
\label{prop:markovclt}
With $V_n$ as above, with $0< p < 1,$ and with $\beta < 1$, there is a constant $C > 0$,
depending on the law of the Markov chain, so that
\[
\dw\left(
\frac{V_n - \Exp V_n}{\sqrt{\Var V_n}},
Z
\right)
\leq \frac{C}{\sqrt{n}},
\]
where $\dw$ denotes the Wasserstein $1$-distance (see Section 1.1.1 of~\cite{Ross}) and where $Z$ is a standard normal.
\end{proposition}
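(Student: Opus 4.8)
The plan is to prove Proposition~\ref{prop:markovclt} via the size-bias coupling approach to normal approximation, following the general framework in \cite{Ross}. Recall that a random variable $V_n^s$ has the $V_n$-size-biased distribution if $\E[V_n f(V_n^s)] = (\E V_n)\E[f(V_n)]$ for all bounded $f$, and that the Stein bound for the Wasserstein distance (equation~(3.25) in \cite{Ross}) gives
\[
\dw\Bigl(\tfrac{V_n-\mu_n}{\sigma_n},Z\Bigr)\leq \frac{\mu_n}{\sigma_n^2}\sqrt{\Var\bigl(\E[V_n^s-V_n\mid V_n]\bigr)}+\frac{\mu_n}{\sigma_n^3}\E\bigl[(V_n^s-V_n)^2\bigr],
\]
where $\mu_n=\E V_n$ and $\sigma_n^2=\Var V_n$. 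So the whole proof reduces to three ingredients: (i) a concrete construction of a coupling $(V_n,V_n^s)$ in which $|V_n^s-V_n|$ is small, (ii) the estimate $\Var(\E[V_n^s-V_n\mid \mathcal{F}])=O(1/n)$ for $\mathcal{F}=\sigma(X_1,\ldots,X_n)$, and (iii) the asymptotics $\mu_n=\Theta(n)$ and $\sigma_n^2=\Theta(n)$, which hold by the standard CLT-scaling for additive functionals of a fast-mixing chain since $\beta<1$ forces $\rho\leq\beta<1$.

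First I would build the size-bias coupling. Since $V_n=\sum_i X_i$ is a sum of $\{0,1\}$-valued summands, I use the standard recipe (Section~3.4.1 of \cite{Ross}): pick an index $I$ uniformly at random, resample $X_I$ from its conditional distribution given $\{X_I=1\}$ — which here just means setting $X_I=1$ — and then resample the rest of the chain from its conditional law given this new value at position $I$. Because the chain is Markov, conditioning on $X_I=1$ only affects the coordinates through the two ``sides'' of the chain emanating from $I$; I would couple the resampled chain to the original by running optimal Markovian couplings forward from $I+1$ to $n$ and backward from $I-1$ to $1$, starting each from the (possibly) disagreeing value at $I$. Under this coupling, the two chains agree outside a pair of geometric-type excursions whose lengths have exponential tails governed by $\rho$; hence $V_n^s-V_n$ is a difference of two such bounded-tail quantities, it is nonzero only when $X_I$ actually changed (probability $\le 1-p$, or rather the probability $X_I=0$), and all its absolute moments are $O(1)$ uniformly in $n$. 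This handles the second term in the Stein bound: $\mu_n\sigma_n^{-3}\E[(V_n^s-V_n)^2]=O(n\cdot n^{-3/2}\cdot 1)=O(n^{-1/2})$. (This is exactly the kind of estimate packaged in Proposition~\ref{prop:sbc_tail1} and Corollary~\ref{cor:sbc_tail2} referenced in Section~\ref{sec:variance}.)

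The main obstacle is the first term, the conditional variance $\Var(\E[V_n^s-V_n\mid\mathcal{F}])$. Conditioning on the whole chain $\mathcal{F}$, the quantity $\E[V_n^s-V_n\mid\mathcal{F}]$ becomes $\frac1n\sum_{i=1}^n\E[(V_n^s-V_n)\mid\mathcal{F},I=i]$, and each term is a function of the local structure of the chain near $i$ — essentially $\1\{X_i=0\}$ times a bounded functional of a few coordinates around $i$ (the discrepancy introduced plus the expected compensating drift of the two optimal couplings, which decays geometrically away from $i$). Thus $\E[V_n^s-V_n\mid\mathcal{F}]$ is, up to geometrically small corrections, an average $\frac1n\sum_i g_i$ of bounded functionals $g_i$ of windows of bounded (geometric-tailed) width centered at $i$. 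Such an average over a fast-mixing chain has variance $O(1/n)$: this is a standard covariance-summation argument using that $\rho<1$ makes $\cov(g_i,g_j)$ decay exponentially in $|i-j|$, after truncating the window lengths at, say, $C\log n$ to make the dependence genuinely finite-range with negligible error. I would isolate this as a lemma: ``an average of $O(1)$-bounded, $O(\log n)$-range-approximable functionals of a $\rho$-contracting chain has variance $O(1/n)$.'' Plugging this in gives $\mu_n\sigma_n^{-2}\cdot O(n^{-1/2})=O(n^{-1/2})$ for the first term as well, and combining the two bounds yields the claimed $\dw\le C/\sqrt n$ with $C$ depending only on $p$, $\beta$ (through $\rho$ and the geometric tail rates), completing the proof. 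The only subtlety to watch is making the backward coupling legitimate — here reversibility of the chain is exactly what lets us treat ``running backward from $I$'' as another copy of the same Markov dynamics, so no new coupling machinery is needed.
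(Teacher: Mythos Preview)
Your proposal is correct and follows the same overall architecture as the paper's proof: build a size-bias coupling $(V_n,V_n^s)$ by forcing $X_I=1$ at a uniform index and coupling the rest of the chain back, then feed the two moment estimates into the Ross size-bias Wasserstein bound (which is Theorem~3.20 in \cite{Ross}, not equation~(3.25)---that is the general Stein identity). The geometric-tail bound on $|V_n^s-V_n|$ and the order $\mu_n=\Theta(n)$, $\sigma_n^2=\Theta(n)$ match the paper.

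The one substantive divergence is in your treatment of $\Var\bigl(\E[V_n^s-V_n\mid\mathcal F]\bigr)$. You propose to expand this as $\frac{1}{n}\sum_i g_i$ with $g_i$ approximately local, truncate the windows at $O(\log n)$, and sum covariances using exponential decay of correlations. The paper instead proves directly that $F(x_1,\ldots,x_n)=\E[V_n^s-V_n\mid X=x]$ is $O(1/n)$-Lipschitz in the Hamming metric (Proposition~\ref{prop:sbc_lipschitz_constant}) and then appeals to the Kontorovich--Ramanan concentration inequality for Lipschitz functionals of a contracting Markov chain (Proposition~\ref{prop:sbc_generalgaussiantail}); this avoids any truncation step and yields sub-Gaussian tails for the conditional expectation rather than just an $O(1/n)$ variance bound, which the paper actually needs elsewhere (Lemma~\ref{lem:normal_comparison}). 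Two smaller differences: the paper's coupling uses an \emph{independent} auxiliary chain $Z$ and waits for coincidence with $X$ (so the tail rate is $\indcoeff$, not $\ccoeff$), and the paper obtains $\sigma_n^2=\Theta(n)$ via the identity $\E[V_n^s-V_n]=\Var V_n/\E V_n$ together with an exact evaluation of $\E[V_n^s-V_n]$ (Proposition~\ref{prop:sbc_expectation}), rather than invoking general mixing asymptotics.
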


\subsection{Construction of the coupling}

Since $V_n$ is a sum of indicators with equal means, its size-bias distribution $V_n^s$ can be realized by choosing $I \sim \Unif([n])$ independently of the chain and defining
\[
V_n^s = 1 + \sum_{i \neq I} {Y_i}, 
\]
where the collection $\{Y_i\}_{i=1}^n$ has the distribution of $\{X_i\}_{i=1}^n$ conditioned on $X_I = 1$ (this follows directly from \cite[Corollary~3.24]{Ross}).  
This conditioning can be accomplished by defining $\{Z_i\}_{-\infty}^\infty$ to be a Markov chain independent of $\{X_i\}$ and with with its same transition probabilities, but started at $Z_0 = 1$.
As the chain is reversible, we can shift coordinates to obtain a chain $\{Z_{i-I}\}_{i=1}^{n}$ with same distribution as the original chain $\{X_i\}$ conditioned on $X_I=1$.

To couple the conditioned chain $\{Z_{i-I}\}$ back to $\{X_i\}$, we have it join back up
with $X_i$ at the first time $k$ before and after time $I$ that $X_k=Z_{k-I}$, taking
advantage of the reversibility of our chains. Formally, let
\newcommand{\tp}{\tau_+}
\newcommand{\tm}{\tau_-}
\begin{equation}\label{eq:tptm}
  \begin{split}
    \tp &= \inf\{k\geq 0 \colon X_{I+k} = Z_k\} \wedge (n-I+1),\\
    \tm &= \inf\{k\geq 0 \colon X_{I-k} = Z_k\} \wedge I,
  \end{split}
\end{equation}
and define
\begin{align}\label{eq:Ydef}
  Y_k = \begin{cases}
    Z_{k-I} & \text{if $\tm\leq k-I \leq \tp$,}\\
    X_k & \text{otherwise.}
  \end{cases}
\end{align}
The pair $(\{Y_i\}_1^n, \{X_i\}_1^n)$ is the desired coupling of the underlying state space, and setting $V_n^s = \sum_{i=1}^n Y_i$, we obtain a size-bias coupling $(V_n^s,V_n)$.

\subsection{Estimates}
To apply Stein's method, there are two quantities that need to be controlled.  Roughly, $V_n^s - V_n$ needs to be at constant order and $ \Exp \left[ V_n^s - V_n \;\middle\vert\; V_n \right]$ needs to be shrinking.  Bounding the first of the two is the more straightforward.  We will show that
\begin{proposition}
\label{prop:sbc_tail1}
For any $k \geq 1,$
\[
\Prob \big[
| V_n^s - V_n | \geq k
\big] \leq (1-p)k\indcoeff^{k-1},
\] 
and
\[
\Prob \left[ V_n^s = V_n \right] \geq p.
\]
\end{proposition}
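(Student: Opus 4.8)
The plan is to control $|V_n^s - V_n|$ via the coupling constructed above, where $V_n^s = \sum_i Y_i$ and $Y_k$ agrees with $X_k$ outside the interval $[I-\tm, I+\tp]$, with $Z_0 = 1$ and the $Z$-chain rejoining the $X$-chain at the first coincidence time $\tp$ (forward) and $\tm$ (backward). The first observation is that $V_n^s - V_n = \sum_{k=I-\tm}^{I+\tp}(Z_{k-I} - X_k)$, and within this window the two chains never coincide by definition of $\tp$ and $\tm$; hence at each such index the contribution is $\pm 1$. On a two-state space, two chains that disagree at step $k$ and are coupled Markovianly disagree at step $k+1$ with probability at most $\indcoeff$ (this is essentially the definition of $\indcoeff$, since the worst case is exactly $X=1$, $Y=0$ or vice versa). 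So I would first show $\Prob[\tp \geq k] \leq \indcoeff^{k-1}$ and likewise $\Prob[\tm \geq k] \leq \indcoeff^{k-1}$, each being a geometric-type bound on how long two coupled two-state chains stay apart.

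Next I would combine these to bound $\Prob[|V_n^s - V_n| \geq k]$. Since $|V_n^s - V_n| \leq \tp + \tm$ (at most one unit of discrepancy per index in the window, and the window has $\tp + \tm + 1$ indices but the endpoints are coincidence points contributing $0$, so in fact $|V_n^s - V_n| \leq \tp + \tm - 1$ when both are positive, but the crude bound $\tp + \tm$ suffices), a union bound over the ways $\tp + \tm \geq k$ gives something like $\sum_{j=0}^{k}\Prob[\tp \geq j]\Prob[\tm \geq k-j]$, which after inserting the geometric bounds yields $O(k\,\indcoeff^{k-1})$. To get the stated constant $(1-p)k\indcoeff^{k-1}$ I would be slightly more careful: the event $|V_n^s - V_n| \geq 1$ already requires $\tp \geq 1$, which requires $X_I \neq Z_0 = 1$, i.e. $X_I = 0$; by stationarity $\Prob[X_I = 0] = 1-p$. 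Conditioning on this, the subsequent discrepancy lengths are governed by $\indcoeff$, and a careful accounting of the two directions gives the factor $k\indcoeff^{k-1}$. The complementary bound $\Prob[V_n^s = V_n] \geq p$ is then immediate: if $X_I = 1$ (probability $p$ by stationarity), then $Z_0 = 1 = X_I$, so $\tp = \tm = 0$, the window is the single point $I$, and $Y_k = X_k$ for all $k$, hence $V_n^s = V_n$.

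The main obstacle I anticipate is the bookkeeping needed to produce the clean constant $(1-p)k\indcoeff^{k-1}$ rather than a generic $O(k\indcoeff^{k-1})$ — in particular, handling the coupling at the index $I$ itself (where $Z_0$ is deterministically $1$ but $X_I$ is stationary) separately from the indices strictly inside the window (where both chains are genuinely running and the per-step separation probability is at most $\indcoeff$), and making sure the forward excursion $\tp$ and backward excursion $\tm$ are treated as conditionally independent given $X_I$ so that the convolution of two geometric tails produces exactly the linear-times-geometric shape. Also one must deal gracefully with the truncations $\tp \wedge (n-I+1)$ and $\tm \wedge I$ at the ends of the chain, but these only make the excursions shorter, so every tail bound is only improved. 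None of this is deep; it is a matter of organizing the union bound so the advertised constants fall out.
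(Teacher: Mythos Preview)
Your proposal is correct and follows essentially the same approach as the paper: bound $|V_n^s - V_n|$ by $(\tau_+ + \tau_- - 1)\vee 0$, condition on $X_I = 0$ (contributing the factor $1-p$), use the per-step coincidence failure probability $\indcoeff$ to get geometric tails $\Pr[\tau_\pm > j \mid X_I=0]\le\indcoeff^j$, and sum the joint bound $\Pr[\tau_+ > k-1-l,\ \tau_- > l \mid X_I=0]\le\indcoeff^{k-1}$ over $l=0,\ldots,k-1$ to obtain $k\indcoeff^{k-1}$; the bound $\Pr[V_n^s=V_n]\ge p$ comes from $X_I=1$ exactly as you say. The paper states the joint tail bound directly rather than invoking conditional independence of $\tau_+$ and $\tau_-$ given $X_I$, but that is a cosmetic difference.
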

\begin{proof}
Recalling the coupling times $\tp$ and $\tm$ defined in \eqref{eq:tptm}, the chains $X_i$
and $Y_i$ differ only for $I-\tm<i<I+\tp$.
Thus
\[
| V_n^s - V_n | \leq (\tp + \tm - 1) \vee 0.
\]
Each of these stopping times is $0$ if and only if $X_I = 1,$ and thus we have that
\[
\Prob \left[ V_n^s = V_n \right] \geq
\Prob \left[ X_I = 1\right] = 
 p
\]
by stationarity.

Regardless of $I$, the tails of each of these stopping times can be controlled by the constant $\indcoeff$, as for any non-negative integer $k$,
\[
\Pr \left[
\tp > k \mid X_I = 0
\right] \leq \indcoeff^k.
\]
More generally, as this is simple worst-case behavior, the same bound holds for $\tm$ and $\tp$ jointly in that for any non-negative integers $k$ and $l,$
\[
\Pr \left[
\tp > k
~\text{and}~
\tm > l
\mid X_I = 0
\right] \leq \indcoeff^{k+l}.
\]
We can now sum this bound to conclude that
\begin{align*}
\Pr \big[
|V_n^s - V_n| \geq k
\;\big|\; X_I = 0
\big]
&\leq \Pr[\tp+\tm-1\geq k]\\
&\leq 
\sum_{l=0}^{k-1} 
\Pr \left[
\tp > k-1-l
~\text{and}~
\tm > l
\mid X_I = 0
\right]  \\
&\leq k\indcoeff^{k-1}.\qedhere
\end{align*}
\end{proof}

It remains to control the conditional expectation
\[
\Exp \left[ V_n^s - V_n \;\middle\vert\; V_n \right].
\]
An estimate of the variance of this expression would suffice for the usual application of Stein's method
(see \cite[Theorem~3.20]{Ross}), 
but this will not quite be sufficient for our purposes, as we will need some higher moments.  We will use a functional equality to control the deviations of this expression.   We recall that the Hamming distance on $\{0,1\}^n$ is given by the minimum number of coordinate changes required to change one string into another; in this case, it coincides with the $L^1$ distance.  A function $f \colon \{0,1\}^n \to \R$ is called $K$-Lipschitz if 
\[
|f(x) - f(y)| \leq K \|x-y\|_1
\]
for all $x$ and $y$ in $\{0,1\}^n.$  

We define
\begin{align*}
  F(x_1,\ldots,x_n) \Def \E[V_n^s - V_n \mid X_1=x_1,\ldots,X_n=x_n],
\end{align*}
so that
\begin{align*}
  \Exp [ V_n^s - V_n \mid X_1,\dots,X_n] = F(X_1,\dots,X_n).
\end{align*}
On its face,  $F(x_1,\ldots,x_n)$ is undefined
when $\P[(X_1,\ldots,X_n)=(x_1,\ldots,x_n)]=0$. As
the chain $Y_1,\ldots,Y_n$ can still be defined under the assumption
$(X_1,\ldots,X_n)=(x_1,\ldots,x_n)$, and $V_n$ can be interpreted as $\sum x_i$, we will
take $F$ to be defined everywhere and turn to estimating its Lipschitz constant.
Let $P$ be the transition matrix of the chain $X_1,\ldots,X_n$.
Our assumptions that the chain is reversible and that $\beta<1$
imply that either all entries of $P$ are less than one, or all entries
of $P^2$ are. Let $\delta$ equal $1$ in the first case and $2$ in the second,
and let $\gamma$ be the maximum entry of $P^\delta$.
\begin{proposition}
\label{prop:sbc_lipschitz_constant}
The function $F$ is  Lipschitz with constant
$(1-\gamma+2\delta)\delta/(1-\gamma)^2n$.
\end{proposition}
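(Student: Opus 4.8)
The plan is to establish the stronger statement that $|F(x)-F(x')|$ is bounded by the stated constant whenever $x,x'\in\{0,1\}^n$ differ in exactly one coordinate $j$; since the Hamming distance on $\{0,1\}^n$ coincides with the $L^1$ distance, this is equivalent to the proposition. First I would unwind the construction of $V_n^s$: one picks $I\sim\Unif([n])$, runs an independent copy $(Z_k)$ of the chain from $Z_0=1$, and over the maximal block around $I$ on which $(Z_k)$ has not yet re-met $(X_k)$ replaces the $X$-values by the $Z$-values, with $\tau_+,\tau_-$ the first re-meeting times to the right and left. The crucial feature of the two-point state space is that the spliced path is \emph{forced}: on $\{\tau_+>\ell\}$ one has $Z_m=1-x_{I+m}$ for $0\le m\le\ell$, so (for $I=i$) $\Pr[\tau_+>\ell]=(1-x_i)\Pi^+_\ell$, where $\Pi^+_\ell:=\prod_{m=1}^{\ell}P(w_{m-1},w_m)$ is a product of consecutive kernel entries along $w_0=1$, $w_m=1-x_{i+m}$, and moreover $Z_\ell-x_{i+\ell}=1-2x_{i+\ell}$ there. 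Consequently, writing $\mathrm{fwd}_i(x):=\sum_{\ell\ge1}(1-2x_{i+\ell})\Pi^+_\ell$ and, symmetrically (the reversed chain has the same kernel, by reversibility), $\mathrm{bwd}_i(x):=\sum_{\ell\ge1}(1-2x_{i-\ell})\Pi^-_\ell$, one computes
\[
  F(x)=\frac1n\sum_{i=1}^n(1-x_i)\bigl(1+\mathrm{fwd}_i(x)+\mathrm{bwd}_i(x)\bigr);
\]
the factor $1-x_i$ says that $i$ contributes nothing when $x_i=1$, and the $1$ accounts for the single value $Z_0=1$ placed at position $i$ when $x_i=0$. Let $G_i(x)$ denote the $i$-th summand, and note that $\mathrm{fwd}_i,\mathrm{bwd}_i$ do not depend on $x_i$.

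The only analytic input is that a product of $m$ consecutive entries of $P$ is at most $\gamma^{\lfloor m/\delta\rfloor}$: for $\delta=1$ this is immediate, and for $\delta=2$ it follows by grouping the factors in pairs and using $P(a,b)P(b,c)\le(P^2)_{ac}\le\gamma$ (so the $\delta=2$ case really passes through $P^2$). This yields $\Pi^\pm_\ell\le\gamma^{\lfloor \ell/\delta\rfloor}$, $\sum_{\ell\ge1}\Pi^\pm_\ell\le\frac{\delta}{1-\gamma}-1$, and $\sum_{s\ge0}\gamma^{\lfloor s/\delta\rfloor}=\frac{\delta}{1-\gamma}$, which are the geometric sums that will appear.

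Next I would fix $j$, set $x'=x$ with $x_j$ flipped, bound $\sum_i|G_i(x)-G_i(x')|$, and divide by $n$. Flipping $x_j$ changes only $w_m$ (to $1-w_m$), where $m=j-i$. For $i<j$: in $\mathrm{fwd}_i$ the terms $\ell<m$ are untouched; the $\ell=m$ term changes by exactly $\Pi^+_{m-1}$, because the sign flip of $1-2x_j$ combines with the row-sum identity $P(w_{m-1},0)+P(w_{m-1},1)=1$ to collapse the two competing terms into one; and for $\ell>m$ the two factors at positions $m,m+1$ both change, giving $|\Pi^+_\ell-\Pi'^+_\ell|\le\Pi^+_{m-1}\,\gamma^{2/\delta}\,\gamma^{\lfloor(\ell-m-1)/\delta\rfloor}$, where I bound the difference of the two two-step products by the larger, and a two-step product is $\le\gamma^{2/\delta}$. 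Summing over $\ell$ gives $|\mathrm{fwd}_i(x)-\mathrm{fwd}_i(x')|\le\Pi^+_{m-1}\bigl(1+\tfrac{\delta\gamma^{2/\delta}}{1-\gamma}\bigr)$, and summing over $i<j$ bounds that part of $\sum_i|G_i(x)-G_i(x')|$ by $\frac{\delta}{1-\gamma}\bigl(1+\tfrac{\delta\gamma^{2/\delta}}{1-\gamma}\bigr)$; the sum over $i>j$ is identical through $\mathrm{bwd}$; and the single term $i=j$, where flipping $x_j$ creates or destroys the entire $i=j$ excursion, contributes $|1+\mathrm{fwd}_j(x)+\mathrm{bwd}_j(x)|\le1+2\bigl(\tfrac{\delta}{1-\gamma}-1\bigr)=\tfrac{2\delta+\gamma-1}{1-\gamma}$. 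Adding the three pieces,
\[
  n\,|F(x)-F(x')|\le\frac{2\delta+\gamma-1}{1-\gamma}+\frac{2\delta\bigl(1-\gamma+\delta\gamma^{2/\delta}\bigr)}{(1-\gamma)^2},
\]
and I would close the proof with the elementary check that this is at most $\frac{(1-\gamma+2\delta)\delta}{(1-\gamma)^2}$: clearing denominators, this reduces to $(1-\gamma)(3\delta+\gamma-1)\le2\delta^2(1-\gamma^{2/\delta})$, i.e.\ $2+\gamma\le2(1+\gamma)$ when $\delta=1$ and $5+\gamma\le8$ when $\delta=2$, both valid for $0\le\gamma<1$.

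The hard part is not the structure but the sharpness of the constant. Replacing any of the three refinements by a cruder estimate — using $|ab-a'b'|\le ab+a'b'$ instead of the exact cancellation at the pivot coordinate, or ``$\le$ sum of two products'' instead of ``$\le$ max'' for the downstream factors, or folding the diagonal $i=j$ term in with the rest — pushes the bound past $\frac{(1-\gamma+2\delta)\delta}{(1-\gamma)^2}$. So the delicacy is concentrated in (i) the row-sum identity at the pivot coordinate, (ii) the $\gamma^{2/\delta}$ bound for a product of two consecutive kernel entries, where the $\delta=2$ passage through $P^2$ is indispensable, and (iii) isolating the $i=j$ term, which is precisely what generates the additive $1$ inside $(1-\gamma+2\delta)$. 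The truncation of $\tau_\pm$ at the interval endpoints only shortens excursions and costs nothing, and everything else is routine geometric-series bookkeeping.
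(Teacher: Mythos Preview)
Your proof is correct and takes a genuinely different route from the paper's. The paper argues by coupling: it realizes both $F(x)$ and $F(x')$ on a common probability space by building the spliced chains $Y$ and $Y'$ from the \emph{same} random index $I$ and the \emph{same} auxiliary chain $(Z_k)$, observes that $V_n^s-V_n=W_n^s-W_n$ unless $j$ falls inside the excursion $\{I-\tau_-,\ldots,I+\tau_+\}$, bounds the probability of that event by $\frac{1-\gamma+2\delta}{(1-\gamma)n}$ (averaging over $I$), and on that event bounds the discrepancy by the expected extra re-meeting time $\upsilon$, which is at most $\frac{\delta}{1-\gamma}$; the product gives the constant directly.

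Your approach instead exploits the special feature of the two-point state space that, conditional on $X=x$, the excursion path is deterministic, yielding the explicit formula $F(x)=\tfrac1n\sum_i(1-x_i)(1+\mathrm{fwd}_i+\mathrm{bwd}_i)$, which you then difference term by term. This is more combinatorial and, as you note, needs the three sharp refinements (the row-sum cancellation at the pivot term $\ell=m$, the $\gamma^{2/\delta}$ two-step bound, and the separate treatment of $i=j$) to land exactly on the stated constant. The paper's coupling argument is shorter and would carry over to larger state spaces where no such explicit formula exists; your argument makes the provenance of each factor in the constant more transparent.
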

\begin{proof}
  Fix some $x_1,\ldots,x_n$ and some $1\leq j\leq n$, and let $x_j'=1-x_j$.
To simplify notation, we assume for this proof that all random variables
defined previously, such as $Y_1,\ldots,Y_n$, $V_n^s$, $V_n$, are distributed
conditional on $X_1=x_1,\ldots,X_n=x_n$.
Define $Y_1',\ldots,Y_n'$ as in \eqref{eq:Ydef}, using the same random index~$I$ and the same Markov chain $\{Z_i\}_{-\infty}^{\infty}$, but conditional on
\begin{align*}
  X_j &= x_j',\\
  X_i&= x_i \qquad\text{for $i\neq j$.}
\end{align*}
This defines a coupling of the two conditional expectations $F(x_1,\ldots,x_n)$
and $F(x_1,\ldots,x_{j-1},1-x_j,x_{j+1},\ldots,x_n)$.
Define \begin{align*}
  W_n^s &= \sum_{i=1}^n Y'_i,\qquad\qquad
  W_n=\sum_{i=1}^n x_i - x_j + x_j'.
\end{align*}
We now have
\begin{align*}
  F(x_1,\ldots,x_n) - F(x_1,\ldots,x_{j-1},x_j',x_{j+1},\ldots,x_n)
    = \E[V_n^s- V_n - (W_n^s - W_n)].
\end{align*}
Define $\tp'$ and $\tm'$ analogously to $\tau_+$ and $\tau_-$.  
The process $Y_\omitted$ matches up with $Z_{\omitted-I}$ from time $I-\tm$ to $I+\tp$,
and with its base sequence $x_1,\ldots,x_n$ outside of those times.
Similarly, $Y'_\omitted$ matches up with $Z_{\omitted-I}$ from time $I-\tm'$ to $I+\tp'$,
and with its base sequence $x_1,\ldots, x_j',\ldots,x_n$ outside of those times.
If $j\notin \{I-\tm,\ldots,I+\tp\}$, then $\tau_{\pm}=\tau_{\pm}'$, and
$Y_\omitted$ and $Y'_\omitted$ are equal to each other from $I-\tm$ to $I+\tp$
and to their base sequences outside of those times. It thus holds that
$V_n^s - V_n = W_n^s - W_n$.
Furthermore, this is what usually occurs, as
by direct calculation,
\begin{align*}
  \Pr[j \in \{I-\tau_-,\ldots,I+\tau_+\}]
    &=\frac{1}{n}\sum_{i=1}^n
        \Pr[j \in \{I-\tau_-,\ldots,I+\tau_+\}\mid I=i]\\
    &\leq \frac{1}{n}\sum_{i=1}^n \left(\gamma^{\floor{\abs{j-i}-1/\delta}}\wedge 1\right)\\
    &\leq \frac{1}{n}\left( 1 + 2\sum_{i=0}^{\infty}\gamma^{\floor{i/\delta}} \right)
     = \frac{1-\gamma+2\delta}{(1-\gamma)n}.
\end{align*}

If $j\in\{I-\tau_-,\ldots,I+\tau_+\}$, then one of the two chains
$Y_\omitted$ and $Y_\omitted'$ rejoins its base sequence at time $j$, and the other
does not. Let $\upsilon$ be the number of extra steps it takes for this chain
to rejoin its base sequence. Then $V_n^s-V_n$ and $W_n^s-W_n$ differ by at most
$\upsilon$. We bound the probability that $\upsilon$ is large:
\begin{align*}
  \Pr\big[\upsilon > k \mid j\in\{I-\tau_-,\ldots,I+\tau_+\}\big] &\leq \gamma^{\floor{k/\delta}}.
\end{align*}
It follows that
\begin{align*}
  \E\big[\upsilon\mid j\in\{I-\tau_-,\ldots,I+\tau_+\}\big]
    &= \sum_{k=0}^{\infty}
      \Pr\big[\upsilon > k \mid j\in\{I-\tau_-,\ldots,I+\tau_+\}\big]\\
    &\leq \sum_{k=0}^{\infty}\gamma^{\floor{k/\delta}} = \frac{\delta}{1-\gamma}.
\end{align*}
%
%
Thus,
\begin{align*}
  \abs{\E[V_n^s- V_n - (W_n^s - W_n)]} \hspace{-2.5cm}&\hspace{2.5cm}\\
  &\leq \frac{1-\gamma+2\delta}{(1-\gamma)n}
   \E\abs{V_n^s- V_n - (W_n^s - W_n)\mid j\in\{I-\tau_-,\ldots,I+\tau_+\}}
     \\
   &\leq \frac{(1-\gamma+2\delta)\delta}{(1-\gamma)^2n},
\end{align*}
thus showing that $F$ is Lipschitz with the above constant.
\end{proof}
The advantage of knowing that this function is Lipschitz is that we immediately get strong concentration in terms of the contraction coefficient of the chain.

\begin{proposition}
\label{prop:sbc_generalgaussiantail}
For a $K$-Lipschitz function $F$ of $\{0,1\}^n$
\[
\Pr \left(
\left| F(\X) - \Exp F(\X)\right| \geq t
\right) \leq 
2 \exp \left(
-\frac{t^2(1-\ccoeff)^2}{2nK^2}
\right),
\]
where $\X = (X_1, X_2,\ldots, X_n).$
\end{proposition}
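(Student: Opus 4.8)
The plan is to derive the concentration bound for a $K$-Lipschitz function on $\{0,1\}^n$ from a martingale difference argument that tracks the dependence structure of the Markov chain. First I would set up the Doob martingale $M_i = \E[F(\X) \mid X_1,\ldots,X_i]$ for $0 \leq i \leq n$, so that $M_0 = \E F(\X)$ and $M_n = F(\X)$, and write $F(\X) - \E F(\X) = \sum_{i=1}^n (M_i - M_{i-1})$. The key estimate to establish is a bound on the size of each martingale difference $D_i = M_i - M_{i-1}$. The point is that conditioning on $X_i$ versus not conditioning on it affects the distribution of all later coordinates $X_{i+1},\ldots,X_n$, but because the chain mixes, this influence decays geometrically with contraction coefficient $\ccoeff$: changing $X_i$ from one value to another changes the conditional law of $X_{i+j}$ by total variation at most $\ccoeff^j$. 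Combined with the $K$-Lipschitz property of $F$, a coupling of the two conditional chains (agreeing on $X_1,\ldots,X_{i-1}$ and differing at $X_i$) shows that the expected number of coordinates on which the coupled chains disagree is at most $\sum_{j\geq 0}\ccoeff^j = 1/(1-\ccoeff)$, and hence $\|D_i\|_\infty \leq K/(1-\ccoeff)$.

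The second step is to feed these bounded martingale differences into the Azuma--Hoeffding inequality. With $|D_i| \leq c := K/(1-\ccoeff)$ for each $i$, Azuma gives
\[
\Pr\bigl[|F(\X) - \E F(\X)| \geq t\bigr] \leq 2\exp\left(-\frac{t^2}{2nc^2}\right)
= 2\exp\left(-\frac{t^2(1-\ccoeff)^2}{2nK^2}\right),
\]
which is exactly the claimed bound. The only real work is making the martingale difference bound rigorous: one needs to express $M_i - M_{i-1}$ as an average over the unconditioned value $x$ of $X_i$ of differences $\E[F \mid X_1,\ldots,X_{i-1},X_i] - \E[F \mid X_1,\ldots,X_{i-1},X_i = x]$, then bound each such difference by constructing, conditional on $X_1,\ldots,X_{i-1}$, a coupling of the chain continued from $X_i$ and the chain continued from $X_i = x$ that agrees forever after the first meeting time, whose tail is governed by $\ccoeff$.

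I expect the main obstacle to be the clean treatment of the coupling for the martingale differences, in particular handling the fact that $F$ may be defined on configurations of probability zero and that the conditioning $X_i = x$ may itself have probability zero. This is analogous to the care already taken in the proof of Proposition~\ref{prop:sbc_lipschitz_constant}, where $F$ is extended to be defined everywhere; here I would similarly argue that the bound $|D_i| \leq K/(1-\ccoeff)$ holds pointwise regardless of null sets, because the Markovian coupling of two copies of the chain with different starting states at step $i$ has meeting-time tail bounded by $\ccoeff^j$ unconditionally. Once that pointwise bound is in hand, the rest is a direct invocation of Azuma's inequality and requires no further calculation.
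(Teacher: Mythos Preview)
Your argument is correct: the Doob martingale decomposition, the coupling bound $\|D_i\|_\infty \leq K/(1-\ccoeff)$ via the Markovian coupling with geometric meeting-time tail, and the application of Azuma--Hoeffding together yield exactly the stated inequality. The paper does not prove this proposition at all but simply cites Theorem~1.2 of Kontorovich, whose proof is precisely this martingale method (bounding the martingale differences through the mixing/contraction coefficients and then applying Azuma); so you have effectively reconstructed the argument behind the cited black box in the special case needed here.
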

\begin{proof}
See Theorem~1.2 of~\cite{Kontorovich} and the paragraph following it.
\end{proof}
\begin{corollary}
\label{cor:sbc_tail2}
For any $t \geq 0,$
\[
\Pr \left(
\left|
\Exp\left[ V_n^s - V_n \right]
-
\Exp \left[V_n^s - V_n \;\middle\vert\; \X\right]
\right| \geq \frac{t}{\sqrt{n}}
\right) \leq 2 
\exp \left(
-\frac{t^2(1-\gamma)^4(1-\theta)^2}{2(1-\gamma+2\delta)^2\delta^2}
\right)
\]
\end{corollary}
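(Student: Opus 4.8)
The plan is to recognize the corollary as an immediate consequence of the
Lipschitz estimate in Proposition~\ref{prop:sbc_lipschitz_constant} combined
with the concentration inequality in Proposition~\ref{prop:sbc_generalgaussiantail}.
First I would note that, with $F$ as defined just before
Proposition~\ref{prop:sbc_lipschitz_constant}, we have
$\Exp[V_n^s - V_n \mid \X] = F(\X)$ by construction, and
$\Exp[V_n^s - V_n] = \Exp F(\X)$ by the tower property; here one uses the
convention, discussed in the text, that $F$ is defined on \emph{all} of
$\{0,1\}^n$, including strings $x$ with $\P[\X=x]=0$, so that there is no
issue applying the Hamming-Lipschitz bound and the concentration inequality
to $F$ on the whole cube.

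Next, Proposition~\ref{prop:sbc_lipschitz_constant} gives that $F$ is
Lipschitz (with respect to the $\ell^1$, i.e.\ Hamming, distance on
$\{0,1\}^n$) with constant
$K = (1-\gamma+2\delta)\delta/\bigl((1-\gamma)^2 n\bigr)$.
Applying Proposition~\ref{prop:sbc_generalgaussiantail} to $F$ with this $K$,
at deviation level $t/\sqrt{n}$, yields
\[
\Pr\left(\bigl|F(\X) - \Exp F(\X)\bigr| \geq \frac{t}{\sqrt{n}}\right)
 \leq 2\exp\left(-\frac{(t/\sqrt{n})^2(1-\ccoeff)^2}{2nK^2}\right).
\]
Finally, substituting $n^2K^2 = (1-\gamma+2\delta)^2\delta^2/(1-\gamma)^4$ simplifies the exponent to
\[
\frac{(t/\sqrt{n})^2(1-\ccoeff)^2}{2nK^2}
 = \frac{t^2(1-\ccoeff)^2(1-\gamma)^4}{2(1-\gamma+2\delta)^2\delta^2},
\]
which is exactly the claimed bound (writing $\theta$ for $\ccoeff$).

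The argument is essentially a composition of two earlier results, so there is
no serious obstacle; the only point requiring a little care is the
bookkeeping just described, namely that the function $F$ whose Lipschitz
constant is controlled in Proposition~\ref{prop:sbc_lipschitz_constant} is the
same object as the conditional expectation we wish to concentrate, and that it
is total on $\{0,1\}^n$ so that Proposition~\ref{prop:sbc_generalgaussiantail}
applies verbatim.
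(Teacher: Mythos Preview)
Your proposal is correct and is essentially identical to the paper's own proof, which simply states that the corollary is ``Proposition~\ref{prop:sbc_lipschitz_constant}, Proposition~\ref{prop:sbc_generalgaussiantail} and a change of variables.'' Your write-up even makes the bookkeeping (identifying $F(\X)$ with the conditional expectation, and the substitution $n^2K^2=(1-\gamma+2\delta)^2\delta^2/(1-\gamma)^4$) more explicit than the paper does.
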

\begin{proof}
This is simply Proposition~\ref{prop:sbc_lipschitz_constant}, Proposition~\ref{prop:sbc_generalgaussiantail} and a change of variables. 
\end{proof}

\begin{proposition}
\label{prop:sbc_expectation}
 Let $\lambda$ be the second
  eigenvalue of the Markov kernel, which is given by
  \begin{align}
    \lambda=\Pr[X_1=1\mid X_0=1]
     -\Pr[X_1=1\mid X_0=0]. \label{eq:lambda}
  \end{align}
  (Note that $\abs{\lambda}=\ccoeff$.)  Then
  \begin{align*}
    \E[V_n^s - V_n] &= \frac{(1-p)(1+\lambda)}{1-\lambda}
    -\frac{2(1-p)\lambda(1-\lambda^n)}{(1-\lambda)^2n}.
  \end{align*}
\end{proposition}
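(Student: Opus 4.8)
The plan is to reduce the computation to the two-point correlations of the chain and then evaluate an elementary geometric double sum. First I would condition on the uniformly random index $I$ used in the size-bias construction. By the construction in Section~\ref{sec:markov}, given $I=i$ the sequence $\{Y_k\}_{k=1}^n$ has the law of $\{X_k\}_{k=1}^n$ conditioned on $X_i=1$, and $V_n^s=\sum_{k=1}^n Y_k$ with $Y_i=Z_0=1$. Hence, using stationarity for $\E X_k=p$,
\[
  \E[V_n^s - V_n]
    = \frac{1}{n}\sum_{i=1}^n\sum_{k=1}^n\bigl(\Pr[X_k=1\mid X_i=1] - \E X_k\bigr)
    = \frac{1}{n}\sum_{i=1}^n\sum_{k=1}^n\bigl(\Pr[X_k=1\mid X_i=1] - p\bigr).
\]

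Next I would insert the spectral form of the two-state kernel. Writing the transition matrix in terms of its stationary distribution $(1-p,p)$ and its nontrivial eigenvalue $\lambda$ (as in \eqref{eq:lambda}), one has $\Pr[X_{i+m}=1\mid X_i=1] = p + (1-p)\lambda^{\abs{m}}$ for every integer $m$, reversibility handling the case $m<0$. Substituting,
\[
  \E[V_n^s - V_n] = \frac{1-p}{n}\sum_{i=1}^n\sum_{k=1}^n\lambda^{\abs{k-i}}.
\]

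The remaining step is to evaluate $S:=\sum_{i=1}^n\sum_{k=1}^n\lambda^{\abs{k-i}}$ in closed form. For fixed $i$, summing the geometric series on each side of $i$ gives $\sum_{k=1}^n\lambda^{\abs{k-i}} = \frac{1-\lambda^i}{1-\lambda} + \frac{1-\lambda^{n-i+1}}{1-\lambda} - 1$; summing this over $i$, and noting that $\sum_{i=1}^n\lambda^i = \sum_{i=1}^n\lambda^{n-i+1} = \lambda(1-\lambda^n)/(1-\lambda)$, yields
\[
  S = \frac{n(1+\lambda)}{1-\lambda} - \frac{2\lambda(1-\lambda^n)}{(1-\lambda)^2}.
\]
Multiplying by $(1-p)/n$ gives exactly the claimed identity.

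There is no genuine obstacle here: every step is forced, and the only point requiring care is the bookkeeping in the double geometric sum, in particular not double-counting the $k=i$ term and correctly identifying the two halves of the inner sum. If one wishes to avoid quoting the conditional law of $\{Y_k\}$, one can instead verify directly from \eqref{eq:Ydef}, by conditioning on the coupling times $\tau_\pm$, that $\E[Y_k\mid I=i]=\Pr[X_k=1\mid X_i=1]$, but this is unnecessary given the construction.
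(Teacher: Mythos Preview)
Your proof is correct, and it takes a genuinely different route from the paper's. The paper works directly with the coupling \eqref{eq:Ydef}: it uses the symmetry between $\tau_+$ and $\tau_-$ to reduce to a one-sided sum, constructs an explicit martingale out of $\sum_{j=0}^{i}(Z_j-X_{I+j})$, applies optional stopping at the (bounded) coupling time, and then handles the boundary term $\E[Z_{\hat\tau_+}-X_{I+\hat\tau_+}]$ by an inductive computation. Your argument sidesteps all of this by observing that, conditionally on $I=i$, $\{Y_k\}$ has the law of $\{X_k\}$ conditioned on $X_i=1$, so the expectation collapses to the covariance sum $\frac{1-p}{n}\sum_{i,k}\lambda^{|k-i|}$; equivalently, you are using the general identity $\E[V_n^s-V_n]=\Var(V_n)/\E V_n$ together with the explicit two-state spectral form of $\Pr[X_k=1\mid X_i=1]$. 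This is strictly more elementary and shorter. The paper's approach, while heavier, has the minor advantage of showing how the answer emerges from the coupling itself rather than from an abstract size-bias identity, which matches how the surrounding estimates (Propositions~\ref{prop:sbc_tail1} and~\ref{prop:sbc_lipschitz_constant}) are proved.
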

\begin{proof}
  \newcommand{\tph}{{\hat{\tau}_{+}}}
  \newcommand{\tmh}{{\hat{\tau}_{-}}}
  Let $\tph=\tp\wedge (n-I)$ and $\tmh=\tm\wedge (I-1)$. This only makes a difference
  when $Z_{\omitted-I}$ never rejoins $X_{\omitted}$, and $\tp=n-I+1$ or $\tm=I$.
  We first use symmetry so that we can ignore one of $\tph$ and $\tmh$:
  \begin{align}
    \E[V_n^s - V_n] &= \E\left[\sum_{i=-\tmh}^{\tph}
                       (Z_i - X_{I+i}) \right]\nonumber\\
       &= \E\left[ \sum_{i=0}^{\tph} (Z_i - X_{I+i}) + 
                   \sum_{i=0}^{\tmh} (Z_{-i} - X_{I-i}) - Z_0 + X_I \right]
                   \nonumber\\
       &= 2\E\left[ \sum_{i=0}^{\tph} (Z_i - X_{I+i}) \right] - 1 + p.
       \label{eq:symreduce}
  \end{align}
  
  Define
  \begin{align*}
    M_i &\Def \sum_{j=0}^i(Z_j - X_{I+j}) - \sum_{j=0}^{i-1}\E[Z_{j+1} - X_{I+j+1}
           \mid \chainfield_j],
  \end{align*}
  where $\chainfield_j=\sigma(I,\, X_I,\,\ldots,\,X_{I+j},\,Z_0,\ldots,\,Z_j)$.    The process $(M_i,\,i\geq 0)$ is a martingale with respect to the filtration $(\chainfield_i,\,i\geq 0)$.
  By explicitly computing this conditional expectation, we see that
  \begin{align*}
    M_i &= (1 - \lambda)\sum_{j=0}^{i-1}(Z_j - X_{I+j}) + Z_i - X_{I+i}.
  \end{align*}

  Since $\tph$ is a bounded stopping time with respect to this filtration,
  \begin{align*}
    1 - p = \E M_0 = \E M_{\tph}
       &= (1 -\lambda)\E\left[\sum_{i=0}^{\tph-1}
          (Z_i - X_{I+i})\right] + \E\left[Z_{\tph} - X_{I+\tph}\right]\\
      &=(1 -\lambda)\E\left[\sum_{i=0}^{\tph}
          (Z_i - X_{I+i})\right] + \lambda
            \E\left[Z_{\tph} - X_{I+\tph}\right]
  \end{align*}
  by the optional stopping theorem.  Thus,
  \begin{align}
    \E\left[\sum_{i=0}^{\tph}
          (Z_i - X_{I+i})\right] &= \frac{1-p - \lambda
            \E\left[Z_{\tph} - 
             X_{I+\tph}\right]}{1 - \lambda}.\label{eq:ost}
  \end{align}
  All that remains is to determine $\E[Z_{\tph} - X_{I+\tph}]$.
  The expression $Z_{\tph} - X_{I+\tph}$ is zero unless
  the Markov chains $Z_{\omitted}$ and $X_{I+\omitted}$ never match up.
  So,
  \begin{align*}
    \E[Z_{\tph} - X_{I+\tph}] &= \E\left[ \1_{\{Z_0\neq X_I,\ldots,
      Z_{n-I}\neq X_n\}}(Z_{n-I}-X_n)\right].
  \end{align*}
  Averaging over the possible choices of $I$ gives
  \begin{align*}
    \E[Z_{\tph} - X_{I+\tph}] 
      &= \frac{1}{n}\sum_{i=1}^{n}\E\left[ \1_{\{Z_0\neq X_i,\ldots,
      Z_{n-i}\neq X_n\}}(Z_{n-i}-X_n)\right]\\
      &= \frac{1}{n}\sum_{i=0}^{n-1}\E\left[ \1_{\{Z_0\neq X_0,\ldots,
      Z_{i}\neq X_{i}\}}(Z_{i}-X_{i})\right].
  \end{align*}
  In the last step, we shifted $X_{\omitted}$ and replaced $i$ with $n-i$
  to make the indices
  easier to deal with; the step is justified because $X_{\omitted}$ is 
  stationary and independent
  of $Z_{\omitted}$.
  Next, we take an inductive approach. By the Markov property,
  \begin{align*}
    &\E\left[ \1_{\{Z_0\neq X_0,\ldots,
      Z_{i}\neq X_{i}\}}(Z_{i}-X_{i}) \,\middle\vert\,
      X_0,\ldots,X_{i-1},Z_0,\ldots,Z_{i-1}\right]\\
      &\qquad\qquad= \1_{\{Z_0\neq X_0,\,\ldots,\,
       Z_{i-1}\neq X_{i-1}\}}\E[ Z_i - X_i \mid X_{i-1}, Z_{i-1}]\\
      &\qquad\qquad= \1_{\{Z_0\neq X_0,\,\ldots,\,
       Z_{i-1}\neq X_{i-1}\}}\lambda(Z_{i-1} - X_{i-1}).
  \end{align*}
  Taking expectations,
  \begin{align*}
    &\E\left[ \1_{\{Z_0\neq X_0,\ldots,
      Z_{i}\neq X_{i}\}}(Z_{i}-X_{i})\right]\\
       &\qquad\qquad= \lambda\E\left[ \1_{\{Z_0\neq X_0,\ldots,
      Z_{i-1}\neq X_{i-1}\}}(Z_{i-1}-X_{i-1})\right]\\
      &\qquad\qquad= \lambda^i(1-p).
  \end{align*}
  Thus
  \begin{align*}
    \E[Z_{\tph} - X_{I+\tph}] 
      &= \frac{1-p}{n}\sum_{i=0}^{n-1}\lambda^i\\
      &= \frac{(1-p)(1-\lambda^n)}{n(1-\lambda)}.
  \end{align*}
  Substituting this into \eqref{eq:symreduce} and \eqref{eq:ost}
  completes the proof.
  \end{proof}

Finally, we prove the quantitative Markov central limit theorem by combining these facts, which we emphasize is not needed for the main results of this paper.

\begin{proof}[Proof of Proposition~\ref{prop:markovclt}]
Let $W = (V_n - \Exp V_n)/\sqrt{\Var{V_n}}$ and let $Z$ be a standard normal.
Our starting point is the standard Stein's method through size-bias coupling lemma (see Theorem~3.20 of~\cite{Ross}), which states that
\[
\dw( W, Z) \leq \frac{\Exp V_n}{\Var V_n} \sqrt{ \frac{2}{\pi}}
\sqrt{ \Var \left(
\Exp \left[
V_n^s - V_n \mid V_n
\right]
\right)} + 
\frac{\Exp V_n}{(\Var V_n)^{3/2}}
\Exp\left[
\left(V_n^s - V_n\right)^2
\right].
\]
We will need to know that the standard deviation of $V_n$ is $\Theta( \sqrt{n}).$  (In this proof,
the implicit constants in asymptotic expressions should be understood to depend on the law of the Markov
chain.) On the one hand, we know that $\Exp V_n = n p = \Theta( n )$ by stationarity.  On the other hand, from the definition of the size-bias distribution that
\[
\Exp \left[
V_n^s - V_n 
\right] = \frac{\Var V_n}{\Exp V_n}.
\]
By hypothesis on $\mu$ and $\beta,$ it follows from Proposition~\ref{prop:sbc_expectation} that $\frac{\Var V_n}{\Exp V_n} = \Theta(1),$ and hence $\Var V_n = \Theta(n)$.  
Meanwhile, by Jensen's inequality and Corollary~\ref{cor:sbc_tail2},
\begin{align*}
  \Var (
\Exp [
V_n^s - V_n\mid V_n
]
)&\leq \Var(\Exp [
V_n^s - V_n\mid \X
])=O(1/n),
\end{align*}
and by Proposition~\ref{prop:sbc_tail1},
\[
\Exp\left[
\left(V_n^s - V_n\right)^2
\right] = O(1).\qedhere
\] 
\end{proof}

\bibliographystyle{alpha}
\bibliography{QH}

\end{document}